\DeclareMathAlphabet\mathscr{U}{eus}{m}{n}
\SetMathAlphabet\mathscr{bold}{U}{eus}{b}{n}
\DeclareMathAlphabet\matheur{U}{eur}{m}{n}
\SetMathAlphabet\matheur{bold}{U}{eur}{b}{n}
\numberwithin{equation}{section}
\newtheorem{theorem}{Theorem}[section]
\newtheorem{proposition}[theorem]{Proposition}
\newtheorem{lemma}[theorem]{Lemma}
\newtheorem{corollary}[theorem]{Corollary}
\theoremstyle{definition}
\newtheorem{definition}[theorem]{Definition}
\newtheorem{example}[theorem]{Example}
\theoremstyle{remark}
\newtheorem{remark}[theorem]{Remark}
\newtheorem{question}[theorem]{Question}
\newcommand{\htop}{{\textup{h}}_{\textup{top}}}
\newcommand{\supp}{\textup{supp}}
\newcommand{\ddet}{\textup{det}}
\newcommand{\Sym}{\mathscr{S}}
\newcommand{\per}{\textup{per}}
\newcommand{\iper}{\textup{per}^{\mspace{1.5mu}\iota}}
\newcommand{\sgn}{\textup{sgn}}
\begin{document}\allowdisplaybreaks\frenchspacing

%\setlength{\baselineskip}{1.1\baselineskip}

%\linenumbers

\title{Restricted Permutations and Permanents of Infinite Amenable Groups}

\author{Hanfeng Li}
\address{Hanfeng Li:
Department of Mathematics, SUNY at Buffalo,
Buffalo, NY 14260-2900, U.S.A.}
\email{hfli@math.buffalo.edu}

\author{Klaus Schmidt}
\address{K. Schmidt:
Mathematics Institute, University of Vienna, Oskar-Morgenstern-Platz 1, 1090 Vienna, Austria}
\email{klaus.schmidt@univie.ac.at}

\subjclass[2020]{37A15, 37A20, 37A35, 37B10, 37B51}
\keywords{Group permutations with restricted movement, symbolic dynamics, permanents of matrices, topological entropy and pressure}

\dedicatory{Dedicated to the memory of Anatoly Moiseevich Vershik}

\date{24-12-31}

	\begin{abstract}
Let $\Gamma $ be an infinite discrete group and $\mathsf{A}\subset \Gamma $ a nonempty finite subset. The set of permutations $\sigma $ of $\Gamma $ such that $s^{-1}\sigma (s)\in \mathsf{A}$ for every $s\in \Gamma $ can be identified with a shift of finite type $X_\mathsf{A}\subset \mathsf{A}^{\Gamma}$ over $\Gamma $. In this paper we study dynamical properties of such shift spaces, like invariant probability measures, topological entropy, and topological pressure, under the hypothesis that $\Gamma $ is amenable.

In this case the topological entropy $\htop(X_\mathsf{A})$ can be expressed as logarithmic growth rate of permanents of certain finite (0,1)-matrices associated with right F{\o}lner sequences in $\Gamma $. Motivated by the difficulty of computing such permanents we introduce the notion of the permanent $\per(f)$ for nonnegative elements $f$ in the real group ring $\mathbb{R}\Gamma $ of $\Gamma $ whose support is the alphabet $\mathsf{A}$ of the \emph{SFT} $X_\mathsf{A}$, and compare, for arbitrary $f \in \mathbb{R}\Gamma $, the \emph{Fuglede-Kadison determinant} $\ddet _\textup{FK}(f)$ with the permanent $\per(|f|)$ of the absolute value $|f|$ of $f$. Although this approach is effective in only few examples, discussed below, it is interesting from a conceptual point of view that the permanent $\per(f)$ of a nonnegative element $f\in \mathbb{R}\Gamma $ can be viewed as \emph{topological pressure} of the restricted-permutation \emph{SFT} $X_\mathsf{A}$ associated with the function $\log f$ on the alphabet $\mathsf{A}=\textup{supp}(f)$ of $X_\mathsf{A}$.
	\end{abstract}

\maketitle

\section{Introduction}

Let $\mathsf{A}\subset \mathbb{Z}$ be a finite set, and let $X_\mathsf{A}^{(n)}$ be the collection of all permutations $\sigma $ of $\mathbb{N}_n=\{0,\dots ,n-1\}$ with the property that $\sigma (i)-i\in \mathsf{A}\pmod n$\, for every $i\in \mathbb{N}_n$. What can one say about the number $\bigl|X_\mathsf{A}^{(n)}\bigr|$ of all such permutations as $\mathsf{A}$ remains fixed, but $n\to\infty $? Both for the history and for variations of this problem we refer to \cites{Baltic10, Diaconis99, Edwards15, Kaplansky44, Kaplansky46, Klove09, Lehmer70}, to mention a few references.

As $n\to \infty $, one is led to consider permutations $\sigma $ of $\mathbb{Z}$ which move each $s\in \mathbb{Z}$ by some element of a given finite set $\mathsf{A}\subset \mathbb{Z}$. This embeds questions about the sizes of the sets $X_\mathsf{A}^{(n)},\,n\ge1$, in problems concerning the dynamics of the shift action of $\mathbb{Z}$ on the shift-invariant set of all permutations $\sigma $ of $\mathbb{Z}$ with $\sigma (i)-i\in \mathsf{A}$ for every $i\in \mathbb{Z}$. If one replaces $\mathbb{Z}$ by $\mathbb{Z}^d$, $d\ge1$, or more generally, by a countably infinite discrete amenable group $\Gamma $, the analogous problem becomes considerably more challenging: for every nonempty finite set $\mathsf{A}\subset \Gamma $ (in symbols: $\mathsf{A}\Subset \Gamma $), the set of permutations $\sigma $ of $\Gamma $ satisfying that $s^{-1}\sigma (s)\in \mathsf{A}$ for every $s\in \Gamma $, can be identified with a shift of finite type (\emph{SFT}) $X_\mathsf{A}\subset \mathsf{A}^{\Gamma}$ over $\Gamma$. For $\Gamma =\mathbb{Z}^d$, the study of such restricted-permutation \emph{SFT}s was initiated in the paper \cite{SS16} and subsequently developed further in \cite{Elimelech21}, where the author used a correspondence between certain $\mathbb{Z}^2$-\emph{SFT}s of this form and dimer coverings of specific locally finite planar graphs to compute the topological entropy of $X_\mathsf{A}$ in some examples.

In the general setting of countable discrete groups $\Gamma $, these spaces $X_\mathsf{A}, \,\mathsf{A}\Subset \Gamma $, form a distinguished class of \emph{SFT}s over $\Gamma $ with interesting and only partially understood dynamical properties. We should also mention connections between restricted permutations of general groups (and, more generally, of metric spaces), \emph{restricted orbit equivalence} of group actions (in the sense of \cites{BT98, KR} or \cite{R}), and \emph{wobbling groups} as discussed, for example, in \cites{Juschenko13, Juschenko15, Juschenko22}. Note, however, that the restricted permutations considered here do not form a group (unlike the families of restricted permutations arising from orbit equivalence or in geometric group theory).

\smallskip This paper is organized as follows: Section \ref{S-local entropy} is devoted to local entropy and local pressure for shift-actions of countable amenable groups $\Gamma $, including \emph{SFT}s with finite alphabets over $\Gamma $ (cf. e.g. \cites{HM10, CM21, DFR}) and algebraic $\Gamma $-actions as defined in \cites{DSAO, Deninger, LT}. In Section \ref{S-local permutation} we focus on local entropy and invariant probability measures for the specific $\Gamma $-\emph{SFT}s arising from restricted permutations of countable amenable groups $\Gamma $. In Section \ref{S-permanent} we look in more detail at the topological entropies $\htop(X_\mathsf{A})$ of such restricted-permutation \emph{SFT}s $X_\mathsf{A}$. These can be expressed as logarithmic growth rates of permanents of certain finite (0,1)-matrices associated with right F{\o}lner sequences in $\Gamma $ (Proposition \ref{P-entropy in local} and Remark \ref{R-permanent and entropy}).

Since the computation of permanents of $(n \negthinspace \times \negthinspace n)$-matrices is NP-hard and conjecturally not possible in polynomial time (cf. e.g., \cite{Valiant79}), there is little hope for general numerical results about $\htop(X_\mathsf{A}),\, \mathsf{A} \Subset \Gamma $. One attempt to overcome the difficulty of computing explicitly the permanent of a large scalar matrix $M$ is to search for linear operations (like sign changes) on the entries of $M$, resulting in a matrix $M'$ whose determinant is equal to the permanent of $M$. This leads to the notion of the \emph{permanent} $\per(f)$ for nonnegative elements $f$ in the real group ring $\mathbb{R}\Gamma $ whose support is the alphabet $\mathsf{A}$ of the \emph{SFT} $X_\mathsf{A}$, and to the problem of comparing, for an arbitrary element $f \in \mathbb{R}\Gamma $, the \emph{Fuglede-Kadison determinant} $\ddet _\textup{FK}(f)$ with the permanent $\per(|f|)$ of the absolute value $|f|$ of $f$. As is to be expected from negative results about this problem (\cites{Marcus-Minc61, Gibson71}), this approach can succeed in only few examples, investigated in Section \ref{S-per vs det}. However, from a conceptual point of view it is interesting that the permanent $\per(f)$ of a nonnegative element $f\in \mathbb{R}\Gamma $ can be viewed as \emph{topological pressure} of the restricted-permutation \emph{SFT} $X_\mathsf{A}$ associated with the function $\log f$ on the alphabet $\mathsf{A}=\textup{supp}(f)$ of $X_\mathsf{A}$.

In Section \ref{S-approximation} we discuss finite approximations of the permanents $\per(f)$, $f\in \mathbb{R}_{\ge0}\Gamma $, for residually finite amenable groups. For $\Gamma =\mathbb{Z}$, $\per(f)$ is indeed equal to the upper limit of permanents corresponding to finite quotients of $\mathbb{Z}$ (Corollary \ref{C-approximation lower bound}), but in general $\per(f)$ may be larger than the $\limsup $ of the permanents of $f$ induced on the finite quotients of $\Gamma $ (Proposition \ref{P-approximation lower bound}). When applying this to the function $f \equiv 1$ on $\mathsf{A}$ this translates into the familiar statement that the logarithmic growth rate of the number of periodic points of the \emph{SFT} $X_\mathsf{A}$ is less than or qual to $\htop(X_\mathsf{A})$, but even for $\Gamma = \mathbb{Z}^2$ equality of these two quantities is an open problem.

In the final Section \ref{S-bounds} we apply a few known results on permanents of matrices to obtain estimates like $\htop(X_\mathsf{A})\le \frac{1}{|\mathsf{A}|}\log(|\mathsf{A}|!)$ if $\Gamma $ is finitely generated with polynomial growth and $\mathsf{A}\Subset \Gamma $ (Theorem \ref{T-upper bound}), or $\htop(X_\mathsf{A})\ge \log \frac{|\mathsf{A}|}{e}$ if $\Gamma $ is infinite amenable and $\mathsf{A}\Subset \Gamma $ (Corollary \ref{C-lower bound}).

\medskip

\noindent{\it Acknowledgements.}
This work started while the first author was visiting the Erwin Schr\"{o}dinger International Institute for Mathematics and 
Physics in the spring of 2020. He thanks the institute for providing a nice environment. 

\section{The local pressure formula}
	\label{S-local entropy}

\subsection{The local pressure formula for shift actions}
	\label{SS-local shift}

Let $\Gamma $ be a countable discrete group with identity element $\matheur{e}_\Gamma $, and let $\mathscr{F}(\Gamma )$ be the collection of all nonempty finite subsets of $\Gamma $.

If $\mathsf{B}$ is a compact metrizable space we consider the \emph{full shift} $\mathsf{B}^\Gamma $, furnished with the product topology, and write $\lambda $ and $\rho $ for the left and right shift actions of $\Gamma $ on $\mathsf{B}^\Gamma $, defined by
	\begin{equation}
	\label{eq:shift}
(\lambda ^sx)_t=x_{s^{-1}t}, \qquad (\rho ^sx)_t=x_{ts}
	\end{equation}
for every $s\in \Gamma $ and $x=(x_t)_{t\in \Gamma }\in \mathsf{B}^\Gamma $. For $\varnothing \ne F\subseteq \Gamma $, $s\in \Gamma $ and $x\in \mathsf{B}^F$, we denote by $sx\colon sF\to \mathsf{B}$ the function sending $st$ to $x_t$ for every $t\in F$. If $F'$ is a subset of $F$ we denote by $x|_{F'}= \pi _{F'}(x)\in \mathsf{B}^{F'}$ the restriction of $x\in \mathsf{B}^F$ to $F'$.

\smallskip Let $\mathcal{P}$ be a set consisting of pairs $(K,\mathscr{O})$, where $K\in \mathscr{F}(\Gamma )$ and $\mathscr{O}$ is an open subset of $\mathsf{B}^K$. We treat $\mathcal{P}$ as a set of \emph{forbidden patterns} and write
	\begin{equation}
	\label{eq:admissible}
X_\mathcal{P} = \{x\in \mathsf{B}^\Gamma \colon sx|_K= \pi _K(\lambda ^s x) \notin \mathscr{O}\; \textup{for every}\; s\in \Gamma \enspace \textup{and all}\enspace (K,\mathscr{O})\in \mathcal{P}\}
	\end{equation}
for the corresponding closed, left shift-invariant set of \emph{admissible elements} in $\mathsf{B}^\Gamma $. For given $F\subseteq \Gamma $ we denote by
	\begin{equation}
	\label{eq:locally admissible}
X_{\mathcal{P},F} = \{x\in \mathsf{B}^F\colon s^{-1}x|_K\notin \mathscr{O}\;\textup{for every}\; (K,\mathscr{O})\in \mathcal{P}\;\textup{and}\;s\in \Gamma \;\textup{with}\;sK\subseteq F\}
	\end{equation}
the set of \emph{locally admissible} patterns on $F$.

\smallskip Note that every closed, left shift-invariant subset $X\subseteq \mathsf{B}^\Gamma $ can be written as $X_\mathcal{P}$ for suitable $\mathcal{P}$: indeed, one may take $\mathcal{P} = \{(K,\mathsf{B}^K\smallsetminus \pi _K(X)):K\in \mathscr{F}(\Gamma )\}$.

\smallskip Now assume that $\Gamma $ is amenable. We use right F{\o}lner sets.

\smallskip Let $\mathscr{U}$ be a finite open cover of $\mathsf{B}$. For each $F\in \mathscr{F}(\Gamma )$ we denote by $\mathscr{U}^F$ the finite open cover of $\mathsf{B}^F$ consisting of $\prod _{t\in F}U_t$ for all maps $t\mapsto U_t$ from $F$ to $\mathscr{U}$. For each $Y\subseteq \mathsf{B}^F$, we denote by $N(\mathscr{U}^F, Y)$ the minimal number of elements of $\mathscr{U}^F$ needed to cover $Y$.

\smallskip Denote by $\pi = \pi _{\{\matheur{e}_\Gamma \}}$ the coordinate map $\mathsf{B}^\Gamma \to \mathsf{B}$ sending each $x = (x_s)\in \mathsf{B}^\Gamma $ to $x_{\matheur{e}_\Gamma}$. Then $\pi^{-1}(\mathscr{U})$ is a finite open cover of $\mathsf{B}^\Gamma $. For each $F\in \mathscr{F}(\Gamma)$, denote by $\pi^{-1}(\mathscr{U})_F$ the open cover $\bigvee_{s\in F} s\pi^{-1}(\mathscr{U})$ of $\mathsf{B}^\Gamma $. It is well known that the limit
	\begin{equation}
	\label{eq:limit}
\htop(X_\mathcal{P}, \pi^{-1}(\mathscr{U})) \coloneqq \lim_F\frac{1}{|F|}\log N(\pi^{-1}(\mathscr{U})_F, X_\mathcal{P})
	\end{equation}
exists in the sense that there is some $C\in [-\infty ,+\infty )$ such that, for any neighbourhood $U$ of $C$ in $[-\infty ,+\infty )$, there are $K\in \mathscr{F}(\Gamma)$ and $\delta>0$ such that $\frac{1}{|F|}\log N(\pi^{-1}(\mathscr{U})_F, X_\mathcal{P}) \in U$ for all $F\in \mathscr{F}(\Gamma)$ with $|FK\vartriangle F| < \delta |F|$ (cf. \cite{KL16}*{page 220}). For each $F\in \mathscr{F}(\Gamma)$, it is easy to check that $\pi^{-1}(\mathscr{U})_F=\pi _F^{-1}(\mathscr{U}^F)$. Hence
	\begin{displaymath}
N(\pi^{-1}(\mathscr{U})_F, X_\mathcal{P})=N(\pi _F^{-1}(\mathscr{U}^F), X_\mathcal{P})=N(\mathscr{U}^F, \pi _F(X_\mathcal{P})).
	\end{displaymath}
Thus
	\begin{displaymath}
\htop(X_\mathcal{P}, \pi^{-1}(\mathscr{U}))=\lim_F \frac{1}{|F|}\log N(\mathscr{U}^F, \pi _F(X_\mathcal{P})).
	\end{displaymath}
It is also easy to check that for every finite open cover $\mathscr{V}$ of $\mathsf{B}^\Gamma $, one has $\pi^{-1}(\mathscr{U})_K\succeq \mathscr{V}$ for some finite open cover $\mathscr{U}$ of $\mathsf{B}$ and some $K\in \mathscr{F}(\Gamma)$.
It follows that
	\begin{equation}
	\label{eq:htop}
\htop(X_\mathcal{P})= \sup_{\mathscr{U}}\htop(X_\mathcal{P}, \pi^{-1}(\mathscr{U}))
	\end{equation}
for $\mathscr{U}$ ranging over finite open covers of $\mathsf{B}$.

We denote by $C(\mathsf{B})$ the set of real-valued continuous functions on $\mathsf{B}$. Let $f\in C(\mathsf{B})$, and let $\mathscr{U}$ be a finite open cover of $\mathsf{B}$. For each $F\in \mathscr{F}(\Gamma)$, $Y\subseteq \mathsf{B}^\Gamma$ and $Z\subseteq \mathsf{B}^F$,
we set
	\begin{displaymath}
p_F(f, \pi^{-1}(\mathscr{U}), Y)=\inf\Biggl\{\sum_{V\in \mathscr{V}}\,\sup_{x\in V\cap Y} e^{\sum_{s\in F}f(x_s)}\colon \mathscr{V} \text{ is a finite cover of } \mathsf{B}^\Gamma \text{ refining } \pi^{-1}(\mathscr{U})_F\Biggr\}
	\end{displaymath}
and
	\begin{displaymath}
p_F(f, \mathscr{U}, Z)=\inf\Biggl\{\sum_{V\in \mathscr{V}}\,\sup_{x\in V\cap Z} e^{\sum_{s\in F}f(x_s)}\colon \mathscr{V} \text{ is a finite cover of } \mathsf{B}^F \text{ refining } \mathscr{U}^F\Biggr\}.
	\end{displaymath}
Clearly $p_F(0, \pi^{-1}(\mathscr{U}), Y)=N(\pi^{-1}(\mathscr{U}), Y)$ and $p_F(0, \mathscr{U}, Z)=N(\mathscr{U}^F, Z)$. For any finite cover $\mathscr{V}$ of $\mathsf{B}^\Gamma$ refining $\pi^{-1}(\mathscr{U})_F$, if we set $\mathscr{V}'=\{\pi_F(V\cap Y)\colon V\in \mathscr{V}\}\cup \{U\smallsetminus \pi_F(Y)\colon U\in \mathscr{U}^F\}$, then $\mathscr{V}'$ is a finite cover of $\mathsf{B}^F$ refining $\mathscr{U}^F$, and
	\begin{displaymath}
\sum_{V\in \mathscr{V}}\,\sup_{x\in V\cap Y} e^{\sum_{s\in F}f(x_s)}=\sum_{V'\in \mathscr{V}'}\,\sup_{x\in (V'\times \mathsf{B}^{\Gamma\smallsetminus F})\cap Y} e^{\sum_{s\in F}f(x_s)}.
	\end{displaymath}
In the definition of $p_F(f, \pi^{-1}(\mathscr{U}), Y)$ we may thus require that $\mathscr{V}=\pi_F^{-1}(\mathscr{V}')$ for some finite cover $\mathscr{V}'$ of $\mathsf{B}^F$ refining $\mathscr{U}^F$. Therefore
	\begin{displaymath}
p_F(f, \pi^{-1}(\mathscr{U}), Y)=p_F(f, \mathscr{U}, \pi_F(Y)).
	\end{displaymath}

	\begin{lemma}
	\label{L-local}
Let $f\in C(\mathsf{B})$ and let $\mathscr{U}$ be a finite open cover of $\mathsf{B}$. As $F\in \mathscr{F}(\Gamma )$ becomes more and more right invariant, the limits $\lim_F \frac{1}{|F|}\log p_F(f, \pi^{-1}(\mathscr{U}), X_\mathcal{P})$ and $\lim_F \frac{1}{|F|}\log p_F(f, \mathscr{U}, X_{\mathcal{P}, F})$ exist in the same sense as in \eqref{eq:limit}.
	\end{lemma}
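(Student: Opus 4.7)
My plan is to verify the hypotheses of the Ornstein--Weiss lemma for subadditive, left-invariant set functions on $\mathscr{F}(\Gamma)$ (as in \cite{KL16}, page 220, which is already used to establish \eqref{eq:limit} for the case $f = 0$). Concretely, I would show that each of $\varphi_1(F) = \log p_F(f, \pi^{-1}(\mathscr{U}), X_\mathcal{P})$ and $\varphi_2(F) = \log p_F(f, \mathscr{U}, X_{\mathcal{P}, F})$ is subadditive in $F$, satisfies $\varphi_i(sF) = \varphi_i(F)$ for all $s \in \Gamma$, and is bounded above by $(\log |\mathscr{U}| + \sup_{b \in \mathsf{B}} |f(b)|)\,|F|$. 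These are precisely the ingredients needed to deduce the convergence of $\varphi_i(F)/|F|$ along right F{\o}lner nets in the sense of \eqref{eq:limit}.

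For subadditivity, given a disjoint decomposition $F = F_1 \sqcup F_2$, I would combine finite covers $\mathscr{V}_i$ refining $\pi^{-1}(\mathscr{U})_{F_i}$ (respectively $\mathscr{U}^{F_i}$) into the intersection cover $\{V_1 \cap V_2 : V_i \in \mathscr{V}_i\}$, which refines $\pi^{-1}(\mathscr{U})_F$ (respectively, after the natural identification $\mathsf{B}^F \cong \mathsf{B}^{F_1} \times \mathsf{B}^{F_2}$, refines $\mathscr{U}^F$). Since $e^{\sum_{s \in F} f(x_s)}$ factors as $e^{\sum_{s \in F_1} f(x_s)}\cdot e^{\sum_{s \in F_2} f(x_s)}$ and since the supremum of a product is at most the product of suprema, this yields $p_F \le p_{F_1}\cdot p_{F_2}$, hence subadditivity of $\log p_F$. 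For $\varphi_2$ the argument uses in addition the elementary observation that $x \in X_{\mathcal{P}, F}$ implies $x|_{F_i} \in X_{\mathcal{P}, F_i}$ whenever $F = F_1 \cup F_2$.

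For left-invariance of $\varphi_1$, I would use that $X_\mathcal{P}$ is left shift-invariant, together with the identities $\pi^{-1}(\mathscr{U})_{sF} = \lambda^s\pi^{-1}(\mathscr{U})_F$ and $\sum_{t \in sF} f(x_t) = \sum_{t' \in F} f((\lambda^{s^{-1}} x)_{t'})$; the change of variables $x \mapsto \lambda^{s^{-1}} x$ then sets up a cover-preserving bijection that yields $p_{sF} = p_F$. For $\varphi_2$ the analogous argument uses the canonical bijection $\tau_s \colon \mathsf{B}^F \to \mathsf{B}^{sF}$ defined by $(\tau_s x)_{st} = x_t$, whose compatibility with the defining condition of $X_{\mathcal{P}, F}$ must be checked by translating the constraint \emph{$rK \subseteq sF$} into \emph{$s^{-1} rK \subseteq F$} and identifying $(r^{-1}\tau_s x)|_K$ with $((s^{-1} r)^{-1} x)|_K$.

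The main obstacle I anticipate is this last verification: while the left-invariance of $\varphi_1$ follows almost immediately from the left shift-invariance of $X_\mathcal{P}$, for $\varphi_2$ the set of locally admissible patterns $X_{\mathcal{P}, F}$ depends on $F$, so one must trace carefully through the quantifiers in the definition of $X_{\mathcal{P}, F}$ under the relabeling $\tau_s$. Once the three hypotheses are established, the conclusion follows directly from the Ornstein--Weiss machinery cited for \eqref{eq:limit}, with the limit taking values in $[-\infty, +\infty)$ to accommodate the degenerate cases where $X_\mathcal{P}$ or $X_{\mathcal{P}, F}$ is empty.
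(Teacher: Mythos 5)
Your overall strategy --- verify the hypotheses of the Ornstein--Weiss lemma and invoke it --- is exactly the paper's, and your invariance and boundedness checks match what the paper does (it simply notes $sX_{\mathcal{P},F}=X_{\mathcal{P},sF}$ and hence $p_F(f,\mathscr{U},X_{\mathcal{P},F})=p_{sF}(f,\mathscr{U},X_{\mathcal{P},sF})$). The genuine gap is in the subadditivity step: you only establish $p_F\le p_{F_1}\cdot p_{F_2}$ for \emph{disjoint} decompositions $F=F_1\sqcup F_2$, whereas the Ornstein--Weiss machinery as used here needs it for \emph{arbitrary} pairs $F_1,F_2$ (the quasi-tiles produced by the Ornstein--Weiss tiling theorem are only $\varepsilon$-disjoint, i.e.\ they overlap). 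Nor can you recover the general case from the disjoint case via monotonicity: for $\varphi_2(F)=\log p_F(f,\mathscr{U},X_{\mathcal{P},F})$, local admissibility on a larger set is a \emph{stronger} condition, so $X_{\mathcal{P},F}$ can shrink (even become empty) as $F$ grows while $X_{\mathcal{P},F'}$ remains large for $F'\subseteq F$; hence $F\mapsto\varphi_2(F)$ is not monotone. For overlapping $F_1,F_2$ the exact factorization $e^{\sum_{s\in F_1\cup F_2}f(x_s)}=e^{\sum_{s\in F_1}f(x_s)}e^{\sum_{s\in F_2}f(x_s)}$ that you invoke is false; one only gets ``$\le$'', and only when $f\ge 0$, since the overlap $F_1\cap F_2$ is counted twice on the right. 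This is precisely why the paper first normalizes, replacing $f$ by $f-C\ge 0$ (which shifts $\frac{1}{|F|}\log p_F$ by the constant $C$), proves the inequality \eqref{E-local} for arbitrary $F_1,F_2$ using $f\ge 0$ together with $\pi_{F_j}(X_{\mathcal{P},F_1\cup F_2})\subseteq X_{\mathcal{P},F_j}$, and only then applies the Ornstein--Weiss lemma.

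A second, smaller omission: the Ornstein--Weiss lemma applies to finite-valued nonnegative set functions, so it does not by itself ``accommodate'' the value $-\infty$. The paper splits off the degenerate case first: if $X_{\mathcal{P},F_1}=\varnothing$ for some $F_1$, then $X_{\mathcal{P},F}=\varnothing$ for all sufficiently right invariant $F$ and the limit is $-\infty$ directly; in the remaining case the normalization $f\ge 0$ also guarantees $\log p_F\ge 0$, which is needed to apply the cited lemma. With these two repairs --- subadditivity for arbitrary (overlapping) pairs after reducing to $f\ge 0$, and separate treatment of the empty case --- your argument coincides with the paper's proof.
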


	\begin{proof}
We prove the assertion about $p_F(f, \mathscr{U}, X_{\mathcal{P}, F})$. The proof for $p_F(f, \pi^{-1}(\mathscr{U}), X_\mathcal{P})$ is similar.

If $X_{\mathcal{P}, F_1}=\varnothing $ for some $F_1\in \mathscr{F}(\Gamma)$, then $X_{\mathcal{P}, F}=\varnothing $ for all sufficiently right invariant $F\in \mathscr{F}(\Gamma)$, so that $\lim_F \frac{1}{|F|}\log p_F(f, \mathscr{U}, X_{\mathcal{P}, F})=-\infty$. Thus we may assume that $X_{\mathcal{P}, F}\neq \varnothing $ for all $F\in \mathscr{F}(\Gamma)$.

We consider first the case $f\ge 0$.

For any $F\in \mathscr{F}(\Gamma)$ and $s\in \Gamma $, clearly $sX_{\mathcal{P}, F}=X_{\mathcal{P}, sF}$ and
	\begin{displaymath}
p_F(f, \mathscr{U}, X_{\mathcal{P}, F})=p_{sF}(f, \mathscr{U}, X_{\mathcal{P}, sF}).
	\end{displaymath}

For any $F_1, F_2\in \mathscr{F}(\Gamma)$, note that if $\mathscr{V}_j$ is a finite cover of $\mathsf{B}^{F_j}$ refining $\mathscr{U}^{F_j}$ for $j=1, 2$, then the sets $(V_1\times \mathsf{B}^{F_2\smallsetminus F_1})\cap (\mathsf{B}^{F_1\smallsetminus F_2}\times V_2)$ for $V_1\in \mathscr{V}_1$ and $V_2\in \mathscr{V}_2$ form a finite cover of $\mathsf{B}^{F_1\cup F_2}$ refining $\mathscr{U}^{F_1\cup F_2}$. Hence
	\begin{align*}
p_{F_1\cup F_2}&(f, \mathscr{U}, X_{\mathcal{P}, F_1\cup F_2})
	\\
&\le \sum_{V_1\in \mathscr{V}_1, V_2\in \mathscr{V}_2}\,\sup_{x\in (V_1\times \mathsf{B}^{F_2\smallsetminus F_1})\cap (\mathsf{B}^{F_1\smallsetminus F_2}\times V_2)\cap X_{\mathcal{P}, F_1\cup F_2}}e^{\sum_{s\in F_1\cup F_2}f(x_s)}
	\\
&\le \sum_{V_1\in \mathscr{V}_1, V_2\in \mathscr{V}_2}\,\sup_{x\in (V_1\times \mathsf{B}^{F_2\smallsetminus F_1})\cap (\mathsf{B}^{F_1\smallsetminus F_2}\times V_2)\cap X_{\mathcal{P}, F_1\cup F_2}}e^{\sum_{s\in F_1}f(x_s)}e^{\sum_{s\in F_2}f(x_s)}
	\\
&\le \sum_{V_1\in \mathscr{V}_1, V_2\in \mathscr{V}_2}\Biggl(\sup_{x\in V_1\cap \pi_{F_1}(X_{\mathcal{P}, F_1\cup F_2})}e^{\sum_{s\in F_1}f(x_s)}\Biggr)\Biggl(\sup_{x\in V_2\cap \pi_{F_2}(X_{\mathcal{P}, F_1\cup F_2})}e^{\sum_{s\in F_2}f(x_s)}\Biggr)
	\\
&=\Biggl(\sum_{V_1\in \mathscr{V}_1}\sup_{x\in V_1\cap \pi_{F_1}(X_{\mathcal{P}, F_1\cup F_2})}e^{\sum_{s\in F_1}f(x_s)}\Biggr)\Biggl(\sum_{V_2\in \mathscr{V}_2}\sup_{x\in V_2\cap \pi_{F_2}(X_{\mathcal{P}, F_1\cup F_2})}e^{\sum_{s\in F_2}f(x_s)}\Biggr),
	\end{align*}
where in the second inequality we use that $f\ge 0$. It follows that
	\begin{equation}
	\label{E-local}
	\begin{aligned}
p_{F_1\cup F_2}(f, \mathscr{U}, X_{\mathcal{P}, F_1\cup F_2})\le p_{F_1}(f, \mathscr{U}, \pi_{F_1}(X_{\mathcal{P}, F_1\cup F_2})) \cdot p_{F_2}(f, \mathscr{U}, \pi_{F_2}(X_{\mathcal{P}, F_1\cup F_2})).
	\end{aligned}
	\end{equation}

Now by the Ornstein-Weiss lemma \cite{KL16}*{Theorem 4.38} the limit $\lim_F \frac{1}{|F|}\log p_F(f, \mathscr{U}, X_{\mathcal{P}, F})$ exists.

For general $f\in C(\mathsf{B})$, we can find a constant $C$ such that $f\ge C$. Then
	\begin{displaymath}
\lim_F \frac{1}{|F|}\log p_F(f, \mathscr{U}, X_{\mathcal{P}, F})=C+\lim_F \frac{1}{|F|}\log p_F(f-C, \mathscr{U}, X_{\mathcal{P}, F}).\tag*{\qedsymbol}
	\end{displaymath}
	\renewcommand{\qedsymbol}{}
	\vspace{-\baselineskip}
	\end{proof}

For any $f\in C(\mathsf{B})$ and any finite open cover $\mathscr{U}$ of $\mathsf{B}$, we set
	\begin{displaymath}
P(X_\mathcal{P}, f, \pi^{-1}(\mathscr{U}))=\lim_F \frac{1}{|F|}\log p_F(f, \pi^{-1}(\mathscr{U}), X_\mathcal{P}).
	\end{displaymath}
For every finite open cover $\mathscr{V}$ of $\mathsf{B}^\Gamma $, one has $\pi^{-1}(\mathscr{U})_K\succeq \mathscr{V}$ for some finite open cover $\mathscr{U}$ of $\mathsf{B}$ and some $K\in \mathscr{F}(\Gamma)$. Thus
the {\it topological pressure of the $\Gamma $-action $\lambda $ on $X_\mathcal{P}$ associated to $f$} is
	\begin{displaymath}
P(X_\mathcal{P}, f)=\sup_{\mathscr{U}}P(X_\mathcal{P}, f, \pi^{-1}(\mathscr{U}))
	\end{displaymath}
for $\mathscr{U}$ ranging over finite open covers of $\mathsf{B}$ (cf. \cite{MO}*{Section 5.2}).

	\begin{lemma}
	\label{L-neighborhood}
Let $f\in C(\mathsf{B})$ and let $\mathscr{U}$ be a finite open cover of $\mathsf{B}$.
Let $F\in \mathscr{F}(\Gamma)$, $X\subseteq \mathsf{B}^F$, and $\eta>0$. Then there
is an open subset $W$ of $\mathsf{B}^F$ such that $X\subseteq W$ and, for any $Z\subseteq W$, one has
	\begin{displaymath}
p_F(f, \mathscr{U}, Z)\le e^{\eta} p_F(f, \mathscr{U}, X).
	\end{displaymath}
	\end{lemma}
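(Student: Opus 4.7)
The plan is to exploit the fact that the function $h\colon \mathsf{B}^F\to(0,\infty)$ defined by $h(x)=e^{\sum_{s\in F}f(x_s)}$ is continuous on the compact space $\mathsf{B}^F$: a near-optimal cover for $X$ in the definition of $p_F(f,\mathscr{U},X)$ can be thickened, element-by-element, to an open neighbourhood of $X$ without inflating the local suprema too much. The case $X=\varnothing$ is trivial (take $W=\varnothing$), so I will assume $X\neq\varnothing$ and $p_F(f,\mathscr{U},X)>0$.

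First, I would fix a finite cover $\mathscr{V}$ of $\mathsf{B}^F$ refining $\mathscr{U}^F$ which is $e^{\eta/2}$-near-optimal, in the sense that
\[
\sum_{V\in\mathscr{V}}\,\sup_{x\in V\cap X}h(x)\ \le\ e^{\eta/2}\, p_F(f,\mathscr{U},X).
\]
Since $\mathscr{V}$ refines $\mathscr{U}^F$, I pick for each $V\in\mathscr{V}$ some $U_V\in\mathscr{U}^F$ with $V\subseteq U_V$. For each $V$ with $V\cap X\neq\varnothing$, set $M_V\coloneqq\sup_{x\in V\cap X}h(x)\in(0,\infty)$ and define the open set
\[
O_V\ \coloneqq\ U_V\cap\{y\in\mathsf{B}^F\colon h(y)<e^{\eta/2}M_V\}.
\]
This $O_V$ is open (by continuity of $h$ and openness of $U_V\in\mathscr{U}^F$), contains $V\cap X$ (because $h(x)\le M_V<e^{\eta/2}M_V$ on $V\cap X\subseteq U_V$), and is contained in $U_V$. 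Consequently
\[
W\ \coloneqq\ \bigcup_{V\in\mathscr{V},\,V\cap X\neq\varnothing}O_V
\]
is an open subset of $\mathsf{B}^F$ containing $X$.

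Given any $Z\subseteq W$, the key step is to build a competitor cover of $\mathsf{B}^F$ refining $\mathscr{U}^F$ for the infimum defining $p_F(f,\mathscr{U},Z)$. I propose
\[
\mathscr{V}'\ \coloneqq\ \{O_V\colon V\in\mathscr{V},\,V\cap X\neq\varnothing\}\ \cup\ \{U\smallsetminus W\colon U\in\mathscr{U}^F\}.
\]
It covers $\mathsf{B}^F$ because $W\subseteq\bigcup_V O_V$ and $\mathsf{B}^F\smallsetminus W\subseteq\bigcup_{U\in\mathscr{U}^F}(U\smallsetminus W)$, and it refines $\mathscr{U}^F$ by construction. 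Since $Z\subseteq W$, the terms $U\smallsetminus W$ contribute nothing to the sum, and each remaining term satisfies $\sup_{O_V\cap Z}h\le\sup_{O_V}h\le e^{\eta/2}M_V$. Summing yields
\[
p_F(f,\mathscr{U},Z)\ \le\ \sum_{V\colon V\cap X\neq\varnothing}\sup_{O_V}h\ \le\ e^{\eta/2}\sum_V M_V\ \le\ e^{\eta}\,p_F(f,\mathscr{U},X),
\]
which is the desired estimate.

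I do not expect any genuine obstacle here; the only subtleties are the usual bookkeeping ones. Specifically, one must be careful that the cover elements $V\in\mathscr{V}$ need not be open (only $\mathscr{U}^F$ is an open cover), which is why the sets $O_V$ are defined as open subsets of the enclosing $U_V\in\mathscr{U}^F$ rather than of $V$; and one must verify that the ``complement'' pieces $U\smallsetminus W$ actually cover $\mathsf{B}^F\smallsetminus W$, for which it is essential that $\mathscr{U}^F$ itself covers $\mathsf{B}^F$. Both points are transparent once the construction is set up this way.
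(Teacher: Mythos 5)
Your proof is correct and follows essentially the same strategy as the paper's: fix an $e^{\eta/2}$-near-optimal finite cover for $X$ and thicken it into an open neighbourhood $W$ of $X$ on which the weight $h(x)=e^{\sum_{s\in F}f(x_s)}$ exceeds the relevant suprema by at most another factor $e^{\eta/2}$. The only difference is in how the thickening is carried out: the paper introduces a compatible metric, enlarges and then shrinks the cover elements, and invokes uniform continuity of $h$ to take $W$ to be a $\delta$-neighbourhood of $X$, whereas your sublevel sets $O_V=U_V\cap\{h<e^{\eta/2}M_V\}$ achieve the same effect more directly and without a metric.
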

	\begin{proof}
If $X$ is empty we may take $W=\varnothing $. Thus we may assume that $X$ is nonempty. Then $p_F(f, \mathscr{U}, X)>0$.
Let $\rho$ be a compatible metric on $\mathsf{B}^F$.

Take a finite cover $\mathscr{V}$ of $\mathsf{B}^F$ refining $\mathscr{U}^F$ such that
	\begin{displaymath}
\sum_{V\in \mathscr{V}}\sup_{x\in V\cap X}e^{\sum_{s\in F}f(x_s)}< e^{\eta/2} p_F(f, \mathscr{U}, X).
	\end{displaymath}
Enlarging each member of $\mathscr{V}$ slightly, we may assume that $\mathscr{V}$ is an open cover of $\mathsf{B}^F$. For each $V\in \mathscr{V}$, take an open subset $V'$ of $\mathsf{B}^F$ such that the closure of $V'$ is contained in $V$ and that $\mathscr{V}'=\{V'\colon V\in \mathscr{V}\}$ covers $\mathsf{B}^F$.
Take a $\delta>0$ such that for any $x, z\in \mathsf{B}^F$ with $\rho(x, z)<\delta$ one has
	\begin{displaymath}
\Biggl|\sum_{s\in F}f(x_s)-\sum_{s\in F}f(z_s)\Biggr|< \eta/2.
	\end{displaymath}
Shrinking $\delta$ if necessary, we may assume that for any $V\in \mathscr{V}$, any $z\in V'$ and any $x\in \mathsf{B}^F$ with $\rho(x, z)<\delta $, one has $x\in V$. Denote by $W$ the set of $z\in \mathsf{B}^F$ satisfying that $\sup_{x\in X}\rho(x, z)<\delta $. Then $W$ is an open subset of $\mathsf{B}^F$ and $X\subseteq W$.

Let $Z\subseteq W$ and $V\in \mathscr{V}$. For each $z\in V'\cap Z$, since $z\in W$, we can find some $x\in X$ with $\rho(x, z)<\delta$. By our choice of $\delta$ we have $x\in V$ and
	\begin{displaymath}
\sum_{s\in F}f(z_s)<\eta/2+\sum_{s\in F}f(x_s).
	\end{displaymath}
Thus
	\begin{displaymath}
\sup_{z\in V'\cap Z}e^{\sum_{s\in F}f(z_s)}\le e^{\eta/2}\sup_{x\in V\cap X}e^{\sum_{s\in F}f(x_s)}.
	\end{displaymath}
Therefore
	\begin{displaymath}
p_F(f, \mathscr{U}, Z)\le \sum_{V'\in \mathscr{V}'}\,\sup_{z\in V'\cap Z}e^{\sum_{s\in F}f(z_s)}\le e^{\eta/2}\sum_{V\in \mathscr{V}}\,\sup_{x\in V\cap X}e^{\sum_{s\in F}f(x_s)}\le e^{\eta} p_F(f, \mathscr{U}, X). \tag*{\qedsymbol}
	 \end{displaymath}
	 \renewcommand{\qedsymbol}{}
	 \vspace{-\baselineskip}
	\end{proof}

The following is the Ornstein-Weiss quasitiling theorem \cite{KL16}*{Theorem 4.36}.

	\begin{lemma}
	\label{L-OW}
Let $K\in \mathscr{F}(\Gamma)$ and $0<\delta, \delta_1, \eta<1$.
There are $K', F_1, F_2, \dots, F_N\in \mathscr{F}(\Gamma)$ and $\delta'>0$ such that $|F_j K|<(1+\delta)|F_j|$ for each $j=1, \dots, N$, and for any $F\in \mathscr{F}(\Gamma)$ satisfying $|FK'|<(1+\delta')|F|$ there are finite sets $C_1, \dots, C_N\subseteq \Gamma $ satisfying the following conditions:
	\begin{enumerate}
	\item
There is a set $\widehat{cF_j}\subseteq cF_j$ satisfying $|\widehat{cF_j}|\ge (1-\eta)|cF_j|$ for every $1\le j\le N$ and $c\in C_j$ such that the sets $\widehat{cF_j}$ for $1\le j\le N$ and $c\in C_j$ are pairwise disjoint;
	\item
$\bigcup_{1\le j\le N}\bigcup_{c\in C_j}cF_j\subseteq F$ and $\bigl|\bigcup_{1\le j\le N}\bigcup_{c\in C_j}cF_j\bigr|\ge (1-\delta_1)|F|$.
	\end{enumerate}
	\end{lemma}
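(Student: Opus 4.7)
The plan is to follow the standard Ornstein--Weiss quasitiling construction: fix the number $N$ of shapes from $\delta_1$ and $\eta$; inductively build the shapes $F_1,\dots,F_N$ along a hierarchy of progressively stronger invariance conditions; and, given any sufficiently invariant $F$, produce the packings $C_1,\dots,C_N$ by a greedy $\eta$-almost-disjoint algorithm, estimating the uncovered volume iteratively.

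Choose first $N\in\mathbb{N}$ with $(1-\eta)^N<\delta_1$. Then build the tiles $F_1,\ldots,F_N$ together with auxiliary invariance parameters $(K_j,\varepsilon_j)_{j=1}^{N}$ by induction on $j$: set $K_1=K$, $\varepsilon_1=\delta$, and let $F_1\in\mathscr{F}(\Gamma)$ be any set with $|F_1K|<(1+\delta)|F_1|$. Given $F_1,\ldots,F_{j-1}$, put $K_j=K\cup\bigcup_{i<j}(F_i\cup F_i^{-1})$ and choose $\varepsilon_j\in(0,\delta)$ small enough that any finite $F'\subseteq\Gamma$ with $|F'K_j|<(1+\varepsilon_j)|F'|$ has at least $(1-\eta/2)|F'|$ many elements $s$ for which $sF_i\subseteq F'$ for every $i<j$; then take $F_j\in\mathscr{F}(\Gamma)$ with $|F_jK_j|<(1+\varepsilon_j)|F_j|$. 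Finally set $K'=K_N$ and $\delta'=\varepsilon_N$. Since $K_j\supseteq K$ and $\varepsilon_j\le\delta$, each $F_j$ is $(K,\delta)$-invariant in the sense of the lemma.

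For $F\in\mathscr{F}(\Gamma)$ with $|FK'|<(1+\delta')|F|$, construct the centres $C_N,\ldots,C_1$ in decreasing order of $j$ via a running covered set $R$ (initially empty). At stage $j$, repeatedly pick $c\in\Gamma$ with $cF_j\subseteq F$ and $|cF_j\cap R|\le\eta|F_j|$; at each selection, set $\widehat{cF_j}:=cF_j\setminus R$ (with $R$ evaluated at that moment) and then replace $R$ by $R\cup\widehat{cF_j}$. Stop when no further admissible $c$ exists, and let $C_j$ collect the selected centres. By construction, $|\widehat{cF_j}|\ge(1-\eta)|cF_j|$; the $\widehat{cF_j}$'s are pairwise disjoint across all $j$ and $c\in C_j$; and $\bigcup_{j,c}cF_j\subseteq F$, establishing condition (1) and the first half of (2).

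The remaining task---and the main obstacle---is the coverage bound $\bigl|\bigcup_{j,c}cF_j\bigr|\ge(1-\delta_1)|F|$. Letting $U_j:=F\setminus R_j$ where $R_j$ is $R$ at the end of stage $j$ (with $U_{N+1}:=F$), termination of stage $j$ forces $|cF_j\cap U_j|<(1-\eta)|F_j|$ for every $c$ with $cF_j\subseteq F$. A Fubini-type double count over $\{c:cF_j\subseteq F\}$, combined with the $(K',\delta')$-invariance of $F$ (which via $F_j^{\pm 1}\subseteq K'$ ensures that almost all points of $F$ lie inside many such translates), yields a shrinkage inequality of the form $|U_j|\le(1-\eta)|U_{j+1}|+o(|F|)$; iterating from $j=N$ down to $j=1$ gives $|U_1|\le(1-\eta)^N|F|+o(|F|)<\delta_1|F|$ by the choice of $N$ together with the tightness of $\delta'$ relative to the tile sizes. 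The delicate point is to arrange the boundary errors so they do not accumulate destructively across the $N$ stages; this is precisely what forces the tower $K_1\subseteq\cdots\subseteq K_N=K'$ and the tight choice of $\varepsilon_j$, and it is why the invariance of $F$ must ``see'' every previously built tile rather than only the original $K$.
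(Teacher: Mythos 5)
The paper does not prove this lemma; it is quoted verbatim from Kerr--Li (\cite{KL16}*{Theorem 4.36}), so your argument has to stand on its own. Your architecture is the standard Ornstein--Weiss one and most of it is right: the tower $K_1\subseteq\cdots\subseteq K_N$ making each later tile $F_j$ highly invariant under all earlier tiles, the greedy $\eta$-disjoint packing from $j=N$ down to $j=1$, and the verification of condition (1) and of $\bigcup_{j,c}cF_j\subseteq F$ are all correct. The gap is in the one step you yourself flag as the main obstacle. A double count over $\{c:cF_j\subseteq F\}$ cannot yield $|U_j|\le(1-\eta)|U_{j+1}|+o(|F|)$. The termination condition says that every $c$ with $cF_j\subseteq F$ has $|cF_j\cap U_j|<(1-\eta)|F_j|$; summing this over all such $c$ and using the invariance of $F$ (so that most $u\in F$ lie in $|F_j|$ admissible translates) gives only
\begin{equation*}
|F_j|\bigl(|U_j|-o(|F|)\bigr)\;\le\;\sum_{c:\,cF_j\subseteq F}|cF_j\cap U_j|\;<\;(1-\eta)|F_j|\cdot|F|,
\end{equation*}
i.e.\ $|U_j|\le(1-\eta)|F|+o(|F|)$ --- the \emph{same} bound at every stage. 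The count never sees $U_{j+1}$, because a candidate translate can fail admissibility merely by meeting the tiles placed at \emph{earlier} stages; so no iteration and no geometric decay follow.

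To repair it you must localize the count to the previously uncovered set: restrict attention to $c$ with $cF_j\subseteq U_{j+1}$ (or with $|cF_j\cap U_{j+1}|\ge(1-\eta/2)|F_j|$). For such $c$ the termination condition forces $|cF_j\cap(U_{j+1}\smallsetminus U_j)|>\eta|F_j|$ (resp.\ $>(\eta/2)|F_j|$), and the double count then gives $|U_{j+1}\smallsetminus U_j|\ge\eta\,|\{c:cF_j\subseteq U_{j+1}\}|$. The whole weight of the proof is the lower bound $|\{c:cF_j\subseteq U_{j+1}\}|\ge(1-o(1))|U_{j+1}|$, which is \emph{not} a consequence of the invariance of $F$ alone: one must show that $U_{j+1}=F\smallsetminus\bigcup_{i>j}\bigcup_{c\in C_i}cF_i$ is itself almost $F_j$-invariant. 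This uses exactly the tower you built --- each $F_i$ with $i>j$ is $(F_j,\varepsilon_i)$-invariant, so each placed tile contributes at most $\varepsilon_i|F_i|$ to the $F_j$-boundary of $U_{j+1}$ --- together with the bound $|C_i|\le|F|/((1-\eta)|F_i|)$ coming from $\eta$-disjointness, so that the total boundary is $\le\frac{1}{1-\eta}\sum_{i>j}\varepsilon_i\,|F|$, and one only proceeds when $|U_{j+1}|\ge\delta_1|F|$ so this is a small fraction of $|U_{j+1}|$. You construct the invariance tower but never deploy it in the counting; as written, the shrinkage inequality, and hence the coverage bound in (2), is unproved.
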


The next lemma establishes the upper semicontinuity for topological pressure with respect to a fixed finite open cover.

	\begin{lemma}
	\label{L-upper continuity}
Let $f\in C(\mathsf{B})$ and let $\mathscr{U}$ be a finite open cover of $\mathsf{B}$.
Let $X$ be a closed $\Gamma $-invariant subset of $\mathsf{B}^\Gamma$, and let $\varepsilon>0$. Then there is an open subset $W$ of $\mathsf{B}^\Gamma$ such that $X\subseteq W$ and for any closed $\Gamma $-invariant subset $Y$ of $\mathscr{B}^\Gamma$ with $Y\subseteq W$, one has
	\begin{displaymath}
P(Y, f, \pi^{-1}(\mathscr{U}))\le P(X, f, \pi^{-1}(\mathscr{U}))+\varepsilon.
	\end{displaymath}
	\end{lemma}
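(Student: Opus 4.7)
The plan is to combine the local control provided by Lemma~\ref{L-neighborhood} with the Ornstein-Weiss quasitiling of Lemma~\ref{L-OW}, in the same multiplicative spirit as the proof of Lemma~\ref{L-local}. After replacing $f$ by $f-\inf f$ (which shifts both sides of the desired inequality by the same additive constant) I may assume $f\ge 0$. If $X=\varnothing$, take $W=\varnothing$; otherwise $P:=P(X,f,\pi^{-1}(\mathscr{U}))$ is a finite real number.

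First I would fix small parameters $\eta,\delta_1>0$ and a large integer $M$, to be tuned at the end in terms of $\varepsilon$, $|P|$, $\|f\|_\infty$, $|\mathscr{U}|$. By the definition of pressure as a limit over right-invariant sets, choose $K_0\in\mathscr{F}(\Gamma)$ and $\delta_0>0$ so that every nonempty $F$ with $|FK_0|<(1+\delta_0)|F|$ satisfies $|F|\ge M$ and $\frac{1}{|F|}\log p_F(f,\mathscr{U},\pi_F(X))\le P+\eta$. Now invoke Lemma~\ref{L-OW} with $K:=K_0$, $\delta:=\delta_0$, and parameters $\delta_1,\eta$; it yields shapes $F_1,\dots,F_N$ which are automatically $(K_0,\delta_0)$-invariant, together with $K'$ and $\delta'>0$. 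For each $j$, apply Lemma~\ref{L-neighborhood} to $\pi_{F_j}(X)\subseteq \mathsf{B}^{F_j}$ with tolerance $\eta$, producing an open $W_j\subseteq \mathsf{B}^{F_j}$ with $\pi_{F_j}(X)\subseteq W_j$ and $p_{F_j}(f,\mathscr{U},Z)\le e^\eta p_{F_j}(f,\mathscr{U},\pi_{F_j}(X))$ for every $Z\subseteq W_j$. Define
\[W:=\bigcap_{j=1}^{N}\pi_{F_j}^{-1}(W_j),\]
which is open in $\mathsf{B}^\Gamma$ and contains $X$.

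Let $Y\subseteq W$ be closed and $\Gamma$-invariant. Then $\pi_{F_j}(Y)\subseteq W_j$, and the translation identity $p_{cF_j}(f,\mathscr{U},\pi_{cF_j}(Y))=p_{F_j}(f,\mathscr{U},\pi_{F_j}(c^{-1}Y))=p_{F_j}(f,\mathscr{U},\pi_{F_j}(Y))$ together with Lemma~\ref{L-neighborhood} gives $p_{cF_j}(f,\mathscr{U},\pi_{cF_j}(Y))\le e^\eta e^{|F_j|(P+\eta)}$ for every $c\in\Gamma$ and every $j$. Take any right-invariant $F$ with $|FK'|<(1+\delta')|F|$ and let $C_1,\dots,C_N$ and cores $\widehat{cF_j}$ be the quasitiling data from Lemma~\ref{L-OW}. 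Writing $F^*:=\bigcup_{j,c\in C_j}cF_j\subseteq F$, so that $|F^*|\ge(1-\delta_1)|F|$, form near-optimal covers $\mathscr{V}_{j,c}$ of $\mathsf{B}^{cF_j}$ refining $\mathscr{U}^{cF_j}$, take their common refinement on $\mathsf{B}^{F^*}$, and argue exactly as in the proof of Lemma~\ref{L-local}: since $f\ge 0$ and each $s\in F^*$ lies in at least one tile, $\sum_{s\in F^*}f(x_s)\le\sum_{j,c}\sum_{s\in cF_j}f(x_s)$, and the estimate factors to
\[p_{F^*}(f,\mathscr{U},\pi_{F^*}(Y))\le \prod_{j,\,c\in C_j} 2\,e^\eta\,e^{|F_j|(P+\eta)}.\]
Padding this to a cover of $\mathsf{B}^F$ by adjoining $\mathscr{U}^{F\setminus F^*}$ contributes an extra multiplicative factor bounded by $e^{\delta_1|F|(\log|\mathscr{U}|+\|f\|_\infty)}$, since $|F\setminus F^*|\le\delta_1|F|$.

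Taking logarithms, dividing by $|F|$, and using $p_F(f,\pi^{-1}(\mathscr{U}),Y)=p_F(f,\mathscr{U},\pi_F(Y))$ yields
\[\tfrac{1}{|F|}\log p_F(f,\pi^{-1}(\mathscr{U}),Y)\le \tfrac{\log 2+\eta}{|F|}\sum_{j,c}1+(P+\eta)\tfrac{\sum_{j,c}|F_j|}{|F|}+\delta_1(\log|\mathscr{U}|+\|f\|_\infty).\]
The step I expect to be most delicate is estimating the right-hand side, in particular controlling the ``tile mass'' $\sum_{j,c}|F_j|$. Disjointness of the cores and $|\widehat{cF_j}|\ge(1-\eta)|cF_j|$ give $\sum_{j,c}|F_j|\le |F|/(1-\eta)$, while $|F^*|\ge(1-\delta_1)|F|$ gives $\sum_{j,c}|F_j|\ge(1-\delta_1)|F|$. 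Depending on the sign of $P+\eta$ one of these is the relevant bound, but either way $(P+\eta)\sum_{j,c}|F_j|/|F|\le(P+\eta)+|P+\eta|\max(\eta/(1-\eta),\delta_1)$. Similarly $\sum_{j,c}1\le |F|/((1-\eta)M)$ makes the first summand negligible once $M$ is large. Choosing $\eta,\delta_1$ small and $M$ large (depending only on $|P|$, $\|f\|_\infty$, $|\mathscr{U}|$) makes the total error smaller than $\varepsilon$, and passing to the limit over $F$ yields $P(Y,f,\pi^{-1}(\mathscr{U}))\le P+\varepsilon$.
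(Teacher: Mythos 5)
Your proposal is correct and follows essentially the same route as the paper's proof: quasitiling via Lemma~\ref{L-OW}, the neighbourhood control of Lemma~\ref{L-neighborhood} applied to each tile shape $F_j$ to build $W=\bigcap_j\pi_{F_j}^{-1}(W_j)$, submultiplicativity over the tiles, and padding the untiled remainder by $\mathscr{U}^{F\smallsetminus F^*}$. The only (harmless) divergence is bookkeeping: the paper takes tolerance $|F_j|\eta$ in Lemma~\ref{L-neighborhood} so the per-tile errors are automatically absorbed into $\sum_{j,c}|F_j|\le|F|/(1-\eta)$, whereas you use a fixed tolerance $\eta$ (plus a factor $2$) and compensate by forcing $|F_j|\ge M$ to control the number of tiles.
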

	\begin{proof}
Since $ P(X, f+C, \pi^{-1}(\mathscr{U}))= C+P(X, f, \pi^{-1}(\mathscr{U}))$ for every constant $C$, replacing $f$ by $f+C$ for a suitable $C$, we may assume that $f\ge 0$. Arguing as in \eqref{E-local}, we have
	\begin{displaymath}
p_{F_1\cup F_2}(f, \mathscr{U}, \pi_{F_1\cup F_2}(X))\le p_{F_1}(f, \mathscr{U}, \pi_{F_1}(X))\cdot p_{F_1}(f, \mathscr{U}, \pi_{F_2}(X))
	\end{displaymath}
for all $F_1, F_2\in \mathscr{F}(\Gamma)$. Set $\|f\|=\max_{y\in \mathsf{B}}|f(y)|$.

Take an $\eta>0$ such that
	\begin{displaymath}
\eta+\frac{P(X, f, \pi^{-1}(\mathscr{U}))+2\eta}{1-\eta}\le P(X, f, \pi^{-1}(\mathscr{U}))+\varepsilon.
	\end{displaymath}
Take $0<\delta_1<1$ such that $(e^{\|f\|}|\mathscr{U}|)^{\delta_1}\le e^\eta$. Take $0<\delta<1$ and $K\in \mathscr{F}(\Gamma)$ containing $\matheur{e}_\Gamma $ such that
	\begin{displaymath}
\frac{1}{|F|}\log p_F(f, \mathscr{U}, \pi_F(X))=\frac{1}{|F|}\log p_F(f, \pi^{-1}(\mathscr{U}), X)\le P(X, f, \pi^{-1}(\mathscr{U}))+\eta
	\end{displaymath}
for every $F\in \mathscr{F}(\Gamma)$ satisfying $|FK|\le (1+\delta)|F|$. Then we have $K', F_1, F_2, \dots, F_N\in \mathscr{F}(\Gamma)$ and $\delta'>0$ given by Lemma~\ref{L-OW}.

For each $1\le j\le N$, by Lemma~\ref{L-neighborhood} we can find an open subset $W_j$ of $\mathsf{B}^{F_j}$ such that $\pi_{F_j}(X)\subseteq W_j$ and for any $Z\subseteq W_j$ one has
	\begin{displaymath}
p_{F_j}(f, \mathscr{U}, Z)\le e^{|F_j|\eta} p_{F_j}(f, \mathscr{U}, \pi_{F_j}(X)).
	\end{displaymath}
Set $W=\bigcap_{1\le j\le N} (W_j\times \mathsf{B}^{\Gamma\smallsetminus F_j})$. Then $W$ is an open subset of $\mathsf{B}^\Gamma$ and $X\subseteq W$.

Let $Y$ be a closed $\Gamma $-invariant subset of $\mathsf{B}^\Gamma$ such that $Y\subseteq W$.

For each $1\le j\le N$, we have $\pi_{F_j}(Y)\subseteq W_j$, whence
	\begin{displaymath}
p_{F_j}(f, \mathscr{U}, \pi_{F_j}(Y))\le e^{|F_j|\eta} p_{F_j}(f, \mathscr{U}, \pi_{F_j}(X)).
	\end{displaymath}

Let $F\in \mathscr{F}(\Gamma)$ such that $|FK'|<(1+\delta')|F|$. Then we have $C_1, \dots, C_N$ given by Lemma~\ref{L-OW}. Set $L=F\smallsetminus \bigcup_{1\le i\le N}\bigcup_{c\in C_i}cF_i$.
Note that
	\begin{displaymath}
|L|\le \delta_1|F|
	\end{displaymath}
and
	\begin{displaymath}
\sum_{1\le j\le N}\sum_{c\in C_j}|F_j|=\sum_{1\le j\le N}\sum_{c\in C_j}|cF_j|\le \sum_{1\le j\le N}\sum_{c\in C_j}\frac{1}{1-\eta}|\widehat{c F_j}|\le \frac{1}{1-\eta}|F|.
	\end{displaymath}
For each $1\le j\le N$ and $c\in C_j$, we have
	\begin{displaymath}
p_{cF_j}(f, \mathscr{U}, \pi_{cF_j}(Y))=p_{F_j}(f, \mathscr{U}, \pi_{F_j}(Y))\le e^{|F_j|\eta} p_{F_j}(f, \mathscr{U}, \pi_{F_j}(X)).
	\end{displaymath}
Now we have
	\begin{align*}
p_F(f, \mathscr{U}, \pi_F(Y))&\le p_L(f, \mathscr{U}, \pi_L(Y))\cdot \prod _{1\le j\le N}\prod _{c\in C_j}p_{cF_j}(f, \mathscr{U}, \pi_{cF_j}(Y))
	\\
&\le (e^{\|f\|}|\mathscr{U}|)^{|L|}\cdot \prod _{1\le j\le N}\prod _{c\in C_j}\bigl(e^{|F_j|\eta} p_{F_j}(f, \mathscr{U}, \pi_{F_j}(X))\bigr)
	\\
&\le (e^{\|f\|}|\mathscr{U}|)^{\delta_1|F|}\cdot e^{\sum_{1\le j\le N}\sum_{c\in C_j}(P(X, f, \pi^{-1}(\mathscr{U}))+2\eta)|F_j|}
	\\
&\le e^{\eta|F|}\cdot e^{\frac{P(X, f, \pi^{-1}(\mathscr{U}))+2\eta}{1-\eta}|F|}.
	\end{align*}
Thus
	\begin{align*}
P(Y, f, \pi^{-1}(\mathscr{U}))&=\lim_F \frac{1}{|F|}\log p_F(f, \mathscr{U}, \pi_F(Y))
	\\
&\le \eta+\frac{P(X, f, \pi^{-1}(\mathscr{U}))+2\eta}{1-\eta}\le P(X, f, \pi^{-1}(\mathscr{U}))+\varepsilon.\tag*{\qedsymbol}
	\end{align*}
\renewcommand{\qedsymbol}{}
\vspace{-\baselineskip}
	\end{proof}

	\begin{lemma}
	\label{L-local extension}
Let $f\in C(\mathsf{B})$ and let $\mathscr{U}$ be a finite open cover of $\mathsf{B}$. Let $F\in \mathscr{F}(\Gamma)$ and $\eta>0$. Then there is an $F'\in \mathscr{F}(\Gamma)$ such that $F\subseteq F'$ and
	\begin{displaymath}
p_F(f, \mathscr{U}, \pi _F(X_{\mathcal{P}, F'}))\le e^{\eta} p_F(f, \mathscr{U}, \pi _F(X_\mathcal{P})).
	\end{displaymath}
	\end{lemma}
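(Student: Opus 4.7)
The plan is to combine Lemma~\ref{L-neighborhood}, which provides upper semicontinuity of $p_F$ with respect to the set argument, with a compactness argument showing that the projections $\pi_F(X_{\mathcal{P}, F'})$ shrink down to $\pi_F(X_\mathcal{P})$ as $F'$ grows. First, I would apply Lemma~\ref{L-neighborhood} to $X = \pi_F(X_\mathcal{P}) \subseteq \mathsf{B}^F$ to obtain an open neighbourhood $W$ of $\pi_F(X_\mathcal{P})$ in $\mathsf{B}^F$ such that $p_F(f, \mathscr{U}, Z) \le e^\eta p_F(f, \mathscr{U}, \pi_F(X_\mathcal{P}))$ for every $Z \subseteq W$. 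It would then suffice to exhibit a finite $F' \supseteq F$ with $\pi_F(X_{\mathcal{P}, F'}) \subseteq W$.

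The heart of the argument is the identity
\[
\pi_F(X_\mathcal{P}) = \bigcap_{F' \in \mathscr{F}(\Gamma),\, F \subseteq F'} \pi_F(X_{\mathcal{P}, F'}).
\]
The inclusion $\subseteq$ is immediate from the definitions. For $\supseteq$, given $x$ in the intersection, I would choose for each $F' \supseteq F$ a witness $y^{F'} \in X_{\mathcal{P}, F'}$ with $y^{F'}|_F = x$, extend it arbitrarily to a point $\tilde y^{F'} \in \mathsf{B}^\Gamma$, and pass to a subnet converging in the product topology to $\tilde y \in \mathsf{B}^\Gamma$. Clearly $\tilde y|_F = x$. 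For any $(K, \mathscr{O}) \in \mathcal{P}$ and any $s \in \Gamma$, once $F'$ is large enough to contain $sK$, the locally admissible pattern $y^{F'}$ avoids the open set $\mathscr{O}$ under the appropriate shift; since the complement of $\mathscr{O}$ is closed, this property passes to the limit, giving $\tilde y \in X_\mathcal{P}$.

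Next, I would observe that each $X_{\mathcal{P}, F'}$ is closed in $\mathsf{B}^{F'}$ (as the intersection of preimages of the closed sets $\mathsf{B}^K \smallsetminus \mathscr{O}$ under continuous restriction maps), hence compact, and so is its projection $\pi_F(X_{\mathcal{P}, F'})$. Moreover, this family is decreasing in $F'$, since restricting a locally admissible pattern on $F''$ to a subset $F' \subseteq F''$ yields a locally admissible pattern on $F'$. Therefore the compact sets $\pi_F(X_{\mathcal{P}, F'}) \smallsetminus W$ form a decreasing net with empty intersection, and by the finite intersection property (using directedness of the index set under union) some single $F' \supseteq F$ must already satisfy $\pi_F(X_{\mathcal{P}, F'}) \subseteq W$. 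Applying the defining property of $W$ to $Z = \pi_F(X_{\mathcal{P}, F'})$ then yields the desired inequality. The main obstacle is the compactness argument for the intersection identity --- specifically, verifying that the product-topology limit $\tilde y$ actually lies in $X_\mathcal{P}$ rather than in some weaker approximation --- but this is a standard diagonal argument using openness of the forbidden sets $\mathscr{O}$.
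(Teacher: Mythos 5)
Your proposal is correct and follows essentially the same route as the paper: apply Lemma~\ref{L-neighborhood} to $\pi_F(X_\mathcal{P})$ to get the neighbourhood $W$, and then use compactness of $\mathsf{B}^\Gamma$ together with the openness of the forbidden sets $\mathscr{O}$ to show that $\pi_F(X_{\mathcal{P},F'})\subseteq W$ for some finite $F'\supseteq F$. The paper phrases this last step as a proof by contradiction along an increasing sequence $F_n\nearrow\Gamma$ with a convergent subsequence, while you phrase it as the intersection identity $\pi_F(X_\mathcal{P})=\bigcap_{F'}\pi_F(X_{\mathcal{P},F'})$ plus the finite intersection property over the directed family; these are the same argument.
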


	\begin{proof}
By Lemma~\ref{L-neighborhood} we can find an open subset $W$ of $\mathsf{B}^F$ such that $\pi_F(X_\mathcal{P})\subseteq W$ and for any $Z\subseteq W$ one has
	\begin{displaymath}
p_F(f, \mathscr{U}, Z)\le e^{\eta} p_F(f, \mathscr{U}, \pi _F(X_\mathcal{P})).
	\end{displaymath}

Take a sequence $(F_n)_{n\in \mathbb{N}}$ in $\mathscr{F}(\Gamma)$ such that $F\subseteq F_n\subseteq F_{n+1}$ for every $n\in \mathbb{N}$ and $\Gamma=\bigcup_{n\in \mathbb{N}}F_n$. Then
	\begin{displaymath}
\pi _F(X_\mathcal{P})\subseteq \pi _F(X_{\mathcal{P}, F_{n+1}})\subseteq \pi _F(X_{\mathcal{P}, F_n})
	\end{displaymath}
for every $n\in \mathbb{N}$.

We claim that there is some $n\in \mathbb{N}$ such that $\pi _F(X_{\mathcal{P}, F_n})\subseteq W$. We argue by contradiction. So we assume that
$\pi _F(X_{\mathcal{P}, F_n})\nsubseteq W$ for every $n\in \mathbb{N}$. Then for each $n$ we can find a $y_n\in X_{\mathcal{P}, F_n}$ such that $y_n|_F\not\in W$. We take a $y_n'\in \mathsf{B}^\Gamma $ extending $y_n$. Since $\mathsf{B}^\Gamma $ is compact, we can find a subsequence $(y_{n_k}')_{k\in \mathbb{N}}$ of $(y_n')_{n\in \mathbb{N}}$ converging to some $y\in \mathsf{B}^\Gamma $.

We claim that $y\in X_\mathcal{P}$. Let $(K, \mathscr{O})\in \mathcal{P}$ and $s\in \Gamma $. For all large enough $k$, we have $sK\subseteq F_{n_k}$, whence $s^{-1}y_{n_k}'|_K=s^{-1}y_{n_k}|_K\not\in \mathscr{O}$. Since $\mathscr{O}$ is open and $s^{-1}y_{n_k}'|_K$ converges to $s^{-1}y|_K$ as $k\to \infty$, we get that $s^{-1}y|_K\not\in \mathscr{O}$. Thus $y\in X_\mathcal{P}$, as desired.

Note that $y_{n_k}'|_F=y_{n_k}|_F\not\in W$ for every $k$. Since $W$ is open and $y_{n_k}'|_F$ converges to $y|_F$ as $k\to \infty$, we get $y|_F\not\in W$. This contradicts the fact that $\pi _F(X_\mathcal{P})\subseteq W$. Thus we do find some $n\in \mathbb{N}$ such that $\pi _F(X_{\mathcal{P}, F_n})\subseteq W$. Then
	\begin{displaymath}
p_{F_n}(f, \mathscr{U}, \pi _F(X_{\mathcal{P}, F_n}))\le e^{\eta} p_F(f, \mathscr{U}, \pi _F(X_\mathcal{P})). \tag*{\qedsymbol}
	\end{displaymath}
	\renewcommand{\qedsymbol}{}
	\vspace{-\baselineskip}
	\end{proof}

	\begin{theorem}
	\label{T-local}
Let $f\in C(\mathsf{B})$. We have
	\begin{displaymath}
P(X_\mathcal{P}, f, \pi ^{-1}(\mathscr{U}))=\lim_F \frac{1}{|F|}\log p_F(f, \mathscr{U}, X_{\mathcal{P}, F})=\inf_{\mathcal{P}'}\lim_F \frac{1}{|F|}\log p_F(f, \mathscr{U}, X_{\mathcal{P}', F})
	\end{displaymath}
for every finite open cover $\mathscr{U}$ of $\mathsf{B}$, where $\mathcal{P}'$ ranges over finite subsets of $\mathcal{P}$. In particular,
	\begin{displaymath}
P(X_\mathcal{P}, f)=\sup_{\mathscr{U}}\lim_F \frac{1}{|F|}\log p_F(f, \mathscr{U}, X_{\mathcal{P}, F})=\sup_{\mathscr{U}}\inf_{\mathcal{P}'}\lim_F \frac{1}{|F|}\log p_F(f, \mathscr{U}, X_{\mathcal{P}', F})
	\end{displaymath}
for $\mathscr{U}$ ranging over finite open covers of $\mathsf{B}$ and $\mathcal{P}'$ ranging over finite subsets of $\mathcal{P}$.
	\end{theorem}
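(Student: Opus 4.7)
The plan is to prove the two stated equalities separately; the $P(X_\mathcal{P},f)$ formulas will then follow by taking $\sup_\mathscr{U}$. Write $P:=P(X_\mathcal{P},f,\pi^{-1}(\mathscr{U}))$. The $\le$ direction of the first equality is immediate from $\pi_F(X_\mathcal{P})\subseteq X_{\mathcal{P},F}$ combined with the identity $p_F(f,\pi^{-1}(\mathscr{U}),X_\mathcal{P})=p_F(f,\mathscr{U},\pi_F(X_\mathcal{P}))$ established before Lemma~\ref{L-local}. For finite $\mathcal{P}'\subseteq\mathcal{P}$, $X_{\mathcal{P},F}\subseteq X_{\mathcal{P}',F}$ likewise gives the $\le$ half of the second equality. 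The substantive content is the reverse inequality of the first equality, which will reduce everything else.

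For that, I would mirror the quasitiling argument in Lemma~\ref{L-upper continuity}, using Lemma~\ref{L-local extension} as the bridge between $X_{\mathcal{P},F_j'}$ and $X_\mathcal{P}$. WLOG $f\ge 0$. Given $\varepsilon>0$, pick $\eta,\delta_1\in(0,1)$ so that $\eta+\frac{P+2\eta}{1-\eta}\le P+\varepsilon$ and $(e^{\|f\|}|\mathscr{U}|)^{\delta_1}\le e^\eta$; select $K\in\mathscr{F}(\Gamma)$ and $\delta>0$ so that $(1+\delta)$-$K$-invariance of $E$ forces $\frac{1}{|E|}\log p_E(f,\mathscr{U},\pi_E(X_\mathcal{P}))\le P+\eta$; invoke Lemma~\ref{L-OW} to extract $K',F_1,\dots,F_N,\delta'$; and, for each $F_j$, apply Lemma~\ref{L-local extension} with $\eta|F_j|$ in place of $\eta$ to produce $F_j'\supseteq F_j$ satisfying
\[p_{F_j}(f,\mathscr{U},\pi_{F_j}(X_{\mathcal{P},F_j'}))\le e^{\eta|F_j|}\,p_{F_j}(f,\mathscr{U},\pi_{F_j}(X_\mathcal{P}))\le e^{(P+2\eta)|F_j|}.\]
For $F$ sufficiently right-invariant --- beyond Lemma~\ref{L-OW}'s $(1+\delta')$-$K'$-invariance, also invariant enough that almost every quasitile center $c\in C_j$ fulfills the stronger condition $cF_j'\subseteq F$ (not only $cF_j\subseteq F$) --- the key inclusion
\[\pi_{cF_j}(X_{\mathcal{P},F})\subseteq c\,\pi_{F_j}(X_{\mathcal{P},F_j'})\]
holds, since any $x\in X_{\mathcal{P},F}$ restricted to $cF_j'$ lies in $X_{\mathcal{P},cF_j'}=cX_{\mathcal{P},F_j'}$. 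Iterating the generalization of \eqref{E-local} to arbitrary subsets of $\mathsf{B}^{F_1\cup F_2}$ (valid for overlapping tiles thanks to $f\ge 0$, as in Lemma~\ref{L-upper continuity}) then gives
\[p_F(f,\mathscr{U},X_{\mathcal{P},F}) \le (e^{\|f\|}|\mathscr{U}|)^{|L|}\prod_{j,c}p_{F_j}(f,\mathscr{U},\pi_{F_j}(X_{\mathcal{P},F_j'})) \le e^{\eta|F|}\,e^{(P+2\eta)|F|/(1-\eta)},\]
using $|L|\le\delta_1|F|$ and $\sum_{j,c}|F_j|\le|F|/(1-\eta)$; hence $\lim_F\frac{1}{|F|}\log p_F(f,\mathscr{U},X_{\mathcal{P},F})\le P+\varepsilon$.

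The remaining half of the second equality follows by applying the first equality to both $\mathcal{P}$ and each finite $\mathcal{P}'\subseteq\mathcal{P}$: this converts the claim into $\inf_{\mathcal{P}'}P(X_{\mathcal{P}'},f,\pi^{-1}(\mathscr{U}))=P(X_\mathcal{P},f,\pi^{-1}(\mathscr{U}))$. Since $\{X_{\mathcal{P}'}\}$ is a decreasing net of closed subsets of the compact space $\mathsf{B}^\Gamma$ with intersection $X_\mathcal{P}$, every open neighborhood of $X_\mathcal{P}$ contains some $X_{\mathcal{P}'}$, and Lemma~\ref{L-upper continuity} supplies the required upper semicontinuity. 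Taking $\sup_\mathscr{U}$ of either of the two established expressions for $P(X_\mathcal{P},f,\pi^{-1}(\mathscr{U}))$ then yields the corresponding formulas for $P(X_\mathcal{P},f)$.

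The main technical obstacle is the twist in the quasitiling step: beyond Lemma~\ref{L-OW}'s guarantee $cF_j\subseteq F$, one needs the stronger $cF_j'\subseteq F$ for the bulk of the centers. This can be arranged either by strengthening the right-invariance required of $F$ so as to absorb $\bigcup_j F_j'F_j^{-1}$, or by reclassifying a small set of boundary centers into the remainder $L$ (whose total mass stays $\le\delta_1|F|$). This is the only substantive modification of the Lemma~\ref{L-upper continuity} strategy.
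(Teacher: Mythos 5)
Your proof is correct and follows essentially the same route as the paper's: the trivial inclusion $\pi_F(X_\mathcal{P})\subseteq X_{\mathcal{P},F}$ for one direction, the Ornstein--Weiss quasitiling combined with Lemma~\ref{L-local extension} and the subadditivity \eqref{E-local} for the hard direction, and Lemma~\ref{L-upper continuity} together with compactness of the decreasing net $\{X_{\mathcal{P}'}\}$ for the $\inf_{\mathcal{P}'}$ statement. The one point where you deviate --- securing $cF_j'\subseteq F$ by discarding a vanishing proportion of boundary centers into $L$, where the paper instead tiles the enlarged set $FE$ with $E=\bigcup_j E_j$ so that $cF_jE_j\subseteq FE$ automatically --- is a sound bookkeeping variant (it only perturbs $|L|$ by an arbitrarily small fraction of $|F|$), provided you also dispose of the degenerate case $X_\mathcal{P}=\varnothing$, i.e.\ $P=-\infty$, separately as the paper does.
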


	\begin{proof}
Replacing $f$ by $f+C$ for a suitable constant $C$, we may assume that $f\ge 0$. Set $\|f\|=\max_{y\in \mathsf{B}}|f(y)|$.

Let $\mathscr{U}$ be a finite open cover of $\mathsf{B}$. We first show that
	\begin{equation}
	\label{E-local10}
P(X_\mathcal{P}, f, \pi^{-1}(\mathscr{U}))=\lim_F \frac{1}{|F|}\log p_F(f, \mathscr{U}, X_{\mathcal{P}, F}).
	\end{equation}

For each $F\in \mathscr{F}(\Gamma)$, we have $X_{\mathcal{P}, F}\supseteq \pi_F(X_\mathcal{P})$, so that
	\begin{displaymath}
p_F(f, \mathscr{U}, X_{\mathcal{P}, F})\ge p_F(f, \mathscr{U}, \pi_F(X_\mathcal{P})).
	\end{displaymath}
Thus
	\begin{displaymath}
P(X_\mathcal{P}, f, \pi^{-1}(\mathscr{U}))=\lim_F \frac{1}{|F|}\log p_F(f, \mathscr{U}, \pi_F(X_\mathcal{P}))\le \lim_F \frac{1}{|F|}\log p_F(f, \mathscr{U}, X_{\mathcal{P}, F}).
	\end{displaymath}

Set $D=\lim_F \frac{1}{|F|}\log p_F(f, \mathscr{U}, X_{\mathcal{P}, F})$. We are left to show that
	\begin{displaymath}
D\le \lim_F \frac{1}{|F|}\log p_F(f, \mathscr{U}, \pi_F(X_\mathcal{P})).
	\end{displaymath}
We may assume that $D\neq -\infty$. Then from Lemma~\ref{L-local extension} we know that $X_\mathcal{P}\neq \varnothing $.

It suffices to show that for any $0<\eta<1$, $K\in \mathscr{F}(\Gamma)$ and $\delta>0$ there is some $F\in \mathscr{F}(\Gamma)$ with $|FK|<(1+\delta)|F|$ such that
	\begin{displaymath}
(1-\eta)(D-2\eta)-\eta \le \frac{1}{|F|}\log p_F(f, \mathscr{U}, \pi_F(X_\mathcal{P})).
	\end{displaymath}

Let $0<\eta<1$, $K\in \mathscr{F}(\Gamma)$ and $\delta>0$. We may assume that $D-2\eta>0$. Take $0<\delta_1<1$ such that $(e^{\|f\|}|\mathscr{U}|)^{2\delta_1}\le e^\eta$. Take $0<\delta_2<1$ and $K_2\in \mathscr{F}(\Gamma)$ containing $\matheur{e}_\Gamma $ such that
	\begin{displaymath}
\frac{1}{|F|}\log p_F(f, \mathscr{U}, X_{\mathcal{P}, F})\ge D-\eta
	\end{displaymath}
for every $F\in \mathscr{F}(\Gamma)$ satisfying $|FK_2|\le (1+\delta_2)|F|$. Then we have $K', F_1, F_2, \dots, F_N\in \mathscr{F}(\Gamma)$ and $\delta'>0$ given by Lemma~\ref{L-OW}. Set
	\begin{displaymath}
D'=\max_{1\le j\le N}\frac{1}{|F_j|}\log p_{F_j}(f, \mathscr{U}, \pi_{F_j}(X_\mathcal{P})).
	\end{displaymath}
Then it suffices to show that
	\begin{displaymath}
(1-\eta)(D-2\eta)-\eta\le D'.
	\end{displaymath}

For each $1\le j\le N$, by Lemma~\ref{L-local extension} we can find an $E_j\in \mathscr{F}(\Gamma)$ containing $\matheur{e}_\Gamma $ such that
	\begin{displaymath}
p_{F_j}(f, \mathscr{U}, \pi _{F_j}(X_{\mathcal{P}, F_jE_j}))\le e^{|F_j|\eta}p_{F_j}(f, \mathscr{U}, \pi _{F_j}(X_\mathcal{P})).
	\end{displaymath}
Set $E=\bigcup_{1\le j\le N}E_j$.

Take an $F\in \mathscr{F}(\Gamma)$ such that $|FK'|<(1+\delta')|F|$ and $|FEK_2|\le (1+\min(\delta_1, \delta_2))|F|$. Then we have $C_1, \dots, C_N$ given by Lemma~\ref{L-OW}. Set $L=FE\smallsetminus \bigcup_{1\le j\le N}\bigcup_{c\in C_j}cF_j$.
Note that
	\begin{displaymath}
|L|=|FE\smallsetminus F|+\Biggl|F\smallsetminus \bigcup_{1\le j\le N}\bigcup_{c\in C_j}cF_j\Biggr|\le |FEK_2\smallsetminus F|+\delta_1|F|\le 2\delta_1|F|,
	\end{displaymath}
and
	\begin{align*}
\sum_{1\le j\le N}\sum_{c\in C_j}|F_j|=\sum_{1\le j\le N}\sum_{c\in C_j}|cF_j|\le \sum_{1\le j\le N}\sum_{c\in C_j}\frac{1}{1-\eta}|\widehat{c F_j}|\le \frac{1}{1-\eta}|F|.
	\end{align*}
For each $1\le j\le N$ and $c\in C_j$, we have
	\begin{align*}
p_{cF_j}(f, \mathscr{U}, \pi _{c F_j}(X_{\mathcal{P}, FE}))&\le p_{cF_j}(f, \mathscr{U}, \pi _{c F_j}(X_{\mathcal{P}, cF_jE_j}))
	\\
&=p_{F_j}(f, \mathscr{U}, \pi _{F_j}(X_{\mathcal{P}, F_jE_j}))=e^{|F_j|\eta}p_{F_j}(f, \mathscr{U}, \pi _{F_j}(X_\mathcal{P}))\le e^{(D'+\eta)|F_j|}.
	\end{align*}
Now we have
	\begin{align*}
p_{FE}(f, \mathscr{U}, X_{\mathcal{P}, FE})&\overset{\eqref{E-local}}\le p_L(f, \mathscr{U}, \pi _L(X_{\mathcal{P}, FE}))\cdot \prod _{1\le j\le N}\prod _{c\in C_j}p_{cF_j}(f, \mathscr{U}, \pi _{c F_j}(X_{\mathcal{P}, FE}))
	\\
&\le (e^{\|f\|}|\mathscr{U}|)^{|L|}\cdot \prod _{1\le j\le N}\prod _{c\in C_j}e^{(D'+\eta)|F_j|}
	\\
&\le (e^{\|f\|}|\mathscr{U}|)^{2\delta_1|F|}\cdot e^{\sum_{1\le j\le N}\sum_{c\in C_j}(D'+\eta)|F_j|}\le e^{\eta|F|}\cdot e^{\frac{D'+\eta}{1-\eta}|F|}.
	\end{align*}
Since $|FEK_2|\le (1+\delta_2)|F|\le (1+\delta_2)|FE|$, we have
	\begin{displaymath}
D-\eta\le \frac{1}{|FE|}\log p_{FE}(f, \mathscr{U}, X_{\mathcal{P}, FE}).
	\end{displaymath}
Therefore
	\begin{align*}
e^{(D-\eta)|F|}\le e^{(D-\eta)|FE|}\le p_{FE}(f, \mathscr{U}, X_{\mathcal{P}, FE})\le e^{\eta|F|}\cdot e^{\frac{D'+\eta}{1-\eta}|F|}.
	\end{align*}
It follows that
	\begin{displaymath}
D-\eta\le \eta+\frac{D'+\eta}{1-\eta},
	\end{displaymath}
whence
	\begin{displaymath}
(1-\eta)(D-2\eta)-\eta\le D'
	\end{displaymath}
as desired. This proves \eqref{E-local10}.

Note that
	\begin{displaymath}
X_\mathcal{P}=\bigcap_{\mathcal{P}'}X_{\mathcal{P}'}
	\end{displaymath}
for $\mathcal{P}'$ ranging over finite subsets of $\mathcal{P}$.
Thus by Lemma~\ref{L-upper continuity} we have
	\begin{displaymath}
P(X_\mathcal{P}, f, \pi ^{-1}(\mathscr{U}))= \inf_{\mathcal{P}'}P(X_{\mathcal{P}'}, f, \pi^{-1}(\mathscr{U}))=\inf_{\mathcal{P}'}\lim_F \frac{1}{|F|}\log p_F(f, \mathscr{U}, X_{\mathcal{P}', F})
	\end{displaymath}
for $\mathcal{P}'$ ranging over finite subsets of $\mathcal{P}$.
	\end{proof}

	\begin{remark}
	\label{R-exchange order}
From Theorem~\ref{T-local} we have
	\begin{displaymath}
\htop(X_\mathcal{P})\le \inf_{\mathcal{P}'}\sup_{\mathscr{U}}\lim_F \frac{1}{|F|}\log N(\mathscr{U}^F, X_{\mathcal{P}', F})=\inf_{\mathcal{P}'}\htop(X_{\mathcal{P}'})
	\end{displaymath}
for $\mathscr{U}$ ranging over finite open covers of $\mathsf{B}$ and $\mathcal{P}'$ ranging over finite subsets of $\mathcal{P}$. In general one might have
	\begin{displaymath}
\htop(X_\mathcal{P})<\inf_{\mathcal{P}'}\htop(X_{\mathcal{P}'})
	\end{displaymath}
for $\mathcal{P}'$ ranging over finite subsets of $\mathcal{P}$. For example, let $\mathsf{B}=[0, 1]$ and let $\mathcal{P}$ be the set of $(\{\matheur{e}_\Gamma\}, (r, 1])$ for rational $0<r<1$. Then $X_\mathcal{P}$ is the singleton of the function $\Gamma\rightarrow \{0\}$ and $\htop(X_\mathcal{P})=0$. For each finite subset $\mathcal{P}'$ of $\mathcal{P}$, $X_{\mathcal{P}'}=[0, t]^\Gamma $ for some $t\in (0, 1]$, whence $\htop(X_{\mathcal{P}'})=\infty$.
	\end{remark}

Now let $\vartheta $ be a compatible metric on $\mathsf{B}$. For each $F\in \mathscr{F}(\Gamma)$, we define a compatible metric $\vartheta_F$ on
$\mathsf{B}^F$ by
	\begin{displaymath}
\vartheta _F(x, y)=\max_{s\in F}\vartheta (x_s, y_s).
	\end{displaymath}
For $Y\subseteq \mathsf{B}^F$ and $\varepsilon>0$, denote by $N_\varepsilon(Y, \vartheta_F)$ the maximal cardinality of $(\vartheta_F, \varepsilon)$-separated subsets of $Y$, where $Z\subseteq Y$ is $(\vartheta_F, \varepsilon)$-separated if $\vartheta_F(z, z')>\varepsilon$ for all distinct $z, z'\in Z$. Similar to the proof that topological entropy defined using open covers is the same as using separated sets, one can check easily that
	\begin{displaymath}
\sup_\mathscr{U} \lim_F \frac{1}{|F|}\log N(\mathscr{U}^F, X_{\mathcal{P}, F})=\sup_{\varepsilon>0}\limsup_F \frac{1}{|F|}\log N_\varepsilon(X_{\mathcal{P}, F}, \vartheta_F)
	\end{displaymath}
for $\mathscr{U}$ ranging over finite open covers of $\mathsf{B}$, and
	\begin{displaymath}
\sup_\mathscr{U} \inf_{\mathcal{P}'}\lim_F \frac{1}{|F|}\log N(\mathscr{U}^F, X_{\mathcal{P}', F})=\sup_{\varepsilon>0}\inf_{\mathcal{P}'}\limsup_F \frac{1}{|F|}\log N_\varepsilon(X_{\mathcal{P}', F}, \vartheta_F)
	\end{displaymath}
for $\mathscr{U}$ ranging over finite open covers of $\mathsf{B}$ and $\mathcal{P}'$ ranging over finite subsets of $\mathcal{P}$. Thus for $f=0$ the second assertion of Theorem~\ref{T-local} can also be stated as the following

	\begin{corollary}
	\label{C-local}
We have
	\begin{displaymath}
\htop(X_\mathcal{P})=\sup_{\varepsilon>0}\limsup_F \frac{1}{|F|}\log N_\varepsilon(X_{\mathcal{P}, F}, \vartheta _F)= \sup_{\varepsilon>0}\inf_{\mathcal{P}'}\limsup_F \frac{1}{|F|}\log N_\varepsilon (X_{\mathcal{P}', F}, \vartheta_F)
	\end{displaymath}
for $\mathcal{P}'$ ranging over finite subsets of $\mathcal{P}$.
	\end{corollary}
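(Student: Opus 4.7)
The plan is to derive Corollary~\ref{C-local} as a direct specialization of Theorem~\ref{T-local} to $f=0$, combined with the standard translation between open-cover and separated-set formulations of local entropy. The translation step is exactly the content of the two identities asserted in the paragraph immediately preceding the corollary, and I would include a short verification for completeness.

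First, since $p_F(0,\mathscr{U},Z)=N(\mathscr{U}^F,Z)$ (recorded right after the definition of $p_F$), the second display of Theorem~\ref{T-local} at $f=0$ specializes to
\[
\htop(X_\mathcal{P}) = \sup_{\mathscr{U}}\lim_F \frac{1}{|F|}\log N(\mathscr{U}^F, X_{\mathcal{P}, F}) = \sup_{\mathscr{U}}\inf_{\mathcal{P}'}\lim_F \frac{1}{|F|}\log N(\mathscr{U}^F, X_{\mathcal{P}', F}),
\]
where $\mathscr{U}$ ranges over finite open covers of $\mathsf{B}$ and $\mathcal{P}'$ over finite subsets of $\mathcal{P}$. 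This already gives the corollary modulo translating from covers to separated sets.

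Next, for any $Z\subseteq \mathsf{B}^F$ I would run the standard two-sided comparison. (i) If $\mathscr{U}$ has $\vartheta$-mesh at most $\varepsilon$, then every element of $\mathscr{U}^F$ has $\vartheta_F$-diameter at most $\varepsilon$, so no two points of a $(\vartheta_F,\varepsilon)$-separated subset of $Z$ lie in the same member of a subcover, giving $N_\varepsilon(Z,\vartheta_F)\le N(\mathscr{U}^F,Z)$. (ii) Conversely, if $\mathscr{U}$ is the (finite) cover of $\mathsf{B}$ by open $\vartheta$-balls of radius $\varepsilon/2$, a maximal $(\vartheta_F,\varepsilon)$-separated subset of $Z$ has the property that the $\vartheta_F$-balls of radius $\varepsilon$ around its points cover $Z$, and each such ball is contained in an element of $\mathscr{U}^F$; hence $N(\mathscr{U}^F,Z)\le N_\varepsilon(Z,\vartheta_F)$. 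Because both inequalities hold uniformly in $F$ and in the choice of $\mathcal{P}'$, letting the mesh of $\mathscr{U}$ (respectively $\varepsilon$) tend to $0$ shows
\[
\sup_{\mathscr{U}}\lim_F \frac{1}{|F|}\log N(\mathscr{U}^F, X_{\mathcal{P},F}) = \sup_{\varepsilon>0}\limsup_F \frac{1}{|F|}\log N_\varepsilon(X_{\mathcal{P},F},\vartheta_F),
\]
and the same identity with $\inf_{\mathcal{P}'}$ inserted on both sides.

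Substituting these two identities into the display of the first step yields the corollary. The only minor subtlety is the switch from $\lim_F$ (which exists by Lemma~\ref{L-local}) on the open-cover side to $\limsup_F$ on the separated-set side, but since the inequalities in (i) and (ii) hold for every $F\in\mathscr{F}(\Gamma)$ they survive taking either flavour of limit. I do not anticipate a genuine obstacle: the result is essentially bookkeeping, repackaging Theorem~\ref{T-local} via a Lebesgue-number/mesh argument.
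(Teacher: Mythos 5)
Your overall route is exactly the paper's: the corollary is obtained by setting $f=0$ in the second assertion of Theorem~\ref{T-local} (using $p_F(0,\mathscr{U},Z)=N(\mathscr{U}^F,Z)$) and then invoking the standard translation between open-cover counts and separated-set counts, which the paper states without proof in the paragraph preceding the corollary. Your step (i) and your handling of the $\lim$ versus $\limsup$ issue and of the uniformity in $\mathcal{P}'$ are fine.

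However, step (ii) as written is incorrect. If $\mathscr{U}$ consists of $\vartheta$-balls of radius $\varepsilon/2$, then a $\vartheta_F$-ball of radius $\varepsilon$ around a point of a maximal separated set is a product of $\vartheta$-balls of radius $\varepsilon$, and such a ball is in general \emph{not} contained in any ball of radius $\varepsilon/2$, so the claimed containment in an element of $\mathscr{U}^F$ fails. Moreover, for the inequality LHS $\le$ RHS you need, for \emph{every} finite open cover $\mathscr{U}$, some $\varepsilon>0$ with $N(\mathscr{U}^F,Z)\le N_\varepsilon(Z,\vartheta_F)$; your argument only addresses covers of a special form and has the quantifiers reversed. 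The standard fix: given $\mathscr{U}$, let $3\varepsilon$ be a Lebesgue number of $\mathscr{U}$; a maximal $(\vartheta_F,\varepsilon)$-separated subset $E$ of $Z$ is $(\vartheta_F,\varepsilon)$-spanning, so $Z\subseteq\bigcup_{e\in E}\bar B_{\vartheta_F}(e,\varepsilon)$, and each factor $\bar B_{\vartheta}(e_t,\varepsilon)$ has diameter at most $2\varepsilon<3\varepsilon$ and hence lies in a member of $\mathscr{U}$; thus each $\bar B_{\vartheta_F}(e,\varepsilon)$ lies in a member of $\mathscr{U}^F$ and $N(\mathscr{U}^F,Z)\le|E|=N_\varepsilon(Z,\vartheta_F)$. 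With this repair the argument is complete and coincides with the paper's.
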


\subsection{The local entropy formula for algebraic actions}
	\label{SS-local algebraic}

We denote by $\mathbb{Z}\Gamma $ the \emph{integer group ring} of $\Gamma $, consisting of all finitely supported functions $f\colon \Gamma \to \mathbb{Z}$, written as $f=\sum_{s\in \Gamma }f_ss\in \mathbb{Z}\Gamma $ with $f_s\in \mathbb{Z}$ for all $s$, and with addition, multiplication and involution defined by
	\begin{displaymath}
f+g=\sum_{s\in \Gamma }(f_s+g_s)s,\quad fg = \sum_{s,t\in \Gamma } f_sg_tst, \quad \textup{and}\quad f^* = \sum_{s\in \Gamma }f_ss^{-1},
	\end{displaymath}
for all $f,g$ in $\mathbb{Z}\Gamma $. The set $\supp(f)=\{s\in \Gamma \colon f_s\ne0\}$ is called the \emph{support} of $f \in \mathbb{Z}\Gamma $.

We set $X=\mathbb{T}^\Gamma $ and extend the left and right shift actions $\lambda $ and $\rho $ of $\Gamma $ on $\mathbb{T}^\Gamma $ in \eqref{eq:shift} to $\mathbb{Z}\Gamma $-actions by setting
	\begin{equation}
	\label{eq:rho}
\lambda ^fx= \sum_{t\in \Gamma } f_t \lambda ^tx ,\quad
\rho ^fx= \sum_{t\in \Gamma } f_t \rho ^tx,
	\end{equation}
for every $f=\sum_{s\in \Gamma }f_ss\in \mathbb{Z}\Gamma $ and $x\in \mathbb{T}^\Gamma $.

\smallskip The additive group $\mathbb{Z}\Gamma $ can be identified with the Pontryagin dual of $\mathbb{T}^\Gamma $ through the pairing
	\begin{equation}
	\label{eq:pairing}
\langle f,x \rangle = \sum_{s\in \Gamma }f_s x_s = (\rho ^fx)_{\matheur{e}_\Gamma }
	\end{equation}
for every $f\in \mathbb{Z}\Gamma $ and $x=(x_s)\in\mathbb{T}^\Gamma $. In this pairing, the shifts $\lambda ^t$ (resp. $\rho ^t$) on $\mathbb{T}^\Gamma $ become dual to left (resp. right) multiplication by $t^{-1}$
%(resp. right multiplication by $t^{-1}$)
on $\mathbb{Z}\Gamma $, and $\lambda ^f$ (resp. $\rho ^f$) is dual to left (resp. right) multiplication by $f^*$ on $\mathbb{Z}\Gamma $.

\smallskip For every subset $J\subseteq \mathbb{Z}\Gamma $ we set
	\begin{equation}
	\label{eq:XJ}
X_J = \{x=(x_s)\in \mathbb{T}^\Gamma \colon \rho ^hx=0\enspace \textup{for every}\enspace h\in J\}.
	\end{equation}
Since the shift actions $\lambda $ and $\rho $ commute, the subgroup $X_J\subseteq \mathbb{T}^\Gamma $ is $\lambda $-invariant. We denote by
	\begin{displaymath}
\lambda _J=\lambda |_{X_J}
	\end{displaymath}
the restriction of $\lambda $ to $X_J$ and call $(X_J,\lambda _J)$ the \emph{algebraic} $\Gamma $-action defined by $J\subseteq \mathbb{Z}\Gamma $. Note that $(X_J,\lambda _J) = (X_{(J)},\lambda _{(J)})$, where $(J)=\mathbb{Z}\Gamma J$ is the left ideal in $\mathbb{Z}\Gamma $ generated by $J$. If $J=(f)$ is the principal ideal generated by an element $f\in \mathbb{Z}\Gamma $, $(X_f,\lambda _f)\coloneqq (X_{(f)},\lambda _{(f)})$ is called the \emph{principal} algebraic $\Gamma $-action defined by $f$.

We fix $J\subseteq \mathbb{Z}\Gamma $ for the moment and set, for every $F\in \mathscr{F}(\Gamma )$,
	\begin{equation}
	\label{eq:alg local}
X_{J,F}=\{x\in \mathbb{T}^\Gamma \colon \rho ^f x=0\enspace \textup{on}\enspace F\enspace \textup{for every}\enspace f\in J\}.
	\end{equation}

The following local entropy formula for the algebraic $\Gamma $-action $(X_J,\lambda _J)$ is a straightforward consequence of Corollary~\ref{C-local}.

	\begin{corollary}
	\label{C-fg}
Let $J\subseteq \mathbb{Z}\Gamma $, and let $\vartheta $ be the usual metric $\vartheta (z+\mathbb{Z},z'+\mathbb{Z})=\min_{n\in \mathbb{Z}}|z-z'-n|$ on $\mathbb{T}=\mathbb{R}/\mathbb{Z}$. Then
	\begin{displaymath}
\htop(X_J)=\sup_{\varepsilon>0}\limsup_F \frac{1}{|F|}\log N_\varepsilon(X_{J, F}, \bar\vartheta _F).
	\end{displaymath}
where $\bar\vartheta _F$ is the pseudometric
	\begin{equation}
	\label{eq:pseudometric}
\bar\vartheta _F(x,y)=\max_{s\in F}\vartheta (x_s,y_s)
	\end{equation}
on $\mathsf{B}^\Gamma $.
	\end{corollary}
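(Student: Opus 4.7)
The plan is to realize $X_J$ as an admissible set $X_\mathcal{P}$ for a suitable family of forbidden patterns, invoke Corollary~\ref{C-local}, and reconcile the two notions of locally admissible configurations ($X_{\mathcal{P}, F}\subseteq \mathbb{T}^F$ versus $X_{J, F}\subseteq \mathbb{T}^\Gamma$) via a F{\o}lner boundary estimate.

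For each $h\in J$ with $h\ne 0$, I would set $K_h = \supp(h)$ and
\[
\mathscr{O}_h=\Bigl\{y\in \mathbb{T}^{K_h}\colon \sum_{t\in K_h}h_ty_t\ne 0\Bigr\},
\]
which is open in $\mathbb{T}^{K_h}$. With $\mathcal{P}=\{(K_h,\mathscr{O}_h)\colon h\in J\setminus\{0\}\}$, the identity $(\rho^hx)_s=\sum_{t}h_tx_{st}$ from \eqref{eq:rho} together with $\pi_K(\lambda^sx)=(x_{s^{-1}t})_{t\in K}$ gives $X_J=X_\mathcal{P}$, so Corollary~\ref{C-local} yields
\[
\htop(X_J)=\sup_{\varepsilon>0}\limsup_F\tfrac{1}{|F|}\log N_\varepsilon(X_{\mathcal{P},F},\vartheta_F)=\sup_{\varepsilon>0}\inf_{\mathcal{P}'}\limsup_F\tfrac{1}{|F|}\log N_\varepsilon(X_{\mathcal{P}',F},\vartheta_F),
\]
where $\mathcal{P}'$ ranges over finite subsets of $\mathcal{P}$. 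Since $\bar\vartheta_F$ depends only on $F$-coordinates, $N_\varepsilon(Y,\bar\vartheta_F)=N_\varepsilon(\pi_F(Y),\vartheta_F)$ for every $Y\subseteq \mathbb{T}^\Gamma$, so the inclusion $X_J\subseteq X_{J,F}$ immediately gives the easy inequality $\htop(X_J)\le \sup_\varepsilon\limsup_F\tfrac{1}{|F|}\log N_\varepsilon(X_{J,F},\bar\vartheta_F)$.

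For the reverse inequality I would fix a finite $J'\subseteq J$ with corresponding $\mathcal{P}'$ and compare $\pi_F(X_{J',F})$ with $X_{\mathcal{P}',F}$. These differ only in the boundary indexing: $X_{J',F}$ enforces $(\rho^hx)_s=0$ for $s\in F$ (with $x$ free outside $F$), while $X_{\mathcal{P}',F}$ enforces $(\rho^hy)_s=0$ only for those $s$ with $sK_h\subseteq F$. With $K=\bigcup_{h\in J'}(K_h\cup K_h^{-1})\cup\{\matheur{e}_\Gamma\}$ finite, the symmetric difference of the two index sets lies in $F\triangle FK$, of cardinality $o(|F|)$ for F{\o}lner $F$. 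Since each $\mathbb{T}$-coordinate contributes at most $\lceil 1/\varepsilon\rceil$ to an $\varepsilon$-separated count, one obtains
\[
\bigl|\log N_\varepsilon(\pi_F(X_{J',F}),\vartheta_F)-\log N_\varepsilon(X_{\mathcal{P}',F},\vartheta_F)\bigr|\le c(\varepsilon)\cdot|F\triangle FK|,
\]
so the two $|F|^{-1}$-normalized limsups coincide. Combined with $X_{J,F}\subseteq X_{J',F}$, this gives $\limsup_F\tfrac{1}{|F|}\log N_\varepsilon(X_{J,F},\bar\vartheta_F)\le \inf_{\mathcal{P}'}\limsup_F\tfrac{1}{|F|}\log N_\varepsilon(X_{\mathcal{P}',F},\vartheta_F)$ for each $\varepsilon>0$; taking $\sup_\varepsilon$ and applying the inf-formula above concludes. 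The main obstacle is this boundary estimate: one must verify that the freedom of extending elements of $\pi_F(X_{J',F})$ outside $F$ and the extra constraints in $X_{\mathcal{P}',F}$ indexed by external $s$ with $sK_h\subseteq F$ each contribute at most $O(|F\triangle FK|)$ to the exponent of the $\varepsilon$-separated count; the F{\o}lner property, combined with the fact that $K$ depends only on the finite approximation $J'$, ensures this correction is absorbed in the $o(|F|)$ term.
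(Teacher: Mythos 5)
Your proposal follows essentially the same route as the paper: the same forbidden-pattern encoding of $X_J$ (with $\mathscr{O}_h=\{y\colon \sum_t h_ty_t\ne 0\}$), the same appeal to Corollary~\ref{C-local} together with the infimum over finite $J'\subseteq J$, and the same interior-versus-boundary comparison between $X_{J',F}$ and the locally admissible sets, so the argument is correct. The only caveat is that your two-sided estimate $\bigl|\log N_\varepsilon(\pi_F(X_{J',F}),\vartheta_F)-\log N_\varepsilon(X_{\mathcal{P}',F},\vartheta_F)\bigr|\le c(\varepsilon)|F\triangle FK|$ overstates what is true and what is needed: only the one-sided inclusion $\pi_{F'}(X_{J',F})\subseteq X_{\mathcal{P}_{J'},F'}$ for the interior $F'=\{s\in F\colon sK\subseteq F\}$ is actually used (the reverse direction amounts to a nontrivial extension statement about locally admissible patterns), and one then controls the boundary coordinates by a spanning set (losing a factor $\varepsilon\mapsto\varepsilon/2$) and compares counts over $F'$ with counts over a general F{\o}lner set by re-indexing the limsup, exactly as in the paper's proof.
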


	\begin{proof}
For each $f\in J$, take a $K_f\in \mathscr{F}(\Gamma)$ such that $\supp(f)\subseteq K_f$, and
denote by $\mathscr{O}_f$ the open subset of $\mathbb{T}^{K_f}$ consisting of $x\in \mathbb{T}^{K_f}$ such that $(\rho ^fx)_{\matheur{e}_\Gamma }\neq 0_{\mathbb{T}}$. Set
	\begin{displaymath}
\mathcal{P}_J=\{(K_f, \mathscr{O}_f)\colon f\in J\}.
	\end{displaymath}
Then $X_{\mathcal{P}_J}=X_J$.

Note that the continuous pseudometric $\bar{\vartheta}_{\{\matheur{e}_\Gamma\}}$ on $\mathbb{T}^\Gamma $ is dynamically generating in the sense that $\sup_{s\in \Gamma}\bar{\vartheta}_{\{\matheur{e}_\Gamma\}}(\lambda^s x, \lambda^s y)>0$ for all distinct $x, y\in \mathbb{T}^\Gamma$. Thus by \cite{KL16}*{Theorem 9.38} one has
	\begin{displaymath}
\htop(X_J)=\sup_{\varepsilon>0}\limsup_F \frac{1}{|F|}\log N_\varepsilon(X_J, \bar{\vartheta}_F)\le \sup_{\varepsilon>0}\limsup_F \frac{1}{|F|}\log N_\varepsilon(X_{J, F}, \bar{\vartheta}_F).
	\end{displaymath}
In view of Corollary~\ref{C-local} it suffices to show that
	\begin{align*}
\sup_{\varepsilon>0}\limsup_F \frac{1}{|F|}\log N_\varepsilon(X_{J, F}, \bar{\vartheta}_F)\le \sup_{\varepsilon>0}\inf_{J'}\limsup_F \frac{1}{|F|}\log N_\varepsilon(X_{\mathcal{P}_{J'}, F}, \vartheta_F)
	\end{align*}
for $J'$ ranging over finite subsets of $J$.
In turn it suffices to show that for every $\varepsilon>0$ we have
	\begin{equation}
	\label{E-fg}
\limsup_F \frac{1}{|F|}\log N_\varepsilon(X_{J, F}, \bar{\vartheta}_F)\le \inf_{J'}\limsup_F \frac{1}{|F|}\log N_{\varepsilon/2}(X_{\mathcal{P}_{J'}, F}, \vartheta_F)
	\end{equation}
for $J'$ ranging over finite subsets of $J$.

\smallskip Let $\varepsilon>0$, and let $J'$ be a finite subset of $J$.
Take a $K\in \mathscr{F}(\Gamma)$ such that $K\supseteq K_f^{-1}$ for all $f\in J'$. For each $F\in \mathscr{F}(\Gamma)$, denote by $F'_K$ the set of $s\in F$ satisfying $sK\subseteq F$.

We claim that $\pi _{F'_K}(X_{J', F})\subseteq X_{\mathcal{P}_{J'}, F'_K}$ for every $F\in \mathscr{F}(\Gamma)$. Let $x\in X_{J', F}$. Let $s\in \Gamma $ and $f\in J'$ such that $sK_f\subseteq F'_K$. Then $s\in F'_KK_f^{-1}\subseteq F'_KK\subseteq F$. Since $x\in X_{J', F}$, we have $\rho ^fx=0_{\mathbb{T}}$ at $s$. It follows that $(s^{-1}(x|_{F'_K}))|_{K_f}=s^{-1}x|_{K_f}\not\in \mathscr{O}_f$. Therefore $x|_{F'_K}\in X_{\mathcal{P}_{J'}, F'_K}$. This proves our claim.

Take a finite $(\vartheta, \varepsilon/2)$-spanning subset $Z$ of $\mathbb{T}$, i.e., for every $y\in \mathbb{T}$ there is some $z\in Z$ with $\vartheta(y, z)\le \varepsilon/2$. Let $F\in \mathscr{F}(\Gamma)$ be sufficiently right Følner. Then $F'_K$ is nonempty. Take a maximal $(\vartheta_{F'_K}, \varepsilon/2)$-separated subset $Y$ of $X_{\mathcal{P}_{J'}, F'_K}$. Then $Y$ is also $(\vartheta_{F'_K}, \varepsilon/2)$-spanning in $X_{\mathcal{P}_{J'}, F'_K}$. Let $W\subseteq X_{J', F}$ be $(\bar{\vartheta}_F, \varepsilon)$-separated. Then for each $w\in W$ we can find some $u(w)\in Z^{F\smallsetminus F'_K}\times Y\subseteq \mathbb{T}^F$ such that $\vartheta_F(w|_F, u(w))\le \varepsilon/2$. Since $W$ is $(\bar{\vartheta}_F, \varepsilon)$-separated, the map
$W\rightarrow Z^{F\smallsetminus F'_K}\times Y$ sending $w$ to $u(w)$ must be injective. Thus
	\begin{displaymath}
|W|\le |Z^{F\smallsetminus F'_K}\times Y|\le |Z|^{|F\smallsetminus F'_K|}N_{\varepsilon/2}(X_{\mathcal{P}_{J'}, F'_K}, \vartheta_{F'_K}).
	\end{displaymath}
Then
	\begin{displaymath}
N_\varepsilon(X_{J', F}, \bar{\vartheta}_F)\le |Z|^{|F\smallsetminus F'_K|}N_{\varepsilon/2}(X_{\mathcal{P}_{J'}, F'_K}, \vartheta_{F'_K}).
	\end{displaymath}
As $F\in \mathscr{F}(\Gamma)$ becomes more and more right invariant in the same sense as in \eqref{eq:limit}, one has $\frac{|F\smallsetminus F'_K|}{|F|}\to 0$ and $F'_K$ also becomes more and more right invariant. Therefore
	\begin{align*}
\limsup_F\frac{1}{|F|}\log N_\varepsilon(X_{J', F}, \bar{\vartheta}_F)&\le \limsup_F \frac{|F\smallsetminus F'_K|}{|F|}\log |Z|
	\\
&\qquad \qquad +\limsup_F \frac{1}{|F|}\log N_{\varepsilon/2}(X_{\mathcal{P}_{J'}, F'_K}, \vartheta_{F'_K})
	\\
&\le \limsup_F \frac{1}{|F|}\log N_{\varepsilon/2}(X_{\mathcal{P}_{J'}, F}, \vartheta_F).
	\end{align*}
Then
	\begin{align*}
\limsup_F\frac{1}{|F|}\log N_\varepsilon(X_{J, F}, \bar{\vartheta}_F)&\le \inf_{J'}\limsup_F\frac{1}{|F|}\log N_\varepsilon(X_{J', F}, \bar{\vartheta}_F)
	\\
&\le \inf_{J'}\limsup_F \frac{1}{|F|}\log N_{\varepsilon/2}(X_{\mathcal{P}_{J'}, F}, \vartheta_F)
	\end{align*}
for $J'$ ranging over finite subsets of $J$, establishing \eqref{E-fg}.
	\end{proof}

As stated, the local entropy formula in Corollary \ref{C-fg} applies to all closed, left shift-invariant subgroups of $\mathbb{T}^\Gamma $. The local entropy formula for finitely generated algebraic actions in \cite{LT}*{Theorem 4.2} corresponds to the extension of Corollary \ref{C-fg} to all left shift-invariant subgroups of $(\mathbb{T}^d)^\Gamma $, $d\ge 1$. The proof of this latter extension requires only small notational changes to the argument in Corollary \ref{C-fg}, as described in the following brief discussion.

\smallskip Fix $d\ge 1$, set $\mathsf{B}=\mathbb{T}^d$, and write the elements of $\mathsf{B}$ as $b=(b^{(1)},\dots ,b^{(d)})$ with $b^{(i)}\in \mathbb{T}$. By identifying each $x =({b}_s)_{s\in \Gamma }\in \mathsf{B}^\Gamma $ with the $d$-tuple $(x^{(i)}=(b^{(i)}_s)_{s\in \Gamma }, i=1,\dots ,d)$ of elements in $\mathbb{T}^\Gamma $ we obtain a shift-equivariant isomorphism of the compact abelian groups $\mathsf{B}^\Gamma $ and $(\mathbb{T}^\Gamma )^d$.

For every $h=(h^{(1)},\dots ,h^{(d)})\in (\mathbb{Z}\Gamma )^d$ and every $x=(x^{(1)},\dots ,x^{(d)}) \in (\mathbb{T}^\Gamma )^d = \mathsf{B}^\Gamma $ we set
	\begin{equation}
	\label{eq:pairing2}
\langle \negthinspace \langle h,x \rangle \negthinspace \rangle = \rho ^{h^{(1)}}\hspace{-.4em} x^{(1)} + \dots + \rho ^{h^{(d)}}\hspace{-.4em} x^{(d)} \in \mathbb{T}^\Gamma .
	\end{equation}
If we evaluate the pairing $(\mathbb{Z}\Gamma )^d\times \mathsf{B}^\Gamma \to \mathbb{T}^\Gamma $ in \eqref{eq:pairing2} at the coordinate $\matheur{e}_\Gamma $, the map $(h,x)\to \langle h,x\rangle \coloneqq \langle \negthinspace \langle h,x \rangle \negthinspace \rangle _{\matheur{e}_\Gamma } = (\rho ^{h^{(1)}}\hspace{-.4em} x^{(1)})_{\matheur{e}_\Gamma } + \dots + (\rho ^{h^{(d)}}\hspace{-.4em} x^{(d)})_{\matheur{e}_\Gamma }$ is a map from $(\mathbb{Z}\Gamma )^d\times \mathsf{B}^\Gamma $ to $\mathbb{T}$ which allows us to identify $(\mathbb{Z}\Gamma )^d$ with the Pontryagin dual $\widehat{\mathsf{B}^\Gamma }$ of $\mathsf{B}^\Gamma $ (as in \eqref{eq:pairing}).

\smallskip In analogy to \eqref{eq:XJ} -- \eqref{eq:alg local} we define, for every subset $J\subseteq (\mathbb{Z}\Gamma )^d$ and $F\in \mathscr{F}(\Gamma )$,
	\begin{equation*}
	\begin{gathered}
X_J = \{x\in \mathsf{B}^\Gamma \colon \langle h,x \rangle = 0_{\mathbb{T}^\Gamma }\enspace \textup{for every}\enspace h\in J\},
	\\
X_{J,F}=\{x\in \mathsf{B}^\Gamma \colon \langle h,x \rangle =0\enspace \textup{on}\enspace F\enspace \textup{for every}\enspace h\in J\}.
	\end{gathered}
	\end{equation*}
Then we have the following restatement of \cite{LT}*{Theorem 4.2}.\footnote{\,Note that \cite{LT}*{Theorem 4.2} uses the \emph{right} shift action of $\Gamma $ on $(\mathbb{T}^d)^\Gamma $ and \emph{left} Følner sets.}

	\begin{corollary}
	\label{C-fg-d}
Let $d\ge 1$, $J\subseteq (\mathbb{Z}\Gamma )^d$, and let $\vartheta $ be a compatible metric on $\mathbb{T}^d$. Then
	\begin{displaymath}
\htop(X_J)=\sup_{\varepsilon>0}\limsup_F \frac{1}{|F|}\log N_\varepsilon(X_{J, F}, \bar{\vartheta}_F),
	\end{displaymath}
where $\bar{\vartheta }_F$ is defined as in \eqref{eq:pseudometric}.
	\end{corollary}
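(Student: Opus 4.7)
The plan is to follow the proof of Corollary~\ref{C-fg} essentially verbatim, with $\mathsf{B}=\mathbb{T}^d$ in place of $\mathbb{T}$ and the pairing \eqref{eq:pairing2} in place of the scalar pairing \eqref{eq:pairing}. For each $h=(h^{(1)},\dots,h^{(d)})\in J$, I would fix $K_h\in\mathscr{F}(\Gamma)$ containing $\supp(h^{(i)})$ for each $i=1,\dots,d$, and define $\mathscr{O}_h\subseteq\mathsf{B}^{K_h}$ to be the open set of $y\in\mathsf{B}^{K_h}$ for which $\langle h,\tilde y\rangle\neq 0_{\mathbb{T}}$, where $\tilde y$ is any extension of $y$ to $\Gamma$ (the value is independent of the extension because $\supp(h^{(i)})\subseteq K_h$ for all $i$). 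Setting $\mathcal{P}_J=\{(K_h,\mathscr{O}_h):h\in J\}$, one verifies $X_{\mathcal{P}_J}=X_J$, and for every finite $J'\subseteq J$, $X_{\mathcal{P}_{J'},F}$ coincides with the natural truncation of $X_{J'}$ to $F$.

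For the easy direction, the continuous pseudometric $\bar\vartheta_{\{\matheur{e}_\Gamma\}}$ on $\mathsf{B}^\Gamma=(\mathbb{T}^d)^\Gamma$ is dynamically generating, since any two distinct elements differ at some coordinate $s\in\Gamma$ and hence have positive $\vartheta$-distance there. Applying \cite{KL16}*{Theorem 9.38} together with the inclusion $X_J\subseteq X_{J,F}$ then gives
\begin{displaymath}
\htop(X_J)=\sup_{\varepsilon>0}\limsup_F \frac{1}{|F|}\log N_\varepsilon(X_J,\bar\vartheta_F)\le \sup_{\varepsilon>0}\limsup_F \frac{1}{|F|}\log N_\varepsilon(X_{J,F},\bar\vartheta_F).
\end{displaymath}

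For the reverse direction, Corollary~\ref{C-local} applied to $\mathcal{P}_J$ yields
\begin{displaymath}
\htop(X_J)=\sup_{\varepsilon>0}\inf_{J'}\limsup_F \frac{1}{|F|}\log N_\varepsilon(X_{\mathcal{P}_{J'},F},\vartheta_F),
\end{displaymath}
with $J'$ ranging over finite subsets of $J$. It therefore suffices to show, for each $\varepsilon>0$ and each finite $J'\subseteq J$, that
\begin{displaymath}
\limsup_F \frac{1}{|F|}\log N_\varepsilon(X_{J',F},\bar\vartheta_F)\le \limsup_F \frac{1}{|F|}\log N_{\varepsilon/2}(X_{\mathcal{P}_{J'},F},\vartheta_F).
\end{displaymath}
Fix $K\in\mathscr{F}(\Gamma)$ with $K\supseteq K_h^{-1}$ for every $h\in J'$ and, for each $F\in\mathscr{F}(\Gamma)$, set $F'_K=\{s\in F:sK\subseteq F\}$. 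Unwinding \eqref{eq:pairing2} in the same way as in the proof of Corollary~\ref{C-fg} yields $\pi_{F'_K}(X_{J',F})\subseteq X_{\mathcal{P}_{J'},F'_K}$. Then the spanning/separated comparison in that proof carries over: choose a finite $(\vartheta,\varepsilon/2)$-spanning subset $Z\subseteq\mathsf{B}$, combine it with a maximal $(\vartheta_{F'_K},\varepsilon/2)$-separated subset of $X_{\mathcal{P}_{J'},F'_K}$ to $(\bar\vartheta_F,\varepsilon/2)$-span any $(\bar\vartheta_F,\varepsilon)$-separated subset of $X_{J',F}$, and exploit $|F\smallsetminus F'_K|/|F|\to 0$. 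The main obstacle is purely notational: one must verify that $\mathscr{O}_h$ is open in $\mathsf{B}^{K_h}$ and that the pairing \eqref{eq:pairing2} recovers $X_J$ and $X_{J,F}$ under the identification $\mathsf{B}^\Gamma\cong(\mathbb{T}^\Gamma)^d$, both of which are immediate from the definitions.
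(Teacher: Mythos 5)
Your proposal is correct and follows exactly the route the paper itself indicates: the paper proves Corollary \ref{C-fg-d} by declaring it the notational extension of the argument for Corollary \ref{C-fg}, with the pairing \eqref{eq:pairing2} replacing \eqref{eq:pairing}, and your setup of $\mathcal{P}_J$, the dynamically generating pseudometric, \cite{KL16}*{Theorem 9.38}, and the $\pi_{F'_K}(X_{J',F})\subseteq X_{\mathcal{P}_{J'},F'_K}$ comparison reproduces that argument faithfully. (The only loose phrase is the claim that $X_{\mathcal{P}_{J'},F}$ ``coincides with the natural truncation of $X_{J'}$ to $F$'' --- locally admissible patterns need not be restrictions of globally admissible points --- but this claim is never used in your actual chain of inequalities, so it is harmless.)
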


\subsection{The local pressure formula for subshifts}
	\label{SS-local subshift}

Let $\Gamma $ be a countable discrete group and let $\mathsf{B}$ a nonempty \emph{finite} set. As in Subsection \ref{SS-local shift} we assume that $\mathcal{P}$ is a set of pairs $(K,\mathscr{O})$, where $K\in \mathscr{F}(\Gamma )$ and $\mathscr{O}\subseteq \mathsf{B}^K$, and define $X_\mathcal{P}\subseteq \mathsf{B}^\Gamma $ by \eqref{eq:admissible}. If the set $\mathcal{P}$ can be chosen to be finite, the space $X_\mathcal{P}\subseteq \mathsf{B}^\Gamma $ is called a \emph{shift of finite type} (or \emph{SFT}).

For $F\in \mathscr{F}(\Gamma)$ and $k\in \mathbb{N}$, we say that a finite collection $\mathcal{E}$ of finite subsets of $\Gamma$, with possible repetitions, is a $k$-cover of $F$, if $\sum_{E\in \mathcal{E}}1_E\ge k 1_F$. The case $f=0$ of the following lemma was proved by Downarowicz, Frej and Romagnoli \cite{DFR}*{Lemma 6.1}, who in turned followed the proof of \cite{BT}*{Theorem 2}. We follow the proof of \cite{DFR}*{Lemma 6.1}.

	\begin{lemma}
	\label{L-infimum rule}
Let $f\colon \mathsf{B}\rightarrow \mathbb{R}_{\ge 0}$. Let $F\in \mathscr{F}(\Gamma)$, $k\in \mathbb{N}$, and $\mathcal{E}$ a $k$-cover of $F$. Then for each $X\subseteq \mathsf{B}^\Gamma$ one has
	\begin{displaymath}
\sum_{x\in \pi_F(X)}e^{\sum_{s\in F}f(x_s)}\le \prod_{E\in \mathcal{E}}\Biggl(\sum_{z\in \pi_E(X)}e^{\sum_{s\in E}f(z_s)}\Biggr)^{1/k},
	\end{displaymath}
where as convention for $E=\varnothing$ we set $\sum_{z\in \pi_E(X)}e^{\sum_{s\in E}f(z_s)}=1$.
	\end{lemma}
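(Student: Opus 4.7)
The plan is to lift the classical Han--Shearer inequality for Shannon entropy to the weighted (partition-function) setting by means of the Gibbs variational principle, following the structural idea of the $f = 0$ case treated in \cite{DFR}*{Lemma 6.1}.

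First I would reduce to the case where each $E \in \mathcal{E}$ is contained in $F$. Replacing $E$ by $E \cap F$ keeps $\mathcal{E}$ a $k$-cover of $F$, and since $f \ge 0$ every extension of an element of $\pi_{E \cap F}(X)$ to an element of $\pi_E(X)$ contributes a factor $e^{f(z_s)} \ge 1$, so
$$\sum_{z \in \pi_{E \cap F}(X)} e^{\sum_{s \in E \cap F} f(z_s)} \le \sum_{z' \in \pi_E(X)} e^{\sum_{s \in E} f(z'_s)},$$
and hence the right-hand side of the target inequality can only decrease under this replacement. The trivial cases $\pi_F(X) = \varnothing$ and $E = \varnothing$ are then handled by inspection.

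Write $Z_G = \sum_{z \in \pi_G(X)} e^{\sum_{s \in G} f(z_s)}$ for $G \subseteq F$, and introduce the Gibbs probability measure $\mu$ on $\pi_F(X)$ defined by $\mu(x) = Z_F^{-1} e^{\sum_{s \in F} f(x_s)}$. A direct computation yields
$$\log Z_F = H(\mu) + \sum_{s \in F} \mathbb{E}_\mu[f(X_s)].$$
For each $E \in \mathcal{E}$ let $\mu_E$ denote the pushforward of $\mu$ under the restriction map $\pi_F(X) \to \pi_E(X)$. Applying the Gibbs variational principle with $\mu_E$ as a trial measure on $\pi_E(X)$ (equivalently, nonnegativity of the Kullback--Leibler divergence between $\mu_E$ and the Gibbs measure on $\pi_E(X)$) yields the one-sided bound
$$\log Z_E \ge H(\mu_E) + \sum_{s \in E} \mathbb{E}_{\mu_E}[f(X_s)] = H(\mu_E) + \sum_{s \in E} \mathbb{E}_\mu[f(X_s)].$$

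Fix a linear order on $F$ and set $a(s) = \{t \in F : t < s\}$ and $a_E(s) = \{t \in E : t < s\} \subseteq a(s)$ for $E \subseteq F$ and $s \in E$. The entropy chain rule applied to $\mu_E$, combined with the monotonicity of conditional entropy under additional conditioning, gives
$$H(\mu_E) = \sum_{s \in E} H_\mu(X_s \mid X_{a_E(s)}) \ge \sum_{s \in E} H_\mu(X_s \mid X_{a(s)}).$$
Summing the previous bound for $\log Z_E$ over $E \in \mathcal{E}$ and invoking $\sum_{E \in \mathcal{E}} \mathbf{1}_E \ge k\, \mathbf{1}_F$,
$$\sum_{E \in \mathcal{E}} \log Z_E \ge k \sum_{s \in F} \bigl(H_\mu(X_s \mid X_{a(s)}) + \mathbb{E}_\mu[f(X_s)]\bigr) = k \log Z_F,$$
where the last equality uses $H(\mu) = \sum_{s \in F} H_\mu(X_s \mid X_{a(s)})$ together with the identity for $\log Z_F$ displayed above. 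Exponentiating and taking $k$-th roots finishes the proof. The conceptual step is recognizing $\sum_x e^{\sum_s f(x_s)}$ as a one-site Gibbs partition function, after which the classical Shearer argument applies essentially verbatim; the only bookkeeping nuisance is verifying that $\mu_E$ is genuinely the pushforward of $\mu$, which is what makes $\mathbb{E}_{\mu_E}[f(X_s)] = \mathbb{E}_\mu[f(X_s)]$ and $H_{\mu_E}(X_s \mid X_{a_E(s)}) = H_\mu(X_s \mid X_{a_E(s)})$ available for $s \in E$.
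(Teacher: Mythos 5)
Your proof is correct, and it takes a genuinely different route from the paper's. The paper follows \cite{DFR}*{Lemma 6.1} (and ultimately \cite{BT}): it argues by induction on $|F|$, peeling off one coordinate $t\in F$ at a time and closing the induction with H\"older's inequality $\sum_{b}\prod_{E\in\mathcal{E}_1}g_E(b)\le\prod_{E\in\mathcal{E}_1}\|g_E\|_{|\mathcal{E}_1|}\le\prod_{E\in\mathcal{E}_1}\|g_E\|_k$; the hypothesis $f\ge 0$ enters through the replacement $f(b)k/|\mathcal{E}_1|\le f(b)$. You instead derive the weighted statement from the classical Shannon-entropy form of Shearer's inequality via the Gibbs variational principle: writing $\log Z_F=H(\mu)+\sum_{s\in F}\mathbb{E}_\mu[f(X_s)]$, bounding each $\log Z_E$ from below by the corresponding entropy-plus-energy of the marginal $\mu_E$, and then applying the chain-rule proof of Shearer to the entropy terms while the energy terms are handled by the covering condition together with $f\ge 0$ (which also justifies your preliminary reduction to $E\subseteq F$, and the overcounting $|\{E: s\in E\}|\ge k$ of both the nonnegative conditional entropies and the nonnegative energies). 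The two proofs are of comparable length; the paper's is more elementary and self-contained, while yours is more conceptual, exhibiting the lemma as the pressure analogue of Shearer's inequality obtained by superadditivity of the free-energy decomposition rather than by reproving Shearer from scratch. All the steps you sketch (surjectivity of the restriction maps so that $\mu_E$ lives on $\pi_E(X)$, finiteness of $\pi_F(X)$ since $\mathsf{B}$ is finite, and the degenerate cases $\pi_F(X)=\varnothing$ and $E=\varnothing$) check out.
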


	\begin{proof}
For each finite set $E\subseteq \Gamma$ we set
	\begin{displaymath}
\varphi(X, E)=\sum_{x\in \pi_E(X)}e^{\sum_{s\in F}f(x_s)}.
	\end{displaymath}
Our convention is that $\varphi(X, \varnothing)=1$.

We argue via induction on $|F|$. Consider first the case $|F|=1$. Say, $F=\{t\}$. Since $\mathcal{E}$ is a $k$-cover of $F$, we can find an $\mathcal{E}'\subseteq \mathcal{E}$ such that $|\mathcal{E}'|=k$ and $t\in E$ for every $E\in \mathcal{E}'$. Since $f\ge 0$, we have $\varphi(X, E)\ge 1$ for every $E\in \mathcal{E}$ and $\varphi(X, E)\ge \varphi(X, F)$ for every $E\in \mathcal{E}'$. Then
	\begin{displaymath}
\varphi(X, F)\le \prod_{E\in \mathcal{E}'}\varphi(X, E)^{1/k}\le \prod_{E\in \mathcal{E}}\varphi(X, E)^{1/k}.
	\end{displaymath}

Assume that $|F|\ge 2$ and the statement holds for each $F'\subseteq F$ with $|F'|=|F|-1$. Fix one $t\in F$ and set $F'=F\smallsetminus \{t\}$. Then $F'\subseteq F$ and $|F'|=|F|-1$. We set $E'=E\smallsetminus \{t\}$ for each $E\in \mathcal{E}$. Then $\mathcal{E}'=\{E'\colon E\in \mathcal{E}\}$ is a $k$-cover of $F'$.

For each $b\in \mathsf{B}$, we set $X_b=\{x\in X\colon x_t=b\}$. Applying the inductive hypothesis to $F', \mathcal{E}'$ and $X_b$, we have
	\begin{equation}
	\label{E-infimum rule}
\varphi(X_b, F')\le \prod_{E\in \mathcal{E}}\varphi(X_b, E')^{1/k}.
	\end{equation}
We denote by $\mathcal{E}_1$ the set of $E\in \mathcal{E}$ satisfying $t\in E$, and by $\mathcal{E}_2$ the set of $E\in \mathcal{E}$ satisfying $t\not\in E$.
For each $b\in \mathsf{B}$ and $E\in \mathcal{E}_2$, we have
	\begin{displaymath}
\varphi(X_b, E')=\varphi(X_b, E)\le \varphi(X, E).
	\end{displaymath}
Then for each $b\in \mathsf{B}$ from \eqref{E-infimum rule} we have
	\begin{align*}
\varphi(X_b, F')&\le \Biggl(\prod_{E\in \mathcal{E}_1}\varphi(X_b, E')^{1/k}\Biggr)\Biggl(\prod_{E\in \mathcal{E}_2}\varphi(X_b, E')^{1/k}\Biggr)
	\\
& \le \Biggl(\prod_{E\in \mathcal{E}_1}\varphi(X_b, E')^{1/k}\Biggr)\Biggl(\prod_{E\in \mathcal{E}_2}\varphi(X, E)^{1/k}\Biggr).
	\end{align*}
Thus
	\begin{align*}
\varphi(X, F)&=\sum_{b\in \mathsf{B}}e^{f(b)}\varphi(X_b, F')\le \Biggl(\prod_{E\in \mathcal{E}_2}\varphi(X, E)^{1/k}\Biggr)\sum_{b\in \mathsf{B}}e^{f(b)}\prod_{E\in \mathcal{E}_1}\varphi(X_b, E')^{1/k}
	\\
&=\Biggl(\prod_{E\in \mathcal{E}_2}\varphi(X, E)^{1/k}\Biggr)\sum_{b\in \mathsf{B}}\prod_{E\in \mathcal{E}_1}(e^{f(b)k/|\mathcal{E}_1|}\varphi(X_b, E'))^{1/k}.
	\end{align*}
Since $\mathcal{E}$ is a $k$-cover of $F$, we have $|\mathcal{E}_1|\ge k$. As $f\ge 0$, we have $f(b)k/|\mathcal{E}_1|\le f(b)$ for every $b\in \mathsf{B}$. Then from the above inequality we have
	\begin{align*}
\varphi(X, F)&\le \Biggl(\prod_{E\in \mathcal{E}_2}\varphi(X, E)^{1/k}\Biggr)\sum_{b\in \mathsf{B}}\prod_{E\in \mathcal{E}_1}(e^{f(b)}\varphi(X_b, E'))^{1/k}
	\\
&=\Biggl(\prod_{E\in \mathcal{E}_2}\varphi(X, E)^{1/k}\Biggr)\sum_{b\in \mathsf{B}}\prod_{E\in \mathcal{E}_1}\varphi(X_b, E)^{1/k}.
	\end{align*}
For each $E\in \mathcal{E}_1$, we define a function $g_E\colon \mathsf{B}\rightarrow \mathbb{R}$ by $g_E(b)=\varphi(X_b, E)^{1/k}$ for $b\in \mathsf{B}$. For each $q\ge 1$, we denote by $\|\cdot\|_q$ the $\ell_q$-norm on $\ell_q(\mathsf{B})$ given by $\|g\|_q=\left(\sum_{b\in \mathsf{B}}|g(b)|^q\right)^{1/q}$. By the Hölder inequality we have
	\begin{displaymath}
\sum_{b\in \mathsf{B}}\prod_{E\in \mathcal{E}_1}\varphi(X_b, E)^{1/k}=\sum_{b\in \mathsf{B}}\prod_{E\in \mathcal{E}_1}g_E(b)\le \prod_{E\in \mathcal{E}_1}\|g_E\|_{|\mathcal{E}_1|}\le \prod_{E\in \mathcal{E}_1}\|g_E\|_k=\prod_{E\in \mathcal{E}_1}\varphi(X, E)^{1/k}.
	\end{displaymath}
Therefore
	\begin{displaymath}
\varphi(X, F)\le \Biggl(\prod_{E\in \mathcal{E}_2}\varphi(X, E)^{1/k}\Biggr)\Biggl(\prod_{E\in \mathcal{E}_1}\varphi(X, E)^{1/k}\Biggr)=\prod_{E\in \mathcal{E}}\varphi(X, E)^{1/k}.
	\end{displaymath}
This finishes the induction step and proves the lemma.
	\end{proof}

Now assume that $\Gamma $ is amenable. If $\mathscr{U}$ is the finite open cover of $\mathsf{B}$ consisting of singletons we have
	\begin{displaymath}
N(\mathscr{U}^F, X_{\mathcal{P}, F})=|X_{\mathcal{P}, F}| \text{ and } p_F(f, \mathscr{U}, X_{\mathcal{P}, F})=\sum_{x\in X_{\mathcal{P}, F}}e^{\sum_{s\in F}f(x_s)}
	\end{displaymath}
for every $F\in \mathscr{F}(\Gamma)$ and $f\colon \mathsf{B}\rightarrow \mathbb{R}$. From Lemma~\ref{L-local} we get

	\begin{lemma}
	\label{L-local entropy for subshift}
The limits $\lim_F \frac{1}{|F|}\log |X_{\mathcal{P}, F}|$ and $\lim_F \frac{1}{|F|}\log \left(\sum_{x\in X_{\mathcal{P}, F}}e^{\sum_{s\in F}f(x_s)}\right)$ exist in the same sense as in \eqref{eq:limit} for every $f\colon \mathsf{B}\rightarrow \mathbb{R}$.
	\end{lemma}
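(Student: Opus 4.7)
The plan is to derive the lemma as an immediate specialization of Lemma~\ref{L-local} to a particular choice of finite open cover. Since $\mathsf{B}$ is finite, I endow it with the discrete topology; then every function $f\colon \mathsf{B}\to\mathbb{R}$ belongs to $C(\mathsf{B})$, and the family $\mathscr{U}$ consisting of all singletons $\{b\}$, $b\in\mathsf{B}$, is a finite open cover of $\mathsf{B}$. This simple setup is exactly what lets the results of Subsection~\ref{SS-local shift} apply verbatim to the subshift setting where $\mathsf{B}$ is a finite alphabet.

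The key observation is how the quantities $N(\mathscr{U}^F,\cdot)$ and $p_F(f,\mathscr{U},\cdot)$ simplify for this cover. The product cover $\mathscr{U}^F$ consists precisely of the singletons $\{x\}$, $x\in \mathsf{B}^F$. Any finite cover $\mathscr{V}$ of $\mathsf{B}^F$ refining $\mathscr{U}^F$ must, for each $x\in\mathsf{B}^F$, contain a member equal to $\{x\}$. Hence the infimum in the definition of $p_F(f,\mathscr{U},Z)$ is realized by $\mathscr{V}=\{\{x\}:x\in\mathsf{B}^F\}$, yielding
\[
p_F(f,\mathscr{U},Z)=\sum_{x\in Z}e^{\sum_{s\in F}f(x_s)},
\]
as already recorded in the paragraph preceding the lemma statement; taking $f\equiv 0$ recovers $p_F(0,\mathscr{U},X_{\mathcal{P},F})=|X_{\mathcal{P},F}|$.

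With these identities in hand, the existence of both limits $\lim_F \frac{1}{|F|}\log|X_{\mathcal{P},F}|$ and $\lim_F \frac{1}{|F|}\log\sum_{x\in X_{\mathcal{P},F}}e^{\sum_{s\in F}f(x_s)}$ in the sense of \eqref{eq:limit} follows immediately from the conclusion of Lemma~\ref{L-local} applied to the pair $(0,\mathscr{U})$ and $(f,\mathscr{U})$, respectively. There is no substantive obstacle to overcome: the only things to verify are that every $f\colon\mathsf{B}\to\mathbb{R}$ is continuous and that $\mathscr{U}$ is a legitimate finite open cover, both of which are automatic once the discrete topology on the finite set $\mathsf{B}$ is fixed. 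The real content of the lemma is borne entirely by Lemma~\ref{L-local}; the proof amounts to packaging the two identities above.
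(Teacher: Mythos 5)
Your proposal is correct and is essentially identical to the paper's own argument: the paper also takes $\mathscr{U}$ to be the cover of $\mathsf{B}$ by singletons, records the identities $N(\mathscr{U}^F, X_{\mathcal{P},F})=|X_{\mathcal{P},F}|$ and $p_F(f,\mathscr{U},X_{\mathcal{P},F})=\sum_{x\in X_{\mathcal{P},F}}e^{\sum_{s\in F}f(x_s)}$, and then invokes Lemma~\ref{L-local}. Nothing is missing.
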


In \cite{Friedland97}, when $X_\mathcal{P}$ is of finite type, the limit $\lim_F \frac{1}{|F|}\log |X_{\mathcal{P}, F}|$ is called the \textup{combinatorial entropy} of $X_\mathcal{P}$.

It is well known \cite{DFR}*{Corollary 6.3} that
	\begin{displaymath}
\htop(X_\mathcal{P})=\inf_{F\in \mathscr{F}(\Gamma)}\frac{1}{|F|}\log |\pi _F(X_\mathcal{P})|=\lim_F\frac{1}{|F|}\log|\pi _F(X_\mathcal{P})|.
	\end{displaymath}

	\begin{theorem}
	\label{T-local subshift}
For every $f\colon \mathsf{B}\rightarrow \mathbb{R}$, we have
	\begin{align*}
P(X_\mathcal{P}, f)&=\lim_F \frac{1}{|F|}\log \Biggl(\sum_{x\in X_{\mathcal{P}, F}}e^{\sum_{s\in F}f(x_s)}\Biggr)
	\\
&=\inf_{F\in \mathscr{F}(\Gamma)} \frac{1}{|F|}\log \Biggl(\sum_{x\in X_{\mathcal{P}, F}}e^{\sum_{s\in F}f(x_s)}\Biggr)=\inf_{F\in \mathscr{F}(\Gamma)} \frac{1}{|F|}\log \Biggl(\sum_{x\in \pi_F(X_\mathcal{P})}e^{\sum_{s\in F}f(x_s)}\Biggr).
	\end{align*}
In particular,
	\begin{displaymath}
\htop(X_\mathcal{P})=\inf_{F\in \mathscr{F}(\Gamma)} \frac{1}{|F|}\log |X_{\mathcal{P}, F}|=\lim_F \frac{1}{|F|}\log |X_{\mathcal{P}, F}|.
	\end{displaymath}
	\end{theorem}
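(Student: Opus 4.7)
The plan is to view $P(X_\mathcal{P},f)$ through the cover $\mathscr{U}$ of the finite set $\mathsf{B}$ by singletons, then use the $k$-cover subadditivity of Lemma~\ref{L-infimum rule} to convert the Følner limit into an infimum. Reducing to $f\ge 0$ by replacing $f$ with $f+C$ (which shifts every expression in the statement by the same constant $C$) is harmless. If $X_\mathcal{P}=\varnothing$ then, since $\mathsf{B}$ is finite, a compactness argument forces $X_{\mathcal{P},F}=\varnothing$ for all sufficiently right-invariant $F$, and every term in the chain is $-\infty$; so we may also assume $X_\mathcal{P}\neq\varnothing$. Write
\begin{displaymath}
\varphi(F)=\log\sum_{x\in\pi_F(X_\mathcal{P})}e^{\sum_{s\in F}f(x_s)},\qquad \tilde\varphi(F)=\log\sum_{x\in X_{\mathcal{P},F}}e^{\sum_{s\in F}f(x_s)}.
\end{displaymath}

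Step 1 (first equality). Let $\mathscr{U}$ be the cover of $\mathsf{B}$ by singletons. Because $\mathsf{B}$ is finite, $\mathscr{U}^F$ is the finest open cover of $\mathsf{B}^F$, so $\mathscr{U}$ attains the supremum defining $P(X_\mathcal{P},f)$, and unwinding the definitions of $p_F$ gives $p_F(f,\mathscr{U},X_{\mathcal{P},F})=e^{\tilde\varphi(F)}$ and $p_F(f,\pi^{-1}(\mathscr{U}),X_\mathcal{P})=e^{\varphi(F)}$. Theorem~\ref{T-local} therefore yields
\begin{displaymath}
P(X_\mathcal{P},f)=\lim_F\tilde\varphi(F)/|F|=\lim_F\varphi(F)/|F|.
\end{displaymath}

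Step 2 (the infimum formula for $\varphi$). Fix $F_0\in\mathscr{F}(\Gamma)$ and, for each $F\in\mathscr{F}(\Gamma)$, form the collection $\mathcal{E}=\{tF_0\colon t\in FF_0^{-1}\}$. For any $r\in F$ we have $rF_0^{-1}\subseteq FF_0^{-1}$, so $|\{t\in FF_0^{-1}\colon r\in tF_0\}|=|rF_0^{-1}|=|F_0|$; hence $\mathcal{E}$ is an $|F_0|$-cover of $F$ with $|\mathcal{E}|\le|FF_0^{-1}|$. Applying Lemma~\ref{L-infimum rule} to $X=X_\mathcal{P}$ and using left shift-invariance to identify $\sum_{z\in\pi_{tF_0}(X_\mathcal{P})}e^{\sum_{s\in tF_0}f(z_s)}$ with $e^{\varphi(F_0)}$ gives
\begin{displaymath}
e^{\varphi(F)}\le\prod_{t\in FF_0^{-1}}e^{\varphi(F_0)/|F_0|}=e^{|FF_0^{-1}|\varphi(F_0)/|F_0|},
\end{displaymath}
so $\varphi(F)/|F|\le(|FF_0^{-1}|/|F|)\cdot\varphi(F_0)/|F_0|$. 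As $F$ becomes right Følner, $|FF_0^{-1}|/|F|\to 1$, and since the limit over Følner $F$ of $\varphi(F)/|F|$ exists (Step~1), it is at most $\varphi(F_0)/|F_0|$. Because $F_0$ is arbitrary and $\inf\le\lim$ trivially,
\begin{displaymath}
P(X_\mathcal{P},f)=\lim_F\varphi(F)/|F|=\inf_{F_0}\varphi(F_0)/|F_0|.
\end{displaymath}

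Step 3 (sandwich). The containment $\pi_F(X_\mathcal{P})\subseteq X_{\mathcal{P},F}$ gives $\varphi(F)\le\tilde\varphi(F)$, so
\begin{displaymath}
P(X_\mathcal{P},f)=\inf_F\varphi(F)/|F|\le\inf_F\tilde\varphi(F)/|F|\le\lim_F\tilde\varphi(F)/|F|=P(X_\mathcal{P},f),
\end{displaymath}
and all three quantities coincide, proving the remaining equalities. The particular case $f\equiv 0$ yields the stated formula for $\htop(X_\mathcal{P})$.

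The main obstacle is Step~2: choosing an efficient $k$-cover of $F$ by translates of $F_0$ whose multiplicity is exactly $|F_0|$ and whose size $|FF_0^{-1}|$ differs from $|F|$ only by a Følner-negligible amount. Everything else is bookkeeping between the singleton-cover interpretations of $p_F$ and the results of Theorem~\ref{T-local}.
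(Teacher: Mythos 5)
Your proof is correct, and while it shares the paper's overall architecture (singleton cover plus Theorem~\ref{T-local} for the first equality, then Lemma~\ref{L-infimum rule} to pass from the F{\o}lner limit to an infimum), the way you deploy the Shearer-type inequality is genuinely different. The paper invokes the abstract infimum rule of \cite{DFR}*{Proposition 3.3} --- a $\Gamma$-invariant function satisfying Shearer's inequality has $\lim=\inf$ --- and applies it separately to $F\mapsto\log\sum_{x\in\pi_F(X_\mathcal{P})}e^{\sum_{s\in F}f(x_s)}$ and to $F\mapsto\log\sum_{x\in X_{\mathcal{P},F}}e^{\sum_{s\in F}f(x_s)}$. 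You instead prove the infimum rule directly, and only for the first of these functions, by feeding the explicit $|F_0|$-cover $\{tF_0\colon t\in FF_0^{-1}\}$ into Lemma~\ref{L-infimum rule}, using left shift-invariance of $X_\mathcal{P}$ to identify each factor with $e^{\varphi(F_0)}$, and letting $|FF_0^{-1}|/|F|\to1$; the locally admissible version then falls out of the sandwich $\pi_F(X_\mathcal{P})\subseteq X_{\mathcal{P},F}$ combined with the limit already known from Theorem~\ref{T-local}. This buys two things: the argument is self-contained modulo Lemma~\ref{L-infimum rule}, and you never have to verify Shearer's inequality for the locally admissible counts $X_{\mathcal{P},F}$, which are not literally of the form $\pi_F(X)$ covered by the statement of that lemma. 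The only details worth writing out are that your collection $\{tF_0\}_{t\in FF_0^{-1}}$ is a multiset indexed by $t$ (so each $r\in F$ is covered exactly $|F_0|$ times), and that after the shift $f\mapsto f+C\ge0$ one has $\varphi(F_0)\ge0$, so the factor $|FF_0^{-1}|/|F|\to1$ really does give $\lim_F\varphi(F)/|F|\le\varphi(F_0)/|F_0|$; both are immediate.
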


	\begin{proof}
Let $\mathscr{U}$ be the finite open cover of $\mathsf{B}$ consisting of singletons. From Theorem~\ref{T-local} we have
	\begin{displaymath}
P(X_\mathcal{P}, f)=\lim_F \frac{1}{|F|}\log p_F(f, \mathscr{U}, X_{\mathcal{P}, F})=\lim_F \frac{1}{|F|}\log \Biggl(\sum_{x\in X_{\mathcal{P}, F}}e^{\sum_{s\in F}f(x_s)}\Biggr).
	\end{displaymath}

Take a constant $C$ such that $f+C\ge 0$. We may assume that $X_\mathcal{P}\neq \varnothing$. A result of Downarowicz, Frej and Romagnoli \cite{DFR}*{Proposition 3.3} says that if a function $\varphi \colon \mathscr{F}(\Gamma)\rightarrow [0, \infty)$ is $\Gamma$-invariant in the sense that $\varphi(sF)=\varphi(F)$ for all $F\in \mathscr{F}(\Gamma)$ and $s\in \Gamma$, and satisfies Shearer's inequality in the sense that $\varphi(F)\le \frac{1}{k}\sum_{E\in \mathcal{E}}\varphi(E)$ for any $F\in \mathscr{F}(\Gamma)$, any $k\in \mathbb{N}$ and any $k$-cover $\mathcal{E}$ of $F$ consisting of nonempty sets, then
	\begin{displaymath}
\lim_F \frac{1}{|F|}\varphi(F)=\inf_{F\in \mathscr{F}(\Gamma)}\frac{1}{|F|}\varphi(F).
	\end{displaymath}
By Lemma~\ref{L-infimum rule} the functions $\varphi, \psi\colon \mathscr{F}(\Gamma)\rightarrow [0, \infty)$ defined by
	\begin{displaymath}
\varphi(F)=\log p_F(f+C, \mathscr{U}, \pi_F(X_\mathcal{P})) \text{ and } \psi(F)=\log p_F(f+C, \mathscr{U}, X_{\mathcal{P}, F})
	\end{displaymath}
both are $\Gamma$-invariant and satisfy Shearer's inequality, so that
	\begin{align*}
P&(X_\mathcal{P}, f)=-C+P(X_\mathcal{P}, f+C)=-C+\lim_F \frac{1}{|F|}\log p_F(f+C, \mathscr{U}, \pi_F(X_\mathcal{P}))
	\\
&=-C+\inf_{F\in \mathscr{F}(\Gamma)} \frac{1}{|F|}\log p_F(f+C, \mathscr{U}, \pi_F(X_\mathcal{P}))=\inf_{F\in \mathscr{F}(\Gamma)} \frac{1}{|F|}\log \Biggl(\sum_{x\in \pi_F(X_\mathcal{P})}e^{\sum_{s\in F}f(x_s)}\Biggr),
	 \end{align*}
and
	\begin{align*}
P&(X_\mathcal{P}, f)=-C+P(X_\mathcal{P}, f+C)=-C+\lim_F \frac{1}{|F|}\log p_F(f+C, \mathscr{U}, X_{\mathcal{P}, F})
	\\
&=-C+\inf_{F\in \mathscr{F}(\Gamma)} \frac{1}{|F|}\log p_F(f+C, \mathscr{U}, X_{\mathcal{P}, F})=\inf_{F\in \mathscr{F}(\Gamma)} \frac{1}{|F|}\log \Biggl(\sum_{x\in X_{\mathcal{P}, F}}e^{\sum_{s\in F}f(x_s)}\Biggr).\tag*{\qedsymbol}
	 \end{align*}
\renewcommand{\qedsymbol}{}
\vspace{-\baselineskip}
	 \end{proof}

The equality $\htop(X_\mathcal{P})=\lim_F \frac{1}{|F|}\log |X_{\mathcal{P}, F}|$ in the case where $\Gamma =\mathbb{Z}^d$ and $\mathcal{P}$ is finite is proved by Friedland \cite{Friedland97}*{Theorem 2.5}. The equality $\htop(X_\mathcal{P})=\inf_{F\in \mathscr{F}(\Gamma)} \frac{1}{|F|}\log |X_{\mathcal{P}, F}|$ seems new even when $\Gamma=\mathbb{Z}$ and $\mathcal{P}$ is finite.

\section{Local entropy for restricted permutations}
	\label{S-local permutation}

Let $\Gamma $ be a countable group, and let $\mathscr{S}(\Gamma )$ be the group of all permutations of $\Gamma $. Fix a set $\mathsf{A}\in \mathscr{F}(\Gamma )$ and consider the set $\mathscr{S}_\mathsf{A}\subseteq \mathscr{S}(\Gamma )$ consisting of all permutations $\sigma \colon s\mapsto \sigma (s)$ of $\Gamma $ with the property that
	\begin{equation}
	\label{eq:perm}
x^{(\sigma )}_s \coloneqq s^{-1}\sigma (s)\in \mathsf{A}\enspace \textup{for every}\enspace s\in \Gamma .
	\end{equation}
Every $\sigma \in \mathscr{S}_\mathsf{A}$ is determined completely by the point $x^{(\sigma )}=(x^{(\sigma )}_s)_{s\in \Gamma }\in \mathsf{A}^\Gamma $ in \eqref{eq:perm}. Conversely we define, for every $F\subseteq \Gamma $ and $x = (x_s)_{s\in F}\in \mathsf{A}^F$, a map $\varphi ^{(x)}\colon F\rightarrow \Gamma $ by
	\begin{equation}
	\label{eq:px}
\varphi ^{(x)}(s)=sx_s,\enspace s\in F.
	\end{equation}
Put
	\begin{equation}
	\label{eq:XA}
X_\mathsf{A}^\iota =\{x \in \mathsf{A}^\Gamma \colon \varphi ^{(x)} \enspace \textup{is injective}\},\qquad X_\mathsf{A} = \{x\in \mathsf{A}^\Gamma \colon \varphi ^{(x)} \enspace \textup{is bijective}\}.
	\end{equation}
The sets $X_\mathsf{A}\subseteq X_\mathsf{A}^\iota \subseteq \mathsf{A}^\Gamma $ are closed and hence compact in the product topology, and they are invariant under the left shift action $\lambda $ of $\Gamma $ on $\mathsf{A}^\Gamma $ in \eqref{eq:shift}.

	\begin{lemma}
	\label{L-conjugacy}
For every $\mathsf{A} \in \mathscr{F}(\Gamma )$ and $t\in \Gamma $, the dynamical systems $(X_{\mathsf{A}t},\lambda )$ and $(X_\mathsf{A},\lambda )$ {\upshape(}resp., $(X_{\mathsf{A}t}^\iota ,\lambda )$ and $(X_\mathsf{A}^\iota ,\lambda )${\upshape)} are topologically conjugate.
	\end{lemma}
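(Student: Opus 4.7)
The plan is to exhibit an explicit topological conjugacy $\Phi $ induced by right multiplication by $t$. For $x=(x_s)_{s\in \Gamma }\in \mathsf{A}^\Gamma $ I would set
\[
\Phi (x)=(x_st)_{s\in \Gamma }\in (\mathsf{A}t)^\Gamma .
\]
Since right multiplication by $t$ is a bijection $\mathsf{A}\to \mathsf{A}t$, this formula defines a homeomorphism $\mathsf{A}^\Gamma \to (\mathsf{A}t)^\Gamma $ with continuous inverse $y\mapsto (y_st^{-1})_{s\in \Gamma }$. The $\lambda $-equivariance is immediate from \eqref{eq:shift}: for every $r\in \Gamma $,
\[
\Phi (\lambda ^rx)_s=(\lambda ^rx)_s\cdot t=x_{r^{-1}s}\cdot t=\Phi (x)_{r^{-1}s}=(\lambda ^r\Phi (x))_s.
\]

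The next step is to verify that $\Phi $ carries $X_\mathsf{A}^\iota $ onto $X_{\mathsf{A}t}^\iota $ and $X_\mathsf{A}$ onto $X_{\mathsf{A}t}$. The key observation is that, with the notation of \eqref{eq:px},
\[
\varphi ^{(\Phi (x))}(s)=s\cdot (x_st)=\varphi ^{(x)}(s)\cdot t\quad \textup{for every}\ s\in \Gamma ,
\]
so $\varphi ^{(\Phi (x))}$ is the composition of $\varphi ^{(x)}$ with the bijection $r\mapsto rt$ of $\Gamma $. Consequently $\varphi ^{(\Phi (x))}$ is injective (resp.\ bijective) if and only if $\varphi ^{(x)}$ is, and the same argument applied to $\Phi ^{-1}$ shows the converse inclusion. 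Therefore $\Phi $ restricts to $\Gamma $-equivariant homeomorphisms $X_\mathsf{A}^\iota \to X_{\mathsf{A}t}^\iota $ and $X_\mathsf{A}\to X_{\mathsf{A}t}$, as required.

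There is no real obstacle here: the lemma simply records that the defining condition $s^{-1}\sigma (s)\in \mathsf{A}$ is compatible with right translation of the alphabet, and the candidate conjugacy is forced by this observation. The only care needed is to keep track of left versus right actions, which is why $\Phi $ uses right multiplication on the values $x_s$ while the shift $\lambda $ acts on the index $s$ from the left, so that the two operations commute.
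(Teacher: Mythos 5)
Your proof is correct and is essentially identical to the paper's: the map $\Phi$ you define is exactly the paper's $\tilde{\varrho}^{\,t}$, $(\tilde{\varrho}^{\,t}x)_s=x_st$, and the verification via $\varphi^{(\Phi(x))}=\varrho^t\circ\varphi^{(x)}$ is the same observation the paper invokes through \eqref{eq:perm}.
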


	\begin{proof}
For every $t\in \Gamma $ we denote by $\varrho ^t\in \mathscr{S}(\Gamma )$ right multiplication by $t$, i.e., $\varrho ^t(s)=st$ for every $s\in \Gamma $. The shift-equivariant map $\tilde{\varrho }\mspace{1mu}^t\colon \mathsf{A}^\Gamma \rightarrow (\mathsf{A}t)^\Gamma $, given by $(\tilde{\varrho }\mspace{1mu}^tx)_s=\varrho ^t(x_s)=x_st$ for all $x=(x_s)\in \mathsf{A}^\Gamma $ and $s\in \Gamma $, satisfies that $\tilde{\varrho }\mspace{1mu}^t(X_\mathsf{A}) = X_{\mathsf{A}t}$ (by \eqref{eq:perm}), and $(X_\mathsf{A},\lambda )$ and $(X_{\mathsf{A}t},\lambda )$ are topologically conjugate. Similarly we see that $(X_{\mathsf{A}t}^\iota ,\lambda )$ and $(X_\mathsf{A}^\iota ,\lambda )$ are conjugate.
	\end{proof}

	\begin{remark}
	\label{r:1 in A}
In view of Lemma \ref{L-conjugacy} we may assume without loss in generality that $\matheur{e}_\Gamma \in \mathsf{A}$.
	\end{remark}

	\begin{proposition}
	\label{P-SFT1}
For every $\mathsf{A}\in \mathscr{F}(\Gamma )$, the sets $X_\mathsf{A}\subseteq X_\mathsf{A}^\iota \subseteq \mathsf{A}^\Gamma $ are \emph{SFT}s.
	\end{proposition}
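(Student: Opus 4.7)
Since the alphabet $\mathsf{A}$ is finite, I must exhibit each of $X_\mathsf{A}^\iota$ and $X_\mathsf{A}$ in the form $X_\mathcal{P}$ for a \emph{finite} collection $\mathcal{P}$ of pairs $(K,\mathscr{O})$ with $K\in \mathscr{F}(\Gamma)$ and $\mathscr{O}\subseteq \mathsf{A}^K$. Both injectivity and bijectivity of $\varphi^{(x)}$ are plainly left-shift invariant, so it suffices to localise each defining failure to a bounded window centred at $\matheur{e}_\Gamma$ and then appeal to shift invariance.

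\emph{Injectivity.} The map $\varphi^{(x)}$ fails to be injective iff there exist distinct $s,t \in \Gamma$ with $sx_s = tx_t$, i.e.\ $x_s = (s^{-1}t)\,x_t$. Since $x_s,x_t\in\mathsf{A}$, such a coincidence forces $s^{-1}t$ into the finite set $\mathsf{A}\mathsf{A}^{-1}\setminus\{\matheur{e}_\Gamma\}$. I therefore take the window $K_1=\mathsf{A}\mathsf{A}^{-1}$ and the finite forbidden set
\[
\mathscr{O}_1 = \{\,y \in \mathsf{A}^{K_1} : y_{\matheur{e}_\Gamma} = v\,y_v \text{ for some } v \in K_1\setminus\{\matheur{e}_\Gamma\}\,\},
\]
and verify, using the identity $\pi_{K_1}(\lambda^r x)_u = x_{r^{-1}u}$ from Subsection~\ref{SS-local shift}, that $X_\mathsf{A}^\iota = X_{\{(K_1,\mathscr{O}_1)\}}$.

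\emph{Surjectivity.} This is the less obvious half. The key observation is that for $t\in\Gamma$ to lie in the image of $\varphi^{(x)}$ one needs $s$ with $sx_s=t$, i.e.\ $x_s = s^{-1}t\in\mathsf{A}$, and this forces $s$ into the \emph{finite} set $t\mathsf{A}^{-1}$. Writing $s = ta^{-1}$, surjectivity at $t$ becomes the existential condition
\[
\exists\, a \in \mathsf{A}\colon\quad x_{ta^{-1}} = a,
\]
which depends only on $x|_{t\mathsf{A}^{-1}}$. Taking $K_2 = \mathsf{A}^{-1}$ and
\[
\mathscr{O}_2 = \{\,y \in \mathsf{A}^{K_2} : y_u \neq u^{-1}\text{ for every } u \in \mathsf{A}^{-1}\,\},
\]
one checks that the global surjectivity of $\varphi^{(x)}$ is equivalent to $\pi_{K_2}(\lambda^r x) \notin \mathscr{O}_2$ for every $r\in\Gamma$; hence $X_\mathsf{A} = X_{\{(K_1,\mathscr{O}_1),(K_2,\mathscr{O}_2)\}}$.

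\emph{Where the work is.} There is no serious obstacle; the only conceptual point is recognising that surjectivity, which sounds global, is localised by the bounded-displacement hypothesis $s^{-1}\sigma(s)\in\mathsf{A}$ into the finite-search condition above. The remaining verifications--checking that each $\mathscr{O}_i$ correctly captures the relevant failure and that shift invariance recovers all pairs $(s,t)$ from the single window at $\matheur{e}_\Gamma$--are routine bookkeeping with the conventions of Subsection~\ref{SS-local shift}.
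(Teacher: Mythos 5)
Your proof is correct and follows essentially the same route as the paper's: you localise failure of injectivity to the window $\mathsf{A}\mathsf{A}^{-1}$ (the paper splits this into the two-point windows $\{\matheur{e}_\Gamma,t_1t_2^{-1}\}$ for each pair $t_1\neq t_2$ in $\mathsf{A}$, a purely cosmetic difference) and failure of surjectivity to the window $\mathsf{A}^{-1}$ with exactly the paper's forbidden set $\mathcal{P}_\mathsf{A}^\star$. Both collections are finite, so nothing further is needed.
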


	\begin{proof}
For any two distinct $t_1,t_2\in \mathsf{A}$ we put $K_{t_1,t_2}= \{\matheur{e}_\Gamma ,t_1t_2^{-1}\}$ and define $y^{(t_1,t_2)}\in \mathsf{A}^{K_{t_1,t_2}}$ by $y^{(t_1,t_2)}_{\matheur{e}_\Gamma }=t_1$ and $y^{(t_1,t_2)}_{t_1t_2^{-1}}=t_2$. If we set
	\begin{equation}
	\label{eq:PIA}
\mathcal{P}_\mathsf{A}^\iota =\bigl\{(K_{t_1,t_2},\{y^{(t_1,t_2)}\})\colon t_1,t_2\in \mathsf{A}, t_1\ne t_2\bigr\},
	\end{equation}
then one checks easily that, in the notation of \eqref{eq:admissible} -- \eqref{eq:locally admissible}, $X_{\mathcal{P}_\mathsf{A}^\iota } = X_\mathsf{A}^\iota $. Similarly, if
	\begin{equation}
	\label{eq:PIA'}
\mathcal{P}_\mathsf{A}^\star=\bigl\{(\mathsf{A}^{-1},\{z\}) \colon z\in \mathsf{A}^{\mathsf{A}^{-1}}\enspace \textup{and}\enspace z_{t^{-1}}\ne t\enspace \textup{for all}\enspace t\in \mathsf{A}\bigr\}
	\end{equation}
and $\mathcal{P}_\mathsf{A}=\mathcal{P}_\mathsf{A}^\iota \cup \mathcal{P}_\mathsf{A}^\star$, then $X_{\mathcal{P}_\mathsf{A}} = X_\mathsf{A}$.
	\end{proof}

Now assume that $\Gamma $ is amenable. We use right F{\o}lner sets.

The case $\Gamma=\mathbb{Z}^d$ of the following lemma is proved in \cite{Elimelech21}*{Proposition 10} using the ergodic theorem, and the proof there also works for general amenable $\Gamma $. We give a different proof.

	\begin{lemma}
	\label{L-inv measure are on perm}
Let $\mu $ be a $\Gamma $-invariant Borel probability measure on $X_\mathsf{A}^\iota $. Then $\supp(\mu)\subseteq X_\mathsf{A}$.
	\end{lemma}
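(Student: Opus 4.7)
The plan is to show that $\mu(X_\mathsf{A})=1$, which suffices because $X_\mathsf{A}$ is closed. Since $X_\mathsf{A} = \{x\in X_\mathsf{A}^\iota : \varphi^{(x)}(\Gamma)=\Gamma\} = \bigcap_{t\in \Gamma} A_t$, where
\[
A_t \coloneqq \{x\in X_\mathsf{A}^\iota : t\in \varphi^{(x)}(\Gamma)\} = \bigcup_{s\in t\mathsf{A}^{-1}}\{x : x_s = s^{-1}t\},
\]
and $\Gamma$ is countable, it is enough to prove $\mu(A_t)=1$ for every $t\in \Gamma$. Note that each $A_t$ is a clopen (hence measurable) subset of $X_\mathsf{A}^\iota$.

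First I would establish that $\mu(A_t)$ is constant in $t$. A direct computation shows that if $y=\lambda^r x$, then $\varphi^{(y)}(s)=s\, x_{r^{-1}s}$, so substituting $s=ru$ gives $\varphi^{(y)}(ru) = r\varphi^{(x)}(u)$, whence $\varphi^{(y)}(\Gamma) = r\,\varphi^{(x)}(\Gamma)$. Consequently $\lambda^r A_t = A_{rt}$, and the $\lambda$-invariance of $\mu$ yields $\mu(A_t)=\mu(A_{rt})$ for every $r\in \Gamma$. Thus $\mu(A_t)=c$ for some constant $c\in [0,1]$ independent of $t$.

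Next, for each $F\in \mathscr{F}(\Gamma)$ consider the counting function
\[
N_F(x) \coloneqq \sum_{t\in F} 1_{A_t}(x) = |F\cap \varphi^{(x)}(\Gamma)|.
\]
On $X_\mathsf{A}^\iota$ the map $\varphi^{(x)}$ is injective, so $|\varphi^{(x)}(F)|=|F|$; moreover $\varphi^{(x)}(F) \subseteq F\mathsf{A}$ because $\varphi^{(x)}(s)=s x_s \in s\mathsf{A}$. Therefore
\[
N_F(x) \ge |F\cap \varphi^{(x)}(F)| \ge |\varphi^{(x)}(F)|-|F\mathsf{A}\smallsetminus F| = |F|-|F\mathsf{A}\smallsetminus F|.
\]
Integrating against $\mu$ gives $c|F| = \int N_F \,d\mu \ge |F| - |F\mathsf{A}\smallsetminus F|$, i.e., $c \ge 1 - |F\mathsf{A}\smallsetminus F|/|F|$. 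Taking $F$ from a right F{\o}lner sequence, the right-hand side tends to $1$, forcing $c=1$. Hence $\mu(A_t)=1$ for all $t$, and the argument is complete.

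The computation is short and elementary; the only subtlety is keeping the left-shift action $\lambda$ and the right F{\o}lner condition properly aligned (in particular, checking that $\lambda^r A_t = A_{rt}$ from the correct form of $\varphi^{(\lambda^r x)}$), and ensuring that the inclusion $\varphi^{(x)}(F) \subseteq F\mathsf{A}$ is used with $F\mathsf{A}$ (not $\mathsf{A}F$) so that the right F{\o}lner property gives the desired asymptotic estimate.
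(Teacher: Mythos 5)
Your proof is correct, and it is organized differently from the paper's. The paper argues by contradiction around a fixed bad point: given $x\in X_\mathsf{A}^\iota\smallsetminus X_\mathsf{A}$ missing some $g$, it considers the clopen set $Y$ of configurations agreeing with $x$ on $g\mathsf{A}^{-1}$ (all of which also miss $g$), assumes $\mu(Y)>0$, extracts a single point $z$ with at least $|F|\mu(Y)$ visits to $Y$ along a F{\o}lner set $F$, and derives a cardinality contradiction from the disjointness of $Wg$ and $V=\varphi^{(z)}(F)$ inside $F(\mathsf{A}\cup\{g\})$. You instead prove directly that each clopen set $A_t=\{x: t\in\varphi^{(x)}(\Gamma)\}$ has full measure, using the equivariance $\lambda^r A_t=A_{rt}$ (which you verify correctly from $(\lambda^r x)_s=x_{r^{-1}s}$) to get a constant $c=\mu(A_t)$, and then the pointwise bound $\sum_{t\in F}1_{A_t}(x)\ge |F|-|F\mathsf{A}\smallsetminus F|$, valid on all of $X_\mathsf{A}^\iota$ by injectivity of $\varphi^{(x)}$ and $\varphi^{(x)}(F)\subseteq F\mathsf{A}$, integrated against $\mu$. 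Both arguments are elementary F{\o}lner-counting arguments avoiding the ergodic theorem (which is what Elimelech's proof uses); yours has the advantage of being a direct, quantitative statement ($c\ge 1-|F\mathsf{A}\smallsetminus F|/|F|$ for every $F$) with no case analysis or extraction of a special point, at the mild cost of first establishing the translation identity for the sets $A_t$. The final step is also sound: $\mu(X_\mathsf{A})=\mu\bigl(\bigcap_t A_t\bigr)=1$ over a countable intersection, and since $X_\mathsf{A}$ is closed this gives $\supp(\mu)\subseteq X_\mathsf{A}$.
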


	\begin{proof}
Let $x\in X_\mathsf{A}^\iota \smallsetminus X_\mathsf{A}$. Then there is some $g\in \Gamma $ such that $g\neq sx_s$ for all $s\in \Gamma $. Denote by $Y$ the set of $y\in X_\mathsf{A}^\iota $ satisfying that $y=x$ on $g\mathsf{A}^{-1}$. Then $Y$ is a closed and open subset of $X_\mathsf{A}^\iota $ containing $x$, and $g\neq sy_s$ for all $y\in Y$ and $s\in \Gamma $. It suffices to show that $\mu(Y)=0$.

Suppose that $\mu(Y)>0$. Take $F\in \mathscr{F}(\Gamma)$ such that $|F(\mathsf{A}\cup \{g\})|<(1+\mu(Y))|F|$. Then
	\begin{displaymath}
\int_{X_\mathsf{A}^\iota }\sum_{t\in F} 1_Y(t^{-1}y)\, d\mu(y)=|F|\mu(Y),
	\end{displaymath}
and hence there is some $z\in X_\mathsf{A}^\iota $ satisfying $\sum_{t\in F}1_Y(t^{-1}z)\ge |F|\mu(Y)$. Set $W=\{t\in F\colon t^{-1}z\in Y\}$. Then $|W|=\sum_{t\in F}1_Y(t^{-1}z)\ge |F|\mu(Y)$.
For each $t\in W$, we have $t^{-1}z\in Y$ and hence $g\neq s(t^{-1}z)_s=sz_{ts}$ for all $s\in \Gamma $, which implies that $g\neq t^{-1}sz_s$ for all $s\in \Gamma $. Put $V=\{sz_s\colon s\in F\}\subseteq F\mathsf{A}$. Then $Wg\cap V=\varnothing $, and $|V|=|F|$. Now we have
	\begin{displaymath}
|F|\mu(Y)+|F|\le |Wg|+|V|=|Wg\cup V|\le |F(\mathsf{A}\cup \{g\})|<(1+\mu(Y))|F|,
	\end{displaymath}
a contradiction.
	\end{proof}

From Lemma~\ref{L-inv measure are on perm} and the variational principle \cite{KL16}*{Theorem 9.48} we obtain the following consequence which is proved in \cite{Elimelech21}*{Theorem 10} in the case $\Gamma =\mathbb{Z}^d$.

	\begin{proposition}
	\label{P-equal entropy}
Let $\Gamma $ be a countable amenable group, and let $\mathsf{A}\in \mathscr{F}(\Gamma )$. Then $\htop(X_\mathsf{A})=\htop(X_\mathsf{A}^\iota )$.
	\end{proposition}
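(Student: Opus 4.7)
My plan is a quick two-step argument combining the inclusion $X_\mathsf{A}\subseteq X_\mathsf{A}^\iota$ with Lemma~\ref{L-inv measure are on perm} via the variational principle.

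First, the inclusion $X_\mathsf{A}\subseteq X_\mathsf{A}^\iota$ together with the fact that both are compact left $\Gamma$-invariant subsets of $\mathsf{A}^\Gamma$ gives immediately that $\htop(X_\mathsf{A})\le \htop(X_\mathsf{A}^\iota )$, since $\pi_F(X_\mathsf{A})\subseteq \pi_F(X_\mathsf{A}^\iota )$ for every $F\in \mathscr{F}(\Gamma)$ and topological entropy of a subshift is monotone in the alphabet-projected cardinalities (e.g.\ via Theorem~\ref{T-local subshift}).

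For the reverse inequality, I would invoke the variational principle for actions of countable amenable groups on compact metrizable spaces (cited from \cite{KL16}*{Theorem 9.48}) applied to $(X_\mathsf{A}^\iota ,\lambda )$:
\begin{displaymath}
\htop(X_\mathsf{A}^\iota )=\sup_{\mu }\hmu(X_\mathsf{A}^\iota ,\lambda ),
\end{displaymath}
where $\mu $ ranges over the $\lambda $-invariant Borel probability measures on $X_\mathsf{A}^\iota $. By Lemma~\ref{L-inv measure are on perm}, every such $\mu $ satisfies $\supp(\mu )\subseteq X_\mathsf{A}$, so $\mu $ may be regarded as a $\lambda $-invariant Borel probability measure on the closed $\Gamma $-invariant subset $X_\mathsf{A}$, and this identification preserves measure-theoretic entropy. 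Hence
\begin{displaymath}
\htop(X_\mathsf{A}^\iota )=\sup_{\mu }\hmu(X_\mathsf{A}^\iota ,\lambda )=\sup_{\mu }\hmu(X_\mathsf{A},\lambda )\le \htop(X_\mathsf{A}),
\end{displaymath}
where the last inequality is another application of the variational principle, this time to $(X_\mathsf{A},\lambda )$. Combining both inequalities yields the claim.

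The only mild subtlety is the trivial case when $X_\mathsf{A}^\iota =\varnothing $ (and hence $X_\mathsf{A}=\varnothing $), in which both entropies are $-\infty $ by convention; otherwise compactness of $X_\mathsf{A}^\iota $ together with amenability of $\Gamma $ guarantees the existence of a $\lambda $-invariant Borel probability measure, so the variational principle applies on both sides. There is no substantive obstacle beyond correctly invoking Lemma~\ref{L-inv measure are on perm} to transfer invariant measures from $X_\mathsf{A}^\iota $ to $X_\mathsf{A}$.
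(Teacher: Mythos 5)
Your argument is correct and is exactly the paper's proof: the authors also derive the proposition directly from Lemma~\ref{L-inv measure are on perm} combined with the variational principle \cite{KL16}*{Theorem 9.48}, the inequality $\htop(X_\mathsf{A})\le \htop(X_\mathsf{A}^\iota )$ being immediate from the inclusion $X_\mathsf{A}\subseteq X_\mathsf{A}^\iota $. (Note that the emptiness worry is vacuous: by Lemma~\ref{L-conjugacy} one may assume $\matheur{e}_\Gamma \in \mathsf{A}$, so $X_\mathsf{A}\ne \varnothing $.)
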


\medskip For discussing local entropies of the spaces $X_\mathsf{A}$ and $X_\mathsf{A}^\iota $ we set, for all $F,F'\in \mathscr{F}(\Gamma )$,
	\begin{equation}
	\label{eq:XAF}
	\begin{gathered}
X_{\mathsf{A},F}^\iota =\bigl\{x \in \mathsf{A}^F\colon \varphi ^{(x)} \enspace \textup{is injective}\bigr\},
	\\
X_{\mathsf{A},F} = \bigl\{x\in X_{\mathsf{A},F}^\iota \colon \varphi ^{(x)}(F)\supseteq \{s\in \Gamma \colon s\mathsf{A}^{-1}\subseteq F\}\bigr\}.
	\\
X_{\mathsf{A},F,F'} = \{x\in X_{\mathsf{A},F}^\iota\colon \varphi ^{(x)}(F)\supseteq F'\}.
	\end{gathered}
	\end{equation}
It is easily checked that, in the notation of Proposition \ref{P-SFT1}, \eqref{eq:PIA} -- \eqref{eq:PIA'},
	\begin{displaymath}
X_{\mathcal{P}_\mathsf{A}^\iota , F}=X_{\mathsf{A}, F}^\iota \enspace \textup{and}\enspace X_{\mathcal{P}_\mathsf{A}, F}=X_{\mathsf{A}, F}
	\end{displaymath}
for every $F\in \mathscr{F}(\Gamma )$.

	\begin{proposition}
	\label{P-entropy in local}
We have
	\begin{displaymath}
\htop(X_\mathsf{A})=\inf_{F\in \mathscr{F}(\Gamma)}\frac{1}{|F\mathsf{A}^{-1}|}\log |X_{\mathsf{A}, F\mathsf{A}^{-1}, F}|=\lim_F\frac{1}{|F|}\log |X_{\mathsf{A}, F\mathsf{A}^{-1}, F}|=\lim_F \frac{1}{|F|}\log |X_{\mathsf{A}, F}|,
	\end{displaymath}
and
	\begin{displaymath}
\htop(X_\mathsf{A}^\iota )=\inf_{F\in \mathscr{F}(\Gamma)}\frac{1}{|F|}\log |X_{\mathsf{A}, F}^\iota |=\lim_F \frac{1}{|F|}\log |X_{\mathsf{A}, F}^\iota |.
	\end{displaymath}
	\end{proposition}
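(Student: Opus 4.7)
The plan is to derive everything from Theorem~\ref{T-local subshift}, applied to the two SFTs $X_\mathsf{A}^\iota=X_{\mathcal{P}_\mathsf{A}^\iota}$ and $X_\mathsf{A}=X_{\mathcal{P}_\mathsf{A}}$ of Proposition~\ref{P-SFT1}, combined with Proposition~\ref{P-equal entropy} and a simple sandwich. Using the identifications $X_{\mathcal{P}_\mathsf{A}^\iota,F}=X_{\mathsf{A},F}^\iota$ and $X_{\mathcal{P}_\mathsf{A},F}=X_{\mathsf{A},F}$ recorded just above the proposition, Theorem~\ref{T-local subshift} applied to $\mathcal{P}=\mathcal{P}_\mathsf{A}^\iota$ immediately gives the entire second assertion, and applied to $\mathcal{P}=\mathcal{P}_\mathsf{A}$ it yields
$$\htop(X_\mathsf{A})=\inf_{G\in \mathscr{F}(\Gamma)}\frac{1}{|G|}\log |X_{\mathsf{A},G}|=\lim_G \frac{1}{|G|}\log |X_{\mathsf{A},G}|,$$
which already accounts for the last equality in the first assertion.

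The remaining work is to identify $\inf_F \frac{1}{|F\mathsf{A}^{-1}|}\log |X_{\mathsf{A},F\mathsf{A}^{-1},F}|$ and $\lim_F \frac{1}{|F|}\log |X_{\mathsf{A},F\mathsf{A}^{-1},F}|$ with $\htop(X_\mathsf{A})$. For this I would establish the chain
$$X_{\mathsf{A},F\mathsf{A}^{-1}}\subseteq X_{\mathsf{A},F\mathsf{A}^{-1},F}\subseteq X_{\mathsf{A},F\mathsf{A}^{-1}}^\iota, \qquad F\in \mathscr{F}(\Gamma).$$
The right-hand inclusion is immediate from \eqref{eq:XAF}. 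For the left inclusion, note that any $s\in F$ satisfies $s\mathsf{A}^{-1}\subseteq F\mathsf{A}^{-1}$, so that $F\subseteq \{s\in \Gamma : s\mathsf{A}^{-1}\subseteq F\mathsf{A}^{-1}\}$ and the defining condition for $X_{\mathsf{A},F\mathsf{A}^{-1}}$ is therefore \emph{stronger} than the one defining $X_{\mathsf{A},F\mathsf{A}^{-1},F}$.

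Since $F\mapsto F\mathsf{A}^{-1}$ sends any right F\o{}lner sequence to another right F\o{}lner sequence with $|F\mathsf{A}^{-1}|/|F|\to 1$, substituting $G=F\mathsf{A}^{-1}$ in the two limit formulas above and invoking Proposition~\ref{P-equal entropy} to equate $\htop(X_\mathsf{A})$ and $\htop(X_\mathsf{A}^\iota)$ shows that both ends of the chain, after dividing by $|F\mathsf{A}^{-1}|$ (equivalently by $|F|$), tend to $\htop(X_\mathsf{A})$; squeezing gives the desired limit. The infimum assertion then follows from the left inclusion in the chain together with Theorem~\ref{T-local subshift}: for every $F\in \mathscr{F}(\Gamma)$,
$$\frac{1}{|F\mathsf{A}^{-1}|}\log |X_{\mathsf{A},F\mathsf{A}^{-1},F}|\ge \frac{1}{|F\mathsf{A}^{-1}|}\log |X_{\mathsf{A},F\mathsf{A}^{-1}}|\ge \inf_{G}\frac{1}{|G|}\log |X_{\mathsf{A},G}|=\htop(X_\mathsf{A}),$$
so the infimum is at least $\htop(X_\mathsf{A})$, and the matching upper bound is supplied by the Følner limit. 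The only real bookkeeping point—the main obstacle, such as it is—is the standard verification that $F\mapsto F\mathsf{A}^{-1}$ preserves right F\o{}lner-ness and that $|F\mathsf{A}^{-1}|/|F|\to 1$, which follows routinely from $|Fa\vartriangle F|/|F|\to 0$ for each $a\in \mathsf{A}$.
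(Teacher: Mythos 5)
Your proof is correct, and its skeleton coincides with the paper's: both deduce the second assertion and the equality $\htop(X_\mathsf{A})=\lim_F\frac{1}{|F|}\log|X_{\mathsf{A},F}|=\inf_F\frac{1}{|F|}\log|X_{\mathsf{A},F}|$ directly from Theorem~\ref{T-local subshift} via the identifications $X_{\mathcal{P}_\mathsf{A},F}=X_{\mathsf{A},F}$, $X_{\mathcal{P}_\mathsf{A}^\iota,F}=X_{\mathsf{A},F}^\iota$, and both use the inclusion $X_{\mathsf{A},F\mathsf{A}^{-1}}\subseteq X_{\mathsf{A},F\mathsf{A}^{-1},F}$ for the lower bound on $|X_{\mathsf{A},F\mathsf{A}^{-1},F}|$. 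The one genuine difference is the upper bound. You complete the sandwich with $X_{\mathsf{A},F\mathsf{A}^{-1},F}\subseteq X_{\mathsf{A},F\mathsf{A}^{-1}}^\iota$ and then invoke Proposition~\ref{P-equal entropy} to identify the limit of the right-hand side with $\htop(X_\mathsf{A})$; this is valid, but it imports the measure-theoretic input (Lemma~\ref{L-inv measure are on perm} plus the variational principle) into a statement that the paper keeps purely combinatorial. The paper instead normalizes $\matheur{e}_\Gamma\in\mathsf{A}$ (Remark~\ref{r:1 in A}), observes that the restriction to $F$ of any $x\in X_{\mathsf{A},F\mathsf{A}^{-1},F}$ lies in $X_{\mathsf{A},F}$, and gets the elementary counting bound $|X_{\mathsf{A},F\mathsf{A}^{-1},F}|\le |X_{\mathsf{A},F}|\cdot|\mathsf{A}|^{|F\mathsf{A}^{-1}\smallsetminus F|}$, which suffices since $|F\mathsf{A}^{-1}\smallsetminus F|/|F|\to 0$. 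What your route buys is that you never need to restrict elements of $X_{\mathsf{A},F\mathsf{A}^{-1},F}$ to $F$ (and hence never need the normalization $\matheur{e}_\Gamma\in\mathsf{A}$); what the paper's route buys is independence from Proposition~\ref{P-equal entropy}. Your remaining bookkeeping claims --- that $F\mapsto F\mathsf{A}^{-1}$ preserves right F\o lner approximate invariance and that $|F\mathsf{A}^{-1}|/|F|\to 1$ --- are indeed routine and correct.
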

	\begin{proof}
From Theorem~\ref{T-local subshift} we have
	\begin{equation}
	\label{E-entropy in local1}
\htop(X_\mathsf{A})= \inf_{F\in \mathscr{F}(\Gamma)} \frac{1}{|F|}\log |X_{\mathsf{A}, F}|=\lim_F \frac{1}{|F|}\log |X_{\mathsf{A}, F}|,
	\end{equation}
and
	\begin{displaymath}
\htop(X_\mathsf{A}^\iota )= \inf_{F\in \mathscr{F}(\Gamma)} \frac{1}{|F|}\log |X_{\mathsf{A}, F}^\iota |=\lim_F \frac{1}{|F|}\log |X_{\mathsf{A}, F}^\iota |.
	\end{displaymath}
We are left to show that
	\begin{equation}
	\label{E-entropy in local2}
\htop(X_\mathsf{A})=\inf_{F\in \mathscr{F}(\Gamma)}\frac{1}{|F\mathsf{A}^{-1}|}\log |X_{\mathsf{A}, F\mathsf{A}^{-1}, F}|=\lim_F\frac{1}{|F|}\log |X_{\mathsf{A}, F\mathsf{A}^{-1}, F}|.
	\end{equation}
For every $F\in \mathscr{F}(\Gamma )$ we have
	\begin{displaymath}
X_{\mathsf{A},F\mathsf{A}^{-1}}\subseteq X_{\mathsf{A},F\mathsf{A}^{-1},F}.
	\end{displaymath}
With reference to Remark \ref{r:1 in A} we take it that $\matheur{e}_\Gamma \in \mathsf{A}$. Then the restriction of each $x\in X_{\mathsf{A},F\mathsf{A}^{-1},F}$ to $F$ lies in $X_{\mathsf{A},F}$, and hence
	\begin{displaymath}
|X_{\mathsf{A}, F\mathsf{A}^{-1}, F}|\le |X_{\mathsf{A}, F}|\cdot |\mathsf{A}|^{|F\mathsf{A}^{-1}\smallsetminus F|}.
	\end{displaymath}
Thus
	\begin{equation}
	\label{E-entropy in local3}
	\begin{aligned}
\inf_{F\in \mathscr{F}(\Gamma)} \frac{1}{|F|}&\log |X_{\mathsf{A}, F}|\le \inf_{F\in \mathscr{F}(\Gamma)} \frac{1}{|F\mathsf{A}^{-1}|}\log |X_{\mathsf{A}, F\mathsf{A}^{-1}}|
	\\
&\le \inf_{F\in \mathscr{F}(\Gamma)}\frac{1}{|F\mathsf{A}^{-1}|}\log |X_{\mathsf{A}, F\mathsf{A}^{-1}, F}|
\le \liminf_F\frac{1}{|F\mathsf{A}^{-1}|}\log |X_{\mathsf{A}, F\mathsf{A}^{-1}, F}|
	\\
&\le \liminf_F\frac{1}{|F|}\log |X_{\mathsf{A}, F\mathsf{A}^{-1}, F}| \le \limsup_F\frac{1}{|F|}\log |X_{\mathsf{A}, F\mathsf{A}^{-1}, F}|
	\\
&\le \limsup_F\frac{1}{|F|}\log \bigl(|X_{\mathsf{A}, F}|\cdot |\mathsf{A}|^{|F\mathsf{A}^{-1}\smallsetminus F|}\bigr)=\lim_F\frac{1}{|F|}\log |X_{\mathsf{A}, F}|.
	\end{aligned}
	\end{equation}
Now \eqref{E-entropy in local2} follows from \eqref{E-entropy in local3} and \eqref{E-entropy in local1}.
	\end{proof}

	\section{Permanents for elements of the group algebra}
	\label{S-permanent}

Let $\Gamma $ be a countable discrete group, $\mathsf{A}\in \mathscr{F}(\Gamma )$, and let $X_\mathsf{A}\subseteq X_\mathsf{A}^\iota \subseteq \mathsf{A}^\Gamma $ be the subshifts defined in \eqref{eq:XA}. For every $F\in \mathscr{F}(\Gamma )$ we define $X_{\mathsf{A},F}\subseteq X_{\mathsf{A},F}^\iota \subseteq \mathsf{A}^F$ by \eqref{eq:XAF}.

Let $\mathbb{C}\Gamma $ be the \emph{complex group ring} of $\Gamma $, i.e., the set of all finitely supported maps $f\colon s\mapsto f_s$ from $\Gamma $ to $\mathbb{C}$. For $f, g\in \mathbb{C}\Gamma $ we denote by $f\cdot g$ the \emph{pointwise} product of $f$ and $g$, i.e. $(f\cdot g)_s=f_sg_s$ for all $s\in \Gamma $ (this is \emph{not} the usual multiplication in $\mathbb{C}\Gamma $, which we write as $(f,g)\mapsto fg$). We embed $\Gamma $ in $\mathbb{C}\Gamma $ by identifying each $s\in \Gamma $ with the element in $\mathbb{C}\Gamma $ given by
	\begin{displaymath}
s_t=
	\begin{cases}
1&\textup{if}\enspace t=s,
	\\
0&\textup{otherwise}.
	\end{cases}
	\end{displaymath}
In this notation every $f\in \mathbb{C}\Gamma $ is a finite sum of the form $f=\sum_{s\in \Gamma }f_ss$. For every $f=\sum_{s\in \Gamma }f_ss\in \mathbb{C}\Gamma $ we denote by $f^*=\sum_{s\in \Gamma }\overline{f_{s^{-1}}}s\in \mathbb{C}\Gamma $ the \emph{adjoint} of $f$ and observe that $(ff^*)_{\matheur{e}_\Gamma }=(f^*f)_{\matheur{e}_\Gamma }= \sum_{s\in \Gamma }|f_s|^2 >0$ whenever $f$ is nonzero. The \emph{real} and \emph{integer group rings} $\mathbb{Z}\Gamma \subset \mathbb{R}\Gamma \subset \mathbb{C}\Gamma $ are the subrings of all real- and integer-valued elements of $\mathbb{C}\Gamma $, and we put $\mathbb{R}_{\ge0}=\{r\in \mathbb{R}\colon r\ge0\}$, $\mathbb{Z}_{\ge0}=\mathbb{Z}\cap\mathbb{R}_{\ge0}$ and write $\mathbb{Z}_{\ge0}\Gamma \subset \mathbb{R}_{\ge0}\Gamma $ for the sets of nonnegative elements in $\mathbb{Z}\Gamma $ and $\mathbb{R}\Gamma $.

\smallskip For $f\in \mathbb{R}_{\ge0}\Gamma $ and $\mathsf{A},F\in \mathscr{F}(\Gamma )$ we set
	\begin{equation}
	\label{eq:permanent}
\per _{\mathsf{A},F}(f)=\sum_{x\in X_{\mathsf{A},F}}\prod _{s\in F}f_{x_s}\quad\textup{and}\quad \iper_{\mathsf{A},F}(f)=\sum_{x\in X_{\mathsf{A},F}^\iota }\prod _{s\in F}f_{x_s}.
	\end{equation}

	\begin{lemma}
	\label{L-perm subadditivity}
Let $f, g\in \mathbb{R}_{\ge0}\Gamma $ and $\mathsf{A}\in \mathscr{F}(\Gamma )$.
	\begin{enumerate}
	\item
For any $s\in \Gamma $ and $F\in \mathscr{F}(\Gamma )$, $\per _{\mathsf{A}s,F}(fs)=\per _{\mathsf{A},F}(f)= \per _{s\mathsf{A},Fs^{-1}}(sf)$ and\linebreak $\iper_{\mathsf{A}s,F} (fs)\linebreak[0]=\iper_{\mathsf{A},F} (f)=\iper_{s\mathsf{A},Fs^{-1}} (sf)$;
	\item
For any $s\in \Gamma $ and $F\in \mathscr{F}(\Gamma )$, $\per _{\mathsf{A},sF}(f)=\per _{\mathsf{A},F}(f)$ and $\iper_{\mathsf{A},sF} (f)=\iper_{\mathsf{A},F} (f)$;
	\item
Assume that $f$ is nonzero and put $\varkappa =\min_{\{s\in \Gamma \colon f_s>0\}}f_s>0$. For any $F_1, F_2\in \mathscr{F}(\Gamma )$ one has
	\begin{gather*}
\smash[b]{\frac{\per _{\mathsf{A},F_1\cup F_2}(f)}{\varkappa ^{|F_1\cup F_2|}}\le \frac{\per _{\mathsf{A},F_1}(f)}{\varkappa ^{|F_1|}}\cdot \frac{\per _{\mathsf{A},F_2}(f)}{\varkappa ^{|F_2|}},}
	\\
\intertext{and}
\smash[t]{\frac{\iper_{\mathsf{A},F_1\cup F_2} (f)}{\varkappa ^{|F_1\cup F_2|}}\le \frac{\iper_{\mathsf{A},F_1} (f)} {\varkappa ^{|F_1|}}\cdot \frac{\iper_{\mathsf{A},F_2} (f)} {\varkappa ^{|F_2|}}.}
	\end{gather*}
	\item
For any $F\in \mathscr{F}(\Gamma )$, $\per _{\mathsf{A},F}(f\cdot g)\le \per _{\mathsf{A},F}(f)\cdot \per _{\mathsf{A},F}(g)$ and $\iper_{\mathsf{A},F} (f\cdot g)\le \iper_{\mathsf{A},F} (f)\cdot \iper_{\mathsf{A},F} (g)$.
	\end{enumerate}
	\end{lemma}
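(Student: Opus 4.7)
Parts (1) and (2) are verifications that the relevant sums are invariant under natural relabellings. For (2), the map $x \mapsto sx$ with $(sx)_{st} = x_t$ is a bijection $\mathsf{A}^F \to \mathsf{A}^{sF}$ satisfying $\varphi^{(sx)}(st) = s\,\varphi^{(x)}(t)$, so the definitions in \eqref{eq:XAF} yield bijections $X_{\mathsf{A},F} \to X_{\mathsf{A},sF}$ and $X_{\mathsf{A},F}^\iota \to X_{\mathsf{A},sF}^\iota$ which manifestly preserve the weight $\prod_t f_{x_t}$. For (1) the analogous bijections come from right- or left-multiplying alphabet values and indices by $s^{\pm 1}$: the map $x \mapsto (x_t s^{-1})_{t \in F}$ sends $X_{\mathsf{A}s,F}$ onto $X_{\mathsf{A},F}$, with $\varphi^{(x)} = \varphi^{(y)} s$ and weight identity $(fs)_{x_t} = f_{y_t}$; similarly for the left action. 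The injectivity clause and the covering clause in \eqref{eq:XAF} both transform correspondingly (the set $\{g\colon g\mathsf{A}^{-1}\subseteq F\}$ becomes $\{g\colon g(\mathsf{A}s)^{-1}\subseteq F\}$ after the substitution $g \leftrightarrow gs$, and similarly on the left).

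The main content is in (3). I first claim that restriction sends $X_{\mathsf{A}, F_1 \cup F_2}$ injectively into the subset of $X_{\mathsf{A}, F_1} \times X_{\mathsf{A}, F_2}$ consisting of pairs that agree on $F_1 \cap F_2$. Injectivity of $\varphi^{(x)}|_{F_i}$ is automatic. The covering condition requires the following observation: if $g\mathsf{A}^{-1} \subseteq F_i$, then $g\mathsf{A}^{-1} \subseteq F_1 \cup F_2$, so by hypothesis there is $t \in F_1 \cup F_2$ with $t x_t = g$; but then $t = g x_t^{-1} \in g\mathsf{A}^{-1} \subseteq F_i$, placing the preimage inside $F_i$. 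The same argument, with the covering clause deleted, handles $\iper$.

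For the weight estimate, use the identity
        \begin{displaymath}
\prod_{s \in F_1 \cup F_2} f_{x_s} = \frac{\bigl(\prod_{s \in F_1} f_{x_s}\bigr)\bigl(\prod_{s \in F_2} f_{x_s}\bigr)}{\prod_{s \in F_1 \cap F_2} f_{x_s}}.
        \end{displaymath}
Only tuples $x$ with $x_s \in \supp(f)$ for every $s$ contribute to $\per_{\mathsf{A}, F_1 \cup F_2}(f)$, and for any such $x$ the denominator is at least $\varkappa^{|F_1 \cap F_2|}$. Summing over $x \in X_{\mathsf{A}, F_1 \cup F_2}$ and then enlarging the range to all of $X_{\mathsf{A}, F_1} \times X_{\mathsf{A}, F_2}$ (dropping the consistency constraint on $F_1 \cap F_2$) yields $\per_{\mathsf{A}, F_1 \cup F_2}(f) \le \varkappa^{-|F_1 \cap F_2|}\, \per_{\mathsf{A}, F_1}(f)\, \per_{\mathsf{A}, F_2}(f)$, which the identity $|F_1| + |F_2| = |F_1 \cup F_2| + |F_1 \cap F_2|$ rearranges into the stated form. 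Replacing $X_{\mathsf{A}, \cdot}$ by $X_{\mathsf{A}, \cdot}^\iota$ throughout gives the $\iper$ version.

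Finally, (4) is immediate: expanding the product gives $\per_{\mathsf{A}, F}(f)\, \per_{\mathsf{A}, F}(g) = \sum_{x, y \in X_{\mathsf{A}, F}} \prod_{s \in F} f_{x_s} g_{y_s}$, which dominates its diagonal subsum $\sum_{x} \prod_s f_{x_s} g_{x_s} = \per_{\mathsf{A}, F}(f \cdot g)$ term by term by nonnegativity; the same holds with $\iper$. The only genuine content is the surjectivity verification in part (3); everything else is bookkeeping.
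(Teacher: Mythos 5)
Your proof is correct and follows essentially the same route as the paper's: bijective relabellings for (1)--(2), restriction to $F_1$ and $F_2$ combined with the $\varkappa$-normalization for (3), and the diagonal subsum for (4). The one step where you add genuine detail is the verification that restriction preserves the covering clause $\varphi^{(x)}(F_i)\supseteq\{g\colon g\mathsf{A}^{-1}\subseteq F_i\}$ (the paper simply asserts $\pi_{F_j}(x)\in X_{\mathsf{A},F_j}$), and your argument there is right.
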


	\begin{proof} (1) and (2) are obvious.

(3): We prove the first inequality. The second inequality is proved similarly. Let $x\in X_{\mathsf{A},F_1\cup F_2}$. Then $\pi _{F_j}(x) \in X_{\mathsf{A},F_j}$ for $j=1,2$. If $f_{x_s}\neq 0$ for all $s\in F_1\cap F_2$, then
	\begin{displaymath}
\varkappa ^{|F_1\cap F_2|}\prod _{s\in F_1\cup F_2}f_{x_s}\le \Biggl(\prod _{s\in F_1\cap F_2}f_{x_s}\Biggr)\cdot \Biggl(\prod _{s\in F_1\cup F_2}f_{x_s}\Biggr)=\Biggl(\prod _{s\in F_1}f_{x_s}\Biggr)\cdot \Biggl(\prod _{s\in F_2}f_{x_s}\Biggr),
	\end{displaymath}
and hence
	\begin{equation}
	\label{E-product}
\smash[t]{\frac{\prod _{s\in F_1\cup F_2}f_{x_s}}{\varkappa ^{|F_1\cup F_2|}}\le \frac{\prod _{s\in F_1}f_{x_s}}{\varkappa ^{|F_1|}}\cdot \frac{\prod _{s\in F_2}f_{x_s}}{\varkappa ^{|F_2|}}.}
	\end{equation}
If $f_{x_s}=0$ for some $s\in F_1\cap F_2$, then \eqref{E-product} holds trivially. Now we have
	\begin{align*}
\frac{\per _{\mathsf{A},F_1\cup F_2}(f)}{\varkappa ^{|F_1\cup F_2|}}&=\sum_{x\in X_{\mathsf{A},F_1\cup F_2}}\frac{\prod _{s\in F_1\cup F_2}f_{x_s}}{\varkappa ^{|F_1\cup F_2|}} \le \sum_{x\in X_{\mathsf{A},F_1\cup F_2}}\frac{\prod _{s\in F_1}f_{x_s}}{\varkappa ^{|F_1|}}\cdot \frac{\prod _{s\in F_2}f_{x_s}}{\varkappa ^{|F_2|}}
	\\
&\le \frac{\per _{\mathsf{A},F_1}(f)} {\varkappa ^{|F_1|}}\cdot \frac{\per _{\mathsf{A},F_2}(f)} {\varkappa ^{|F_2|}}.
	\end{align*}

(4): We have
	\begin{align*}
\per _{\mathsf{A},F}(f\cdot g)&=\sum_{x\in X_{\mathsf{A},F}}\prod _{s\in F}(f\cdot g)_{x_s}
=\sum_{x\in X_{\mathsf{A},F}}\prod _{s\in F}f_{x_s}\prod _{t\in F}g_{x_t}
	\\
&\le \Biggl(\sum_{x\in X_{\mathsf{A},F}}\prod _{s\in F}f_{x_s}\Biggr)\cdot \Biggl(\sum_{x\in X_{\mathsf{A},F}}\prod _{t\in F}g_{x_t}\Biggr)
	\\
&=\per _{\mathsf{A},F}(f)\cdot \per _{\mathsf{A},F}(g).
	\end{align*}
The proof for $\iper_{\mathsf{A},F} (f\cdot g)\le \iper_{\mathsf{A},F} (f)\cdot \iper_{\mathsf{A},F}(g)$ is similar.
	\end{proof}

Now assume that $\Gamma $ is amenable. As in \eqref{eq:limit} -- \eqref{eq:htop} we use right F{\o}lner sets.

	\begin{lemma}
	\label{L-limit}
The limit $\lim_F\frac{1}{|F|}\log \per _{\mathsf{A},F}(f)$ exists in $[-\infty, +\infty)$, i.e., there is some $C\in [-\infty, \linebreak[0]+\infty)$ such that for any neighborhood $U$ of $C$ in $[-\infty, +\infty)$ there are $K\in \mathscr{F}(\Gamma )$ and $\delta >0$ such that $\frac{1}{|F|}\log \per _{\mathsf{A},F}(f)\in U$ for all $F\in \mathscr{F}(\Gamma )$ with $|FK\vartriangle F|<\delta |F|$. Also the limit $\lim_F\frac{1}{|F|}\log \iper_{\mathsf{A},F} (f)$ exists in $[-\infty, +\infty)$.
	\end{lemma}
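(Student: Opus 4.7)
The strategy is to renormalize $\per_{\mathsf{A},F}(f)$ so that Lemma~\ref{L-perm subadditivity} reduces the claim to the Ornstein--Weiss lemma (\cite{KL16}*{Theorem 4.38}), exactly as in the proof of Lemma~\ref{L-local}. If $f=0$ then $\per_{\mathsf{A},F}(f)=0$ for every $F\in\mathscr{F}(\Gamma)$ and the limit is $-\infty$ trivially, so I may assume $f\neq 0$ and set $\varkappa=\min_{\{s\,:\,f_s>0\}}f_s>0$. Define
\begin{displaymath}
\psi(F)=\log \per_{\mathsf{A},F}(f)-|F|\log \varkappa \;\in\; [-\infty,+\infty),
\end{displaymath}
using the convention $\log 0=-\infty$. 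From Lemma~\ref{L-perm subadditivity}(2) we have $\psi(sF)=\psi(F)$ for every $s\in\Gamma$, i.e., $\psi$ is left-$\Gamma$-invariant; from Lemma~\ref{L-perm subadditivity}(3) we obtain
\begin{displaymath}
\psi(F_1\cup F_2)\le \psi(F_1)+\psi(F_2)
\end{displaymath}
for all $F_1,F_2\in\mathscr{F}(\Gamma)$, with the usual convention that $-\infty+c=-\infty$.

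Next I split into two cases according to whether $\per_{\mathsf{A},F}(f)$ ever vanishes. Suppose first that $\per_{\mathsf{A},F_0}(f)=0$ for some $F_0\in\mathscr{F}(\Gamma)$. By left-invariance $\psi(sF_0)=-\infty$ for every $s\in\Gamma$. A short pigeonhole calculation shows that for $F\in\mathscr{F}(\Gamma)$ with $|F\smallsetminus Fa^{-1}|<|F|/|F_0|$ for every $a\in F_0$ the intersection $\bigcap_{a\in F_0}Fa^{-1}$ is nonempty, giving $s\in\Gamma$ with $sF_0\subseteq F$; then subadditivity applied to the decomposition $F=(sF_0)\cup(F\smallsetminus sF_0)$ forces $\psi(F)=-\infty$. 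Since such $F$ arise whenever $F$ is sufficiently right invariant with respect to $F_0^{-1}$, we conclude $\lim_F \psi(F)/|F|=-\infty$ in the sense of \eqref{eq:limit}. In the remaining case $\psi$ is real-valued everywhere, and the Ornstein--Weiss lemma \cite{KL16}*{Theorem 4.38}, applied to the left-invariant, subadditive function $\psi$, yields the existence of $\lim_F \psi(F)/|F|\in(-\infty,+\infty)$.

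In either case,
\begin{displaymath}
\lim_F \tfrac{1}{|F|}\log \per_{\mathsf{A},F}(f)=\log\varkappa+\lim_F \tfrac{1}{|F|}\psi(F)
\end{displaymath}
exists in $[-\infty,+\infty)$ in the sense specified by the lemma. The argument for $\iper_{\mathsf{A},F}(f)$ is word-for-word the same, since Lemma~\ref{L-perm subadditivity} provides the corresponding left-invariance and subadditivity for $\iper$. The only subtlety I expect is the bookkeeping in the vanishing case --- ensuring that a translate $sF_0\subseteq F$ can be found once $F$ is right Følner enough --- but this is a routine consequence of the right Følner condition and causes no genuine obstruction.
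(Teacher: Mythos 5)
Your proof is correct and takes essentially the same route as the paper's: normalize by $\varkappa^{|F|}$, invoke Lemma~\ref{L-perm subadditivity}~(2)--(3) for translation invariance and subadditivity, and apply the Ornstein--Weiss lemma. Your separate treatment of the degenerate case where $\per_{\mathsf{A},F}(f)$ vanishes (so that the normalized function takes the value $-\infty$) is a correct extra bit of care that the paper leaves implicit.
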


	\begin{proof}
We may assume that $f\neq 0$ and put $\varkappa =\min_{\{s\in \Gamma \colon f_s>0\}}f_s>0$. From Lemma~\ref{L-perm subadditivity} (2) -- (3) and the Ornstein-Weiss lemma \cite{KL16}*{Theorem 4.38} we know that the limits
	\begin{displaymath}
\smash[b]{\lim_F\frac{1}{|F|}\log \frac{\per _{\mathsf{A},F}(f)}{\varkappa ^{|F|}} \quad \textup{and} \quad \lim_F\frac{1}{|F|}\log \frac{\iper_{\mathsf{A},F} (f)} {\varkappa ^{|F|}}}
	\end{displaymath}
exist. Then
	\begin{displaymath}
\smash{\lim_F\frac{1}{|F|}\log \per _{\mathsf{A},F}(f)=\lim_F\frac{1}{|F|}\log \frac{\per _{\mathsf{A},F}(f)}{\varkappa ^{|F|}}+\log \varkappa }
	\end{displaymath}
and
	\begin{displaymath}
\smash[t]{\lim_F\frac{1}{|F|}\log \iper_{\mathsf{A},F} (f)= \lim_F\frac{1}{|F|}\log \frac{\iper_{\mathsf{A},F} (f)} {\varkappa^{|F|}}+\log \varkappa .}\tag*{\qedsymbol}
	\end{displaymath}
	\renewcommand{\qedsymbol}{}
	\vspace{-\baselineskip}
	\end{proof}

	\begin{lemma}
	\label{L-indep of A}
When $\supp(f)=\{s\in \Gamma \colon f_s>0\}\subseteq \mathsf{A}$, the limits $\lim_F\frac{1}{|F|}\log \per _{\mathsf{A},F}(f)$ and $\lim_F\frac{1}{|F|}\log \iper_{\mathsf{A},F} (f)$ do not depend on the choice of $\mathsf{A}$.
	\end{lemma}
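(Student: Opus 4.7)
The plan is to sandwich $\per_{\mathsf{A},F}(f)$ and $\iper_{\mathsf{A},F}(f)$ between quantities that manifestly do not depend on $\mathsf{A}$, and then show that the outer bounds have the same logarithmic growth rate as the middle.

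First I would dispose of the $\iper$ case entirely. Since $f_{x_s}=0$ whenever $x_s \notin \supp(f)$, the sum defining $\iper_{\mathsf{A},F}(f)$ picks up only those $x \in X_{\mathsf{A},F}^\iota$ with $x_s \in \supp(f)$ for every $s \in F$. For such an $x$ the injectivity of $\varphi^{(x)}$ is a condition on the function $F\to\supp(f)$ that does not refer to the ambient alphabet $\mathsf{A}$. Hence $\iper_{\mathsf{A},F}(f)=\iper_{\supp(f),F}(f)$ for every $\mathsf{A}\supseteq\supp(f)$, proving the $\mathsf{A}$-independence of $\lim_F\tfrac{1}{|F|}\log \iper_{\mathsf{A},F}(f)$ on the nose.

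Next, set $\mathsf{B}=\supp(f)$ and observe that $F_\mathsf{A}\coloneqq\bigcap_{a\in \mathsf{A}}Fa \subseteq \bigcap_{a\in \mathsf{B}}Fa=F_\mathsf{B}$, so the range condition $\varphi^{(x)}(F)\supseteq F_\mathsf{B}$ appearing in $X_{\mathsf{B},F}$ is stronger than the range condition $\varphi^{(x)}(F)\supseteq F_\mathsf{A}$ appearing in $X_{\mathsf{A},F}$; combined with the inclusion $\mathsf{B}\subseteq \mathsf{A}$ this gives $X_{\mathsf{B},F}\subseteq X_{\mathsf{A},F}\subseteq X_{\mathsf{A},F}^\iota$, whence
\[
\per_{\mathsf{B},F}(f)\;\le\; \per_{\mathsf{A},F}(f)\;\le\; \iper_{\mathsf{A},F}(f)\;=\;\iper_{\mathsf{B},F}(f).
\]

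The hard step is to show that the outer two limits agree, i.e.\
\[
\lim_F \tfrac{1}{|F|}\log \per_{\mathsf{B},F}(f)\;=\;\lim_F \tfrac{1}{|F|}\log \iper_{\mathsf{B},F}(f).
\]
Here the crucial point is that on $\mathsf{B}=\supp(f)$ the function $\log f$ takes only finite real values and is therefore a genuine continuous function on the finite alphabet $\mathsf{B}$. By Proposition \ref{P-SFT1} the spaces $X_\mathsf{B}$ and $X_\mathsf{B}^\iota$ are SFTs, and Theorem \ref{T-local subshift} identifies the two limits with the topological pressures $P(X_\mathsf{B},\log f)$ and $P(X_\mathsf{B}^\iota,\log f)$, respectively. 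The inclusion $X_\mathsf{B}\subseteq X_\mathsf{B}^\iota$ gives $P(X_\mathsf{B},\log f)\le P(X_\mathsf{B}^\iota,\log f)$. For the reverse inequality I would invoke the variational principle for pressure of amenable group actions (in the spirit of \cite{KL16}*{Theorem 9.48}, exactly as in the proof of Proposition \ref{P-equal entropy}) together with Lemma \ref{L-inv measure are on perm}: every $\Gamma$-invariant Borel probability measure on $X_\mathsf{B}^\iota$ is supported on $X_\mathsf{B}$, so the variational suprema over invariant measures on $X_\mathsf{B}^\iota$ and on $X_\mathsf{B}$ agree.

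Combining the sandwich with the equality of the outer limits forces the middle limit $\lim_F\tfrac{1}{|F|}\log \per_{\mathsf{A},F}(f)$ to equal the common $\mathsf{A}$-free value, completing the proof. The main obstacle is the last step: invoking a pressure analogue of the variational principle applied in Proposition \ref{P-equal entropy}. If one prefers to avoid citing the variational principle for pressure, an alternative is to argue purely combinatorially in the spirit of Proposition \ref{P-entropy in local}, extending $x\in X_{\mathsf{B},F}^\iota$ to elements of $X_{\mathsf{B},F\mathsf{B}^{-1},F}$ and tracking the multiplicative cost in $f$, but this is technically more fiddly than the measure-theoretic route.
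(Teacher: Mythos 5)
Your argument is correct, but it takes a genuinely different route from the paper's. The paper proves Lemma~\ref{L-indep of A} by a purely combinatorial extension--restriction argument: after normalizing so that $\matheur{e}_\Gamma\in\supp(f)$, it compares $\per_{\mathsf{A}_2,F'}(f)$ for $F'=F\mathsf{A}_2^{-1}$ with $\per_{\mathsf{A}_1,F}(f)$ by observing that every contributing pattern in $X_{\mathsf{A}_2,F'}$ lies in $X_{\mathsf{A}_1,F',F}$ and restricts to an element of $X_{\mathsf{A}_1,F}$, at a multiplicative cost of $(\|f\|_\infty|\mathsf{A}_1|)^{|F'\smallsetminus F|}$, which is negligible along a right F{\o}lner net. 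You instead sandwich $\per_{\mathsf{A},F}(f)$ between $\per_{\mathsf{B},F}(f)$ and $\iper_{\mathsf{B},F}(f)$ with $\mathsf{B}=\supp(f)$ (both inclusions you use, $X_{\mathsf{B},F}\subseteq X_{\mathsf{A},F}\subseteq X_{\mathsf{A},F}^\iota$ and $\iper_{\mathsf{A},F}(f)=\iper_{\mathsf{B},F}(f)$, are correct), and then close the gap by proving $P(X_{\mathsf{B}},\log f)=P(X_{\mathsf{B}}^\iota,\log f)$ via Lemma~\ref{L-inv measure are on perm} and the variational principle for pressure (\cite{MO}*{Theorem 5.2.7}). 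This is exactly the content of Proposition~\ref{P-per equal iper} (in the case $\mathsf{A}=\supp(f)$), which the paper proves only \emph{after} Definition~\ref{D-permanent}; your route is not circular, since Theorem~\ref{T-local subshift}, Lemma~\ref{L-inv measure are on perm} and the variational principle are all independent of Lemma~\ref{L-indep of A}, but it does import measure-theoretic machinery into what the paper keeps as an elementary well-definedness check, and it effectively absorbs Proposition~\ref{P-per equal iper} into this lemma. What your approach buys is conceptual economy (one sandwich plus one appeal to the variational principle, no F{\o}lner bookkeeping); what the paper's buys is self-containedness and a quantitative finite-$F$ comparison. Two small points to tidy: dispose of $f=0$ separately (there $\supp(f)=\varnothing$ and $\per_{\mathsf{A},F}(f)=0$ for all $\mathsf{A}$, so both limits are $-\infty$, while your $X_{\mathsf{B}}$ would have empty alphabet), and when invoking the variational principle note that $X_{\mathsf{B}}\ne\varnothing$ (it contains the constant configurations), so invariant measures exist.
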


	\begin{proof}
The statement about $\lim_F\frac{1}{|F|}\log \iper_{\mathsf{A},F} (f)$ is obvious. We shall prove the statement about $\lim_F\frac{1}{|F|} \log \per _{\mathsf{A},F}(f)$. The case $f=0$ is trivial. Thus we may assume that $f\ne0$. From Lemma \ref{L-perm subadditivity} (1) we know that $\lim_F \frac{1}{|F|} \log \per _{\mathsf{A},F}(f) = \lim_F \frac{1}{|F|} \log \per _{\mathsf{A}s,F}(fs)$ for every $s\in \mathsf{A}^{-1}$. Thus we may assume that $\matheur{e}_\Gamma \in \supp(f)$. It suffices to show that
	\begin{displaymath}
\lim_F\frac{1}{|F|}\log \per _{\mathsf{A}_1,F}(f)= \lim_F\frac{1}{|F|}\log \per _{\mathsf{A}_2,F}(f)
	\end{displaymath}
for $\supp(f)\subseteq \mathsf{A}_1\subseteq \mathsf{A}_2$. Put $L_i=\lim_F\frac{1}{|F|}\log \per _{\mathsf{A}_i,F}(f)$ for $i=1, 2$. Clearly $L_1\le L_2$. Thus it suffices to show that $L_1\ge L_2$. We may assume that $L_2>-\infty$.

Let $\varepsilon >0$. Then there are some $K\in \mathscr{F}(\Gamma )$ and $\delta >0$ such that $\frac{1}{|F'|}\log \per _{\mathsf{A}_2,F'}(f)\ge L_2-\varepsilon$ for every $F'\in \mathscr{F}(\Gamma )$ satisfying that $|F'K\smallsetminus F'|<\delta |F'|$.

Let $F\in \mathscr{F}(\Gamma )$ be such that $|F\mathsf{A}_2^{-1}K\smallsetminus F|<\delta |F|$. Put $F'=F\mathsf{A}_2^{-1}\supseteq F$. Then $|F'K\smallsetminus F'|\le |F\mathsf{A}_2^{-1}K\smallsetminus F|<\delta |F|\le \delta |F'|$, and hence $\frac{1}{|F'|}\log \per _{\mathsf{A}_2,F'}(f)\ge L_2-\varepsilon$.

If we define $X_{\mathsf{A}_1,F',F}$ as in \eqref{eq:XAF}, then
	\begin{displaymath}
\per _{\mathsf{A}_2,F'}(f)\le \sum_{x\in X_{\mathsf{A}_1,F',F}}\,\prod _{s\in F'}f_{x_s}.
	\end{displaymath}
Note that for each $x\in X_{\mathsf{A}_1,F', F}$ the restriction of $x$ to $F$ lies in $X_{\mathsf{A}_1,F}$, and that $\prod _{s\in F'}f_{x_s}\le \|f\|_\infty^{|F'\smallsetminus F|}\prod _{s\in F}f_{x_s}$. Thus
	\begin{align*}
\sum_{x\in X_{\mathsf{A}_1,F', F}}\,\prod _{s\in F'}f_{x_s}&\le \|f\|_\infty^{|F'\smallsetminus F|}\sum_{x\in X_{\mathsf{A}_1,F', F}}\,\prod _{s\in F}f_{x_s}\le \|f\|_\infty^{|F'\smallsetminus F|}|\mathsf{A}_1|^{|F'\smallsetminus F|} \sum_{x\in X_{\mathsf{A}_1,F}}\,\prod _{s\in F}f_{x_s}
	\\
&= (\|f\|_\infty |\mathsf{A}_1|)^{|F'\smallsetminus F|}\,\per _{\mathsf{A}_1,F}(f).
	\end{align*}
Therefore
	\begin{displaymath}
e^{|F'|(L_2-\varepsilon)}\le \per _{\mathsf{A}_2,F'}(f)\le \sum_{x\in X_{\mathsf{A}_1,F', F}}\,\prod _{s\in F'}f_{x_s}\le (\|f\|_\infty |\mathsf{A}_1|)^{|F'\smallsetminus F|}\,\per _{\mathsf{A}_1,F}(f).
	\end{displaymath}
It follows that
	\begin{displaymath}
L_1=\lim_F\frac{1}{|F|}\log \per _{\mathsf{A}_1,F}(f)\ge \lim_F\frac{|F'|(L_2-\varepsilon)-|F'\smallsetminus F|\log (\|f\|_\infty|\mathsf{A}_1|)}{|F|}=L_2-\varepsilon.
	\end{displaymath}
Letting $\varepsilon\to 0$ we obtain that $L_1\ge L_2$, as claimed.
	\end{proof}

	\begin{definition}
	\label{D-permanent}
Let $\Gamma $ be a countable discrete group, $0 \ne f\in \mathbb{R}_{\ge0}\Gamma $, and let $\mathsf{A}=\textup{supp}(f)\in \mathscr{F}(\Gamma )$.
	\begin{enumerate}
	\item
If $\Gamma $ is finite we denote by
	\begin{displaymath}
\per (f)= \frac{1}{|\Gamma |}\sum_{x\in X_{\mathsf{A}}}\prod _{s\in \Gamma }f_{x_s}\quad\textup{and}\quad \iper (f)=\frac{1}{|\Gamma |}\sum_{x\in X_{\mathsf{A}}^\iota }\, \prod _{s\in \Gamma }f_{x_s}
	\end{displaymath}
the \emph{permanent} and the \emph{injective permanent} of $f$.
\smallskip
	\item
If $\Gamma $ is infinite and amenable we call
	\begin{displaymath}
\per(f) = \lim_F\frac{1}{|F|}\log \Biggl(\sum_{x\in X_{\mathsf{A},F}}\,\prod _{s\in F}f_{x_s}\Biggr)\quad\textup{and}\quad \iper(f)=\lim_F\frac{1}{|F|}\log \Biggl(\sum_{x\in X_{\mathsf{A},F}^\iota }\prod _{s\in F}f_{x_s}\Biggr)
	\end{displaymath}
the \emph{permanent} and the \emph{injective permanent} of $f$, where the limits are taken along right F{\o}lner sequences.
	\end{enumerate}
	\end{definition}

	\begin{remark}
	\label{R-real}
When $f\in \mathbb{R}_{\ge0}\Gamma $ is nonzero, setting $\varkappa=\max_{s\in \Gamma }f_s$ in \eqref{eq:permanent}, we have that $\per _{\mathsf{A},F}(f)\linebreak[0]\ge \varkappa^{|F|}$ for every $F\in \mathscr{F}(\Gamma )$, so that $\iper(f)\ge \per (f)\ge \log \varkappa >-\infty $. Also, for the function $\log f\colon \mathsf{A}\rightarrow \mathbb{R}$, where $\mathsf{A}=\supp(f)$, by Theorem~\ref{T-local subshift} we have $\per(f)=P(X_\mathsf{A}, \log f)$ and $\iper(f)=P(X_\mathsf{A}^\iota, \log f)$.
	\end{remark}

	\begin{question}
Is there any reasonable way to define $\per (f)$ for \emph{all} (i.e. not necessarily amenable) countably infinite groups $\Gamma $?
	\end{question}

	\begin{remark}
	\label{R-infinite support}
For $f\in \mathbb{R}_{\ge0}^\Gamma $ with infinite support, one can define $\per (f)$ as
	\begin{displaymath}
\sup_{\substack{\mathsf{A}\in \mathscr{F}(\Gamma )
	\\
\mathsf{A}\subset \supp(f)}} \lim_{F }\frac{1}{|F|}\log \per _{\mathsf{A},F}(f)= \lim_{\substack{\mathsf{A}\nearrow\supp(f) ,\,\mathsf{A}\in \mathscr{F}(\Gamma )
	\\
\mathsf{A}\subset \supp(f)}}\lim_{F }\frac{1}{|F|}\log \per _{\mathsf{A},F}(f),
	\end{displaymath}
since $\lim_F\frac{1}{|F|}\log \per _{\mathsf{A},F}(f)$ increases in $\mathsf{A}$.
	\end{remark}

	\begin{remark}
	\label{R-permanent and entropy}
For any nonempty finite subset $\mathsf{A}$ of $\Gamma $, taking $f\in \mathbb{Z}\Gamma $ to be the indicator function of $\mathsf{A}$, we get from Proposition~\ref{P-entropy in local} that $\htop(X_\mathsf{A})=\per (f)$ and $\htop(X_\mathsf{A}^\iota )=\iper(f)$.
	\end{remark}

We list some basic properties of $\per (f)$ and $\iper(f)$. Item (5) is the analogue of the Chollet conjecture for permanents of positive semidefinite matrices (cf. \cite{Chollet}, \cite{Zhang}*{Section 6}).

	\begin{proposition}
	\label{P-basic}
The following hold:
	\begin{enumerate}
	\item
For any $f\in \mathbb{R}_{\ge 0}\Gamma $ and $s\in \Gamma $, we have $\per (fs)=\per (f)=\per (sf)$ and $\iper(fs)=\iper(f)=\iper(sf)$.
	\item
For any $f, g\in \mathbb{R}_{\ge 0}\Gamma $ with $f\le g$, we have $\per (f)\le \per (g)$ and $\iper(f)\le \iper(g)$.
	\item
For any $f\in \mathbb{R}_{\ge 0}\Gamma $ and $C>0$, we have $\per (Cf)=\per (f)+\log C$ and $\iper(Cf)=\iper(f)+\log C$.
	\item
For any $\mathsf{A}\in \mathscr{F}(\Gamma)$, the maps $f\mapsto \per (f)$ and $f\mapsto \iper(f)$ are continuous on $\mathbb{R}_{\ge 0}^\mathsf{A}$.
	\item
For any $f, g\in \mathbb{R}_{\ge 0}\Gamma $, we have $\per (f\cdot g)\le \per (f)+\per (g)$ and $\iper(f\cdot g)\le \iper(f)+\iper(g)$ .
	\item
Let $G$ be an amenable group containing $\Gamma $, and let $f\in \mathbb{R}_{\ge 0}\Gamma $. Denote by $\per _\Gamma(f)$ {\textup(}resp. $\iper_\Gamma(f)${\textup)} and $\per _G(f)$ {\textup(}resp. $\iper_G(f)${\textup)} the {\textup(}resp. injective{\textup)} permanents of $f$ as elements of $\mathbb{R}_{\ge 0}\Gamma $ and $\mathbb{R}_{\ge 0}G$, respectively. Then $\per _\Gamma(f)=\per _G(f)$ and $\iper_\Gamma(f)=\iper_G(f)$.
	\end{enumerate}
	\end{proposition}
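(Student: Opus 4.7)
The plan is to treat (1), (2), (3), (5) as direct consequences of Lemma~\ref{L-perm subadditivity} and the basic definitions, and to focus the main effort on (4) and (6).

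For (1), (2), (3), (5): Parts~(1)--(2) of Lemma~\ref{L-perm subadditivity} give (1) after passing to the F\o lner limit, using that $Fs^{-1}$ and $sF$ are right F\o lner whenever $F$ is (the relevant kernels get conjugated by $s$) and have the same cardinality as $F$. For (2), choose $\mathsf{A}=\supp(g)\supseteq\supp(f)$; by Lemma~\ref{L-indep of A} both permanents can be computed along this common alphabet, and the pointwise bound $\per_{\mathsf{A},F}(f)\le \per_{\mathsf{A},F}(g)$ is immediate from $f_t\le g_t$ for $t\in \mathsf{A}$ (with the convention $\per(0)=-\infty$). Item (3) reduces to $\per_{\mathsf{A},F}(Cf)=C^{|F|}\per_{\mathsf{A},F}(f)$. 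For (5), take $\mathsf{A}\supseteq \supp(f)\cup\supp(g)$ and apply part~(4) of Lemma~\ref{L-perm subadditivity}, then pass to the limit. The $\iper$ versions are analogous.

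For (4): The starting point is the infimum formula
\[
\per(h)=\inf_{F\in \mathscr{F}(\Gamma)}\frac{1}{|F|}\log \per_{\mathsf{A},F}(h),
\]
valid for any nonzero $h$ supported in $\mathsf{A}$, obtained by combining Remark~\ref{R-real} with Theorem~\ref{T-local subshift}, and analogously for $\iper$. For each fixed $F$ the map $h\mapsto \frac{1}{|F|}\log\per_{\mathsf{A},F}(h)$ is continuous from $\mathbb{R}_{\ge 0}^\mathsf{A}$ to $[-\infty,+\infty)$, so $\per$ is upper semicontinuous as the infimum of such maps. For lower semicontinuity at a strictly positive $h_0$, if $h_n\to h_0$ then $h_n\ge (1-\varepsilon)h_0$ for $n$ large, so by (2) and (3) we have $\per(h_n)\ge \log(1-\varepsilon)+\per(h_0)$; sending $n\to\infty$ and then $\varepsilon\downarrow 0$ yields $\liminf_n \per(h_n)\ge \per(h_0)$. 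To extend continuity to a general $h_0\in \mathbb{R}_{\ge 0}^\mathsf{A}$ with some vanishing coordinates, approximate $h_0$ from above by $h_0+\delta\mathbf{1}_\mathsf{A}\in\mathbb{R}_{>0}^\mathsf{A}$; the infimum formula shows $\per(h_0+\delta \mathbf{1}_\mathsf{A})\downarrow \per(h_0)$ as $\delta\downarrow 0^+$, and combined with the interior continuity just established this closes the gap. The edge case $h_0=0$ follows from the pinching $\log\|h\|_\infty \le \per(h)\le \log\|h\|_\infty + \per(\mathbf{1}_\mathsf{A})$ coming from (2) and (3).

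For (6): Since $\mathsf{A}\subseteq \Gamma$, every $\sigma \in X_\mathsf{A}^G$ satisfies $\sigma(g)\in g\mathsf{A}\subseteq g\Gamma$ and thus preserves the left $\Gamma$-cosets of $G$. Fix a transversal $\mathscr{S}\subseteq G$ with $\matheur{e}_\Gamma\in \mathscr{S}$ and $G=\bigsqcup_{s\in \mathscr{S}}s\Gamma$, and for $F\in \mathscr{F}(G)$ put $F^{(s)}=F\cap s\Gamma$ and $\tilde F^{(s)}=s^{-1}F^{(s)}\subseteq \Gamma$. The bijection $s\gamma\leftrightarrow \gamma$ intertwines the restricted-permutation structure on $s\Gamma$ with that on $\Gamma$, and the interior conditions split cleanly because $s\mathsf{A}^{-1}\subseteq s\Gamma$. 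This gives the key factorization
\[
X_{\mathsf{A},F}^G\;\cong\;\prod_{s\in \mathscr{S}}X_{\mathsf{A},\tilde F^{(s)}}^\Gamma,\qquad \per_{\mathsf{A},F}^G(f)=\prod_{s\in \mathscr{S}}\per_{\mathsf{A},\tilde F^{(s)}}^\Gamma(f),
\]
with $\sum_{s}|\tilde F^{(s)}|=|F|$. The inequality $\per_G(f)\le \per_\Gamma(f)$ follows by restricting the $G$-infimum to $F\in \mathscr{F}(\Gamma)\subseteq \mathscr{F}(G)$, where all factors other than $s=\matheur{e}_\Gamma$ are trivial. Conversely, applying the infimum formula in $\Gamma$ to each nonempty $\tilde F^{(s)}$ yields $\log \per_{\mathsf{A},\tilde F^{(s)}}^\Gamma(f)\ge |\tilde F^{(s)}|\per_\Gamma(f)$; summing over $s$ and dividing by $|F|$ gives $\frac{1}{|F|}\log \per_{\mathsf{A},F}^G(f)\ge \per_\Gamma(f)$ for every $F\in \mathscr{F}(G)$, hence $\per_G(f)\ge \per_\Gamma(f)$. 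The same argument works for $\iper$.

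The main obstacles are (4), where the boundary of $\mathbb{R}_{\ge 0}^\mathsf{A}$ must be handled by approximation because $\log h$ fails to be continuous where $h_s\to 0$, and (6), which rests on the coset decomposition and the infimum formula of Theorem~\ref{T-local subshift}; all other items reduce to routine manipulations of Lemma~\ref{L-perm subadditivity}.
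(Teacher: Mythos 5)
Your proposal is correct, and on the two substantive items, (4) and (6), it takes a genuinely different route from the paper. For (4), the paper proves upper semicontinuity at a point $g$ with $\supp(g)\subsetneq \mathsf{A}$ by a direct estimate: it splits $X_{\mathsf{A},F}$ according to how often a configuration leaves $\supp(g)$ and controls the two pieces with Stirling's formula and F{\o}lner bookkeeping. You instead upgrade the infimum formula $\per(h)=\inf_{F}\frac{1}{|F|}\log \per_{\mathsf{A},F}(h)$ from the interior of $\mathbb{R}_{\ge 0}^{\mathsf{A}}$ to all of it and then read off upper semicontinuity as an infimum of continuous functions, which is slicker. One step must be made explicit, though: Remark~\ref{R-real} and Theorem~\ref{T-local subshift} give the infimum formula only when $\supp(h)=\mathsf{A}$, so it is not available ``for any nonzero $h$ supported in $\mathsf{A}$'' at the outset, and asserting u.s.c.\ before extending it is out of order. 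Your own $\delta$-regularization does supply the extension, via the sandwich
\begin{displaymath}
\per(h_0)\le \inf_{\delta>0}\per(h_0+\delta\mathbf{1}_{\mathsf{A}})=\inf_{F}\inf_{\delta>0}\tfrac{1}{|F|}\log\per_{\mathsf{A},F}(h_0+\delta\mathbf{1}_{\mathsf{A}})=\inf_{F}\tfrac{1}{|F|}\log\per_{\mathsf{A},F}(h_0)\le \lim_{F}\tfrac{1}{|F|}\log\per_{\mathsf{A},F}(h_0)=\per(h_0),
\end{displaymath}
where the first inequality is your item (2), the last inequality is the trivial $\inf\le\lim$, and the final equality is Lemma~\ref{L-indep of A}; this uses only (2)--(3) and is not circular. (In the paper this general infimum formula is Proposition~\ref{P-per equal iper}, proved \emph{from} Proposition~\ref{P-basic}; you are in effect reproving it independently, which is fine.) For (6), both arguments rest on the same coset factorization $\per^{G}_{\mathsf{A},F}(f)=\prod_{s}\per^{\Gamma}_{\mathsf{A},\tilde F^{(s)}}(f)$, but the paper must then show that most coset pieces of a F{\o}lner set in $G$ are nearly right invariant in $\Gamma$ and run a convexity argument on neighbourhoods of the two limits; your use of the infimum formula (every piece $E$, F{\o}lner or not, satisfies $\log\per^{\Gamma}_{\mathsf{A},E}(f)\ge |E|\per_\Gamma(f)$, and the $G$-infimum restricted to $\mathscr{F}(\Gamma)$ gives the reverse bound) eliminates that bookkeeping entirely. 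Items (1), (2), (3), (5) are handled exactly as in the paper.
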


	\begin{proof}
(1) follows from (1) of Lemma~\ref{L-perm subadditivity}.

(2) and (3) are obvious.

For (4), we shall prove the statement about $\per (f)$. The proof for the statement about $\iper(f)$ is similar.
Fix $\mathsf{A}\in \mathscr{F}(\Gamma)$. It follows from (2) and (3) that the map $f\mapsto \per (f)$ is lower semicontinuous on $\mathbb{R}_{\ge 0}^\mathsf{A}$ and that when $\supp(g)=\mathsf{A}$, the map $f\mapsto \per (f)$ on $\mathbb{R}_{\ge 0}^\mathsf{A}$ is continuous at $g$. Thus to prove (4) it suffices to show that when $\supp(g)\subsetneq \mathsf{A}$, the map $f\mapsto \per (f)$ on $\mathbb{R}_{\ge 0}^\mathsf{A}$ is upper semicontinuous at $g$. If $\Gamma $ is finite this is obvious. Thus we may assume that $\Gamma $ is infinite. By (3) we may assume that $\|g\|_\infty=1$.

Let $D>\per (g)$.
Take $\beta>0$ with $\per (g)<D-4\beta$. Take $K\in \mathscr{F}(\Gamma)$ and $\delta>0$ such that for any $W\in \mathscr{F}(\Gamma)$ with $|WK\smallsetminus W|<\delta|W|$ we have
$\frac{1}{|W|}\log \per _{\mathsf{A}, W}(g)\le D-3\beta$. By Stirling's approximation formula there is some $0<\kappa<1/2$ such that for any nonempty finite set $F$ the number of subsets $W$ of $F$ with $|W|\ge (1-\kappa)|F|$ is at most $e^{\beta|F|}$ (see for example \cite{CCL}*{Appendix A}). Take $0<\eta<\kappa$ such that
$2\eta/(1-\eta)<\delta $ and $|\mathsf{A}|^\eta<e^\beta $.

Take $0<\varepsilon<1$ such that $2|\mathsf{A}|\varepsilon^\eta<e^D$ and $\log(1+\varepsilon)<\beta$. Denote by $U$ the set of $f\in \mathbb{R}_{\ge 0}^\mathsf{A}$ satisfying that $f_s<(1+\varepsilon)g_s$ for all $s\in \supp(g)$ and $f_s<\varepsilon$ for all $s\in \mathsf{A}\smallsetminus \supp(g)$. Then $U$ is an open neighborhood of $g$ in $\mathbb{R}_{\ge 0}^\mathsf{A}$. Now it suffices to show $\per (f)<D$ for every $f\in U$.

Let $f\in U$. Then $\|f\|_\infty\le 2$. Denote by $h$ the element of $\mathbb{R}_{\ge 0}\Gamma $ which coincides with $f$ on $\supp(g)$ and vanishes on $\Gamma\smallsetminus \supp(g)$.
For $F\in \mathscr{F}(\Gamma)$ denote by $X_{\mathsf{A}, F, \eta}$ the set of $x\in X_{\mathsf{A}, F}$ satisfying that $|x^{-1}(\supp(g))|\ge (1-\eta) |F|$.
For each $x\in X_{\mathsf{A}, F}\smallsetminus X_{\mathsf{A}, F, \eta}$ we have $\prod _{t\in F}f_{x_t}\le 2^{|F|}\varepsilon^{\eta |F|}$. Thus
	\begin{align*}
\sum_{x\in X_{\mathsf{A}, F}\smallsetminus X_{\mathsf{A}, F, \eta}}\prod _{t\in F}f_{x_t}\le \sum_{x\in X_{\mathsf{A}, F}\smallsetminus X_{\mathsf{A}, F, \eta}}2^{|F|}\varepsilon^{\eta |F|}\le |\mathsf{A}|^{|F|}2^{|F|}\varepsilon^{\eta |F|}<e^{D|F|}.
	\end{align*}
Let $F\in \mathscr{F}(\Gamma)$ such that $|FK\smallsetminus F|<\eta|F|$.
For each $W\subseteq F$ with $|W|\ge (1-\eta)|F|$ we have
	\begin{displaymath}
|WK\smallsetminus W|\le |FK\smallsetminus W|\le |FK\smallsetminus F|+|F\smallsetminus W|< \eta|F|+\eta |F|\le \frac{2\eta}{1-\eta}|W|<\delta |W|,
	\end{displaymath}
and hence
	\begin{align*}
\frac{1}{|W|}\log \per _{\mathsf{A}, W}(h)&\le \frac{1}{|W|}\log \per _{\mathsf{A}, W}((1+\varepsilon)g)
	\\
&=\log(1+\varepsilon)+\frac{1}{|W|}\log \per _{\mathsf{A}, W}(g) \le \beta+D-3\beta=D-2\beta.
	\end{align*}
For each $x\in X_{\mathsf{A}, F, \eta}$, putting $F_x=x^{-1}(\supp(g))\subseteq F$ we have $|F_x|\ge (1-\eta)|F|$ and
that the restriction of $x$ on $F_x$ lies in $X_{\mathsf{A}, F_x}$. Thus
	\begin{align*}
\sum_{x\in X_{\mathsf{A}, F, \eta}}\prod _{t\in F}f_{x_t}&=\sum_{W\subseteq F, |W|\ge (1-\eta)|F|}\sum_{x\in X_{\mathsf{A}, F, \eta}, F_x=W} \prod _{t\in F}f_{x_t}
	\\
&\le \sum_{W\subseteq F, |W|\ge (1-\eta)|F|}\sum_{x\in X_{\mathsf{A}, F, \eta}, F_x=W} \prod _{t\in W}f_{x_t}
	\\
&\le \sum_{W\subseteq F, |W|\ge (1-\eta)|F|}|\mathsf{A}|^{\eta |F|}\per _{\mathsf{A}, W}(h)
	\\
&\le \sum_{W\subseteq F, |W|\ge (1-\eta)|F|}e^{\beta |F|}e^{|W|(D-2\beta)}
	\\
&\le e^{\beta|F|}e^{\beta |F|}e^{|F|(D-2\beta)}= e^{|F|D}.
	\end{align*}
Then
	\begin{displaymath}
\per _{\mathsf{A}, F}(f)=\sum_{x\in X_{\mathsf{A}, F}\smallsetminus X_{\mathsf{A}, F, \eta}}\prod _{t\in F}f_{x_t}+\sum_{x\in X_{\mathsf{A}, F, \eta}}\prod _{t\in F}f_{x_t} \le e^{F|D}+ e^{|F|D}=2e^{|F|D}.
	\end{displaymath}
Since $\Gamma $ is infinite, it follows that
	\begin{displaymath}
\per (f)=\lim_F\frac{1}{|F|}\log \per _{\mathsf{A}, F}(f)\le D.
	\end{displaymath}

(5) follows from (4) of Lemma~\ref{L-perm subadditivity}.

(6): We prove $\per _\Gamma(f)=\per _G(f)$. The proof for $\iper_\Gamma(f)=\iper_G(f)$ is similar. We may assume that $f\neq 0$. Put $\mathsf{A}=\supp(f)\in \mathscr{F}(\Gamma)$.
Let $U_\Gamma $ and $U_G$ be neighborhoods of $\per _\Gamma(f)$ and $\per _G(f)$ in $\mathbb{R}$ respectively.
Then it suffices to show that $U_\Gamma\cap U_G\neq \varnothing $.

Set $\varkappa=\max_{s\in \mathsf{A}}f_s$. Then there are a convex neighborhood $V_\Gamma $ of $\per _\Gamma(f)$ in $\mathbb{R}$ and $0<\delta<1$ such that for any $0\le \eta\le \delta$, $a\in [\log \varkappa, \log \varkappa+\log |\mathsf{A}|]$, and $b\in V_\Gamma $, one has $\eta a+(1-\eta)b\in U_\Gamma $.

Take $K_\Gamma\in \mathscr{F}(\Gamma )$ containing $\matheur{e}_\Gamma $, $K_G\in \mathscr{F}(G)$, and $\varepsilon>0$ such that for any $F_\Gamma\in \mathscr{F}(\Gamma)$ with $|F_\Gamma K_\Gamma|<(1+\varepsilon)|F_\Gamma|$ one has $\frac{1}{|F_\Gamma|}\log \per _{\mathsf{A}, F_\Gamma}(f)\in V_\Gamma $, and that for any $F_G\in \mathscr{F}(G)$ with $|F_G K_G|<(1+\varepsilon)|F_G|$ one has $\frac{1}{|F_G|}\log \per _{\mathsf{A}, F_G}(f)\in U_G$.

Take $F\in \mathscr{F}(G)$ with $|F(K_\Gamma\cup K_G)|<(1+\delta \varepsilon)|F|$. Then there are some $W\in \mathscr{F}(G)$ and $F_s\in \mathscr{F}(\Gamma)$ for each $s\in W$ such that $s\Gamma\neq t\Gamma $ for all distinct $s, t\in W$ and $F$ is the disjoint union of $sF_s$ for $s\in W$. Denote by $W'$ the set of $s\in W$ satisfying $|F_sK_\Gamma|<(1+\varepsilon)|F_s|$.
Put $F_{W'}=\bigcup_{s\in W'}sF_s$ and $F_{W\smallsetminus W'}=\bigcup_{s\in W\smallsetminus W'}sF_s$.
Then
	\begin{align*}
\delta \varepsilon |F|&\ge |FK_\Gamma|-|F|=\sum_{s\in W}(|sF_sK_\Gamma|-|sF_s|)\ge \sum_{s\in W\smallsetminus W'}(|sF_sK_\Gamma|-|sF_s|)\ge \varepsilon|F_{W\smallsetminus W'}|,
	\end{align*}
and hence $|F_{W\smallsetminus W'}|\le \delta|F|$.

Note that $X_{\mathsf{A}, F}$ can be identified with $\prod _{s\in W}X_{\mathsf{A}, F_s}$ naturally. Thus
	\begin{align*}
\per _{\mathsf{A}, F}(f)=\sum_{x\in X_{\mathsf{A}, F}}\prod _{t\in F}f_{x_t}=\prod _{s\in W}\sum_{x\in X_{\mathsf{A}, F_s}}\prod _{t\in F_s}f_{x_t}=\prod _{s\in W}\per _{\mathsf{A}, F_s}(f),
	\end{align*}
and hence
	\begin{align*}
& \frac{1}{|F|}\log \per _{\mathsf{A}, F}(f)
	\\
&=\frac{|F_{W'}|}{|F|}\sum_{s\in W'}\frac{|F_s|}{|F_{W'}|}\cdot \frac{1}{|F_s|}\log \per _{\mathsf{A}, F_s}(f)+\frac{|F_{W\smallsetminus W'}|}{|F|}\sum_{s\in W\smallsetminus W'}\frac{|F_s|}{|F_{W\smallsetminus W'}|}\cdot \frac{1}{|F_s|}\log \per _{\mathsf{A}, F_s}(f).
	\end{align*}
For each $s\in W$, $\varkappa^{|F_s|}\le \per _{\mathsf{A}, F_s}(f)\le |\mathsf{A}|^{|F_s|}\varkappa^{|F_s|}$, and hence $\log \varkappa\le \frac{1}{|F_s|}\log \per _{\mathsf{A}, F_s}(f)\le \log |\mathsf{A}|+\log |\varkappa|$. It follows that
	\begin{displaymath}
\sum_{s\in W\smallsetminus W'}\frac{|F_s|}{|F_{W\smallsetminus W'}|}\cdot \frac{1}{|F_s|}\log \per _{\mathsf{A}, F_s}(f)\in [\log \varkappa, \log \varkappa+\log |\mathsf{A}|].
	\end{displaymath}
For each $s\in W'$, we have $ \frac{1}{|F_s|}\log \per _{\mathsf{A}, F_s}(f)\in V_\Gamma $. Since $V_\Gamma $ is convex, we have $\sum_{s\in W'}\frac{|F_s|}{|F_{W'}|}\cdot \frac{1}{|F_s|}\log \per _{\mathsf{A}, F_s}(f)\in V_\Gamma $. From our choice of $V_\Gamma $ and $\delta$ we get $\per _{\mathsf{A}, F}(f)\in U_\Gamma $. Since $\per _{\mathsf{A}, F}(f)\in U_G$, we conclude that $U_\Gamma\cap U_G\neq \varnothing $.
	\end{proof}

	\begin{proposition}
	\label{P-per equal iper}
For any $f\in \mathbb{R}_{\ge0}\Gamma $ and $\mathsf{A}\in \mathscr{F}(\Gamma )$ with $\supp(f)\subseteq \mathsf{A}$, we have
	\begin{displaymath}
\per (f)=\iper(f)=\inf_{F\in \mathscr{F}(\Gamma )}\frac{1}{|F|}\log \per_{\mathsf{A},F}(f)=
\inf_{F\in \mathscr{F}(\Gamma )}\frac{1}{|F|}\log \iper_{\mathsf{A},F}(f).
	\end{displaymath}
	\end{proposition}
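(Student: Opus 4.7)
The plan is to derive this from Theorem~\ref{T-local subshift} via Remark~\ref{R-real}, which identifies $\per(f)$ and $\iper(f)$ with the topological pressures $P(X_\mathsf{A},\log f)$ and $P(X_\mathsf{A}^\iota,\log f)$ when $\mathsf{A}=\supp(f)$. The technical subtlety is that $\log f$ takes the value $-\infty$ on $\mathsf{A}\smallsetminus \supp(f)$, so I first prove the result for $\mathsf{A}=\mathsf{A}_0\coloneqq \supp(f)$ and then propagate it to arbitrary $\mathsf{A}\supseteq \mathsf{A}_0$ using Lemma~\ref{L-indep of A}. Since Definition~\ref{D-permanent} requires $f\ne 0$, we have $\mathsf{A}_0\in \mathscr{F}(\Gamma)$.

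When $\mathsf{A}=\mathsf{A}_0$, the function $\log f\colon \mathsf{A}\to \mathbb{R}$ is real-valued, and Proposition~\ref{P-SFT1} presents $X_\mathsf{A}=X_{\mathcal{P}_\mathsf{A}}$ and $X_\mathsf{A}^\iota=X_{\mathcal{P}_\mathsf{A}^\iota}$ as SFTs with $X_{\mathcal{P}_\mathsf{A},F}=X_{\mathsf{A},F}$ and $X_{\mathcal{P}_\mathsf{A}^\iota,F}=X_{\mathsf{A},F}^\iota$. Applying Theorem~\ref{T-local subshift} to each of these SFTs with the function $\log f$, and using $e^{\sum_{s\in F}\log f(x_s)}=\prod_{s\in F}f_{x_s}$, yields
	\begin{displaymath}
\per(f)=\lim_F \frac{1}{|F|}\log \per_{\mathsf{A},F}(f)=\inf_{F\in \mathscr{F}(\Gamma)} \frac{1}{|F|}\log \per_{\mathsf{A},F}(f),
	\end{displaymath}
and the analogous statement with $\iper$ and $X_\mathsf{A}^\iota$, covering three of the four equalities in this special case.

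The remaining equality $\per(f)=\iper(f)$ follows from Lemma~\ref{L-inv measure are on perm}: every $\Gamma$-invariant Borel probability measure on $X_\mathsf{A}^\iota$ is supported on $X_\mathsf{A}$, so the sets of $\Gamma$-invariant probability measures on the two spaces coincide, and the continuous function $x\mapsto (\log f)(x_{\matheur{e}_\Gamma})$ is common to both. The variational principle for topological pressure (\cite{KL16}*{Theorem~9.48}) then gives $P(X_\mathsf{A}^\iota,\log f)=P(X_\mathsf{A},\log f)$. This is the conceptual heart of the argument, converting a combinatorial identity into a statement about invariant measures.

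To pass from $\mathsf{A}_0$ to general $\mathsf{A}\supseteq \mathsf{A}_0$: the inclusion $\mathsf{A}_0^{-1}\subseteq \mathsf{A}^{-1}$ gives $\{s\colon s\mathsf{A}^{-1}\subseteq F\}\subseteq \{s\colon s\mathsf{A}_0^{-1}\subseteq F\}$, so $X_{\mathsf{A}_0,F}\subseteq X_{\mathsf{A},F}$ and hence $\per_{\mathsf{A},F}(f)\ge \per_{\mathsf{A}_0,F}(f)$; for the injective permanent, only $x$ with $x_s\in \mathsf{A}_0$ for all $s$ contribute to $\iper_{\mathsf{A},F}(f)$, and these are precisely the elements of $X_{\mathsf{A}_0,F}^\iota$, so $\iper_{\mathsf{A},F}(f)=\iper_{\mathsf{A}_0,F}(f)$. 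Combining these with Lemma~\ref{L-indep of A}, which guarantees $\lim_F \frac{1}{|F|}\log \per_{\mathsf{A},F}(f)=\per(f)$, and the elementary fact that $\inf_{F\in \mathscr{F}(\Gamma)}$ is bounded above by any limit along a right F{\o}lner sequence, sandwiches $\inf_{F\in \mathscr{F}(\Gamma)}\frac{1}{|F|}\log \per_{\mathsf{A},F}(f)$ between $\per(f)$ from below and $\per(f)$ from above. Thus the infimum equalities carry over from $\mathsf{A}_0$ to $\mathsf{A}$, completing the proof. The main obstacle is the variational-principle step; the rest is bookkeeping around the alphabet--support mismatch.
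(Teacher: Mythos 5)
Your argument is correct, and the first half (the case $\mathsf{A}=\supp(f)$, via Lemma~\ref{L-inv measure are on perm}, the variational principle, Remark~\ref{R-real} and Theorem~\ref{T-local subshift}) coincides with the paper's proof; the only cosmetic difference is that the paper cites the pressure variational principle from \cite{MO}*{Theorem 5.2.7} rather than \cite{KL16}. Where you genuinely diverge is the passage from $\mathsf{A}_0=\supp(f)$ to a general alphabet $\mathsf{A}\supseteq\mathsf{A}_0$. The paper approximates $f$ from above by a decreasing sequence $(f^{(n)})$ in $\mathbb{R}_{>0}^{\mathsf{A}}$, invokes the continuity and monotonicity statements of Proposition~\ref{P-basic} to get $\per(f)=\inf_n\per(f^{(n)})$, and then exchanges the two infima; this leans on the (rather delicate) upper semicontinuity proof in Proposition~\ref{P-basic}(4). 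You instead compare the local quantities directly: $X_{\mathsf{A}_0,F}\subseteq X_{\mathsf{A},F}$ gives the lower bound for the infimum, $\iper_{\mathsf{A},F}(f)=\iper_{\mathsf{A}_0,F}(f)$ handles the injective case outright, and Lemma~\ref{L-indep of A} together with the trivial bound $\inf_F\le\lim_F$ supplies the upper bound. Your route is more elementary and self-contained for this step (it avoids P-basic(4) entirely), at the cost of relying on Lemma~\ref{L-indep of A}; the paper's route is more uniform in that it treats $\per$ and $\iper$ symmetrically by pure limit manipulations. Both are valid, and the sandwich you set up is airtight given that $\per(f)>-\infty$ for $f\neq 0$ (Remark~\ref{R-real}) so the Ornstein--Weiss limit is finite and attained along Følner sets.
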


	\begin{proof}
We may assume that $f\neq 0$.

Consider first the case $\mathsf{A}=\supp(f)$. We have the function $\log f\colon \mathsf{A}\rightarrow \mathbb{R}$. By Lemma~\ref{L-inv measure are on perm} and the variational principle \cite{MO}*{Theorem 5.2.7} we have $P(X_\mathsf{A}, \log f)=P(X_\mathsf{A}^\iota, \log f)$. Then from Remark~\ref{R-real} we have
	\begin{displaymath}
\per(f)=P(X_\mathsf{A}, \log f)=P(X_\mathsf{A}^\iota, \log f)=\iper(f).
	\end{displaymath}
According to Theorem~\ref{T-local subshift} we have
	\begin{displaymath}
\per(f)=P(X_\mathsf{A}, \log f)=\inf_{F\in \mathscr{F}(\Gamma )}\frac{1}{|F|}\log \per_{\mathsf{A},F}(f),
	\end{displaymath}
and
	\begin{displaymath}
\iper(f)=P(X_\mathsf{A}^\iota, \log f)=\inf_{F\in \mathscr{F}(\Gamma )}\frac{1}{|F|}\log \per_{\mathsf{A},F}^\iota(f).
	\end{displaymath}

Now consider the general case $\supp(f)\subseteq \mathsf{A}\in \mathscr{F}(\Gamma )$. Take a decreasing sequence $(f^{(n)})_{n\in \mathbb{N}}$ in $\mathbb{R}_{>0}^\mathsf{A}$ with limit $f$. Then by Proposition~\ref{P-basic} we have that $\per (f)=\inf_{n\in \mathbb{N}}\per (f^{(n)})$ and $\iper(f)=\inf_{n\in \mathbb{N}}\iper(f^{(n)})$. Therefore
	\begin{align*}
\per (f)&=\smash[t]{\inf_{n\in \mathbb{N}}\per(f^{(n)})
=\inf_{n\in \mathbb{N}}\inf_{F\in \mathscr{F}(\Gamma )}\frac{1}{|F|}\log \per_{\mathsf{A},F}(f^{(n)})}
	\\
&=\inf_{F\in \mathscr{F}(\Gamma )}\inf_{n\in \mathbb{N}}\frac{1}{|F|}\log \per_{\mathsf{A},F}(f^{(n)})
=\inf_{F\in \mathscr{F}(\Gamma )}\frac{1}{|F|}\log \per_{\mathsf{A},F}(f).
	\end{align*}
The proof for $\iper (f)=\inf_{F\in \mathscr{F}(\Gamma )}\frac{1}{|F|}\log \per_{\mathsf{A},F}^\iota (f)$ is similar.
	\end{proof}

	\begin{proposition}
	\label{P-adjoint and product}
The following hold:
	\begin{enumerate}
	\item
For any $f\in \mathbb{R}_{\ge0}\Gamma $, we have $\per (f)=\per (f^*)$;
	\item
For any $f, g\in \mathbb{R}_{\ge0}\Gamma $, we have $\per (fg)\ge \per (f)+\per (g)$.
	\end{enumerate}
	\end{proposition}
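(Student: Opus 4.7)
The plan is to exhibit a topological conjugacy $\Phi \colon (X_\mathsf{A},\lambda )\to (X_{\mathsf{A}^{-1}},\lambda )$ induced by the involution $\sigma \mapsto \sigma ^{-1}$ on $\mathscr{S}(\Gamma )$, and then compare the two pressures via the variational principle. For $x\in X_\mathsf{A}$ with associated permutation $\sigma =\varphi ^{(x)}$, set $(\Phi x)_t=t^{-1}\sigma ^{-1}(t)=x_{\sigma ^{-1}(t)}^{-1}\in \mathsf{A}^{-1}$. Since $\sigma ^{-1}(t)$ is the unique $s\in t\mathsf{A}^{-1}$ with $sx_s=t$, it depends only on $x|_{t\mathsf{A}^{-1}}$, so $\Phi $ is continuous; the $\lambda $-equivariance and bijectivity are routine. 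By Remark~\ref{R-real} and the variational principle,
\begin{equation*}
\per(f)=P(X_\mathsf{A},\log f)=\sup _\mu \Bigl(h_\mu +\int _{X_\mathsf{A}}\log f(x_{\matheur{e}_\Gamma })\,d\mu (x)\Bigr),
\end{equation*}
and analogously for $\per(f^*)$. Since $\mu \mapsto \Phi _*\mu $ is an entropy-preserving bijection on invariant Borel probability measures, it suffices to verify that for every such $\mu $,
\begin{equation*}
\int _{X_{\mathsf{A}^{-1}}}\log f^*(y_{\matheur{e}_\Gamma })\,d(\Phi _*\mu )(y)=\int _{X_\mathsf{A}}\log f(x_{\matheur{e}_\Gamma })\,d\mu (x).
\end{equation*}

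The left-hand side is $\int g\,d\mu $ for $g(x)=\log f(x_{\sigma _x^{-1}(\matheur{e}_\Gamma )})$. A short computation of how $\sigma $ transforms under $\lambda ^{t^{-1}}$ gives $g_t\coloneqq g\circ \lambda ^{t^{-1}}=\log f(x_{\sigma _x^{-1}(t)})$, hence $\int g_t\,d\mu =\int g\,d\mu $ by $\mu $-invariance. Reindexing by $s=\sigma _x^{-1}(t)$ yields $\sum _{t\in F}g_t(x)=\sum _{s\in \sigma _x^{-1}(F)}\log f(x_s)$, and the inclusions $\sigma _x^{-1}(F)\subseteq F\mathsf{A}^{-1}$ and $F\subseteq \sigma _x^{-1}(F\mathsf{A})$ give $|\sigma _x^{-1}(F)\vartriangle F|\le |F\mathsf{A}^{-1}\smallsetminus F|+|F\mathsf{A}\smallsetminus F|=o(|F|)$ along right F{\o}lner $F$, so this sum differs from $\sum _{s\in F}\log f(x_s)$ by $o(|F|)\|\log f\|_\infty $. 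Dividing by $|F|$, integrating, and passing to the F{\o}lner limit identifies $\int g\,d\mu $ with $\int \log f(x_{\matheur{e}_\Gamma })\,d\mu $, completing~(1).

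\textbf{For (2).} By Proposition~\ref{P-per equal iper} it is enough to prove $\iper(fg)\ge \iper(f)+\iper(g)$. Put $\mathsf{A}=\supp(f)$, $\mathsf{B}=\supp(g)$, so $\supp(fg)=\mathsf{A}\mathsf{B}$. Expanding $(fg)_{z_s}=\sum _{a_sb_s=z_s}f_{a_s}g_{b_s}$ and interchanging sums,
\begin{equation*}
\iper_{\mathsf{A}\mathsf{B},F}(fg)=\sum _{\substack{a\in \mathsf{A}^F,\ b\in \mathsf{B}^F\\ s\mapsto sa_sb_s\ \textup{inj.\ on}\ F}}\prod _{s\in F}f_{a_s}g_{b_s}.
\end{equation*}
Restricting to pairs with $a\in X_{\mathsf{A},F}^\iota $ (so $\sigma \coloneqq \varphi ^{(a)}$ is injective on $F$), the remaining condition on $b$ becomes $\tilde b\in X_{\mathsf{B},\sigma (F)}^\iota $ with $\tilde b_u=b_{\sigma ^{-1}(u)}$, so that
\begin{equation*}
\iper_{\mathsf{A}\mathsf{B},F}(fg)\ge \sum _{a\in X_{\mathsf{A},F}^\iota }\Biggl(\prod _{s\in F}f_{a_s}\Biggr)\iper_{\mathsf{B},\sigma (F)}(g).
\end{equation*}

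Given $\eta >0$, Lemma~\ref{L-limit} supplies $K'\in \mathscr{F}(\Gamma )$ and $\delta '>0$ such that $\frac{1}{|E|}\log \iper_{\mathsf{B},E}(g)\ge \iper(g)-\eta $ whenever $|EK'\smallsetminus E|<\delta '|E|$. Choosing $F$ right F{\o}lner with $|F\mathsf{A} K'|-|F|<\delta '|F|$, the inclusion $\sigma (F)\subseteq F\mathsf{A}$ together with $|\sigma (F)|=|F|$ yields $|\sigma (F)K'\smallsetminus \sigma (F)|<\delta '|\sigma (F)|$ \emph{uniformly in $a$}, so $\iper_{\mathsf{B},\sigma (F)}(g)\ge e^{|F|(\iper(g)-\eta )}$. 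Hence $\iper_{\mathsf{A}\mathsf{B},F}(fg)\ge e^{|F|(\iper(g)-\eta )}\iper_{\mathsf{A},F}(f)$, and taking $\frac{1}{|F|}\log $ along right F{\o}lner $F$ and letting $\eta \to 0$ gives~(2). The main obstacle in~(1) is matching the potentials: because $\Phi $ reshuffles coordinates in an $x$-dependent way, the pointwise identity fails and must be recovered by F{\o}lner averaging through $g_t=g\circ \lambda ^{t^{-1}}$. In~(2) the difficulty is the $a$-dependent image $\sigma (F)$, which is controlled uniformly by $\sigma (F)\subseteq F\mathsf{A}$.
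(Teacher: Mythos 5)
Your argument is correct in both parts. For (1) you take a genuinely different route from the paper: the paper never leaves the finite level — it reduces to $\iper(f)\le\iper(f^*)$ via Proposition~\ref{P-per equal iper} and then applies the involution $x\mapsto \bar{x}$ (with $\bar{x}_{sx_s}=x_s^{-1}$) directly to $X_{\mathsf{A},F}^\iota$, grouping configurations by their image set $F'\in \Theta^\iota(\mathsf{A},F)$ and absorbing the resulting multiplicity $|\Theta^\iota(\mathsf{A},F)|$ into an $e^{o(|F|)}$ factor via Stirling's formula. You instead globalize the same involution $\sigma\mapsto\sigma^{-1}$ into a topological conjugacy $X_\mathsf{A}\to X_{\mathsf{A}^{-1}}$ and transfer the problem to the variational principle; the real content then shifts to matching the potentials, which you handle correctly by the cocycle identity $g\circ\lambda^{t^{-1}}(x)=\log f\bigl(x_{\sigma_x^{-1}(t)}\bigr)$, the uniform bound $|\sigma_x^{-1}(F)\vartriangle F|\le |F\mathsf{A}^{-1}\smallsetminus F|+|F\mathsf{A}\smallsetminus F|$, and Følner averaging. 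Your version is conceptually cleaner (it exhibits the conjugacy explicitly, which the paper's local bijection only implicitly encodes) at the price of invoking the variational principle for amenable pressure, which the paper in any case uses in Proposition~\ref{P-per equal iper}. For (2) your decomposition is essentially the paper's: the only divergence is in bounding $\iper_{\mathsf{B},\sigma(F)}(g)$ from below, where the paper simply quotes the infimum formula $\iper_{\mathsf{B},E}(g)\ge e^{|E|\,\iper(g)}$ valid for \emph{every} $E\in\mathscr{F}(\Gamma)$ (again from Proposition~\ref{P-per equal iper}), so no Følner control of the $a$-dependent sets $\sigma(F)$ is needed; your uniform control via $\sigma(F)\subseteq F\mathsf{A}$ and $|\sigma(F)|=|F|$ is a correct but slightly more laborious substitute. (Minor housekeeping: both parts implicitly assume $f,g\ne 0$, the degenerate cases being trivial, and in (2) one should take $\matheur{e}_\Gamma\in K'$ so that $\sigma(F)\subseteq F\mathsf{A}K'$.)
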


	\begin{proof}
(1). By Proposition~\ref{P-per equal iper} it suffices to show that $\iper(f)=\iper(f^*)$. By symmetry it suffices to show that $\iper(f)\le \iper(f^*)$. We may assume that $f\neq 0$. Put $\mathsf{A}=\supp(f)\in \mathscr{F}(\Gamma )$. Then $\mathsf{A}^{-1}=\supp(f^*)$.

Let $C>\iper(f^*)$. Take $\varepsilon>0$ with $C>\iper(f^*)+2\varepsilon$.

Take $K\in \mathscr{F}(\Gamma )$ containing $\matheur{e}_\Gamma $ and $\delta >0$ such that for any $F'\in \mathscr{F}(\Gamma )$ with $|F'K|<(1+\delta ) |F'|$ we have $\frac{1}{|F'|}\log \iper_{\mathsf{A}^{-1},F'}(f^*)\le C-2\varepsilon$.

By Stirling's approximation formula (see for example \cite{CCL}*{Appendix A}) there is some $0<\delta '<1/2$ such that for each nonempty finite set $W$ the number of subsets of $W$ with cardinality at most $\delta ' |W|$ is at most $e^{\varepsilon |W|}$.

Let $F\in \mathscr{F}(\Gamma )$ with $|F\mathsf{A}K|<(1+\min(\delta ', \delta ))|F|$.
Denote by $\Theta ^\iota (\mathsf{A},F)$ the set of $F'\subseteq F\mathsf{A}$ satisfying $|F'|=|F|$.
For each $F'\in \Theta ^\iota (\mathsf{A},F)$, we have
	\begin{displaymath}
|F'K|\le |F\mathsf{A}K|<(1+\delta )|F|=(1+\delta )|F'|,
	\end{displaymath}
and hence $\iper_{\mathsf{A}^{-1}, F'}(f^*)\le e^{|F'|(C-2\varepsilon)}$.
For each $F'\in \Theta ^\iota (\mathsf{A},F)$, we also have
	\begin{displaymath}
|F'|=|F|\ge \frac{1}{1+\delta '}|F\mathsf{A}K|\ge \frac{1}{1+\delta '}|F\mathsf{A}|\ge (1-\delta ')|F\mathsf{A}|.
	\end{displaymath}
Thus $|\Theta ^\iota (\mathsf{A},F)|\le e^{\varepsilon |F\mathsf{A}|}\le e^{2\varepsilon |F|}$.

For each $x\in X_{\mathsf{A}, F}^\iota $, the set $F'=\varphi ^{(x)}(F)=\{sx_s\colon s\in F\}$ lies in $\Theta ^\iota (\mathsf{A},F)$ (cf. \eqref{eq:px}). If we set $\bar{x}_{sx_s}= x_s^{-1}$ for every $s\in F$, we obtain a point $\bar{x}\in (\mathsf{A}^{-1})^{F'}$ with the property that $\bar{x}\in X_{\mathsf{A}^{-1},F'}^\iota $ and $\prod _{s\in F}f_{x_s}=\prod _{t\in F'}f^*_{\bar{x}_t}$.

\smallskip For each $F'\in \Theta ^\iota (\mathsf{A},F)$ we have that
	\begin{align*}
\sum_{x\in X_{\mathsf{A},F}^\iota :\, \,\varphi ^{(x)}(F)=F'}\enspace \prod _{s\in F}f_{x_s}&=\sum_{x\in X_{\mathsf{A},F}^\iota :\,\varphi ^{(x)}(F)=F'}\enspace \prod _{t\in F'}f^*_{\bar{x}_t} \le \sum_{\bar{x}\in X_{\mathsf{A}^{-1},\,F'}^\iota }\,\prod _{t\in F'}f^*_{\bar{x}_t}
	\\
&= \iper_{\mathsf{A}^{-1},\,F'}(f^*)\le e^{|F'|(C-2\varepsilon)}=e^{|F|(C-2\varepsilon)}.
	\end{align*}
Therefore
	\begin{align*}
\iper_{\mathsf{A},F}(f)&=\sum_{F'\in \Theta ^\iota (\mathsf{A},F) }\enspace \sum_{x\in X_{\mathsf{A},F}^\iota :\,\,\varphi ^{(x)}(F)=F'}\enspace \prod _{s\in F}f_{x_s}
	\\
&\le \sum_{F'\in \Theta ^\iota (\mathsf{A},F) }e^{|F|(C-2\varepsilon)}=|\Theta ^\iota (\mathsf{A},F) |e^{|F|(C-2\varepsilon)}\le e^{|F|C}.
	\end{align*}
It follows that $\iper(f)=\lim_{F \to \infty }\frac{1}{|F|}\log \iper_{\mathsf{A},F}(f)\le C$. Letting $C\to \iper(f^*)$ we obtain that $\iper(f)\le \iper(f^*)$, as desired.

\smallskip (2) By Proposition~\ref{P-per equal iper} it suffices to show that $\iper(fg)\ge \iper(f)+\iper(g)$. We may assume that $f, g\neq 0$. Put $\mathsf{A}=\supp(f)$ and $\mathsf{B}=\supp(g)$. Then $\mathsf{A}\mathsf{B}=\supp(fg)$.

Let $F\in \mathscr{F}(\Gamma )$. For each $x\in X_{\mathsf{A},F}^\iota $, put $F_x=\{sx_s\colon s\in F\}$, and denote by $\Xi_x$ the set of $y\in \mathsf{B}^F$ for which the map $w\colon F_x\rightarrow \mathsf{B}$, sending $t=sx_s$ for $s\in F$ to $y_s$, lies in $X_{\mathsf{B},F_x}^\iota $. For each $z\in X_{\mathsf{A}\mathsf{B},F}^\iota (fg)$, we have
	\begin{align*}
\prod _{s\in F}(fg)_{z_s}=\sum_{x\in \mathsf{A}^F, \, y\in \mathsf{B}^F, \, xy=z}\,\prod _{s\in F}(f_{x_s}g_{y_s})\ge \sum_{x\in X_{\mathsf{A},F}^\iota }\sum_{y\in \Xi_x, xy=z}\,\prod _{s\in F}(f_{x_s}g_{y_s}).
	\end{align*}
Thus
	\begin{align*}
\iper_{\mathsf{A}\mathsf{B},F}(fg)&=\sum_{z\in X_{\mathsf{A}\mathsf{B},F}^\iota }\,\prod _{s\in F}(fg)_{z_s} \ge \sum_{z\in X_{\mathsf{A}\mathsf{B},F}^\iota }\,\sum_{x\in X_{\mathsf{A},F}^\iota }\,\sum_{y\in \Xi_x, \,xy=z}\,\prod _{s\in F}(f_{x_s}g_{y_s})
	\\
&=\sum_{x\in X_{\mathsf{A},F}^\iota }\,\sum_{y\in \Xi_x}\,\prod _{s\in F}(f_{x_s}g_{y_s})
=\sum_{x\in X_{\mathsf{A},F}^\iota }\,\Biggl(\prod _{s\in F}f_{x_s}\Biggr)\sum_{y\in \Xi_x}\,\prod _{s\in F}g_{y_s}
	\\
&=\sum_{x\in X_{\mathsf{A}, F}^\iota }\,\Biggl(\prod _{s\in F}f_{x_s}\Biggr)\,\sum_{w\in X_{\mathsf{B},F_x}^\iota }\,\prod _{t\in F_x}g_{w_t}
=\sum_{x\in X_{\mathsf{A},F}^\iota }\,\Biggl(\prod _{s\in F}f_{x_s}\Biggr)\, \iper_{\mathsf{B},F_x}(g)
	\\
&\ge \sum_{x\in X_{\mathsf{A},F}^\iota }\,\Biggl(\prod _{s\in F}f_{x_s}\Biggr) e^{|F_x|\,\iper(g)}
=\sum_{x\in X_{\mathsf{A},F}^\iota }\Biggl(\prod _{s\in F}f_{x_s}\Biggr)e^{|F|\,\iper(g)}
	\\
&=\iper_{\mathsf{A},F}(f)\,e^{|F|\,\iper(g)}
\ge e^{|F|\,\iper(f)}e^{|F|\,\iper(g)} =e^{|F|(\iper(f)+\iper(g))},
	\end{align*}
where the 2nd and 3rd inequalities come from Proposition~\ref{P-per equal iper}. Applying Proposition~\ref{P-per equal iper} again, we obtain that
	\begin{displaymath}
\iper(fg)=\inf_{F\in \mathscr{F}(\Gamma )}\frac{1}{|F|}\log \iper_{\mathsf{A}\mathsf{B},F}(fg)\ge \iper(f)+\iper(g).\tag*{\qedsymbol}
	\end{displaymath}
	\renewcommand{\qedsymbol}{}
	\vspace{-\baselineskip}
	\end{proof}

	\begin{remark}
	\label{R-product}
In general, for $f, g\in \mathbb{R}_{\ge0}\Gamma $, the identity $\per (fg)=\per (f)+\per (g)$ may fail. Take, for example, $\Gamma =\mathbb{Z}$ and identify $\mathbb{R}\Gamma $ with $\mathbb{R}[x^{\pm1}]$. If $f=1+x$ and $g=f^*=1+x^{-1}$, we have $\per (fg)=\per (x+2+x^{-1})\ge \per (x+1+x^{-1})=\htop(X_\mathsf{A})>0$ for $\mathsf{A}=\{-1, 0, 1\}\in \mathscr{F}(\mathbb{Z})$, while $\per (f)=\per (g)=\htop(X_\mathsf{B})=0$ for $\mathsf{B}=\{0, 1\}\in \mathscr{F}(\mathbb{Z})$.
	\end{remark}

\section{Permanent vs Determinant}
	\label{S-per vs det}

For any $f=\sum_{s\in \Gamma}f_ss\in \mathbb{C}\Gamma $ we put $|f|=\sum_{s\in \Gamma}|f_s|s\in \mathbb{R}_{\ge 0}\Gamma $. Abusing notation in \eqref{eq:rho} a little we denote, for every $f\in \mathbb{C}\Gamma $, by $\lambda ^f\colon \ell ^2(\Gamma )\to \ell ^2(\Gamma )$ the bounded linear operator extending left multiplication by $f$ on $\mathbb{C}\Gamma \subset \ell ^2(\Gamma )$.

For any $F\in \mathscr{F}(\Gamma )$, we write $\iota _F$ for the natural embedding $\ell ^2(F)\rightarrow \ell ^2(\Gamma )$ and $\Upsilon _F$ for the projection $\ell ^2(\Gamma )\rightarrow \ell ^2(F)$. For $f\in \mathbb{C}\Gamma $, set $\lambda ^f_F:=\Upsilon _F \circ \lambda ^f\circ \iota_F\in \mathcal{B}(\ell ^2(F))$, the algebra of all bounded linear operators on $\ell ^2(F)$. Denote by $\Sym(F)$ the set of all permutations of $F$. For each $\sigma \in \Sym(F)$, put $\sgn(\sigma )=1$ if $\sigma $ is even and $-1$ if $\sigma $ is odd. In the canonical basis of $\ell ^2(F)$, $\lambda ^f_F$ is represented by a square matrix $M=(M_{s, t})_{s, t\in F}$ such that $M_{s, t}=f_{st^{-1}}$. Thus
	\begin{displaymath}
\det \lambda ^f_F=\det M=\sum_{\sigma \in \Sym(F)}\sgn(\sigma )\prod _{s\in F}M_{s, \sigma (s)}=\sum_{\sigma \in \Sym(F)}\sgn(\sigma )\prod _{s\in F}f_{s\sigma (s)^{-1}}.
	\end{displaymath}

Let $\mathsf{A}\in \mathscr{F}(\Gamma)$. For each $x\in X_{\mathsf{A}, F}^\iota $, denote by $\varphi ^{(x)}$ the injective map $t\mapsto tx_t$ from $F$ into $\Gamma $ as in \eqref{eq:px}. Denote by $\Theta ^\iota (\mathsf{A}, F)$ the set of $F'\subseteq F\mathsf{A}$ satisfying $|F'|=|F|$, and by $\Theta (\mathsf{A}, F)$ the set of $F'\in \Theta ^\iota (\mathsf{A}, F)$ satisfying $\{t\in \Gamma\colon t\mathsf{A}^{-1}\subseteq F\}\subseteq F'$. For each $F'\in \Theta ^\iota (\mathsf{A}, F)$, take a bijection $\psi _{F'}\colon F'\rightarrow F$.

	\begin{lemma}
	\label{L-det in IA}
Let $f\in \mathbb{C}\Gamma $ be nonzero. Put $\mathsf{A}=\supp(f)\in \mathscr{F}(\Gamma)$. For any $F\in \mathscr{F}(\Gamma)$, we have
	\begin{displaymath}
\det \lambda ^{ff^*}_{F^{-1}}=\sum_{F'\in \Theta ^\iota (\mathsf{A}, F)}\,\Biggl|\sum_{x\in X_{\mathsf{A}, F, F'} }\sgn(\psi _ {F'}\circ \varphi ^{(x)})\prod _{t\in F}f_{x_t}\Biggr|^2,
	\end{displaymath}
where $X_{\textsf{A},F,F'} $ is defined in \eqref{eq:XAF}.
	\end{lemma}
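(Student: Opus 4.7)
The plan is to realize $\lambda^{ff^*}_{F^{-1}}$ as $B^*B$ for a suitable rectangular operator $B$, and then apply the Cauchy--Binet formula. The key observation is that $\lambda^{f^*}(e_s)=\sum_{v\in \mathsf{A}}\overline{f_v}\,e_{v^{-1}s}$ is supported on $\mathsf{A}^{-1}s$, so for $s\in F^{-1}$ the image of $\lambda^{f^*}\iota_{F^{-1}}$ lies in $\ell^2((F\mathsf{A})^{-1})$. Set $B=\Upsilon_{(F\mathsf{A})^{-1}}\lambda^{f^*}\iota_{F^{-1}}\colon \ell^2(F^{-1})\to \ell^2((F\mathsf{A})^{-1})$; taking adjoints gives $\Upsilon_{F^{-1}}\lambda^f\iota_{(F\mathsf{A})^{-1}}=B^*$. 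Consequently
\[
\lambda^{ff^*}_{F^{-1}}=\Upsilon_{F^{-1}}\lambda^f\lambda^{f^*}\iota_{F^{-1}}=\Upsilon_{F^{-1}}\lambda^f\iota_{(F\mathsf{A})^{-1}}\,\Upsilon_{(F\mathsf{A})^{-1}}\lambda^{f^*}\iota_{F^{-1}}=B^*B.
\]

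By the Cauchy--Binet formula applied to the rectangular matrix $B$,
\[
\det(B^*B)=\sum_{S}|\det(\Upsilon_S B)|^2,
\]
where $S$ ranges over $|F|$-element subsets of $(F\mathsf{A})^{-1}$; these are in bijection with $\Theta^\iota(\mathsf{A},F)$ via $S=(F')^{-1}$. Each minor $\det(\Upsilon_S B)$ is computed after fixing arbitrary orderings on the row and column index sets, and $|\det(\Upsilon_S B)|^2$ is independent of those choices. The $(s,u)$-entry of $\Upsilon_S\lambda^{f^*}\iota_{F^{-1}}$, for $s\in (F')^{-1}$ and $u\in F^{-1}$, equals $(f^*)_{su^{-1}}=\overline{f_{us^{-1}}}$; substituting $s=(t')^{-1}$ with $t'\in F'$ and $u=t^{-1}$ with $t\in F$ yields the entry $\overline{f_{t^{-1}t'}}$.

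Finally, I would Leibniz-expand each minor. Bijections $\beta\colon F\to F'$ correspond bijectively to elements $x\in X_{\mathsf{A},F,F'}$ via $x_t=t^{-1}\beta(t)$ (so that $\beta=\varphi^{(x)}$); summands with some $x_t\notin\mathsf{A}$ vanish because $\mathsf{A}=\supp(f)$. Transporting the ordering of $F$ to $F'$ along $\psi_{F'}\colon F'\to F$, the sign of $\beta$ under these induced orderings is exactly $\sgn(\psi_{F'}\circ\beta)=\sgn(\psi_{F'}\circ\varphi^{(x)})$, so
\[
\det(\Upsilon_S B)=\sum_{x\in X_{\mathsf{A},F,F'}}\sgn(\psi_{F'}\circ\varphi^{(x)})\prod_{t\in F}\overline{f_{x_t}}=\overline{\sum_{x\in X_{\mathsf{A},F,F'}}\sgn(\psi_{F'}\circ\varphi^{(x)})\prod_{t\in F}f_{x_t}}.
\]
Taking $|\cdot|^2$ and summing over $F'\in\Theta^\iota(\mathsf{A},F)$ then produces the claimed identity. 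The main subtlety is the sign bookkeeping: both the minor $\det(\Upsilon_S B)$ and the summed expression over $X_{\mathsf{A},F,F'}$ depend on ordering choices (respectively on the choice of $\psi_{F'}$), but the dependence is the same on both sides, so the ambiguity is killed by the modulus squared and no explicit compatibility between the two sets of choices needs to be enforced.
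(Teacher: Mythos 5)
Your argument is correct, and it takes a genuinely different route from the paper's. The paper expands $\det\lambda^{ff^*}_{F^{-1}}$ directly by the Leibniz formula, parametrizes each permutation term by pairs $(x,y)\in\mathsf{A}^F\times\mathsf{A}^F$, kills the contributions with non-injective $\varphi^{(x)}$ by an explicit involution (precomposing $y$ with a transposition), and then, for each $F'$, constructs a bijection between the surviving $y$'s and $X_{\mathsf{A},F,F'}$ to factor the double sum as a modulus squared. Your factorization $\lambda^{ff^*}_{F^{-1}}=B^*B$ with $B=\Upsilon_{(F\mathsf{A})^{-1}}\lambda^{f^*}\iota_{F^{-1}}$, followed by Cauchy--Binet, packages both of those steps into one classical identity: the restriction to injective $\varphi^{(x)}$ comes for free because the nonvanishing Leibniz terms of an $|F|\times|F|$ minor of $B$ are exactly the bijections $F\to F'$ moving each $t$ by an element of $\supp(f)$, and the regrouping into $|\cdot|^2$ is the Cauchy--Binet sum itself. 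The two points that need (and get) attention in your version are the insertion of the projection $\iota_{(F\mathsf{A})^{-1}}\Upsilon_{(F\mathsf{A})^{-1}}$ between $\lambda^f$ and $\lambda^{f^*}$, which is legitimate precisely because the range of $\lambda^{f^*}\iota_{F^{-1}}$ lies in $\ell^2\bigl((F\mathsf{A})^{-1}\bigr)$, and the sign ambiguity of each minor, which is only defined up to the choice of orderings (equivalently, of $\psi_{F'}$) but is irrelevant after taking the modulus squared. In short, the paper's computation is longer but entirely self-contained, while yours is shorter and more conceptual at the cost of invoking Cauchy--Binet (which the paper's cancellation-and-regrouping argument essentially reproves in this special case).
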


	\begin{proof}
Denote by $Q(F)$ the set of $(x, y)\in \mathsf{A}^F\times \mathsf{A}^F$ satisfying that the map
	\begin{displaymath}
\sigma _{xy^{-1}}\colon t\mapsto tx_ty_t^{-1}
	\end{displaymath}
is a permutation of $F$.
For each $x\in \mathsf{A}^F$, denote by $Q(F)_x$ the set of $y\in \mathsf{A}^F$ satisfying that $(x, y)\in Q(F)$.

For any $\sigma \in \mathscr{S}(F^{-1})$, we have
	\begin{align*}
\prod_{s\in F^{-1}}(ff^*)_{s\sigma(s)^{-1}} &=\sum_{\substack{x, y\in \mathsf{A}^{(F^{-1})}
	\\
x_sy_s^{-1}=s\sigma(s)^{-1} \,\forall \,s\in F^{-1}}} \, \prod_{s\in F^{-1}}\bigl(f_{x_s}\overline{f_{y_s}}\bigr) & \scalebox{.8}{$s= t^{-1}$}
	\\
&=\sum_{\substack{x, y\in \mathsf{A}^{(F^{-1})}
	\\
x_{t^{-1}} y_{t^{-1}}^{-1}=t^{-1}\sigma(t^{-1})^{-1} \, \forall \,t\in F}}\prod_{t\in F}\bigl(f_{x_{t^{-1}}}\overline{f_{y_{t^{-1}}}}\bigr) & \scalebox{.8}{$x_{t^{-1}}= x'_t,\,y_{t^{-1}}= y'_t$}
	\\
&=\sum_{\substack{x', y'\in \mathsf{A}^F
	\\
t x'_t{y'_t}^{-1}=\sigma(t^{-1})^{-1} \,\forall \,t\in F}}\prod_{t\in F}\bigl(f_{x'_t}\overline{f_{y'_t}}\bigr).
	\end{align*}
Thus
	\begin{equation}
	\label{E-per ge det1}
	\begin{aligned}
\det \lambda ^{ff^*}_{F^{-1}}&=\sum_{\sigma\in \mathscr{S}(F^{-1})}\sgn(\sigma)\prod_{s\in F^{-1}}(ff^*)_{s\sigma(s)^{-1}}
	\\
&=\sum_{\sigma\in \mathscr{S}(F^{-1})}\, \sum_{\substack{x', y'\in \mathsf{A}^F
	\\
tx'_t {y'_t}^{-1}=\sigma(t^{-1})^{-1} \,\forall\, t\in F}} \sgn(\sigma)\prod_{t\in F}\bigl(f_{x'_t}\overline{f_{y'_t}}\bigr) & \qquad \scalebox{.8}{$\sigma'(t)=\sigma(t^{-1})^{-1}$}
	\\
&=\sum_{\sigma'\in \mathscr{S}(F)}\sum_{\substack{x', y'\in \mathsf{A}^{F}
	\\
t x'_t {y'_t}^{-1}=\sigma'(t) \,\forall\, t\in F}}\sgn(\sigma')\prod_{t\in F}\bigl(f_{x'_t}\overline{f_{y'_t}}\bigr)
	\\
&=\sum_{\sigma\in \mathscr{S}(F)}\sum_{\substack{x, y\in \mathsf{A}^{F}
	\\
t x_t y_t^{-1}=\sigma(t) \,\forall\, t\in F}}\sgn(\sigma)\prod_{t\in F}\bigl(f_{x_t}\overline{f_{y_t}}\bigr)
	\\
&=\sum_{(x, y)\in Q(F)}\sgn(\sigma_{xy^{-1}})\prod_{t\in F}\Biggl(f_{x_t}\overline{f_{y_t}}\Biggr)
	\\
&=\sum_{x\in \mathsf{A}^F}\Biggl(\prod_{t\in F}f_{x_t}\Biggr)\cdot \sum_{y\in Q(F)_x} \sgn(\sigma_{xy^{-1}})\prod_{t\in F}\overline{f_{y_t}}.
	\end{aligned}
	\end{equation}

Let $x\in \mathsf{A}^F\smallsetminus X_{\mathsf{A}, F}^\iota $. Then there are distinct $s, t\in F$ such that $sx_s=tx_t$. Denote by $\tau $ the permutation of $F$ exchanging $s$ and $t$ and fixing all other elements of $F$. For each $y\in Q(F)_x$, we have
	\begin{displaymath}
sx_s(y\circ \tau )_s^{-1}=sx_sy_t^{-1}=tx_ty_t^{-1}=\sigma _{xy^{-1}}(t)=(\sigma _{xy^{-1}}\circ \tau )(s),
	\end{displaymath}
and similarly,
	\begin{displaymath}
tx_t(y\circ \tau )_t^{-1}=(\sigma _{xy^{-1}}\circ \tau )(t),
	\end{displaymath}
while for any $g\in F\smallsetminus \{s,t\}$, we have
	\begin{displaymath}
gx_g(y\circ \tau )_g^{-1} = gx_gy_g^{-1} = \sigma _{xy^{-1}}(g) = (\sigma _{xy^{-1}}\circ \tau )(g).
	\end{displaymath}
It follows that for each $y\in Q(F)_x$, we have
	\begin{displaymath}
y\circ \tau \in Q(F)_x\enspace \textup{and}\enspace \sigma _{x(y\circ \tau )^{-1}}=\sigma _{xy^{-1}}\circ \tau .
	\end{displaymath}
In particular, $y \mapsto y\circ \tau $ is a permutation of $Q(F)_x$. Thus
	\begin{align*}
\sum_{y\in Q(F)_x} \sgn(\sigma _{xy^{-1}})\,\prod _{t\in F}\overline{f_{y_t}}&=\sum_{y\in Q(F)_x} \sgn(\sigma _{x(y\circ \tau )^{-1}})\,\prod _{t\in F}\overline{f_{(y\circ \tau )_t}}
	\\
&=\sum_{y\in Q(F)_x} \sgn(\sigma _{xy^{-1}}\circ \tau )\,\prod _{t\in F}\overline{f_{y_t}}
	\\
&=-\sum_{y\in Q(F)_x} \sgn(\sigma _{xy^{-1}}) \,\prod _{t\in F}\overline{f_{y_t}},
	\end{align*}
and hence
	\begin{equation}
	\label{E-per ge det2}
\sum_{y\in Q(F)_x} \sgn(\sigma_{xy^{-1}})\prod_{t\in F}\overline{f_{y(t)}}=0.
	\end{equation}
From \eqref{E-per ge det1} and \eqref{E-per ge det2} we get
	\begin{align*}
\det \lambda ^{ff^*}_{F^{-1}} &=\sum_{x\in X_{\mathsf{A}, F}^\iota }\Biggl(\prod _{t\in F}f_{x_t}\Biggr)\cdot \sum_{y\in Q(F)_x} \sgn(\sigma _{xy^{-1}})\prod _{t\in F}\overline{f_{y_t}}
	\\
&=\sum_{F'\in \Theta ^\iota (\mathsf{A}, F)}\,\sum_{x\in X_{\mathsf{A}, F, F'} }\Biggl(\prod _{t\in F}f_{x_t}\Biggr)\cdot \sum_{y\in Q(F)_x} \sgn(\sigma _{xy^{-1}})\prod _{t\in F}\overline{f_{y_t}}.
	\end{align*}

For any $x\in \mathsf{A}^F$ and $y\in Q(F)_x$, we have the permutation $\sigma_{xy^{-1}}$ of $F$, whence we can set
	\begin{displaymath}
z^{(x,y)}=y\circ (\sigma_{xy^{-1}})^{-1}\in \mathsf{A}^F.
	\end{displaymath}

Let $x\in X_{\mathsf{A}, F}^\iota $ and $y\in Q(F)_x$. For any $t\in F$, setting $s=\sigma_{xy^{-1}}(t)\in F$, we have $z^{(x,y)}_s=y_t$, whence
	\begin{equation}
	\label{E-per ge det4}
tx_t=tx_ty_t^{-1}y_t=\sigma_{xy^{-1}}(t) y_t=sz^{(x,y)}_s.
	\end{equation}
For any distinct $s, s'\in F$, since $\sigma_{xy^{-1}}$ is a permutation of $F$, we can find $t, t'\in F$ with $s=\sigma_{xy^{-1}}(t)$, $s'=\sigma_{xy^{-1}}(t')$, and $t\neq t'$. Since $x\in X_{\mathsf{A}, F}^\iota $, we have $tx_t\neq t'x_{t'}$. Therefore
	\begin{displaymath}
sz^{(x,y)}_s\overset{\eqref{E-per ge det4}}=tx_t\neq t'x_{t'}\overset{\eqref{E-per ge det4}}=s'z^{(x,y)}_{s'}.
	\end{displaymath}
This shows that $z^{(x,y)}\in X_{\mathsf{A}, F}^\iota $.

Let $F'\in \Theta ^\iota (\mathsf{A}, F)$ and $x\in X_{\mathsf{A}, F, F'} $. For each $y\in Q(F)_x$, we have
	\begin{displaymath}
F'=\{tx_t \colon t\in F\}\overset{\eqref{E-per ge det4}}=\{sz^{(x, y)}_s\colon s\in F\},
	\end{displaymath}
whence $z^{(x,y)}\in X_{\mathsf{A},F,F'} $. We claim that the map $Q(F)_x\rightarrow X_{\mathsf{A}, F, F'} $ sending $y$ to $z^{(x,y)}$ is injective.

Indeed, let $y, y'\in Q(F)_x$ with $z^{(x, y)}=z^{(x, y')}$. For each $s\in F$, since $\sigma_{xy^{-1}}$ and $\sigma_{x(y')^{-1}}$ are permutations of $F$, we can find $t, t'\in F$ such that $ \sigma_{xy^{-1}}(t)=s$ and $\sigma_{x(y')^{-1}}(t')=s$. Then
	\begin{displaymath}
tx_t\overset{\eqref{E-per ge det4}}=sz^{(x, y)}_s=sz^{(x, y')}_s\overset{\eqref{E-per ge det4}}=t'x_{t'}.
	\end{displaymath}
Since $x\in X_{\mathsf{A}, F}^\iota $, we get $t=t'$.
It follows that $\sigma_{xy^{-1}}=\sigma_{x(y')^{-1}}$ and hence $y=y'$. This proves our claim.

We next claim that the map $Q(F)_x\rightarrow X_{\mathsf{A}, F, F'} $ sending $y$ to $z^{(x, y)}$ is surjective. Let $z\in X_{\mathsf{A}, F, F'} $. We have the bijections $\varphi ^{(x)}, \varphi ^{(z)}\colon F\rightarrow F'$, whence we have the permutation $(\varphi ^{(z)})^{-1}\circ \varphi ^{(x)}$ of $F$. Set
	\begin{displaymath}
y=z\circ ((\varphi ^{(z)})^{-1}\circ \varphi ^{(x)})\in \mathsf{A}^F.
	\end{displaymath}
For each $t\in F$, setting $s=(\varphi ^{(z)})^{-1}\circ \varphi ^{(x)})(t)\in F$, we have
	\begin{displaymath}
y_t=z_s \quad \text{ and } \quad \varphi ^{(z)}(s)=\varphi ^{(x)}(t),
	\end{displaymath}
and hence
	\begin{displaymath}
\sigma_{xy^{-1}}(t)=tx_ty_t^{-1}= \varphi ^{(x)}(t)y_t^{-1}=\varphi ^{(z)}(s)z_s^{-1}= s=((\varphi ^{(z)})^{-1}\circ \varphi ^{(x)})(t).
	\end{displaymath}
Thus
	\begin{equation}
	\label{E-per ge det3}
\sigma_{xy^{-1}}=(\varphi ^{(z)})^{-1}\circ \varphi ^{(x)} \in \Sym(F).
	\end{equation}
Therefore $y\in Q(F)_x$, and
	\begin{displaymath}
z^{(x, y)}=y\circ (\sigma_{xy^{-1}})^{-1}=y\circ ((\varphi ^{(z)})^{-1}\circ \varphi ^{(x)})^{-1}=z.
	\end{displaymath}
This proves our claim. Thus the map
$Q(F)_x\rightarrow X_{\mathsf{A}, F, F'} $ sending $y$ to $z^{(x, y)}$ is a bijection, and \eqref{E-per ge det3} holds for $z=z^{(x, y)}$.

For each $F'\in \Theta ^\iota (\mathsf{A}, F)$ we have
	\begin{align*}
\sum_{x\in X_{\mathsf{A}, F, F'} }&\Biggl(\prod_{t\in F}f_{x_t}\Biggr)\cdot \sum_{y\in Q(F)_x} \sgn(\sigma_{xy^{-1}})\prod_{t\in F}\overline{f_{y_t}} \hspace{30mm}\scalebox{.8}{$\text{since } (\sigma_{xy^{-1}})^{-1}\in \mathscr{S}(F)$}
	\\
&=\sum_{x\in X_{\mathsf{A}, F, F'} }\Biggl(\prod_{t\in F}f_{x_t}\Biggr)\cdot \sum_{y\in Q(F)_x} \sgn(\sigma_{xy^{-1}})\prod_{t\in F}\overline{f_{(y\circ (\sigma_{xy^{-1}})^{-1})_t}}
	\\
&=\sum_{x\in X_{\mathsf{A}, F, F'} }\Biggl(\prod_{t\in F}f_{x_t}\Biggr)\cdot \sum_{y\in Q(F)_x} \sgn(\sigma_{xy^{-1}})\prod_{t\in F}\overline{f_{z^{(x, y)}_t}}
	\\
&\overset{\eqref{E-per ge det3}}=\sum_{x\in X_{\mathsf{A}, F, F'} }\Biggl(\prod_{t\in F}f_{x_t}\Biggr)\cdot \sum_{z\in X_{\mathsf{A}, F, F'} } \sgn((\varphi ^{(z)})^{-1}\circ \varphi ^{(x)})\prod_{t\in F}\overline{f_{z_t}}
	\\
&=\sum_{x\in X_{\mathsf{A}, F, F'} }\Biggl(\prod_{t\in F}f_{x_t}\Biggr)\cdot \sum_{z\in X_{\mathsf{A}, F, F'} } \sgn\bigl((\psi _{F'}\circ \varphi ^{(z)})^{-1}\circ (\psi _{F'}\circ \varphi ^{(x)})\bigr)\prod_{t\in F}\overline{f_{z_t}}
	\\
&=\sum_{x\in X_{\mathsf{A}, F, F'} }\Biggl(\prod_{t\in F}f_{x_t}\Biggr)\cdot \sum_{z\in X_{\mathsf{A}, F, F'} } \sgn((\psi _{F'}\circ \varphi ^{(z)})^{-1})\sgn(\psi _{F'}\circ \varphi ^{(x)})\prod_{t\in F}\overline{f_{z_t}}
	\\
&=\sum_{x\in X_{\mathsf{A}, F, F'} }\Biggl(\prod_{t\in F}f_{x_t}\Biggr)\cdot \sum_{z\in X_{\mathsf{A}, F, F'} } \sgn(\psi _{F'}\circ \varphi ^{(z)})\sgn(\psi _{F'}\circ \varphi ^{(x)})\prod_{t\in F}\overline{f_{z_t}}
	\\
&=\Biggl(\sum_{x\in X_{\mathsf{A}, F, F'} }\sgn(\psi _{F'}\circ \varphi ^{(x)})\prod_{t\in F}f_{x_t}\Biggr)\cdot \Biggl(\sum_{z\in X_{\mathsf{A}, F, F'} }\sgn(\psi _{F'}\circ \varphi ^{(z)})\prod_{t\in F}\overline{f_{z_t}}\Biggr)
	\\
&=\Biggl|\sum_{x\in X_{\mathsf{A}, F, F'} }\sgn(\psi _{F'}\circ \varphi ^{(x)})\prod_{t\in F}f_{x_t}\Biggr|^2.
	\end{align*}

Finally we have
	\begin{align*}
\det \lambda ^{ff^*}_{F^{-1}}&=\sum_{F'\in \Theta ^\iota (\mathsf{A}, F)}\,\sum_{x\in X_{\mathsf{A}, F, F'} }\biggl(\prod _{t\in F}f_{x_t}\biggr)\cdot \sum_{y\in Q(F)_x} \sgn(\sigma _{xy^{-1}})\prod _{t\in F}\overline{f_{y_t}}
	\\
&=\sum_{F'\in \Theta ^\iota (\mathsf{A}, F)}\,\biggl|\sum_{x\in X_{\mathsf{A}, F, F'} }\sgn(\psi _{F'}\circ \varphi ^{(x)})\prod _{t\in F}f_{x_t}\biggr|^2.
\tag*{\qedsymbol}
	\end{align*}
\renewcommand{\qedsymbol}{}
	\end{proof}
\vspace{-\baselineskip}

For comparing permanents and determinants we assume in the following discussion that $\Gamma $ is infinite and amenable. In order to unburden notation a little we denote by $\ddet _\textup{FK}(f)$ the \emph{Fuglede-Kadison determinant} of the linear operator $\lambda ^f \in \mathcal{B}(\ell ^2(\Gamma ))$ introduced in \cite{FK}.

	\begin{proposition}
	\label{P-per ge det}
For every $f\in \mathbb{C}\Gamma $ we have $\per (|f|)\ge \ddet _\textup{FK}(f)$.
	\end{proposition}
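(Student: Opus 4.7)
The plan is to start from the sum-of-squares identity for $\det \lambda^{ff^*}_{F^{-1}}$ provided by Lemma~\ref{L-det in IA}, bound each signed inner sum by the corresponding unsigned sum via the triangle inequality, collapse the outer sum of squares into the square of $\iper_{\mathsf{A},F}(|f|)$, take $\frac{1}{|F|}\log$, and pass to the limit along right F\o lner sequences, identifying the two sides via Proposition~\ref{P-per equal iper} and via the amenable-group F{\o}lner approximation for Fuglede--Kadison determinants applied to the positive self-adjoint element $ff^{*}$.

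In more detail, one may assume $f\neq 0$ and set $\mathsf{A}=\supp(f)=\supp(|f|)\in \mathscr{F}(\Gamma)$. Lemma~\ref{L-det in IA} expresses $\det \lambda^{ff^{*}}_{F^{-1}}$ as a sum over $F'\in \Theta^\iota(\mathsf{A},F)$ of the squared moduli of certain signed sums indexed by $X_{\mathsf{A},F,F'}$. Since $|\sgn(\cdot)|=1$, the triangle inequality yields, for every $F'$, the bound
\[
\Biggl|\sum_{x\in X_{\mathsf{A},F,F'}}\sgn(\psi_{F'}\circ \varphi^{(x)})\prod_{t\in F}f_{x_{t}}\Biggr|\le \sum_{x\in X_{\mathsf{A},F,F'}}\prod_{t\in F}|f_{x_{t}}|.
\]
Using the elementary estimate $\sum_{i}a_{i}^{2}\le (\sum_{i}a_{i})^{2}$ for $a_{i}\ge 0$ together with the fact that $X_{\mathsf{A},F}^{\iota}$ is the disjoint union of the sets $X_{\mathsf{A},F,F'}$ indexed by $F'\in \Theta^\iota(\mathsf{A},F)$, one then obtains
\[
\det \lambda^{ff^{*}}_{F^{-1}}\le \Biggl(\sum_{F'\in \Theta^\iota(\mathsf{A},F)}\sum_{x\in X_{\mathsf{A},F,F'}}\prod_{t\in F}|f_{x_{t}}|\Biggr)^{2}=\iper_{\mathsf{A},F}(|f|)^{2}.
\]
Taking logarithms and dividing by $2|F|$ gives $\frac{1}{|F|}\log \iper_{\mathsf{A},F}(|f|)\ge \frac{1}{2|F|}\log \det \lambda^{ff^{*}}_{F^{-1}}$.

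Now let $F$ run over a right F{\o}lner sequence, so that $F^{-1}$ runs over a left F{\o}lner sequence. By Proposition~\ref{P-per equal iper} the left-hand side converges to $\iper(|f|)=\per(|f|)$. The right-hand side is handled by the standard amenable-group approximation theorem for Fuglede--Kadison determinants, applied to the positive self-adjoint operator $\lambda^{ff^{*}}$: this yields $\lim_{F}\frac{1}{|F|}\log \det \lambda^{ff^{*}}_{F^{-1}}=\log \ddet_{\textup{FK}}(\lambda^{ff^{*}})=2\,\ddet_{\textup{FK}}(f)$, where the last equality uses the polar-decomposition identity $\ddet_{\textup{FK}}(\lambda^{f})^{2}=\ddet_{\textup{FK}}(\lambda^{ff^{*}})$ (with $\ddet_{\textup{FK}}$ understood additively, matching the logarithmic scale on which $\per$ is defined). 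Combining the two limits gives $\per(|f|)\ge \ddet_{\textup{FK}}(f)$.

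The main obstacle is the last step, the F{\o}lner approximation of $\ddet_{\textup{FK}}(\lambda^{ff^{*}})$. Weak convergence of the spectral distributions of $\lambda^{ff^{*}}_{F^{-1}}$ to that of $\lambda^{ff^{*}}$ alone gives only $\log \ddet_{\textup{FK}}(\lambda^{ff^{*}})\ge \limsup_{F}\frac{1}{|F|}\log \det \lambda^{ff^{*}}_{F^{-1}}$, because $\log$ is upper semicontinuous but unbounded below at $0$. What is needed is the matching lower bound, which in the amenable setting is obtained via a standard regularization: one replaces $T=\lambda^{ff^{*}}$ by $T+\varepsilon I$ (making $\log$ bounded on the spectrum), applies the straightforward bounded-continuous F{\o}lner approximation, and then lets $\varepsilon\to 0^{+}$ using that $\ddet_{\textup{FK}}$ is continuous from above in the operator order. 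This argument is essentially due to Deninger and appears, in the form used here, in the work of Li--Thom on entropy and Fuglede--Kadison determinants; citing that result closes the proof.
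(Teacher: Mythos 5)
Your proposal is correct and follows essentially the same route as the paper: both start from the sum-of-squares identity of Lemma~\ref{L-det in IA}, apply the triangle inequality and $\sum_i a_i^2\le(\sum_i a_i)^2$ to bound $\det\lambda^{ff^*}_{F^{-1}}$ by $\bigl(\iper_{\mathsf{A},F}(|f|)\bigr)^2$, and then pass to the limit using Proposition~\ref{P-per equal iper}, the identity $\ddet_\textup{FK}(f)=\frac12\ddet_\textup{FK}(ff^*)$, and the F{\o}lner approximation of the Fuglede--Kadison determinant from Li--Thom. Your additional remarks on why the determinant approximation requires more than weak spectral convergence are accurate but already packaged in the cited result.
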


	\begin{proof}
We may assume that $f\neq 0$. Put $\mathsf{A}=\supp(f)\in \mathscr{F}(\Gamma)$.

From \cite{FK} we have $\ddet _\textup{FK}(f)=\ddet _\textup{FK}({f^*})=\frac{1}{2}\ddet _\textup{FK}({ff^*})$.
By \cite{LT}*{Theorem 1.4} we have $\ddet _\textup{FK}({ff^*})=\lim_{F}\frac{1}{|F|}\log\det \lambda ^{ff^*}_{F^{-1}}$. By Proposition~\ref{P-per equal iper} we have $\per (|f|)=\iper(|f|)=\lim_{F}\frac{1}{|F|}\log \per _{\mathsf{A}, F}^\iota (|f|)$. Thus it suffices to show
	\begin{displaymath}
\det \lambda ^{ff^*}_{F^{-1}}\le \Biggl(\per _{\mathsf{A}, F}^\iota (|f|)\Biggr)^2
	\end{displaymath}
for every $F\in \mathscr{F}(\Gamma)$.

Indeed, according to Lemma~\ref{L-det in IA},
	\begin{align*}
\det \lambda ^{ff^*}_{F^{-1}}&=\sum_{F'\in \Theta ^\iota (\mathsf{A}, F)}\,\Biggl|\sum_{x\in X_{\mathsf{A}, F, F'} }\sgn(\psi _ {F'}\circ \varphi ^{(x)})\prod _{t\in F}f_{x_t}\Biggr|^2
	\\
&\le \sum_{F'\in \Theta ^\iota (\mathsf{A}, F)}\Biggl(\sum_{x\in X_{\mathsf{A}, F, F'} }\prod _{t\in F}|f|_{x_t}\Biggr)^2 \le \Biggl(\sum_{F'\in \Theta ^\iota (\mathsf{A}, F)}\sum_{x\in X_{\mathsf{A}, F, F'} }\,\prod _{t\in F}|f|_{x_t}\Biggr)^2
	\\
&= \bigl(\per _{\mathsf{A}, F}^\iota (|f|)\bigr)^2.
\tag*{\qedsymbol}
	\end{align*}
\renewcommand{\qedsymbol}{}
\vspace{-\baselineskip}
	\end{proof}

	\begin{remark}
	\label{R-per vs det}
In general, for $f\in \mathbb{R}_{\ge 0}\Gamma $ the identity $\per (f)=\ddet _\textup{FK}(f)$ will fail. For example, if we put $\Gamma=\mathbb{Z}$ and $\mathbb{R}\Gamma =\mathbb{R}[x^{\pm1}]$ as in Remark \ref{R-product}, and if $f=(1+x)(1+x^{-1})=x^{-1}+2+x$, we have $\per (f)\ge \per (x^{-1}+1+x)=\htop(X_\mathsf{A})>0$ for $\mathsf{A}=\{-1, 0, 1\}\in \mathscr{F}(\mathbb{Z})$, while $\ddet _\textup{FK}(f)=2 \ddet _\textup{FK}({1+x})=0$.

However, the problem of finding possible connections between $\per (f)$ and $\ddet _\textup{FK}(f)$ is intriguing. Both these quantities arise naturally in dynamical contexts: if $f$ is the indicator function of a set $\mathsf{A}\in \mathscr{F}(\Gamma )$, Remark \ref{R-permanent and entropy} shows that $\per (f)$ is the topological entropy of the restricted permutation space $X_\mathsf{A}\subseteq \mathsf{A}^\Gamma $; on the other hand, if $f\in \mathbb{Z}\Gamma $ is arbitrary, $\ddet _\textup{FK}(f)$ is the topological entropy of the principal algebraic $\Gamma $-action $(X_f,\lambda _f)$ arising from $f$ (\cites{LSW, Deninger, LT}). At least for $\Gamma =\mathbb{Z}^d$, the explicit value of $\ddet _\textup{FK}(f)$ is given by the logarithmic Mahler measure of $f$ (\cite{LSW}), allowing straightforward numerical calculations and, in special cases, explicit arithmetical formulas for $\htop(\lambda _f)=\ddet _\textup{FK}(f)$ (cf. e.g., \cite{Boyd}). On the other hand, the complexity of computing the permanent of an $n\times n$ (0,1)-matrix is \emph{NP-hard} and conjecturally not possible in polynomial time (cf. \cite{Valiant79}). This doesn't give much hope for general results concerning explicit values of $\per (f),\,f \in \mathbb{Z}_{\ge0}[\mathbb{Z}^d]$, or, indeed, of $\htop (X_\mathsf{A}), \, \mathsf{A}\in \mathscr{F}(\mathbb{Z}^d)$.

In view of the difficulty of computing the permanent of a large matrix $M$ one may try to search for linear operations (like sign changes) on the entries of $M$, resulting in a matrix $M'$ whose determinant is equal to the permanent of $M$. Although this approach works only under severe restrictions (cf. e.g., \cites{Gibson71, Marcus-Minc61}), it is the motivation for the following lemma.
	\end{remark}

	\begin{lemma}
	\label{L-per equal det}
Let $h\in \mathbb{R}_{\ge 0} \Gamma $ be nonzero and let $\mathsf{A}=\supp(h)\in \mathscr{F}(\Gamma)$. Let $W\subseteq \mathbb{R} \Gamma $ such that $|f|=h$ for every $f\in W$. Assume that there are a right F{\o}lner sequence $(F_n)_{n\in \mathbb{N}}$ for $\Gamma $
and a set $H_n\subseteq F_n$ for each $n\in \mathbb{N}$ such that the following conditions hold:
	\begin{enumerate}
	\item
$|H_n|/|F_n|\to 0$ as $n\to \infty$;
	\item
for each $n\in \mathbb{N}$, $F'\in \Theta(\mathsf{A}, F_n)$ and $y\in \mathsf{A}^{H_n}$, there is some $f\in W$ so that the sign of $\sgn(\psi _{F'}\circ \varphi ^{(x)})\prod _{t\in F_n}f_{x_t}$ is the same for all $x\in X_{\mathsf{A}, F_n, F'} $ satisfying $x|_{H_n}=y$.
	\end{enumerate}
Then $\per (h)=\max_{f\in W}\ddet _\textup{FK}(f)$.
	\end{lemma}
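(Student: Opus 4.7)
The inequality $\max_{f\in W}\ddet_\textup{FK}(f)\le \per(h)$ is immediate from Proposition~\ref{P-per ge det}: since $|f|=h$ for every $f\in W$, one has $\ddet_\textup{FK}(f)\le \per(|f|)=\per(h)$. Because $\supp(h)=\mathsf{A}$ and $|f_a|=h_a$ for $a\in \mathsf{A}$, each $f\in W$ is determined by its sign pattern in $\{\pm 1\}^{\mathsf{A}}$, so I may assume $W$ is finite with $|W|\le 2^{|\mathsf{A}|}$; in particular the supremum is attained.

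For the reverse direction, the plan is to exhibit, for each $n$, an element $f^*_n\in W$ with
\[
\det \lambda^{f^*_n(f^*_n)^*}_{F_n^{-1}}\ge C_n^{-1}\bigl(\per_{\mathsf{A},F_n}(h)\bigr)^2, \qquad \log C_n=o(|F_n|).
\]
Passing to a subsequence on which $f^*_n$ is constantly some $f^*\in W$ (possible by finiteness of $W$), then taking $\frac{1}{2|F_n|}\log$ and using $\ddet_\textup{FK}(f)^2=\ddet_\textup{FK}(ff^*)=\lim_n\frac{1}{|F_n|}\log\det \lambda^{ff^*}_{F_n^{-1}}$ from the proof of Proposition~\ref{P-per ge det}, this gives $\ddet_\textup{FK}(f^*)\ge \per(h)$. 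The starting point is Lemma~\ref{L-det in IA} in the form
\[
\det \lambda^{ff^*}_{F_n^{-1}}\ge \sum_{F'\in \Theta(\mathsf{A},F_n)}\bigl|S(f,F')\bigr|^2,\quad S(f,F'):=\sum_{x\in X_{\mathsf{A},F_n,F'}}\sgn(\psi_{F'}\circ \varphi^{(x)})\prod_{t\in F_n}f_{x_t}.
\]

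Next I would partition each $X_{\mathsf{A},F_n,F'}$ according to $y:=x|_{H_n}$, so $S(f,F')=\sum_{y\in \mathsf{A}^{H_n}}S_y(f,F')$ with $S_y$ the corresponding partial sum. Condition (2) supplies, for each $(F',y)\in \Theta(\mathsf{A},F_n)\times \mathsf{A}^{H_n}$, some $f(F',y)\in W$ and a common sign $\epsilon(F',y)\in \{\pm 1\}$ such that $S_y(f(F',y),F')=\epsilon(F',y)T(F',y)$, where $T(F',y):=\sum_{x\in X_{\mathsf{A},F_n,F'},\,x|_{H_n}=y}\prod_t h_{x_t}$ (using $|f(F',y)_{x_t}|=h_{x_t}$ and the sign-constancy). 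Since $X_{\mathsf{A},F_n}$ is the disjoint union of the $X_{\mathsf{A},F_n,F'}$ over $F'\in \Theta(\mathsf{A},F_n)$, the total weighted mass is $\sum_{(F',y)}T(F',y)=\per_{\mathsf{A},F_n}(h)$. A pigeonhole on the $2|W|$ possible labels $(f,\epsilon)\in W\times\{\pm 1\}$, weighted by $T(F',y)$, then produces $(f^*_n,\epsilon^*_n)$ and a set $G^*_n\subseteq \Theta(\mathsf{A},F_n)\times \mathsf{A}^{H_n}$ with $\sum_{(F',y)\in G^*_n}T(F',y)\ge \per_{\mathsf{A},F_n}(h)/(2|W|)$; on $G^*_n$ the contributions $S_y(f^*_n,F')=\epsilon^*_n T(F',y)$ add coherently.

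The main obstacle is that for $y\notin Y^*_{F'}:=\{y:(F',y)\in G^*_n\}$ one has only $|S_y(f^*_n,F')|\le T(F',y)$ with uncontrolled sign, so destructive interference could in principle shrink $|S(f^*_n,F')|$ well below $\sum_{y\in Y^*_{F'}}T(F',y)$. I expect to handle this via a secondary averaging step, e.g.\ averaging $\sum_{F'}|S(f,F')|^2$ over $f\in W$ and exploiting the diagonal $y=y'$ terms in the expansion $|S(f,F')|^2=\sum_{y,y'}S_y(f,F')\overline{S_{y'}(f,F')}$, together with the Cauchy-Schwarz bound $\sum_{F'}|S|^2\ge (\sum_{F'}|S|)^2/|\Theta(\mathsf{A},F_n)|$. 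The factors this introduces are all subexponential in $|F_n|$: $|W|\le 2^{|\mathsf{A}|}$ is bounded, $|\mathsf{A}|^{|H_n|}$ is subexponential by condition (1) (since $|H_n|\log|\mathsf{A}|=o(|F_n|)$), and along a right F{\o}lner sequence $|\Theta(\mathsf{A},F_n)|\le \binom{|F_n\mathsf{A}|}{|F_n|}$ is subexponential since $|F_n\mathsf{A}|/|F_n|\to 1$. Provided this sign-cancellation step goes through (this is the crux of the argument), one obtains $\log C_n=o(|F_n|)$ and the proof is complete.
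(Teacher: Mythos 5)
Your easy direction ($\max_{f\in W}\ddet_\textup{FK}(f)\le \per(h)$ via Proposition \ref{P-per ge det}) is fine, and your skeleton for the hard direction — Lemma \ref{L-det in IA}, splitting $X_{\mathsf{A},F_n,F'}$ by $y=x|_{H_n}$, invoking condition (2) to make each partial sum $S_y$ sign-coherent — is the right starting point. But the step you yourself flag as the crux is a genuine gap, and the fix you sketch does not close it. When you apply Lemma \ref{L-det in IA} to $F_n$ itself, the sum over $y\in\mathsf{A}^{H_n}$ sits \emph{inside} the square: $\det\lambda^{ff^*}_{F_n^{-1}}=\sum_{F'}\bigl|\sum_y S_y(f,F')\bigr|^2$, and condition (2) only fixes the sign of each $S_y$ separately, not a common sign across different $y$ (indeed in Examples \ref{E-1+x+x2}--\ref{E-4 points} the sign $\epsilon(F',y)$ genuinely depends on $y$). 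So $\bigl|\sum_y \epsilon(F',y)T(F',y)\bigr|$ can be far smaller than $\sum_y T(F',y)$, even zero. Your proposed rescue — averaging $\sum_{F'}|S(f,F')|^2$ over $f\in W$ and extracting the diagonal $y=y'$ terms — fails because the off-diagonal terms $S_y\overline{S_{y'}}$ are of uncontrolled sign and size, and averaging over $W$ supplies no orthogonality: $W$ may be a singleton (as in Example \ref{E-1+x+x2}), in which case the average is the original quantity and the inequality $\sum_{F'}|\sum_y S_y|^2\gtrsim\sum_{F'}\sum_y|S_y|^2$ is simply false in general.

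The missing idea in the paper is to apply Lemma \ref{L-det in IA} to the smaller set $F_n\smallsetminus H_n$ and to multiply by the scalar $\|f\|_2^{2|H_n|}=\sum_{y\in\mathsf{A}^{H_n}}\prod_{t\in H_n}|f_{y_t}|^2$. Expanding this product re-indexes the result as a sum over pairs $(F'',y)$ with $F''\in\Theta^\iota(\mathsf{A},F_n)$ of \emph{separate} squared terms $\bigl|\sum_{z\in X_{\mathsf{A},F_n,F''},\,z|_{H_n}=y}\sgn(\psi_{F''}\circ\varphi^{(z)})\prod_t f_{z_t}\bigr|^2$, so distinct boundary configurations $y$ never interfere; condition (2) then makes each such term equal to $T(F'',y)^2$ for a suitable $f\in W$, summing over $f\in W$ collects all of them, and Cauchy--Schwarz over the subexponentially many indices $(F'',y)$ yields $(\per_{\mathsf{A},F_n}(h))^2$ up to factors of size $e^{o(|F_n|)}$. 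Since $(F_n\smallsetminus H_n)$ is still a right F\o lner sequence by condition (1), the limit of $\frac{1}{|F_n\smallsetminus H_n|}\log\det\lambda^{ff^*}_{(F_n\smallsetminus H_n)^{-1}}$ still computes $2\,\ddet_\textup{FK}(f)$, and no pigeonhole selection of a single $f^*_n$ is needed. (Minor additional points: you should note that $W$ may be assumed finite, as you do; and $\ddet_\textup{FK}(ff^*)=2\,\ddet_\textup{FK}(f)$ in the paper's additive convention, not $\ddet_\textup{FK}(f)^2$.)
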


	\begin{proof}
By Proposition~\ref{P-per ge det} it suffices to show that $\per (h)\le \max_{f\in W}\ddet _\textup{FK}(f)$.
From \cite{FK} we have $\ddet _\textup{FK}(f)=\ddet _\textup{FK}({f^*})=\frac{1}{2}\ddet _\textup{FK}({ff^*})$ for each $f\in W$.
By \cite{LT}*{Theorem 1.4} we have $\ddet _\textup{FK}({ff^*})=\lim_{F}\frac{1}{|F|}\log\det \lambda ^{ff^*}_{F^{-1}}$ for each $f\in W$. We may assume that $F_n\smallsetminus H_n\neq \varnothing $ for every $n\in \mathbb{N}$. Since $(F_n)_{n\in \mathbb{N}}$ is a right F{\o}lner sequence and $|H_n|/|F_n|\to 0$ as $n\to \infty$, the sequence $(F_n\smallsetminus H_n)_{n\in \mathbb{N}}$ is also right F{\o}lner.

\smallskip Let $n\in \mathbb{N}$ and $f\in W$. From Lemma~\ref{L-det in IA} we get
	\begin{align*}
&\|f\|_2^{2|H_n|}\det \lambda ^{ff^*}_{(F_n\smallsetminus H_n)^{-1}}
	\\
&=\Biggl(\sum_{y\in \mathsf{A}^{H_n}}\,\prod _{t\in H_n}|f_{y_t}|^2\Biggr)\Biggl(\sum_{F'\in \Theta ^\iota (\mathsf{A}, F_n\smallsetminus H_n)}\Biggr|\,\sum_{x\in X_{\mathsf{A}, F_n\smallsetminus H_n, F'} }\sgn(\psi _{F'}\circ \varphi ^{(x)})\prod _{t\in F_n\smallsetminus H_n}f_{x_t}\Biggr|^2\Biggr)
	\\
&= \sum_{y\in \mathsf{A}^{H_n}}\sum_{F'\in \Theta ^\iota (\mathsf{A}, F_n\smallsetminus H_n)}\,\Biggl|\sum_{x\in X_{\mathsf{A}, F_n\smallsetminus H_n, F'} }\sgn(\psi _{F'}\circ \varphi ^{(x)})\Biggl(\prod _{t\in H_n}f_{y_t}\Biggr)\Biggl(\prod _{t\in F_n\smallsetminus H_n}f_{x_t}\Biggr)\Biggr|^2
	\\
&\ge \sum_{y\in X_{\mathsf{A}, H_n}^\iota }\sum_{\substack{F'\in \Theta ^\iota (\mathsf{A}, F_n\smallsetminus H_n)
	\\
F'\cap \varphi ^{(y)}(H_n)=\varnothing }}\Biggl|\sum_{x\in X_{\mathsf{A}, F_n\smallsetminus H_n, F'} }\sgn(\psi _{F'}\circ \varphi ^{(x)})\Biggl(\prod _{t\in H_n}f_{y_t}\Biggr)\Biggl(\prod _{t\in F_n\smallsetminus H_n}f_{x_t}\Biggr)\Biggr|^2.
	\end{align*}
For any $y\in X_{\mathsf{A}, H_n}^\iota $ and $F'\in \Theta ^\iota (\mathsf{A}, F_n\smallsetminus H_n)$ satisfying $F'\cap \varphi ^{(y)}(H_n)=\varnothing $, the set $F''=F'\cup \varphi ^{(y)}(H_n)$ lies in $\Theta ^\iota (\mathsf{A}, F_n)$. Denote by $\psi _ {F'', y}$ the bijection $F''\rightarrow F_n$ which is equal to $\psi _ {F'}$ and $(\varphi ^{(y)})^{-1}$ on $F'$ and $\varphi ^{(y)}(H_n)$ respectively. Fixing such $y, F'$ and $F''$, there is a natural $1$-$1$ correspondence between $x\in X_{\mathsf{A}, F_n\smallsetminus H_n, F'} $ and $z\in X_{\mathsf{A}, F_n, F''} $ equal to $y$ on $H_n$, given by $z=x$ on $F_n\smallsetminus H_n$ and $z=y$ on $H_n$. Thus
	\begin{align*}
\sum_{y\in X_{\mathsf{A}, H_n}^\iota }&\sum_{\substack{F'\in \Theta ^\iota (\mathsf{A}, F_n\smallsetminus H_n)
	\\
F'\cap \varphi ^{(y)}(H_n)=\varnothing }}\,\Biggl|\sum_{x\in X_{\mathsf{A}, F_n\smallsetminus H_n, F'} }\sgn(\psi _{F'}\circ \varphi ^{(x)})\Biggl(\prod _{t\in H_n}f_{y_t}\Biggr)\biggl(\prod _{t\in F_n\smallsetminus H_n}f_{x_t}\Biggr)\Biggr|^2
	\\
&= \sum_{F''\in \Theta ^\iota (\mathsf{A}, F_n)}\sum_{\substack{y\in X_{\mathsf{A}, H_n}^\iota
	\\
\varphi ^{(y)}(H_n)\subseteq F''}}\Biggl|\sum_{\substack{z\in X_{\mathsf{A}, F_n, F''}
	\\
z|_{H_n}=y}}\sgn(\psi _ {F'', y}\circ \varphi ^{(z)})\prod _{t\in F_n}f_{z_t}\Biggr|^2
	\\
&= \sum_{F''\in \Theta ^\iota (\mathsf{A}, F_n)}\sum_{\substack{y\in X_{\mathsf{A}, H_n}^\iota
	\\
\varphi ^{(y)}(H_n)\subseteq F''}}\Biggl|\sum_{\substack{z\in X_{\mathsf{A}, F_n, F''}
	\\
z|_{H_n}=y}}\sgn(\psi _ {F'', y}\circ \psi _ {F''}^{-1})\sgn(\psi _ {F''}\circ \varphi ^{(z)})\prod _{t\in F_n}f_{z_t}\Biggr|^2
	\\
&= \sum_{F''\in \Theta ^\iota (\mathsf{A}, F_n)}\sum_{\substack{y\in X_{\mathsf{A}, H_n}^\iota
	\\
\varphi ^{(y)}(H_n)\subseteq F''}}\Biggl|\sum_{\substack{z\in X_{\mathsf{A}, F_n, F''}
	\\
z|_{H_n}=y}}\sgn(\psi _ {F''}\circ \varphi ^{(z)})\prod _{t\in F_n}f_{z_t}\Biggr|^2
	\\
&\ge \sum_{F''\in \Theta(\mathsf{A}, F_n)}\sum_{\substack{y\in X_{\mathsf{A}, H_n}^\iota
	\\
\varphi ^{(y)}(H_n)\subseteq F''}}\Biggl|\sum_{\substack{z\in X_{\mathsf{A}, F_n, F''}
	\\
z|_{H_n}=y}}\sgn(\psi _ {F''}\circ \varphi ^{(z)})\prod _{t\in F_n}f_{z_t}\Biggr|^2.
	\end{align*}
For each $F''\in \Theta(\mathsf{A}, F_n)$ and $y\in X_{\mathsf{A}, H_n}^\iota $ satisfying $\varphi ^{(y)}(H_n)\subseteq F''$, by condition (2) we can find some $f^{(F'', y)}\in W$ such that
	\begin{displaymath}
\Biggl|\sum_{\substack{z\in X_{\mathsf{A}, F_n, F''}
	\\
z|_{H_n}=y}}\sgn(\psi _ {F''}\circ \varphi ^{(z)})\prod _{t\in F_n}f^{(F'', y)}_{z_t}\Biggr|=\sum_{\substack{z\in X_{\mathsf{A}, F_n, F''}
	\\
z|_{H_n}=y}}\prod _{t\in F_n}|f^{(F'', y)}_{z_t}|=\sum_{\substack{z\in X_{\mathsf{A}, F_n, F''}
	\\
z|_{H_n}=y}}\prod _{t\in F_n}h_{z_t}.
	\end{displaymath}
Therefore
	\begin{align*}
|W|\cdot \|h\|_2^{2|H_n|}&\max_{f\in W}\det \lambda ^{ff^*}_{(F_n\smallsetminus H_n)^{-1}} \ge \sum_{f\in W}\|f\|_2^{2|H_n|}\det \lambda ^{ff^*}_{(F_n\smallsetminus H_n)^{-1}}
	\\
&\ge \sum_{F''\in \Theta(\mathsf{A}, F_n)}\sum_{\substack{y\in X_{\mathsf{A}, H_n}^\iota
	\\
\varphi ^{(y)}(H_n)\subseteq F''}}\Biggl(\sum_{\substack{z\in X_{\mathsf{A}, F_n, F''}
	\\
z|_{H_n}=y}}\prod _{t\in F_n}h_{z_t}\Biggr)^2
	\\
&\ge \frac{1}{|\Theta(\mathsf{A}, F_n)|\cdot |\mathsf{A}|^{|H_n|}}\Biggl(\sum_{F''\in \Theta(\mathsf{A}, F_n)}\sum_{\substack{y\in X_{\mathsf{A}, H_n}^\iota
	\\
\varphi ^{(y)}(H_n)\subseteq F''}}\sum_{\substack{z\in X_{\mathsf{A}, F_n, F''}
	\\
z|_{H_n}=y}}\prod _{t\in F_n}h_{z_t}\Biggr)^2
	\\
&= \frac{1}{|\Theta(\mathsf{A}, F_n)|\cdot |\mathsf{A}|^{|H_n|}}\Biggl(\sum_{F''\in \Theta(\mathsf{A}, F_n)}\sum_{z\in X_{\mathsf{A}, F_n, F''} }\prod _{t\in F_n}h_{z_t}\Biggr)^2
	\\
&= \frac{1}{|\Theta(\mathsf{A}, F_n)|\cdot |\mathsf{A}|^{|H_n|}}\bigl(\per _{\mathsf{A}, F_n}(h)\bigr)^2.
	\end{align*}

Note that by Stirling's approximation formula we have
	\begin{displaymath}
\lim_{n\to \infty}\frac{1}{|F_n|}\log |\Theta(\mathsf{A}, F_n)|=\lim_{n\to \infty}\frac{1}{|F_n|}\log |\Theta ^\iota (\mathsf{A}, F_n)|=0.
	\end{displaymath}
Since $\Gamma $ is infinite, by condition (1) we also have
	\begin{displaymath}
\lim_{n\to \infty}\frac{1}{|F_n|}\log (|W|\cdot \|h\|_2^{2|H_n|})=\lim_{n\to \infty}\frac{1}{|F_n|}\log |\mathsf{A}|^{|H_n|}=0.
	\end{displaymath}
Thus
	\begin{align*}
\max_{f\in W}\ddet _\textup{FK}(f)&=\frac{1}{2}\lim_{n\to \infty}\frac{1}{|F_n\smallsetminus H_n|}\log \max_{f\in W}\det \lambda ^{ff^*}_{(F_n\smallsetminus H_n)^{-1}}
	\\
&=\frac{1}{2}\lim_{n\to \infty}\frac{1}{|F_n|}\log \max_{f\in W}\det \lambda ^{ff^*}_{(F_n\smallsetminus H_n)^{-1}}
	\\
&\ge \lim_{n\to \infty}\frac{1}{|F_n|}\log\per _{\mathsf{A}, F_n}(h) =\per (h).
\tag*{\qedsymbol}
	\end{align*}
\renewcommand{\qedsymbol}{}
\vspace{-\baselineskip}
	\end{proof}

Let $F, \mathsf{A}\in \mathscr{F}(\Gamma)$ and $F'\in \Theta(\mathsf{A}, F)$. For each $x\in X_{\mathsf{A}, F, F'} $ and $g\in F\smallsetminus F'$ there is a unique $n_{x, g}\in \mathbb{N}$ such that $(\varphi ^{(x)})^j(g)\in F\cap F'$ for all $1\le j<n_{x, g}$ and $(\varphi ^{(x)})^{n_{x, g}}(g)\in F'\smallsetminus F$.

\smallskip Recall that a total order on $\Gamma $ is \textsl{left invariant} if for any $s, t, g\in \Gamma $, one has $s<t$ if and only if $gs<gt$. The group $\Gamma $ is \textsl{left orderable} if it has a left invariant total order.

	\begin{lemma}
	\label{L-order}
Let $\Gamma $ be given a left invariant total order. Let $f\in \mathbb{R}\Gamma $ be nonzero with $\mathsf{A}=\supp(f)\in \mathscr{F}(\Gamma)$.
Let $F\in \mathscr{F}(\Gamma)$ and $F'\in \Theta(\mathsf{A}, F)$.
Assume that the following conditions hold:
	\begin{enumerate}
	\item
$s\ge \matheur{e}_\Gamma $ for all $s\in \mathsf{A}$;
	\item
$f_{\matheur{e}_\Gamma}>0$ if $\matheur{e}_\Gamma \in \mathsf{A}$;
	\item
The map $F\smallsetminus F'\to F'\smallsetminus F$ sending $g$ to $(\varphi ^{(x)})^{n_{x, g}}(g)$ is the same for all $x\in X_{\mathsf{A}, F, F'} $;
	\item
For each $g\in F\smallsetminus F'$, the sign of $(-1)^{n_{x, g}}\prod _{j=0}^{n_{x, g}-1}f_{x_{(\varphi ^{(x)})^j(g)}}$ is the same for all $x\in X_{\mathsf{A}, F, F'} $.
	\end{enumerate}
Then the sign of $\sgn(\psi _{F'}\circ \varphi ^{(x)})\prod _{t\in F}f_{x_t}$ is the same for all $x\in X_{\mathsf{A}, F, F'} $.
	\end{lemma}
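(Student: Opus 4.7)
The plan is to exploit condition (1) to pin down the orbit structure of $\varphi^{(x)}$ on $F$ and then to match sign contributions path by path against $\sgn(\psi_{F'}\circ\varphi^{(x)})$ using conditions (2)--(4). Since the order on $\Gamma$ is left invariant and $s\ge\matheur{e}_\Gamma$ for every $s\in\mathsf{A}$, we have $\varphi^{(x)}(t)=tx_t\ge t$ for every $t\in F$. So, viewed as a partial map with domain $F$, $\varphi^{(x)}$ is weakly increasing under iteration and strictly increasing except at fixed points; in particular it has no cycles of length $\ge 2$ inside $F\cap F'$. A straightforward forward/backward iteration argument then shows that $F$ decomposes as the disjoint union of the set $\textup{Fix}(\varphi^{(x)})\subseteq F\cap F'$ of fixed points of $\varphi^{(x)}$ (at which $x_t=\matheur{e}_\Gamma$, so $f_{x_t}=f_{\matheur{e}_\Gamma}>0$ by condition (2)) and, for each $g\in F\smallsetminus F'$, the \emph{path} $g,\varphi^{(x)}(g),\ldots,(\varphi^{(x)})^{n_{x,g}-1}(g)$, whose next iterate $(\varphi^{(x)})^{n_{x,g}}(g)$ lies in $F'\smallsetminus F$.

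Next, condition (3) says that $(\varphi^{(x)})^{n_{x,g}}(g)$ depends only on $g$, so we may define a reference bijection $\varphi_0\colon F\to F'$ by $\varphi_0(t)=t$ on $F\cap F'$ and $\varphi_0(g)=(\varphi^{(x)})^{n_{x,g}}(g)$ on $F\smallsetminus F'$; this is bijective because, as $g$ varies over $F\smallsetminus F'$, the path endpoints are distinct (the paths of the injective map $\varphi^{(x)}$ are pairwise disjoint) and $|F\smallsetminus F'|=|F'\smallsetminus F|$. Set $\tau_x=\varphi_0^{-1}\circ\varphi^{(x)}\in\Sym(F)$. Unfolding the path description from the previous paragraph shows at once that $\tau_x$ fixes every point of $\textup{Fix}(\varphi^{(x)})$ and acts on the $F$-part of each path by the cyclic permutation $(g,\varphi^{(x)}(g),\ldots,(\varphi^{(x)})^{n_{x,g}-1}(g))$ of length $n_{x,g}$. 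Hence $\sgn(\tau_x)=\prod_{g\in F\smallsetminus F'}(-1)^{n_{x,g}-1}$, and multiplicativity of $\sgn$ gives $\sgn(\psi_{F'}\circ\varphi^{(x)})=\sgn(\psi_{F'}\circ\varphi_0)\cdot\sgn(\tau_x)$.

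Finally, split $\prod_{t\in F}f_{x_t}$ into its fixed-point factor $f_{\matheur{e}_\Gamma}^{|\textup{Fix}(\varphi^{(x)})|}$, which is positive by condition (2), and the path factors $\prod_{j=0}^{n_{x,g}-1}f_{x_{(\varphi^{(x)})^j(g)}}$ for $g\in F\smallsetminus F'$. Condition (4) asserts that each path factor multiplied by $(-1)^{n_{x,g}}$ has a constant sign $\epsilon_g\in\{-1,0,+1\}$ independent of $x$; the case where some $\epsilon_g=0$ makes the whole expression vanish and is trivial, so assume $\epsilon_g\ne 0$ for every $g$. Then each path contributes $(-1)^{n_{x,g}-1}\cdot\epsilon_g\cdot(-1)^{n_{x,g}}=-\epsilon_g$ to the overall sign, and the sign of $\sgn(\psi_{F'}\circ\varphi^{(x)})\prod_{t\in F}f_{x_t}$ equals $\sgn(\psi_{F'}\circ\varphi_0)\cdot(-1)^{|F\smallsetminus F'|}\cdot\prod_{g\in F\smallsetminus F'}\epsilon_g$, which is manifestly independent of $x$.

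The only delicate point is verifying the cycle structure of $\tau_x$ claimed in the second paragraph. Once the path/fixed-point partition of $F$ established in the first paragraph is in hand, this reduces to an unambiguous tracing of $\varphi_0^{-1}\circ\varphi^{(x)}$ on each part, and the remaining sign bookkeeping is routine.
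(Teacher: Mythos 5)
Your proof is correct, and its skeleton is the same as the paper's: condition (1) together with left invariance of the order forces $\varphi^{(x)}(t)=tx_t\ge t$, so $\varphi^{(x)}$ has no nontrivial cycles inside $F\cap F'$ and $F$ decomposes into fixed points (where $f_{x_t}=f_{\matheur{e}_\Gamma}>0$ by condition (2)) and the paths emanating from $F\smallsetminus F'$; the sign is then bookkept path by path using conditions (3) and (4). The one genuine difference is how $\sgn(\psi_{F'}\circ\varphi^{(x)})$ is computed. The paper normalizes $\psi_{F'}$ to be the identity on $F\cap F'$ and works directly with the cycles of $\psi_{F'}\circ\varphi^{(x)}$; these cycles are concatenations of several paths chained together by $\psi_{F'}$ on $F'\smallsetminus F$, so the paper must additionally argue (via condition (3)) that the number $|C_x|$ of nontrivial cycles is independent of $x$. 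You instead factor $\psi_{F'}\circ\varphi^{(x)}=(\psi_{F'}\circ\varphi_0)\circ\tau_x$ through the reference bijection $\varphi_0$ (identity on $F\cap F'$, endpoint map on $F\smallsetminus F'$, well defined precisely because of condition (3)), so that $\tau_x$ has exactly one cycle per path, of length $n_{x,g}$, and all the chaining data is absorbed into the $x$-independent constant $\sgn(\psi_{F'}\circ\varphi_0)$. This is a slightly cleaner organization of the same computation, and the two answers agree: with the paper's normalization one checks $\sgn(\psi_{F'}\circ\varphi_0)=(-1)^{|F\smallsetminus F'|-|C_x|}$, so your $\sgn(\psi_{F'}\circ\varphi_0)(-1)^{|F\smallsetminus F'|}\prod_g\epsilon_g$ matches the paper's $(-1)^{|C_x|}\prod_g(-1)^{n_{x,g}}\prod_{j}f_{x_{(\varphi^{(x)})^j(g)}}$. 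One minor remark: since $x_t\in\mathsf{A}=\supp(f)$ for every $t$, no factor $f_{x_t}$ vanishes, so the degenerate case $\epsilon_g=0$ that you set aside never actually occurs.
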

	\begin{proof}
Since the conclusion does not depend on the choice of $\psi _ {F'}$, we may assume that $\psi _ {F'}(h)=h$ for all $h\in F\cap F'$.

Let $x\in X_{\mathsf{A}, F, F'} $. Denote by $C_x$ the set of cycles of $\psi _{F'}\circ \varphi ^{(x)}$
intersecting with $F\smallsetminus F'$. For each $C\in C_x$, list the elements of $C\cap (F\smallsetminus F')$ as $g_{C, 1}, \dots, g_{C, k_C}$ in the order appearing in $C$. Then
$C$ is
	\begin{align*}
\bigl(g_{C, 1}, &\dots, (\varphi ^{(x)})^{n_{x, g_{C, 1}}-1}(g_{C, 1}), \psi _{F'}\circ (\varphi ^{(x)})^{n_{x, g_{C, 1}}}(g_{C, 1})=
	\\
&g_{C, 2}, \dots, (\varphi ^{(x)})^{n_{x, g_{C, 2}}-1}(g_{C, 2}),
\dots, g_{C, k_C}, \dots, (\varphi ^{(x)})^{n_{x, g_{C, k_C}}-1}(g_{C, k_C})\bigr).
	\end{align*}
Thus
	\begin{align*}
(-1)^{|C|-1}\prod _{t\in C}f_{x_t}&=(-1)^{-1+\sum_{i=1}^{k_C}n_{x, g_{C, i}}}\prod _{i=1}^{k_C}\prod _{j=0}^{n_{x, g_{C, i}}-1}f_{x_{(\varphi ^{(x)})^j(g_{C_, i})}}
	\\
&=(-1)^{-1+\sum_{g\in C\cap (F\smallsetminus F')}n_{x, g}}\prod _{g\in C\cap (F\smallsetminus F')}\prod _{j=0}^{n_{x, g}-1}f_{x_{(\varphi ^{(x)})^j(g)}}.
	\end{align*}
Then
	\begin{align*}
\prod _{C\in C_x}(-1)^{|C|-1}\prod _{t\in C}f_{x_t}&=\prod _{C\in C_x}(-1)^{-1+\sum_{g\in C\cap (F\smallsetminus F')}n_{x, g}}\prod _{g\in C\cap (F\smallsetminus F')}\prod _{j=0}^{n_{x, g}-1}f_{x_{(\varphi ^{(x)})^j(g)}}
	\\
&=(-1)^{|C_x|}\prod _{g\in F\smallsetminus F'}(-1)^{n_{x, g}}\prod _{j=0}^{n_{x, g}-1}f_{x_{(\varphi ^{(x)})^j(g)}}.
	\end{align*}
By condition (1) any cycle of $\psi _{F'}\circ \varphi ^{(x)}$ not contained in $C_x$ is trivial.
By condition (2) the sign of $\sgn(\psi _{F'}\circ \varphi ^{(x)})\prod _{t\in F}f_{x_t}$ is the same as the sign of $\prod _{C\in C_x}(-1)^{|C|-1}\prod _{t\in C}f_{x_t}$, and hence is the same as the sign of $(-1)^{|C_x|}\prod _{g\in F\smallsetminus F'}(-1)^{n_{x, g}}\prod _{j=0}^{n_{x, g}-1}f_{x_{(\varphi ^{(x)})^j(g)}}$.
By condition (4) the sign of $\prod _{g\in F\smallsetminus F'}(-1)^{n_{x, g}}\prod _{j=0}^{n_{x, g}-1}f_{x_{(\varphi ^{(x)})^j(g)}}$ is the same for all $x\in X_{\mathsf{A},F, F'} $. By condition (3) the partition of $F\smallsetminus F'$ consisting of $C\cap (F\smallsetminus F')$ for $C\in C_x$ is the same for all $x\in X_{\mathsf{A}, F, F'} $, thus $|C_x|$ is the same for all $x\in X_{\mathsf{A}, F, F'} $. Therefore the sign of $\sgn(\psi _{F'}\circ \varphi ^{(x)})\prod _{t\in F}f_{x_t}$ is the same for all $x\in X_{\mathsf{A}, F, F'} $.
	\end{proof}

	\begin{example}
	\label{E-1+x+y+xy}
Let $\Gamma=\mathbb{Z}^2$ equipped with the lexicographic order $(n_1, n_2)>(m_1, m_2)$ if $n_1>m_1$ or $n_1=m_1$ and $n_2>m_2$. Let $f=a-bu_1+cu_2+du_1u_2\in \mathbb{R}\Gamma=\mathbb{R}[u_1^{\pm}, u_2^{\pm}]$ for $a, b, c, d>0$. Put $\mathsf{A}=\supp(f)=\{(0, 0), (0, 1), (1, 0), (1, 1)\}\in \mathscr{F}(\Gamma)$. Then the conditions (1) and (2) of Lemma~\ref{L-order} hold. Let $F=\{0, \dots, n\}\times \{0, \dots, m\}\in \mathscr{F}(\Gamma)$ for $n, m\ge 100$, and let $F'\in \Theta(\mathsf{A}, F)$. We claim that the conditions (3) and (4) of Lemma~\ref{L-order} hold. Let $x\in X_{\mathsf{A}, F, F'} $. From the shape of $\mathsf{A}$ we know that for any distinct $g, h\in F\smallsetminus F'$, the paths $g, \varphi ^{(x)}(g), \dots, (\varphi ^{(x)})^{n_{x, g}}(g)$ and $h, \varphi ^{(x)}(h), \dots, (\varphi ^{(x)})^{n_{x, h}}(h)$ do not cross each other. Since $F\smallsetminus F'\subseteq \{(0, k)\colon 0\le k\le m\}\cup \{(k, 0)\colon 0\le k\le n\}$ and $F'\smallsetminus F\subseteq \{(n+1, k)\colon 0\le k\le m+1\}\cup \{(k, m+1)\colon 0\le k\le n+1\}$, we conclude that condition (3) of Lemma~\ref{L-order} holds. Let $g=(i, k)\in F\smallsetminus F'$ and put $(\varphi ^{(x)})^{n_{x, g}}(g)=(i', k')\in F'\smallsetminus F$.
For each $s\in \mathsf{A}$, denote by $n_s$ the number of $0\le j\le n_{x, g}-1$ satisfying $x_{(\varphi ^{(x)})^j(g)}=s$.
Then $n_{(0, 1)}+n_{(1, 1)}=k'-k$, $n_{(1, 0)}+n_{(1, 1)}=i'-i$, and $n_{(0, 1)}+n_{(1, 0)}+n_{(1, 1)}=n_{x, g}$.
Note that the sign of $(-1)^{n_{x, g}}\prod _{j=0}^{n_{x, g}-1}f_{x_{(\varphi ^{(x)})^j(g)}}$ is the same as the sign of $(-1)^{n_{x, g}}(-1)^{n_{(1, 0)}}$. But
	\begin{displaymath}
(-1)^{n_{x, g}}(-1)^{n_{(1, 0)}}=(-1)^{n_{x, g}-n_{(1, 0)}}=(-1)^{n_{(0, 1)}+n_{(1, 1)}}=(-1)^{k'-k}.
	\end{displaymath}
Thus condition (4) of Lemma~\ref{L-order} holds. This proves our claim. Now from Lemmas~\ref{L-order} and \ref{L-per equal det} we get $\per (|f|)=\ddet _\textup{FK}(f)$.

For any $a, b, c, d>0$, we also have
	\begin{equation}
	\label{E-1+x+y+xy1}
	\begin{aligned}
\ddet _\textup{FK}({a+bu_1+cu_2-du_1u_2})&=\ddet _\textup{FK}({a-bu_1+cu_2+du_1u_2})
	\\
&=\per (a+bu_1+cu_2+du_1u_2).
	\end{aligned}
	\end{equation}
	\end{example}

	\begin{example}
	\label{E-1+xy+xy(-1)-x2}
Let $\Gamma $ be an amenable group with a subgroup $G$ isomorphic to $\mathbb{Z}^2$. Fix an isomorphism $\mathbb{Z}^2\rightarrow G$ and denote by $s$ and $t$ the images of $(1,0)$ and $(0,1)$ respectively. For any $a, b, c, d>0$, from \eqref{E-1+x+y+xy1} we have
	\begin{equation}
	\label{E-1+xy+xy(-1)-x22}
\ddet _{\textup{FK}, G}({a+bs+ct-dst})=\ddet _{\textup{FK}, G}({a-bs+ct+dst})=\per _G(a+bs+ct+dst),
	\end{equation}
where $\ddet _{\textup{FK}, G}$ and $\per _G$ denote the Fuglede-Kadison determinant and the permanent arising from elements in $\mathbb{C} G$ and $\mathbb{R}_{\ge 0} G$ respectively. Let $u\in \Gamma $. By Proposition~\ref{P-basic} we have
	\begin{displaymath}
\per _G(f)=\per _\Gamma(f)=\per _\Gamma(uf)=\per _\Gamma(fu)
	\end{displaymath}
for all $f\in \mathbb{R}_{\ge 0}G$. It is also easy to see from the definition of Fuglede-Kadison determinant that
	\begin{displaymath}
\ddet _{\textup{FK}, G}(f)=\ddet _{\textup{FK}, \Gamma}(f)=\ddet _{\textup{FK}, \Gamma}(uf)=\ddet _{\textup{FK}, \Gamma}(fu)
	\end{displaymath}
for all $f\in \mathbb{C} G$. Therefore, for any $a, b, c, d>0$, from \eqref{E-1+xy+xy(-1)-x22} we get
	\begin{equation}
	\label{E-1+xy+xy(-1)-x23}
	\begin{aligned}
\ddet _{\textup{FK}, \Gamma}({au+bsu+ctu-dstu})&=\ddet _{\textup{FK}, \Gamma}({au-bsu+ctu+dstu})
	\\
&=\per _\Gamma(au+bsu+ctu+dstu).
	\end{aligned}
	\end{equation}

Now let $\Gamma=\mathbb{Z}^2$, and let $G$ be the image of $\mathbb{Z}^2$ under the embedding $\mathbb{Z}^2\hookrightarrow \Gamma $ given by $(1, 0)\mapsto s:=(1, 1)$ and $(0, 1)\mapsto t:=(1, -1)$. Also let $u=(-1, 0)\in \Gamma $. Then \eqref{E-1+xy+xy(-1)-x23} becomes
	\begin{align*}
\ddet _{\textup{FK}, \Gamma}({au_1^{-1}+bu_2+cu_2^{-1}-du_1})&=\ddet _{\textup{FK}, \Gamma}({au_1^{-1}-bu_2+cu_2^{-1}+du_1})
	\\
&=\per _\Gamma ({au_1^{-1}+bu_2+cu_2^{-1}+du_1})
	\end{align*}
for all $a, b, c, d>0$, where $\mathbb{R}\Gamma=\mathbb{R}[u_1^{\pm}, u_2^{\pm}]$.

For any $a, b>0$, a simple calculation yields that
	\begin{align*}
(au_1^{-1}+bu_2+bu_2^{-1}&-au_1)^*(au_1^{-1}+bu_2+bu_2^{-1}-au_1)
	\\
&=2a^2+2b^2-a^2u_1^2-a^2u_1^{-2}+b^2u_2^2+b^2u_2^{-2},
	\end{align*}
so that
	\begin{equation}
	\label{E-1+xy+xy(-1)-x24}
	\begin{aligned}
& \per _\Gamma(au_1^{-1}+bu_2+bu_2^{-1}+au_1)
	\\
&\qquad \qquad =\ddet _{\textup{FK}, \Gamma}({au_1^{-1}+bu_2+bu_2^{-1}-au_1})
	\\
&\qquad \qquad =\frac{1}{2}\ddet _{\textup{FK}, \Gamma}({(au_1^{-1}+bu_2+bu_2^{-1}-au_1)^*(au_1^{-1}+bu_2+bu_2^{-1}-au_1)})
	\\
&\qquad \qquad =\frac{1}{2}\ddet _{\textup{FK}, \Gamma}({2a^2+2b^2-a^2u_1^2-a^2u_1^{-2}+b^2u_2^2+b^2u_2^{-2}})
	\\
&\qquad \qquad =\frac{1}{2}\int_0^1\int_0^1\log(2a^2+2b^2-2a^2\cos(4\pi x)+2b^2\cos(4\pi y))\, dx dy
	\\
&\qquad \qquad =\frac{1}{2}\int_0^1\int_{1/4}^{5/4} \log(2a^2+2b^2-2a^2\cos(4\pi x)+2b^2\cos(4\pi y))\, dx dy
	\\
&\qquad \qquad =\frac{1}{2}\int_0^1\int_0^1\log(2a^2+2b^2-2a^2\cos(4\pi x)-2b^2\cos(4\pi y))\, dx dy
	\\
&\qquad \qquad =\frac{1}{2}\int_0^1\int_0^1\log(2a^2+2b^2-2a^2\cos(2\pi x)-2b^2\cos(2\pi y))\, dx dy
	\\
&\qquad \qquad =\frac{1}{2}\int_0^1\int_0^1\log(2a^2+2b^2-2a^2\cos(\pi x)-2b^2\cos(\pi y))\, dx dy.
	\end{aligned}
	\end{equation}
When $a=b=1$, the penultimate term in \eqref{E-1+xy+xy(-1)-x24} is what appears in \cite{Elimelech21}*{Theorem 6}. The last term in \eqref{E-1+xy+xy(-1)-x24} is (up to a factor of $2$) what appears in \cite{Kasteleyn61}*{(17)}. The growth rate of periodic points of perfect matching is calculated in Section 4 of \cite{Kasteleyn61}, and it is stated on page 1220 there that the rate is given by \cite{Kasteleyn61}*{(17)}.
	\end{example}

	\begin{example}
	\label{E-1+x+y}
Let $\Gamma=\mathbb{Z}^2$, equipped with the lexicographic order $(n_1, n_2)>(m_1, m_2)$ if $n_1>m_1$ or $n_1=m_1$ and $n_2>m_2$. Let $f=a+bu_1+cu_2\in \mathbb{R}\Gamma=\mathbb{R}[u_1^{\pm}, u_2^{\pm}]$ for $a, b, c>0$. Put $\mathsf{A}=\supp(f)=\{(0, 0), (0, 1), (1, 0)\}\in \mathscr{F}(\Gamma)$. Then the conditions (1) and (2) of Lemma~\ref{L-order} hold. Let $F=\{0, \dots, n\}\times \{0, \dots, m\}\in \mathscr{F}(\Gamma)$ for $n, m\ge 100$, and let $F'\in \Theta (\mathsf{A}, F)$. We claim that the conditions (3) and (4) of Lemma~\ref{L-order} hold. Let $x\in X_{\mathsf{A}, F, F'} $. The argument in Example~\ref{E-1+x+y+xy} shows that condition (3) of Lemma~\ref{L-order} holds.
Let $g=(i, k)\in F\smallsetminus F'$ and put $(\varphi ^{(x)})^{n_{x, g}}(g)=(i', k')\in F'\smallsetminus F$.
For each $s\in \mathsf{A}$, denote by $n_s$ the number of $0\le j\le n_{x, g}-1$ satisfying that $x_{(\varphi^{(x)})^j(g)}=s$.
Then $n_{(0, 1)}=k'-k$, $n_{(1, 0)}=i'-i$, and $n_{(0, 1)}+n_{(1, 0)}=n_{x, g}$.
Note that the sign of $(-1)^{n_{x, g}}\prod_{j=0}^{n_{x, g}-1}f_{x_{(\varphi^{(x)})^j(g)}}$ is the same as the sign of $(-1)^{n_{x, g}}$. But
		\begin{displaymath}
(-1)^{n_{x, g}}=(-1)^{n_{(0, 1)}+n_{(1, 0)}}=(-1)^{k'-k+i'-i}.
	\end{displaymath}
Thus condition (4) of Lemma~\ref{L-order} holds. This proves our claim. Now from Lemmas~\ref{L-order} and \ref{L-per equal det} we get that $\per(f)=\ddet_{\rm FK}(f)$.
	\end{example}

When $\Gamma=\mathbb{Z}$, we may assume that $\{0, K\}\subseteq \mathsf{A}=\supp(f)\subseteq A_K=\{0, 1, \dots, K\}$. Put $F_n=\{0, 1, \dots, n\}\in \mathscr{F}(\Gamma)$. When $n>2K+1$ and $x\in X_{\mathsf{A}, F_n}$, the value $|\{t-K\le m<t\colon \varphi ^{(x)}(m)\ge t\}|$ is the same for all integers $K\le t\le n$. Denote this number by $a(x)$. In fact, $a(x)=|\varphi ^{(x)}(F_n)\cap [t, \infty)|-(n-t+1)$ for every integer $K\le t\le n$. Thus $a(x)$ is determined by $\varphi ^{(x)}(F_n)$. Then we may write $a(x)$ as $a(\varphi ^{(x)}(F_n))$. For each $F'\in \Theta(\mathsf{A}, F_n)$, take $\psi _ {F'}$ to be the unique increasing bijection $F'\rightarrow F_n$. Then for each $F'\in \Theta(\mathsf{A}, F_n)$ and $x\in X_{\mathsf{A}, F_n, F'} $, putting $b(x)$ to be the number of $(s, t)\in F_n\times F_n$ satisfying $s<t$ and $\varphi ^{(x)}(s)>\varphi ^{(x)}(t)$, one has $\sgn(\psi _{F'}\circ \varphi ^{(x)})=(-1)^{b(x)}$. For each $t\in F_n$, denote by $b_t(x)$ the number of $s\in F_n\cap [t-K, t-1]$ satisfying $\varphi ^{(x)}(s)>\varphi ^{(x)}(t)$. Then $b(x)=\sum_{t\in F_n}b_t(x)$.

	\begin{example}
	\label{E-1+x+x2}
Let $\Gamma=\mathbb{Z}$ and $f=au^2+bu-c\in \mathbb{R}\Gamma=\mathbb{R}[u^{\pm}] $ for $a, b, c\in \mathbb{R}_{>0}$. Put $\mathsf{A}=\supp(f)=\{0, 1, 2\}\in \mathscr{F}(\Gamma)$. We claim that $W=\{f\}$ satisfies the condition in Lemma~\ref{L-per equal det} for $F_n=\{0, 1, \dots, n\}$ and $H_n=\{0, 1, \dots, 20\}\cup \{n-20, \dots, n\}$. Let $n\ge 100$. Put $V_n=F_n\cap [10, n-10]$. Fix $F'\in \Theta(\mathsf{A}, F_n)$ and $y\in X_{\mathsf{A}, H_n}^\iota $ satisfying $\varphi ^{(y)}(H_n)\subseteq F'$. Let $x\in X_{\mathsf{A}, F_n, F'} $ be equal to $y$ on $H_n$. When $a(F')=0$, one has $b_t(x)=0$ and $x_t=0$ for all $t\in V_n$; $b_t(x)=1$ when $a(F')=1$, $x_t=0$, and $t\in V_n$; $b_t(x)=0$ when $a(F')=1$, $x_t\ge 1$, and $t\in V_n$; when $a(F')=2$, one has $b_t(x)=0$ and $x_t=2$ for all $t\in V_n$. Thus when $a(F')=1$ or $2$, the sign of $\sgn(\psi _{F'}\circ \varphi ^{(x)})\prod _{t\in F_n}f_{x_t}$ is $(-1)^{b(x)+|x^{-1}(0)|}=(-1)^{|x^{-1}(0)\smallsetminus V_n|+\sum_{t\in H_n\smallsetminus V_n}b_t(x)}$, which is determined by $y$. When $a(F')=0$, the sign of $\sgn(\psi _{F'}\circ \varphi ^{(x)})\prod _{t\in F_n}f_{x_t}$ is $(-1)^{b(x)+|x^{-1}(0)|}=(-1)^{|x^{-1}(0)\smallsetminus V_n|+|V_n|+\sum_{t\in H_n\smallsetminus V_n}b_t(x)}$, which is determined by $y$.
From Lemma~\ref{L-per equal det} we conclude that $\per (|f|)=\ddet _\textup{FK}(f)$.
	\end{example}

	\begin{example}
	\label{E-3 points}
Let $\Gamma=\mathbb{Z}$ and $K\ge 2$. Let $h=au^K+bu^{K-1}+c\in \mathbb{R}\Gamma=\mathbb{R}[u^{\pm}] $ for $a, b, c\in \mathbb{R}_{>0}$. Put $\mathsf{A}=\supp(h)=\{0, K-1, K\}\in \mathscr{F}(\Gamma)$. We claim that $W=\{f^{(1)}=au^K+bu^{K-1}-c, f^{(2)}=au^K+bu^{K-1}+c\}$ satisfies the condition in Lemma~\ref{L-per equal det} for $F_n=\{0, 1, \dots, n\}$ and $H_n=\{0, 1, \dots, 4K\}\cup \{n-4K, \dots, n\}$. Let $n\ge 100K$. Put $V_n=F_n\cap [2K, n-2K]$. Fix $F'\in \Theta(\mathsf{A}, F_n)$ and $y\in X_{\mathsf{A}, H_n}^\iota $ satisfying $\varphi ^{(y)}(H_n)\subseteq F'$. Let $x\in X_{\mathsf{A}, F_n, F'} $ equal to $y$ on $H_n$. Then $b_t(x)=a(F')$ when $x_t=0$ and $t\in V_n$; $b_t(x)=0$ when $x_t\neq 0$ and $t\in V_n$. Thus when $a(F')$ is odd, the sign of
	\begin{displaymath}
\sgn(\psi _{F'}\circ \varphi ^{(x)})\prod _{t\in F_n}f_{x_t}^{(1)}
	\end{displaymath}
is equal to
	\begin{displaymath}
(-1)^{b(x)+|x^{-1}(0)|}=(-1)^{|x^{-1}(0)\smallsetminus V_n|+\sum_{t\in H_n\smallsetminus V_n}b_t(x)},
	\end{displaymath}
which is determined by $y$. When $a(F')$ is even, the sign of \linebreak[2]$\sgn(\psi _{F'}\circ \varphi ^{(x)})\prod _{t\in F_n}f_{x_t}^{(2)}$ is $(-1)^{b(x)}\linebreak[0]=(-1)^{\sum_{t\in H_n\smallsetminus V_n}b_t(x)}$, which is determined by $y$.
From Lemma~\ref{L-per equal det} we conclude that $\per (h)=\max(\ddet _\textup{FK}(f^{(1)}), \ddet _\textup{FK}(f^{(2)}))$.
	\end{example}

In Example~\ref{E-3 points}, could $\ddet _\textup{FK}(f^{(1)})<\ddet _\textup{FK}(f^{(2)})$ for some $a, b, c>0$?

	\begin{example}
	\label{E-4 points}
Let $\Gamma=\mathbb{Z}$ and $K\ge 3$. Let $h=au^K+bu^{K-1}+cu+d\in \mathbb{R}\Gamma=\mathbb{R}[u^{\pm}] $ for $a, b, c, d\in \mathbb{R}_{>0}$. Put $\mathsf{A}=\supp(h)=\{0, 1, K-1, K\}\in \mathscr{F}(\Gamma)$. We claim that $W=\{f^{(1)}=au^K+bu^{K-1}+cu-d, f^{(2)}=au^K+bu^{K-1}-cu+d\}$ satisfies the condition in Lemma~\ref{L-per equal det} for $F_n=\{0, 1, \dots, n\}$ and $H_n=\{0, 1, \dots, 4K\}\cup \{n-4K, \dots, n\}$. Let $n\ge 100K$. Put $V_n=F_n\cap [2K, n-2K]$. Fix $F'\in \Theta(\mathsf{A}, F_n)$ and $y\in X_{\mathsf{A}, H_n}^\iota $ satisfying $\varphi ^{(y)}(H_n)\subseteq F'$. Let $x\in X_{\mathsf{A}, F_n, F'}$ equal to $y$ on $H_n$. Then $b_t(x)=a(F')$ when $x_t=0$ and $t\in V_n$; $b_t(x)=a(F')-1$ when $x_t=1$ and $t\in V_n$; $b_t(x)=0$ when $x_t\ge K-1$ and $t\in V_n$. Thus when $a(F')$ is odd, the sign of $\sgn(\psi _{F'}\circ \varphi ^{(x)})\prod _{t\in F_n}f_{x_t}^{(1)}$ is $(-1)^{b(x)+|x^{-1}(0)|}=(-1)^{|x^{-1}(0)\smallsetminus V_n|+\sum_{t\in H_n\smallsetminus V_n}b_t(x)}$, which is determined by $y$. When $a(F')$ is even, the sign of $\sgn(\psi _{F'}\circ \varphi ^{(x)})\prod _{t\in F_n}f_{x_t}^{(2)}$ is $(-1)^{b(x)+|x^{-1}(1)|}=(-1)^{|x^{-1}(1)\smallsetminus V_n|+\sum_{t\in H_n\smallsetminus V_n}b_t(x)}$, which is determined by $y$.
From Lemma~\ref{L-per equal det} we conclude that $\per (h)=\max(\ddet _\textup{FK}(f^{(1)}), \ddet _\textup{FK}(f^{(2)}))$.
	\end{example}

In Example~\ref{E-4 points}, could $\ddet _\textup{FK}(f^{(1)})<\ddet _\textup{FK}(f^{(2)})$ for some $a, b, c, d>0$?

\section{Approximation by finite quotient groups}
	\label{S-approximation}

In this section we discuss briefly finite approximations of permanents $\per (f)$, $f\in \mathbb{R}_{\ge0}\Gamma $, for residually finite amenable groups.

Let $\Gamma $ be amenable, $f\in \mathbb{R}_{\ge0}\Gamma $, $\mathsf{A}\coloneqq \textup{supp}(f)\in \mathscr{F}(\Gamma )$, and let $\Delta \subset \Gamma $ be a finite-index normal subgroup such that $(\mathsf{A}\mathsf{A}^{-1})\cap \Delta =\{\matheur{e}_\Gamma \}$. We denote by $\pi _{\Gamma /\Delta }\colon \Gamma \to \Gamma /\Delta $ the quotient map $s\mapsto s\Delta $ as well as the induced homomorphism $\mathbb{C}\Gamma\rightarrow \mathbb{C} (\Gamma/\Delta )$.

We define $X_\mathsf{A}\subseteq \mathsf{A}^\Gamma $ by \eqref{eq:perm} -- \eqref{eq:XA}, set $\tilde{\mathsf{A}}=\pi _{\Gamma /\Delta }(\mathsf{A})\subseteq \Gamma /\Delta $, and denote by $\tilde{\mathscr{S}}_{\tilde{\mathsf{A}}}\subseteq \mathscr{S}(\Gamma /\Delta )$ the set of all permutations $\tilde{\sigma }$ of $\Gamma /\Delta $ with
	\begin{displaymath}
\tilde{x}_{\tilde{s}}= \tilde{s}^{-1}\tilde{\sigma }(\tilde{s})\in \tilde{\mathsf{A}}
	\end{displaymath}
for every $\tilde{s}\in \Gamma /\Delta $. We define $X_{\tilde{\mathsf{A}}}\subseteq \tilde{\mathsf{A}}^{\Gamma /\Delta }$ exactly as in \eqref{eq:perm} -- \eqref{eq:XA}, with $\Gamma $ and $\mathsf{A}$ replaced by $\Gamma /\Delta $ and $\tilde{\mathsf{A}}$, respectively. For every $y\in X_{\tilde{\mathsf{A}}}$, the composition $y\circ\pi _{\Gamma /\Delta }$ lies in $X_\mathsf{A}$ (actually it is an element of $\tilde{\mathsf{A}}^\Gamma $, but since the restriction of $\pi _{\Gamma /\Delta }$ to $\mathsf{A}$ is injective by assumption we can abuse notation and treat it as an element of $X_\mathsf{A}\subseteq \mathsf{A}^\Gamma $ which is invariant under $\lambda ^t$ for every $t\in \Delta $ (cf. \eqref{eq:shift}). Conversely, every $x'\in \textup{Fix}_\Delta (X_\mathsf{A})=\{x\in X_\mathsf{A}\colon \lambda ^tx=x\enspace \textup{for every}\enspace t\in \Delta \}$ arises in this manner. It follows that
	\begin{displaymath}
|\textup{Fix}_\Delta (X_\mathsf{A})| = |X_{\tilde{\mathsf{A}}}|
	\end{displaymath}
and
	\begin{displaymath}
\per (\pi _{\Gamma /\Delta }(f)) = \frac{1}{|\Gamma /\Delta |}\log \Biggl( \sum_{y\in X_{\tilde{\mathsf{A}}}} \,\prod_{\tilde{s}\in \Gamma /\Delta }\pi _{\Gamma /\Delta }(f)_{y_{\tilde{s}}}\Biggr).
	\end{displaymath}

Let $\Gamma $ be amenable and residually finite, and let $(\Delta _n)_{n\in \mathbb{N}}$ be a sequence of finite-index normal subgroups such that $\bigcap_m\bigcup_{n\ge m}\Delta _n=\{\matheur{e}_\Gamma \}$. By \cite{Weiss} there is a right F{\o}lner sequence $(F_n)_{n\in \mathbb{N}}$ of $\Gamma $ such that $\pi_{\Gamma /\Delta _n}$ maps $F_n$ bijectively to $\Gamma /\Delta _n$ for every $n$ and $\mathsf{A}\subset F_n$ for every $n\ge N$, say.

	\begin{proposition}
	\label{P-approximation lower bound}
Let $f\in \mathbb{R}_{\ge0}\Gamma $. Then $\per (f)\ge \limsup_{n\to \infty}\per (\pi _{\Gamma /\Delta _n}(f))$.
	\end{proposition}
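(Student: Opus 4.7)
The plan is to construct a weight-preserving injection $\Phi_n\colon X_{\tilde{\mathsf{A}}_n}\hookrightarrow X_{\mathsf{A},F_n}^\iota$, where $\tilde{\mathsf{A}}_n\coloneqq \pi_{\Gamma/\Delta_n}(\mathsf{A})$, and then pass to the $\limsup$ via the Følner sequence $(F_n)$ using Proposition~\ref{P-per equal iper}. The lift is available because $(\mathsf{A}\mathsf{A}^{-1})\cap \Delta_n=\{\matheur{e}_\Gamma\}$, so $\pi_{\Gamma/\Delta_n}|_{\mathsf{A}}$ is injective onto $\tilde{\mathsf{A}}_n$.

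Concretely, given $y\in X_{\tilde{\mathsf{A}}_n}$, define $\tilde{x}^{(y)}\in \mathsf{A}^\Gamma$ by letting $\tilde{x}^{(y)}_s$ be the unique element of $\mathsf{A}$ with $\pi_{\Gamma/\Delta_n}(\tilde{x}^{(y)}_s)=y_{\pi_{\Gamma/\Delta_n}(s)}$, and set $\Phi_n(y)=\tilde{x}^{(y)}|_{F_n}$. Since $\tilde{x}^{(y)}$ depends on $s$ only through $\pi_{\Gamma/\Delta_n}(s)$, it is constant on each coset $s\Delta_n$; so if we write $a=\tilde{x}^{(y)}_s$, then for $\delta\in\Delta_n$ one has $\varphi^{(\tilde{x}^{(y)})}(s\delta)=s\delta a=sa\cdot(a^{-1}\delta a)\in sa\Delta_n$ by normality of $\Delta_n$, and the map $\delta\mapsto a^{-1}\delta a$ is a bijection of $\Delta_n$. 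Hence $\varphi^{(\tilde{x}^{(y)})}$ restricts to a bijection $s\Delta_n\to sa\Delta_n$ on every coset, and the induced map on $\Gamma/\Delta_n$ is $\tilde{s}\mapsto \tilde{s}y_{\tilde{s}}$, which is a permutation because $y\in X_{\tilde{\mathsf{A}}_n}$. Thus $\varphi^{(\tilde{x}^{(y)})}$ is a bijection of $\Gamma$, so $\tilde{x}^{(y)}\in X_\mathsf{A}$ and in particular $\tilde{x}^{(y)}|_{F_n}\in X_{\mathsf{A},F_n}^\iota$. Injectivity of $\Phi_n$ is immediate from the fact that $\pi_{\Gamma/\Delta_n}$ bijects $F_n$ onto $\Gamma/\Delta_n$, so $\tilde{x}^{(y)}|_{F_n}$ determines $y$.

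For the weight identity, using $\pi_{\Gamma/\Delta_n}(f)_{\pi_{\Gamma/\Delta_n}(a)}=f_a$ for $a\in\mathsf{A}$,
\begin{displaymath}
\prod_{s\in F_n} f_{\tilde{x}^{(y)}_s}\;=\;\prod_{\tilde{s}\in \Gamma/\Delta_n}\pi_{\Gamma/\Delta_n}(f)_{y_{\tilde{s}}}.
\end{displaymath}
Summing over $y\in X_{\tilde{\mathsf{A}}_n}$ and using that $\Phi_n$ takes values in $X_{\mathsf{A},F_n}^\iota$,
\begin{displaymath}
\sum_{y\in X_{\tilde{\mathsf{A}}_n}}\prod_{\tilde{s}\in \Gamma/\Delta_n}\pi_{\Gamma/\Delta_n}(f)_{y_{\tilde{s}}}\;\le\;\iper_{\mathsf{A},F_n}(f).
\end{displaymath}

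Taking logarithms, dividing by $|F_n|=|\Gamma/\Delta_n|$, and passing to $\limsup$ gives
\begin{displaymath}
\limsup_{n\to\infty}\per(\pi_{\Gamma/\Delta_n}(f))\;\le\;\lim_{n\to\infty}\frac{1}{|F_n|}\log\iper_{\mathsf{A},F_n}(f)\;=\;\iper(f)\;=\;\per(f),
\end{displaymath}
where the last two equalities use the Følner property of $(F_n)$ together with Proposition~\ref{P-per equal iper}. The argument is essentially a finite-to-infinite lifting, and the only step requiring any care is verifying bijectivity of $\varphi^{(\tilde{x}^{(y)})}$, which reduces to routine bookkeeping using the normality of $\Delta_n$ and the injectivity of $\pi_{\Gamma/\Delta_n}|_\mathsf{A}$.
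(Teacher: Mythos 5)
Your proof is correct and follows essentially the same route as the paper: lift each $y\in X_{\tilde{\mathsf{A}}_n}$ to a $\Delta_n$-periodic point of $X_{\mathsf{A}}$ via the fundamental domain $F_n$, match the weights using the injectivity of $\pi_{\Gamma/\Delta_n}|_{\mathsf{A}}$, and pass to the limit along the Følner sequence $(F_n)$. The only cosmetic difference is that you bound the quotient sum by $\iper_{\mathsf{A},F_n}(f)$ and invoke Proposition~\ref{P-per equal iper} at the end, whereas the paper bounds it directly by $\per_{\mathsf{A},F_n}(f)$; both are valid.
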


	\begin{proof}
We assume that $f\ne 0$ and set $\textsf{A}=\textup{supp}(f)\in \mathscr{F}(\Gamma )$.

For every $n\ge1$ we put $\mathsf{A}^{(n)} = \pi _{\Gamma /\Delta _n}(\mathsf{A})$ and $f^{(n)} = \pi _{\Gamma /\Delta _n}(f)$. For every $y\in X_{\mathsf{A}^{(n)}}\subset (\mathsf{A}^{(n)})^{\Gamma /\Delta _n}$, the composition $y\circ \pi _{\Gamma /\Delta _n}$ lies in $(\mathsf{A}^{(n)})^\Gamma $. If $n \ge N$, we may view $y\circ \pi _{\Gamma /\Delta _n}$ as an element of $\mathsf{A}^\Gamma $ whose restriction to $F_n$ lies in $X_{\mathsf{A},F_n}$. In the notation of \eqref{eq:permanent} we obtain that
	\begin{displaymath}
\per _{\mathsf{A},F_n}(f)=\sum_{x\in X_{\mathsf{A},F_n}}\prod _{s\in F_n}f_{x_s} \ge \sum_{y\in X_{\mathsf{A}^{(n)}}} \prod _{\tilde{t}\in \Gamma /\Delta _n} f^{(n)}_{y_{\tilde{t}}}.
	\end{displaymath}
According to Definition \ref{D-permanent},
	\begin{align*}
\per (f)=\lim_{n\to \infty}\frac{1}{|F_n|}\log \per _{\mathsf{A},F_n}(f) &\ge \limsup_{n\to \infty}\frac{1}{|\Gamma/\Delta _n|}\log \Biggl( \sum_{y\in X_{\mathsf{A}^{(n)}}} \prod _{\tilde{t}\in \Gamma /\Delta _n} f^{(n)}_{y_{\tilde{t}}}\Biggr)
	\\
&=\limsup_{n\to \infty}\,\per (f^{(n)}).\tag*{\qedsymbol}
	\end{align*}\renewcommand{\qedsymbol}{}
	\vspace{-\baselineskip}
	\end{proof}

For $\Gamma =\mathbb{Z}$, $f\in \mathbb{R}_{\ge0}\mathbb{Z}$, and $\Delta _n = n\mathbb{Z}$, the inequality $\per(f)\ge \limsup_{n\to \infty}\per(\pi_{\mathbb{Z}/n\mathbb{Z}}(f))$ in Proposition \ref{P-approximation lower bound} is actually an equality. This is a consequence of the following classical result, essentially taken from \cite{LM}*{Theorem 4.3.6. and Exercise 4.4.3.}

	\begin{proposition}
	\label{P-approximation SFT}
Let $\mathsf{B}$ be a nonempty finite set, $Y\subseteq \mathsf{B}^\mathbb{Z}$ a shift of finite type {\textup(}as defined in Subsection \ref{SS-local subshift}{\textup)}, and let $\lambda $ be the {\textup(}left{\textup)} shift action \eqref{eq:shift} of $\mathbb{Z}$ on $\mathsf{B}^\mathbb{Z}$. For every $N\ge1$ we denote by $\textup{Fix}_N(Y)=\{y\in Y\colon \lambda ^Ny=y\}$ the set of periodic points in $Y$ with period $N$. Then
	\begin{align*}
\lim_{N\to\infty } \frac1N \,&\log \Biggl( \sum_{y\in \pi _{\{0,\dots ,N-1\}}(Y)} \prod_{k=0}^{N-1}\phi (y_k) \Biggr) = \limsup_{N\to\infty } \frac1N \,\log \Biggl( \sum_{y\in \textup{Fix}_N(Y)} \prod_{k=0}^{N-1}\phi (y_k)\Biggr)
	\end{align*}
for every map $\phi \colon \mathsf{B}\to \mathbb{R}_{>0}$.
	\end{proposition}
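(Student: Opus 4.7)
The plan is to reduce the statement to a classical Perron-Frobenius computation on a weighted transfer matrix. By the standard higher-block recoding (cf.\ \cite{LM}*{Theorem 2.3.2}) I may replace $Y$ by a conjugate 1-step SFT determined by an adjacency matrix $A\in \{0,1\}^{\mathsf{B}\times \mathsf{B}}$, with $\phi$ pulled back so as to depend only on a single coordinate; the edge-corrections introduced by this recoding affect neither asymptotic growth rate. Deleting from $\mathsf{B}$ every vertex that lies on no biinfinite $A$-path further changes neither $\pi_{\{0,\dots,N-1\}}(Y)$ nor $\textup{Fix}_N(Y)$, so I may assume every row and every column of $A$ is nonzero.

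Setting $D=\textup{diag}(\phi(b))_{b\in \mathsf{B}}$ and $M=DA$, a direct expansion of products yields the identities
\begin{displaymath}
\sum_{y\in \pi _{\{0,\dots,N-1\}}(Y)}\,\prod_{k=0}^{N-1}\phi(y_k)=\mathbf{1}^T M^{N-1}D\mathbf{1},\qquad \sum_{y\in \textup{Fix}_N(Y)}\,\prod_{k=0}^{N-1}\phi(y_k)=\tr(M^N).
\end{displaymath}
Consequently, the proposition reduces to the claim that for the non-negative matrix $M$ with spectral radius $\rho=\rho(M)$,
\begin{displaymath}
\lim_{N\to\infty}\frac{1}{N}\log\bigl(\mathbf{1}^T M^{N-1}D\mathbf{1}\bigr)=\log\rho=\limsup_{N\to\infty}\frac{1}{N}\log\tr(M^N).
\end{displaymath}

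For the left-hand limit, the positivity of the diagonal $D$ together with the absence of zero rows or columns of $A$ implies that $\mathbf{1}^T M^{N-1}D\mathbf{1}$ is comparable (up to positive constants independent of $N$) to the sum of all entries of $M^{N-1}$, whose $\frac{1}{N}\log$ converges to $\log\rho$ by Gelfand's formula applied to the $\ell^\infty $ operator norm. For the trace, the upper bound $\tr(M^N)\le |\mathsf{B}|\,\rho^N$ is immediate from $\tr(M^N)=\sum_i\mu_i^N$ over the eigenvalues of $M$. For the matching lower bound, the Frobenius normal form supplies an irreducible block $M_0$ of $M$ with $\rho(M_0)=\rho$; if $p$ denotes the period of $M_0$, the Perron-Frobenius cyclic-decomposition theorem gives $\tr(M_0^{pN})\sim c\,\rho^{pN}$ as $N\to \infty $ with $c>0$, so that $\limsup\frac{1}{N}\log\tr(M^N)\ge \log\rho$.

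The principal subtlety---and the reason the right-hand side must be written as a $\limsup$ rather than a $\lim$---is the possibility that the dominant irreducible block has period $p>1$: for $N$ not divisible by $p$, $\tr(M^N)$ may be exponentially smaller than $\rho^N$, whereas it attains the full rate along the arithmetic progression $N\in p\mathbb{Z}$. All remaining estimates are routine consequences of Perron-Frobenius theory for non-negative matrices, essentially as in \cite{LM}*{Theorem 4.3.6 and Exercise 4.4.3}.
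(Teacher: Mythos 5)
Your argument is correct and complete. The paper does not actually prove Proposition \ref{P-approximation SFT}; it only cites it as a classical fact from Lind--Marcus, so your transfer-matrix/Perron--Frobenius derivation is exactly the standard argument that the cited reference encapsulates. Two small points are worth making explicit. First, the pruning step is what guarantees the identity $\sum_{y\in \pi_{\{0,\dots,N-1\}}(Y)}\prod_k\phi(y_k)=\mathbf{1}^TM^{N-1}D\mathbf{1}$: after deleting vertices not on bi-infinite paths, every locally admissible word extends to a point of $Y$, so the weighted word count really equals the weighted count over $\pi_{\{0,\dots,N-1\}}(Y)$ rather than over merely locally admissible words; you perform the pruning but do not say this is why the identity holds. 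Second, one should note that $\rho(M)>0$ whenever $Y\neq\varnothing$ (a bi-infinite path forces a cycle, so $\tr(A^k)>0$ for some $k$, and $M\ge(\min\phi)A$ entrywise), which rules out the degenerate possibility that the dominant diagonal block in the Frobenius normal form is a trivial zero block; and if $Y=\varnothing$ both sides are $-\infty$ and there is nothing to prove. With these remarks your proof stands as written, and your closing observation about period $p>1$ correctly identifies why the right-hand side is a $\limsup$ rather than a limit.
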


By applying Proposition \ref{P-approximation SFT} to the \emph{SFT} $X_\mathsf{A}\subset \mathsf{A}^\mathbb{Z}$ and the function $f\colon \mathsf{A}\to \mathbb{R}_{>0}$ we obtain the identity
	\begin{align*}
\lim_{|J|\to\infty } \frac1{|J|} \,\log \Biggl( \sum_{x\in X_{\mathsf{A},J}} \prod_{k\in J}f(x_k)\Biggr) &=\lim_{|J|\to\infty } \frac1{|J|} \,\log \Biggl( \sum_{x\in \pi _J(X_\mathsf{A})} \prod_{k\in J}f(x_k)\Biggr)
	\\
&=\limsup_{N\to\infty } \frac1{N} \,\log \Biggl(\sum_{x\in \textup{Fix}_N(X_\mathsf{A})} \prod_{k=0}^{N-1}f(x_k)\Biggr),
	\end{align*}
where the limits are taken over increasing intervals $J=[i,\dots ,j]\subset \mathbb{Z}$, and where $X_{\mathsf{A},J}\subset \mathsf{A}^J$ is defined as in \eqref{eq:XAF} or \eqref{eq:locally admissible}. As indicated in Remark \ref{R-real}, $\per(f)$ coincides with the topological pressure $P(X_\mathsf{A},\log f)$ with $\mathsf{A}=\supp(f)$, which gives us equality in Proposition \ref{P-approximation lower bound}.

	\begin{corollary}
	\label{C-approximation lower bound}
Let $f\in \mathbb{R}_{\ge0}\mathbb{Z}$. Then $\per (f)= \limsup_{n\to \infty}\per (\pi _{\mathbb{Z}/n\mathbb{Z}}(f))$.
	\end{corollary}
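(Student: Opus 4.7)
The plan is to derive Corollary \ref{C-approximation lower bound} by combining Proposition \ref{P-approximation SFT} with the identification of periodic points described immediately before Proposition \ref{P-approximation lower bound}. The $\ge$ direction is already furnished by Proposition \ref{P-approximation lower bound}, so the real task is the reverse inequality $\per(f)\le \limsup_{n\to \infty}\per(\pi_{\mathbb{Z}/n\mathbb{Z}}(f))$, and in fact both directions will emerge simultaneously from Proposition \ref{P-approximation SFT}.

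We may discard the trivial case $f=0$ and set $\mathsf{A}=\supp(f)\in \mathscr{F}(\mathbb{Z})$. By Proposition \ref{P-SFT1} the space $X_\mathsf{A}\subset \mathsf{A}^\mathbb{Z}$ is an \emph{SFT}, and the restriction $\phi \coloneqq f|_\mathsf{A}\colon \mathsf{A}\to \mathbb{R}_{>0}$ is strictly positive. The plan is to apply Proposition \ref{P-approximation SFT} with $\mathsf{B}=\mathsf{A}$, $Y=X_\mathsf{A}$, and this $\phi $. I will then identify each side of the resulting equality with the two quantities in the corollary.

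For the left-hand side of Proposition \ref{P-approximation SFT}, I will invoke Theorem \ref{T-local subshift} (together with Remark \ref{R-real}) along the right F{\o}lner sequence $F_N=\{0,\dots ,N-1\}$ to get
	\begin{displaymath}
\per(f)=\lim_N \frac{1}{N}\log\Biggl(\sum_{x\in X_{\mathsf{A},F_N}}\prod_{k=0}^{N-1}f(x_k)\Biggr).
	\end{displaymath}
To replace $X_{\mathsf{A},F_N}$ by $\pi_{F_N}(X_\mathsf{A})$ I will use the inclusion $\pi_{F_N}(X_\mathsf{A})\subseteq X_{\mathsf{A},F_N}$ and sandwich the intermediate quantity $\frac{1}{N}\log \sum_{x\in \pi_{F_N}(X_\mathsf{A})}\prod f(x_k)$ between the inf-formula from Theorem \ref{T-local subshift} (lower bound $\per(f)$) and its limit version (upper bound $\per(f)$), forcing convergence to $\per(f)$.

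For the right-hand side of Proposition \ref{P-approximation SFT}, I will use the bijection described before Proposition \ref{P-approximation lower bound}: for every $N$ large enough that $(\mathsf{A}\mathsf{A}^{-1})\cap N\mathbb{Z}=\{0\}$ (which holds once $N>\max\mathsf{A}-\min\mathsf{A}$), the map $x\mapsto y$, $y_{k+N\mathbb{Z}}=\pi_{\mathbb{Z}/N\mathbb{Z}}(x_k)$, is a weight-preserving bijection $\textup{Fix}_N(X_\mathsf{A})\to X_{\tilde{\mathsf{A}}}$ (weight-preservation being immediate from the injectivity of $\pi_{\mathbb{Z}/N\mathbb{Z}}|_\mathsf{A}$). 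Combined with the displayed formula for $\per(\pi_{\mathbb{Z}/N\mathbb{Z}}(f))$ preceding Proposition \ref{P-approximation lower bound}, this yields
	\begin{displaymath}
\frac{1}{N}\log\Biggl(\sum_{x\in \textup{Fix}_N(X_\mathsf{A})}\prod_{k=0}^{N-1}f(x_k)\Biggr)=\per(\pi_{\mathbb{Z}/N\mathbb{Z}}(f))
	\end{displaymath}
for all such $N$. Plugging both identifications into Proposition \ref{P-approximation SFT} gives $\per(f)=\limsup_{N\to \infty}\per(\pi_{\mathbb{Z}/N\mathbb{Z}}(f))$. There is no real obstacle here; the only mildly delicate point is the sandwich argument needed to pass from $X_{\mathsf{A},F_N}$ to $\pi_{F_N}(X_\mathsf{A})$, which is a direct consequence of the three equivalent formulas in Theorem \ref{T-local subshift}.
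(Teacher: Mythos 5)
Your proposal is correct and follows essentially the same route as the paper: both directions are obtained from Proposition \ref{P-approximation SFT} applied to $X_{\mathsf{A}}$ with $\phi=f|_{\mathsf{A}}$, identifying the left side with $\per(f)$ via Theorem \ref{T-local subshift} (the same sandwich between $\pi_{F}(X_{\mathsf{A}})$ and $X_{\mathsf{A},F}$) and the right side with $\limsup_N\per(\pi_{\mathbb{Z}/N\mathbb{Z}}(f))$ via the weight-preserving bijection $\textup{Fix}_N(X_{\mathsf{A}})\to X_{\tilde{\mathsf{A}}}$ valid for $N>\max\mathsf{A}-\min\mathsf{A}$.
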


Even for $\Gamma =\mathbb{Z}^2$, the analogue of Corollary \ref{C-approximation lower bound} is not known except under very restrictive conditions. We refer to \cite{SS16}*{\S\,6} and \cite{Elimelech21} for background and a few examples.

\section{Bounds for permanents}
	\label{S-bounds}

Although the problem of calculating of $\htop (X_\mathsf{A}), \mathsf{A}\in \mathscr{F}(\Gamma )$, doesn't seem tractable in general, some of the known results on permanents of matrices lead to useful estimates on the topological entropies of these spaces. For general reference on permanents we refer to \cite{Minc78} and \cite{VW01}*{Chapters 11 and 12}.

We recall that the \textsl{permanent} of a matrix $B=(b_{ij})_{1\le i\le m, 1\le j\le n}\in M_{m, n}(\mathbb{C})$ with $m\le n$ is defined as
	\begin{displaymath}
\per (B)=\sum_{\sigma } \prod _{i=1}^m b_{i\sigma (i)},
	\end{displaymath}
where $\sigma $ ranges over all injective maps $\{1, \dots, m\}\to \{1, \dots, n\}$. Note that the permanent does not change if we exchange two rows or two columns of $B$. Thus we can talk about $\per (B)$ for any $B\in M_{F_1, F_2}(\mathbb{C})$ where $F_1$ and $F_2$ are nonempty finite sets with $|F_1|\le |F_2|$.

A matrix $B\in M_n(\mathbb{R})$ is called \textsl{doubly stochastic} if all entries are nonnegative and all row-sums and column-sums are equal to $1$. We denote by $\Omega_n$ the set of all $n\times n$ doubly stochastic matrices.

We shall need the following result \cite{VW01}*{Theorem 12.1}, known as \textsl{the van der Waerden conjecture}, proved independently by Falikman (without the uniqueness part) \cite{Falikman} and Egoryčev \cite{Egorycev}.

	\begin{theorem}[van der Waerden Conjecture]
	\label{T-van der Waerden}
For any $n\in \mathbb{N}$ and $B\in \Omega_n$, one has
	\begin{displaymath}
\per (B)\ge \frac{n!}{n^n},
	\end{displaymath}
and the equality holds exactly when all entries of $B$ are $\frac{1}{n}$.
	\end{theorem}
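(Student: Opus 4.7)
The plan is to prove the van der Waerden conjecture following the Egoryčev--Falikman strategy, whose central tool is Alexandrov's inequality for mixed permanents. Since $\Omega_n$ is a compact convex polytope and $\per$ is continuous, the minimum of $\per$ on $\Omega_n$ is attained; let $B^\star=(b_{ij})$ be a minimizer. The goal is to prove that $B^\star$ must equal the uniform matrix $J_n$ all of whose entries are $\frac{1}{n}$; once this is established, the evaluation $\per(J_n)=n!/n^n$ is immediate.

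First I will reduce to a situation where the minimizer is regular enough for analytic arguments. A Frobenius--K\"onig-type covering argument combined with extremality shows that $B^\star$ is \emph{fully indecomposable}; in particular every $(n-1)\times (n-1)$ subpermanent $\per(B^\star(i|j))$ obtained by deleting row $i$ and column $j$ is strictly positive. Next, considering the linearization of $\per$ in a single entry together with the Lagrange multipliers attached to the row-sum and column-sum constraints, one derives the Marcus--London--Newman identity: there exist scalars $\alpha_i,\beta_j$ such that $\per(B^\star(i|j))=\alpha_i+\beta_j$ whenever $b_{ij}>0$. Combined with full indecomposability, this forces the uniform value $\per(B^\star(i|j))=\per(B^\star)$ for every such pair $(i,j)$.

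The decisive analytic step is Alexandrov's inequality, applied to the symmetric multilinear form $(C_1,\dots,C_n)\mapsto \per(C_1,\dots,C_n)$ regarded as a function of the columns. For fixed nonnegative columns $C_1,\dots,C_{n-2}$ with appropriate positivity, the symmetric bilinear form $(b,c)\mapsto \per(C_1,\dots,C_{n-2},b,c)$ has exactly one positive eigenvalue, which yields the reverse Cauchy--Schwarz-type estimate
\[
\per(C_1,\dots,C_{n-2},b,c)^2 \;\ge\; \per(C_1,\dots,C_{n-2},b,b)\cdot \per(C_1,\dots,C_{n-2},c,c),
\]
with equality only when $b$ and $c$ are proportional. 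Feeding pairs of columns of $B^\star$ into this inequality and invoking the equality of all $(n-1)\times (n-1)$ subpermanents from the previous step, I conclude that any two columns of $B^\star$ must be proportional; the common column sum $1$ then forces every column to equal $\frac{1}{n}\cdot(1,\dots,1)^\top$, so $B^\star=J_n$, and uniqueness follows from the equality discussion.

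The main obstacle is Alexandrov's inequality itself, whose proof requires the delicate fact that the Hessian of the permanent, viewed as a quadratic form on the space of candidate columns, has signature $(+,-,\dots,-)$. I would prove this by induction on $n$, reducing via a partial Laplace expansion along a coordinate to the negative semidefiniteness of a related quadratic form on a codimension-one subspace; the base case $n=2$ is trivial and the inductive step can be handled, after a suitable diagonalization, by Newton's inequalities for the elementary symmetric polynomials. All other steps are essentially soft (compactness, constrained optimization, and combinatorial normalization), so the entire weight of the argument sits on this spectral property of mixed permanents.
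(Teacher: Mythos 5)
The paper does not prove this theorem: it is quoted verbatim from the literature (\cite{VW01}*{Theorem 12.1}), with the proof attributed to Falikman and Egory\v{c}ev, so there is no ``paper's proof'' to compare against. Your outline is the standard Egory\v{c}ev route (minimizer exists by compactness; full indecomposability; Marcus--London--Newman; Alexandrov's inequality for mixed permanents), and that architecture is correct.

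Two steps, however, do not close as written. First, in the Alexandrov step you expand $\per(C_1,\dots,C_{n-2},b,b)$, with $b$ the $j$-th column inserted also in position $k$, as $\sum_i b_{ij}\,\per(B^\star(i|k))$, and you claim this equals $\per(B^\star)$ by ``the equality of all $(n-1)\times(n-1)$ subpermanents from the previous step.'' But the previous step (Marcus--Newman plus full indecomposability) only gives $\per(B^\star(i|j))=\per(B^\star)$ for pairs with $b^\star_{ij}>0$; the indices $i$ appearing in the expansion are those with $b^\star_{ij}>0$, and for these you need information about $\per(B^\star(i|k))$, where possibly $b^\star_{ik}=0$. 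The standard fix is London's lemma, $\per(B^\star(i|j))\ge\per(B^\star)$ for \emph{all} $(i,j)$ at a minimizer, which you never state; with it the Alexandrov chain gives $\per(B^\star)^2\ge(\text{something}\ge\per(B^\star))^2$ and hence equality of all subpermanents --- this extension is an output of the Alexandrov step, not an input to it. Second, the equality case of Alexandrov's inequality (proportionality of $b$ and $c$) is only valid when the fixed columns $C_1,\dots,C_{n-2}$ are strictly positive; the columns of a minimizer need not be, so you cannot directly conclude that any two columns of $B^\star$ are proportional. The usual workaround is the averaging argument: once all subpermanents equal $\per(B^\star)$, replacing two columns by their average preserves both double stochasticity and the value of the permanent, and iterating drives the minimizer to $J_n$; the uniqueness statement then requires a separate (and genuinely more delicate) analysis of the equality case. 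With these two repairs your sketch becomes the proof in \cite{VW01}.
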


	\begin{comment}
We also need the following result of Schrijver \cite{Schrijver98}*{Corollary 1b}.

	\begin{theorem}
	\label{T-Schrijver}
Let $k, n\in \mathbb{N}$ and $B\in M_n(\mathbb{Z}_{\ge 0})$ such that each row sum and column sum of $B$ is equal to $k$. Then
	\begin{displaymath}\per(B)\ge \biggl(\frac{(k-1)^{k-1}}{k^{k-2}}\biggr)^n.
	\end{displaymath}
	\end{theorem}
	\end{comment}

Let $\Gamma $ be a discrete group. Let $f\in \mathbb{R}_{\ge 0}\Gamma $ and $\mathsf{A}, F\in \mathscr{F}(\Gamma)$. We denote by $B_{F, F\mathsf{A}, f}$ the matrix in $M_{F, F\mathsf{A}}(\mathbb{R})$ with value $f_s$ at $(t, ts)$ for all $t\in F$ and $s\in \mathsf{A}$. In the notation of \eqref{eq:permanent} we have
	\begin{displaymath}
\per (B_{F, F\mathsf{A}, f})=\per _{\mathsf{A}, F}^\iota (f).
	\end{displaymath}

Now assume that $\Gamma $ is infinite and amenable.

	\begin{lemma}
	\label{L-van der Waerden}
Let $f\in \mathbb{R}_{\ge 0}\Gamma $ with $\|f\|_1=1$. Then
	\begin{displaymath}
\iper(f)\ge \log \frac{1}{e},
	\end{displaymath}
where $e=2.71828 \dots $ is the Euler number.
	\end{lemma}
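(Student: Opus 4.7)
The plan is to reduce the bound to the van der Waerden theorem (Theorem~\ref{T-van der Waerden}). Set $\mathsf{A} = \supp(f) \in \mathscr{F}(\Gamma)$. By Proposition~\ref{P-per equal iper} together with the identity $\per(B_{F, F\mathsf{A}, f}) = \iper_{\mathsf{A},F}(f)$ recorded just above,
\begin{displaymath}
\iper(f) = \lim_F \frac{1}{|F|}\log \per(B_{F, F\mathsf{A}, f}),
\end{displaymath}
so it suffices, for each $F \in \mathscr{F}(\Gamma)$, to produce a lower bound of the form $\per(B_{F, F\mathsf{A}, f}) \ge e^{-|F\mathsf{A}|}$ and then let $F$ run along a right F{\o}lner sequence, using $|F\mathsf{A}|/|F| \to 1$.

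Fix $F \in \mathscr{F}(\Gamma)$ and write $B = B_{F, F\mathsf{A}, f}$, $n = |F\mathsf{A}|$. Since $\|f\|_1 = 1$, every row of $B$ sums to $1$; every column sum $c_u = \sum_{s \in \mathsf{A},\, us^{-1} \in F} f_s$ lies in $[0,1]$; and $\sum_{u \in F\mathsf{A}} (1 - c_u) = n - |F|$. This marginal balance makes it straightforward to extend $B$ to an $n \times n$ doubly stochastic matrix $\tilde B$ by appending $n - |F|$ rows that absorb the column deficits $1 - c_u$ while keeping each new row-sum equal to $1$ (for instance by setting $\tilde B_{u, v} = (1 - c_v)/(n - |F|)$ on each new row; in the degenerate case $n = |F|$ the same balance forces $c_u = 1$ for every $u$, so $B$ is already doubly stochastic).

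Now Theorem~\ref{T-van der Waerden} gives $\per(\tilde B) \ge n!/n^n$, which is at least $e^{-n}$ by the elementary inequality $e^n \ge n^n / n!$ (a single term of the Taylor series of $e^n$). The next step compares $\per(\tilde B)$ with $\per(B)$: expanding the permanent of $\tilde B$ according to the restriction of each permutation of $F\mathsf{A}$ to the rows indexed by $F$ yields
\begin{displaymath}
\per(\tilde B) = \sum_{\tau\colon F \to F\mathsf{A} \text{ injective}} \Biggl(\prod_{t \in F} B_{t, \tau(t)}\Biggr)\, \per\bigl(\tilde B|_{F\mathsf{A} \setminus F,\, F\mathsf{A} \setminus \tau(F)}\bigr),
\end{displaymath}
and each inner permanent is bounded by $1$ via the standard estimate $\per(M) \le \prod_i \sum_j M_{ij}$ applied to a nonnegative submatrix of $\tilde B$ whose row sums are all at most $1$. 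Therefore $\per(\tilde B) \le \per(B)$, and combining gives $\per(B) \ge e^{-n}$, i.e.\ $\frac{1}{|F|}\log \per(B) \ge -|F\mathsf{A}|/|F|$.

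Passing to the limit along a right F{\o}lner sequence, where $|F\mathsf{A}|/|F| \to 1$, yields $\iper(f) \ge -1 = \log(1/e)$. The only step that demands a sanity check is the existence of the doubly stochastic extension $\tilde B$, but the balance $\sum_u(1-c_u) = n - |F|$ reduces this to a trivial transportation problem; the conceptual content is simply the van der Waerden lower bound combined with the monotonicity $\per(\tilde B) \le \per(B)$.
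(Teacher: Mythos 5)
Your proof is correct and follows essentially the same strategy as the paper: complete the rectangular matrix $B_{F,F\mathsf{A},f}$ (whose rows each sum to $\|f\|_1=1$) to an $|F\mathsf{A}|\times|F\mathsf{A}|$ doubly stochastic matrix, apply Theorem~\ref{T-van der Waerden}, compare the two permanents, and let $F$ run along a right F{\o}lner sequence using $|F\mathsf{A}|/|F|\to 1$. The only (harmless) difference is in the completion and comparison step: the paper fills the extra rows by extending each partial map $t\mapsto ts$ to a bijection of $F\mathsf{A}$, keeping those rows sparse and obtaining $\per(C_{F\mathsf{A},f})\le|\supp(f)|^{|F\mathsf{A}\smallsetminus F|}\per(B_{F,F\mathsf{A},f})$, whereas your uniform filling of the column deficits yields the cleaner inequality $\per(\tilde B)\le\per(B)$ via the Laplace expansion and the row-sum bound $\per(M)\le\prod_i\sum_j M_{ij}$.
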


	\begin{proof}
We fix an $\mathsf{A}\in \mathscr{F}(\Gamma)$ containing $\supp(f)\cup \{\matheur{e}_\Gamma\}$.

Let $F\in \mathscr{F}(\Gamma)$. We define a matrix $C_{F\mathsf{A}, f}\in M_{F\mathsf{A}}(\mathbb{R})$ as follows. For each $s\in \supp(f)$, we take a bijection $\sigma _s\colon F\mathsf{A}\rightarrow F\mathsf{A}$ such that $\sigma _s(t)=ts$ for all $t\in F$. For any $t_1, t_2\in F\mathsf{A}$, we set
	\begin{displaymath}
(C_{F\mathsf{A}, f})_{t_1, t_2}=\sum_{s\in \supp(f),\, \sigma _s(t_1)=t_2} f_s.
	\end{displaymath}
Then
	\begin{displaymath}
(C_{F\mathsf{A}, f})_{t_1, t_2}=(B_{F, F\mathsf{A}, f})_{t_1, t_2}
	\end{displaymath}
for all $t_1\in F$ and $t_2\in F\mathsf{A}$. That is, $B_{F, F\mathsf{A}, f}$ is a submatrix of $C_{F\mathsf{A}, f}$.

For any $t_2\in F\mathsf{A}$, we have
	\begin{align*}
\sum_{t_1\in F\mathsf{A}}(C_{F\mathsf{A}, f})_{t_1, t_2}=\sum_{t_1\in F\mathsf{A}}\,\sum_{s\in \supp(f),\, \sigma _s(t_1)=t_2} f_s=\sum_{s\in \supp(f)}f_s=1.
	\end{align*}
For any $t_1\in F\mathsf{A}$, we have
	\begin{align*}
\sum_{t_2\in F\mathsf{A}}(C_{F\mathsf{A}, f})_{t_1, t_2}=\sum_{t_2\in F\mathsf{A}}\,\sum_{s\in \supp(f), \, \sigma _s(t_1)=t_2} f_s=\sum_{s\in \supp(f)}f_s=1.
	\end{align*}
Thus $C_{F\mathsf{A}, f}$ is a doubly stochastic matrix. From Theorem~\ref{T-van der Waerden} we have
	\begin{displaymath}
\per (C_{F\mathsf{A}, f})\ge \frac{|F\mathsf{A}|!}{|F\mathsf{A}|^{|F\mathsf{A}|}}.
	\end{displaymath}

Denote by $\Phi$ the set of injective maps $\phi\colon F\rightarrow F\mathsf{A}$, and by $\Psi$ the set of bijections $\psi\colon F\mathsf{A}\rightarrow F\mathsf{A}$. For each $\phi\in \Phi$, denote by $\Psi_\phi$ the set of $\psi\in \Psi$ extending $\phi$, and by $\Psi'_\phi$ the set of $\psi\in \Psi_\phi$ satisfying $(C_{F\mathsf{A}, f})_{t, \psi(t)}>0$ for all $t\in F\mathsf{A}\smallsetminus F$. For each $t\in F\mathsf{A}\smallsetminus F$, the $t$-row of $C_{F\mathsf{A}, f}$ has at most $|\supp(f)|$ positive entries. Thus
	\begin{displaymath}
|\Psi'_\phi|\le |\supp(f)|^{|F\mathsf{A}\smallsetminus F|}
	\end{displaymath}
for every $\phi\in \Phi$. Now
	\begin{align*}
\per (C_{F\mathsf{A}, f})&=\sum_{\psi\in \Psi}\prod _{t\in F\mathsf{A}}(C_{F\mathsf{A}, f})_{t, \psi(t)}=\sum_{\phi\in \Phi}\sum_{\psi\in \Psi_\phi}\prod _{t\in F\mathsf{A}}(C_{F\mathsf{A}, f})_{t, \psi(t)}
	\\
&=\sum_{\phi\in \Phi}\sum_{\psi\in \Psi'_\phi}\prod _{t\in F\mathsf{A}}(C_{F\mathsf{A}, f})_{t, \psi(t)}
	\\
&=\sum_{\phi\in \Phi}\sum_{\psi\in \Psi'_\phi}\left(\prod _{t\in F}(B_{F, F\mathsf{A}, f})_{t, \phi(t)}\right)\left(\prod _{t\in F\mathsf{A}\smallsetminus F}(C_{F\mathsf{A}, f})_{t, \psi(t)}\right)
	\\
&\le \sum_{\phi\in \Phi}\sum_{\psi\in \Psi'_\phi}\prod _{t\in F}(B_{F, F\mathsf{A}, f})_{t, \phi(t)}= \sum_{\phi\in \Phi}|\Psi'_\phi|\prod _{t\in F}(B_{F, F\mathsf{A}, f})_{t, \phi(t)}
	\\
&\le |\supp(f)|^{|F\mathsf{A}\smallsetminus F|}\sum_{\phi\in \Phi}\prod _{t\in F}(B_{F, F\mathsf{A}, f})_{t, \phi(t)}
	\\
&=|\supp(f)|^{|F\mathsf{A}\smallsetminus F|}\per (B_{F, F\mathsf{A}, f})=|\supp(f)|^{|F\mathsf{A}\smallsetminus F|}\per _{\mathsf{A}, F}^\iota (f).
	\end{align*}
Thus
	\begin{displaymath}
\per _{\mathsf{A}, F}^\iota (f)\ge |\supp(f)|^{-|F\mathsf{A}\smallsetminus F|}\frac{|F\mathsf{A}|!}{|F\mathsf{A}|^{|F\mathsf{A}|}}.
	\end{displaymath}
Since $\Gamma $ is infinite, $|F|\to \infty$ as $F$ becomes more and more right invariant.
Therefore, using Stirling's approximation formula, we get
	\begin{displaymath}
\iper(f)=\lim_F\frac{1}{|F|}\log \per _{\mathsf{A}, F}^\iota (f)\ge \lim_F\frac{1}{|F|}\log \left(|\supp(f)|^{-|F\mathsf{A}\smallsetminus F|}\frac{|F\mathsf{A}|!}{|F\mathsf{A}|^{|F\mathsf{A}|}}\right)=\log \frac{1}{e}.\tag*{\qedsymbol}
	\end{displaymath}\renewcommand{\qedsymbol}{}
	\vspace{-\baselineskip}
	\end{proof}

	\begin{remark}
	\label{R-Friedland}
Prior to the resolution of the van der Waerden conjecture by Falikman and Egoryčev, Friedland \cite{Friedland79} proved a weaker result that
	\begin{displaymath}
\per (B)\ge 1/e^n
	\end{displaymath}
for all $B\in \Omega_n$. In the proof of Lemma~\ref{L-van der Waerden} it suffices to use this weaker result.
	\end{remark}

	\begin{theorem}
	\label{T-lower bound}
For every $f\in \mathbb{R}_{\ge 0}\Gamma $ we have
	\begin{displaymath}
\per (f)\ge \log \frac{\|f\|_1}{e}.
	\end{displaymath}
	\end{theorem}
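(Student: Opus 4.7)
The plan is to reduce the general statement to the normalized case $\|f\|_1 = 1$, which is exactly Lemma~\ref{L-van der Waerden}, by means of the scaling identity in Proposition~\ref{P-basic}(3). If $f = 0$, then by convention $\per(f) = -\infty$ and $\log(\|f\|_1/e) = -\infty$, so there is nothing to prove; hence I assume $f \ne 0$, which forces $\|f\|_1 > 0$.

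Write $g = \|f\|_1^{-1} f \in \mathbb{R}_{\ge 0}\Gamma$, so that $\|g\|_1 = 1$ and $\supp(g) = \supp(f)$. Lemma~\ref{L-van der Waerden} then gives
\begin{displaymath}
\iper(g) \ge \log \tfrac{1}{e} = -1.
\end{displaymath}
By Proposition~\ref{P-per equal iper}, $\per(g) = \iper(g)$, so $\per(g) \ge -1$. Applying Proposition~\ref{P-basic}(3) with $C = \|f\|_1$, I get
\begin{displaymath}
\per(f) = \per(\|f\|_1 \cdot g) = \per(g) + \log \|f\|_1 \ge -1 + \log \|f\|_1 = \log \tfrac{\|f\|_1}{e},
\end{displaymath}
which is the desired inequality.

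There is no real obstacle here: all the work is concentrated in Lemma~\ref{L-van der Waerden}, where the van der Waerden/Falikman--Egoryčev bound on doubly stochastic permanents is converted to a bound on $\iper(f)$ by embedding $B_{F, F\mathsf{A}, f}$ as a submatrix of a doubly stochastic matrix $C_{F\mathsf{A}, f}$ on $F\mathsf{A}$; the present theorem is only the homogeneity-plus-normalization wrapper around that lemma.
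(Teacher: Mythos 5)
Your argument is correct and is essentially identical to the paper's proof: both reduce to the normalized case via Proposition~\ref{P-basic}(3), invoke $\per=\iper$ from Proposition~\ref{P-per equal iper}, and apply Lemma~\ref{L-van der Waerden}, differing only in the order in which the normalization and the $\per=\iper$ identity are applied.
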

	\begin{proof}
This is obvious when $f=0$. Thus we may assume $f\neq 0$. We have
	\begin{displaymath}
\per (f)=\iper(f)=\iper\biggl(\frac{f}{\|f\|_1}\biggr)+\log \|f\|_1\ge \log \frac{1}{e}+\log \|f\|_1=\log \frac{\|f\|_1}{e},
	\end{displaymath}
where the first equality is from Proposition~\ref{P-per equal iper}, the second equality is from Proposition~\ref{P-basic}.(3),
and the inequality is from Lemma~\ref{L-van der Waerden}.
	\end{proof}

	\begin{corollary}
	\label{C-lower bound}
Let $\mathsf{A}\in \mathscr{F}(\Gamma)$. Then
	\begin{displaymath}
\htop(X_\mathsf{A})\ge \log \frac{|\mathsf{A}|}{e}.
	\end{displaymath}
Furthermore, the following are equivalent:
	\begin{enumerate}
	\item
$\htop(X_\mathsf{A})=0$.
	\item
$|\mathsf{A}|=1$ or $|\mathsf{A}|=2$ with $\mathsf{A}=\{s, t\}$ such that $st^{-1}$ has infinite order.
	\end{enumerate}
	\end{corollary}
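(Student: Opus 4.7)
The plan is to combine Theorem~\ref{T-lower bound} with a case analysis on $|\mathsf{A}|$. The inequality is immediate: taking $f=\sum _{a\in \mathsf{A}}a\in \mathbb{Z}\Gamma $ one has $\|f\|_1=|\mathsf{A}|$, and Remark~\ref{R-permanent and entropy} identifies $\htop(X_\mathsf{A})$ with $\per (f)$, so Theorem~\ref{T-lower bound} gives $\htop(X_\mathsf{A})\ge \log (|\mathsf{A}|/e)$. For the direction (1)$\Rightarrow $(2), this lower bound forces $|\mathsf{A}|\le 2$ (since $\log (3/e)>0$); the case $|\mathsf{A}|=1$ is trivial as $X_\mathsf{A}$ is a singleton; and when $|\mathsf{A}|=2$, Lemma~\ref{L-conjugacy} lets us replace $\mathsf{A}=\{s,t\}$ by $\mathsf{A}t^{-1}=\{\matheur{e}_\Gamma ,u\}$ with $u=st^{-1}$, so the problem reduces to showing $\htop(X_\mathsf{A})=0$ iff $u$ has infinite order. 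The key observation is that for any $x\in X_\mathsf{A}^\iota $ the map $s\mapsto sx_s\in \{s,su\}$ preserves each left coset of $\langle u\rangle $, so admissibility decouples along these cosets.

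Suppose first that $u$ has finite order $n\ge 2$. Each orbit is a cycle of length $n$, and on such a cycle the injectivity of $\varphi ^{(x)}$ forces $x$ to be constant---either $\equiv \matheur{e}_\Gamma $ or $\equiv u$---giving exactly two global choices per full orbit. For a right F{\o}lner set $F$, let $m_F$ be the number of orbits fully contained in $F$; the F{\o}lner property applied to $\{u,\dots ,u^{n-1}\}$ yields $m_F\ge (1-o(1))|F|/n$. Declaring $x\equiv \matheur{e}_\Gamma $ on all partial orbits produces an admissible element of $X_{\mathsf{A},F}$ ($\varphi ^{(x)}$ is injective and $\varphi ^{(x)}(F)\supseteq F\cap Fu$), so $|X_{\mathsf{A},F}|\ge 2^{m_F}$, and Proposition~\ref{P-entropy in local} yields $\htop(X_\mathsf{A})\ge (\log 2)/n>0$.

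Now suppose $u$ has infinite order; by Proposition~\ref{P-equal entropy} it suffices to bound $|X_{\mathsf{A},F}^\iota |$. Identifying each orbit $g\langle u\rangle $ with $\mathbb{Z}$ via $gu^k\leftrightarrow k$, the restriction of $x\in X_{\mathsf{A},F}^\iota $ to $F\cap g\langle u\rangle $ is a $\{0,1\}$-valued function on a finite subset $S\subset \mathbb{Z}$ forbidden to contain the pattern $(x_s,x_{s+1})=(1,0)$ for any consecutive pair $s,s+1\in S$ (otherwise $\varphi ^{(x)}$ would collide). On a maximal interval of length $\ell $ in $S$ this leaves exactly $\ell +1$ cutoff patterns. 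The total number $K_F$ of maximal intervals across all orbits equals $|F\setminus Fu^{-1}|$, since each interval's rightmost element $s$ satisfies $su\notin F$; hence $K_F=o(|F|)$ by the F{\o}lner property. Concavity of $\log $ then gives
\[
\log |X_{\mathsf{A},F}^\iota |\;\le \;\sum _I\log (|I|+1)\;\le \;K_F\log \bigl(|F|/K_F+1\bigr),
\]
which is $o(|F|)$ since $t\log (1/t+1)\to 0$ as $t\to 0^+$. Hence $\htop(X_\mathsf{A}^\iota )=0=\htop(X_\mathsf{A})$. The main technical step is this final case: extracting the cutoff structure along each orbit and controlling the number of boundary intervals by a F{\o}lner quantity.
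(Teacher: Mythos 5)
Your proof is correct. The inequality and the cases $|\mathsf{A}|\ge 3$, $|\mathsf{A}|=1$ match the paper exactly. For $|\mathsf{A}|=2$ the underlying idea is also the same — everything reduces to the cyclic group $G=\langle st^{-1}\rangle$ — but the execution differs. The paper invokes Proposition~\ref{P-basic}(1),(6) to replace $\per_\Gamma(ft^{-1})$ by $\per_G(ft^{-1})$, i.e.\ by the entropy of the restricted-permutation system of $G$ itself with alphabet $\{\matheur{e}_\Gamma,st^{-1}\}$; it then appeals to the (known, essentially trivial) fact that this entropy is $0$ for $G\cong\mathbb{Z}$ and is $\frac{1}{|G|}\log|Y_{\mathsf{A}'}|\ge\frac{1}{|G|}\log 2$ when $G$ is finite. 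You instead perform the coset decomposition by hand on right F{\o}lner sets of $\Gamma$: in the finite-order case you exhibit $2^{m_F}$ locally admissible patterns (constant on each full $\langle u\rangle$-orbit, which is exactly the $|Y_{\mathsf{A}'}|\ge 2$ count, spread over $\ge(1-o(1))|F|/n$ orbits), and in the infinite-order case you bound $|X^\iota_{\mathsf{A},F}|$ by $\prod_I(|I|+1)$ over the maximal orbit-intervals, controlling the number $K_F=|F\smallsetminus Fu^{-1}|$ of such intervals by the F{\o}lner condition and using concavity of $\log$, then finish with Propositions~\ref{P-entropy in local} and \ref{P-equal entropy}. Your route is more self-contained — it avoids Proposition~\ref{P-basic}(6), whose proof requires a nontrivial coset-tiling argument — at the cost of redoing that decomposition concretely; the paper's route is shorter given that machinery and makes the reduction to the one-dimensional problem conceptually explicit. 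All the estimates you use (the "no $u$ followed by $\matheur{e}_\Gamma$" constraint, the identification of interval endpoints with $F\smallsetminus Fu^{-1}$, and $t\log(1/t+1)\to 0$) check out.
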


	\begin{proof}
Let $f$ be the characteristic function of $\mathsf{A}$. From Remark~\ref{R-permanent and entropy} we have
$\htop(X_\mathsf{A})=\per (f)$.
Then the inequality follows from Theorem~\ref{T-lower bound}.

If $|\mathsf{A}|\ge 3$, then $\htop(X_\mathsf{A})\ge \log \frac{|\mathsf{A}|}{e}>0$.

If $|\mathsf{A}|=1$, then clearly $\htop(X_\mathsf{A})=0$.

Now assume that $|\mathsf{A}|=2$. Say, $\mathsf{A}=\{s, t\}$. Set $\mathsf{A}'=\mathsf{A}t^{-1}=\{\matheur{e}_\Gamma, st^{-1}\}$.
Denote by $G$ the subgroup of $\Gamma $ generated by $st^{-1}$. By parts (1) and (6) of Proposition~\ref{P-basic} we have that
	\begin{displaymath}
\htop(X_\mathsf{A})=\per _\Gamma(f)=\per _\Gamma(ft^{-1})=\per _G(ft^{-1})
	\end{displaymath}
is equal to the topology entropy of the action $G\curvearrowright Y_{\mathsf{A}'}$, where $Y_{\mathsf{A}'}$ is defined similar as $X_{\mathsf{A}'}$ replacing $\Gamma $ by $G$.

If $st^{-1}$ has infinite order, then $G$ is isomorphic to $\mathbb{Z}$, so that $\htop(X_\mathsf{A})=\htop(Y_{\mathsf{A}'})=0$.

If $st^{-1}$ has finite order, then
	\begin{displaymath}
\htop(X_\mathsf{A})=\htop(Y_{\mathsf{A}'})=\frac{1}{|G|}\log |Y_{\mathsf{A}'}|\ge \frac{1}{|G|}\log 2>0. \tag*{\qedsymbol}
	\end{displaymath}\renewcommand{\qedsymbol}{}
	\vspace{-\baselineskip}
	\end{proof}

The case $\Gamma=\mathbb{Z}^d$ of the second assertion of Corollary~\ref{C-lower bound} was proved in \cite{SS16}*{Corollary 5.3}.

We also need the following result of Brègman \cite{Bregman} \cite{Minc78}*{Theorem 6.2.1}, which proves a conjecture of Minc in \cite{Minc63}.

	\begin{theorem}
	\label{T-Minc}
Let $B$ be an $n\times n$ \textup{(0, 1)}-matrix with row sums $r_1, \dots, r_n$. Then
	\begin{displaymath}
\per (B)\le \prod _{i=1}^n(r_i!)^{1/r_i},
	\end{displaymath}
where as convention we set $(0!)^{1/0}=1$.
	\end{theorem}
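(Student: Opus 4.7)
The plan is to follow Radhakrishnan's entropy-method proof (J.~Combin.~Theory Ser.~A, 1997), which is considerably shorter than Br\`egman's original convex-analysis argument. Let $\Sigma_B = \{\sigma \in \Sym(\{1,\dots,n\})\colon B_{i,\sigma(i)} = 1 \text{ for all } i\}$, so that $\per(B) = |\Sigma_B|$, and let $\sigma$ denote a uniform random element of $\Sigma_B$, with Shannon entropy $H(\sigma) = \log \per(B)$. The target inequality is equivalent to
\[
H(\sigma) \le \sum_{i=1}^n \frac{\log(r_i!)}{r_i}.
\]

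First I would introduce an independent uniformly random permutation $\tau$ of $\{1,\dots,n\}$, playing the role of a random order in which the rows are ``revealed,'' and apply the chain rule for entropy:
\[
H(\sigma) = H(\sigma \mid \tau) = \sum_{k=1}^n H\bigl(\sigma(\tau(k)) \bigm| \sigma(\tau(1)), \ldots, \sigma(\tau(k-1)), \tau\bigr).
\]
Because $\sigma(\tau(k))$ must equal some column $j$ with $B_{\tau(k),j}=1$ that has not already appeared among the first $k-1$ reveals, each term is bounded pointwise (in $\sigma$ and $\tau$) by $\log A_{\tau(k)}(\sigma, \tau)$, where
\[
A_i(\sigma,\tau) = \bigl|\{j \colon B_{i,j}=1,\ j \notin \sigma(\{\tau(1), \ldots, \tau(\tau^{-1}(i) - 1)\})\}\bigr|.
\]

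The combinatorial heart of the argument is then a symmetry observation. For each fixed $\sigma \in \Sigma_B$ and row $i$, set $R_i(\sigma) = \{\sigma^{-1}(j)\colon B_{i,j}=1\}$; this is a subset of $\{1,\dots,n\}$ of cardinality $r_i$ containing $i$, and directly from the definitions $A_i(\sigma,\tau)$ equals the number of elements of $R_i(\sigma)$ that are revealed at or after $i$ under the linear order induced by $\tau$. Since $\tau$ is a uniformly random permutation of all $n$ rows independent of $\sigma$, it induces a uniformly random linear order on the fixed set $R_i(\sigma)$, so the rank of $i$ in this induced order is uniform on $\{1,\ldots,r_i\}$. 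Consequently $A_i(\sigma,\tau)$ itself is uniform on $\{1,\dots,r_i\}$, giving
\[
\mathbb{E}_\tau \log A_i(\sigma,\tau) = \frac{1}{r_i}\sum_{k=1}^{r_i}\log k = \frac{\log(r_i!)}{r_i}.
\]
Summing over $i$ yields a bound that is independent of $\sigma$, and combining with the chain-rule step gives $H(\sigma) \le \sum_i \log(r_i!)/r_i$; exponentiating produces the theorem.

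The main technical obstacle is the symmetry step: the set $R_i(\sigma)$ itself depends on $\sigma$, so the invariance under $\tau$ is only valid \emph{conditionally} on $\sigma$. The argument works because the entropy bound $H(\sigma(\tau(k)) \mid \cdots) \le \log A_{\tau(k)}(\sigma,\tau)$ holds pointwise in $\sigma$, so one may freeze $\sigma$, invoke the symmetry of $\tau$ on the (now deterministic) set $R_i(\sigma)$, and only average over $\sigma$ at the end. A minor secondary point is the convention $(0!)^{1/0} = 1$: if some $r_i = 0$ then row $i$ is identically zero, forcing $\per(B) = 0$ and making the bound trivial.
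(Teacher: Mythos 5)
Your argument is correct: it is a faithful rendering of Radhakrishnan's entropy-method proof of the Br\`egman--Minc inequality, and the two delicate points are both handled properly. First, the quantity $A_{\tau(k)}(\sigma,\tau)$ is a function of the conditioning data alone (the order $\tau$ together with the already-revealed values $\sigma(\tau(1)),\dots,\sigma(\tau(k-1))$), and it dominates the support size of $\sigma(\tau(k))$ given that data; this is what legitimizes replacing each conditional entropy term by $\mathbb{E}\log A_{\tau(k)}(\sigma,\tau)$ --- your phrase ``bounded pointwise'' is shorthand for exactly this, and since $A_i\ge 1$ always (the column $\sigma(i)$ itself remains available at step $\tau^{-1}(i)$) there is no issue with $\log 0$. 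Second, the symmetrization is correctly performed with $\sigma$ frozen, so that $R_i(\sigma)$ is a deterministic $r_i$-element set containing $i$ on which the independent uniform $\tau$ induces a uniform order, making $A_i(\sigma,\tau)$ uniform on $\{1,\dots,r_i\}$. As for comparison: the paper does not prove Theorem \ref{T-Minc} at all --- it is quoted as a known result with references to Br\`egman's original article and to Minc's monograph --- so your proposal is not an alternative to an argument in the paper but a self-contained substitute for a citation. Relative to Br\`egman's original proof (an induction combined with convexity and rearrangement estimates for products of factorials), the entropy route trades that machinery for the chain rule plus the random-reveal-order trick, which is both shorter and more transparent; the only thing the classical proof ``buys'' in addition is the characterization of the equality case (block-diagonal matrices of all-ones blocks), which neither the theorem as stated nor the paper's application in Theorem \ref{T-upper bound} requires.
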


A finitely generated group $\Gamma $ has \textsl{polynomial growth} if, given a finite generating subset $S$ of $\Gamma $ satisfying $\matheur{e}_\Gamma \in S=S^{-1}$, there is a polynomial $g$ such that $|S^n|\le g(n)$ for all $n\in \mathbb{N}$. This definition does not depend on the choice of $S$. The theorems of Wolf and Gromov state that a finitely generated group has polynomial growth exactly when it is virtually nilpotent.
We also need the following result of Deninger \cite{Deninger}*{Proposition 6.1}.

	\begin{lemma}
	\label{L-log strong}
Let $\Gamma $ be a finitely generated group of polynomial growth. Then there is a right Følner sequence $(F_n)_{n\in \mathbb{N}}$ of $\Gamma $ such that
	\begin{equation}
	\label{E-log strong}
\lim_{n\to \infty}\frac{|F_n K\smallsetminus F_n|}{|F_n|}\log (1+|F_nK \smallsetminus F_n|)=0
	\end{equation}
for all $K\in \mathscr{F}(\Gamma)$.
	\end{lemma}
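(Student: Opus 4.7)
The plan is to take $F_n=S^n$, where $S\subseteq \Gamma $ is a finite symmetric generating set with $\matheur{e}_\Gamma \in S$, and then, after extracting a suitable subsequence and reindexing, to obtain the claimed Følner sequence. For any $K\in \mathscr{F}(\Gamma )$ there is an $m$ with $K\subseteq S^m$, whence $F_nK\subseteq S^{n+m}$ and $|F_nK\smallsetminus F_n|\le |S^{n+m}|-|S^n|$. Polynomial growth gives $|S^n|\le Cn^d$ for some constants $C,d$; setting $g(n):=\log |S^n|$ we have $g(n)\le d\log n+\log C$ and $\log (1+|F_nK\smallsetminus F_n|)\le \log (1+|S^{n+m}|)=O(\log n)$. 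Everything therefore reduces to producing a subsequence along which $(|S^{n+m}|-|S^n|)/|S^n|=o(1/\log n)$ for every fixed $m$.

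The key step is an averaging argument on the nondecreasing function $g$. A telescoping identity yields
\[
\sum_{n=1}^{N}\bigl(g(n+m)-g(n)\bigr)=\sum_{k=N+1}^{N+m}g(k)-\sum_{k=1}^{m}g(k)\le m\cdot g(N+m)=O_m(\log N).
\]
By Markov's inequality, the number of $n\in \{1,\dots ,N\}$ with $g(n+m)-g(n)>1/(\log N)^{3}$ is at most $O(m(\log N)^4)$. Taking the union over $m\le M$, the set of indices in $[1,N]$ which are \emph{bad} for some $m\le M$ has size $O(M^2(\log N)^4)$, which is negligible compared to $N$ once $M$ grows slowly enough (for instance $M\le \log N$). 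I can therefore choose sequences $N_j\to \infty $ and $M_j\to \infty $ and pick $n_j\in [N_j/2,N_j]$ with $g(n_j+m)-g(n_j)\le 1/(\log N_j)^3\le 1/(\log n_j)^3$ for all $m\le M_j$.

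Reindexing and setting $F_j:=S^{n_j}$: for any $K\subseteq S^m$ and any $j$ large enough that $M_j\ge m$, the bound gives $|S^{n_j+m}|/|S^{n_j}|\le \exp (1/(\log n_j)^3)\le 1+2/(\log n_j)^3$, so $|F_jK\smallsetminus F_j|/|F_j|=O(1/(\log n_j)^3)$. This already implies the right Følner property, and multiplying by $\log (1+|F_jK\smallsetminus F_j|)=O(\log n_j)$ yields the desired $O(1/(\log n_j)^2)\to 0$. The main obstacle is the diagonal argument of the second paragraph: one must calibrate the growth of $M_j$ against $N_j$ so that the good indices have positive density in $[N_j/2,N_j]$ uniformly in $j$, while still ensuring that $M_j$ eventually exceeds every fixed $m$. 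Once this calibration is fixed, the rest is routine estimation.
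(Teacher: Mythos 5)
Your argument is correct: the telescoping identity and Markov count do produce, for each large $N$, an index $n\in[N/2,N]$ with $g(n+m)-g(n)\le (\log N)^{-3}$ simultaneously for all $m\le \log N$, and the calibration you flag as the "main obstacle" is already settled by your own choice $M=\lfloor \log N\rfloor$, since the bad set then has size $O((\log N)^6)=o(N)$ and hence misses most of $[N/2,N]$. The paper itself does not prove this lemma but cites Deninger's Proposition 6.1, whose proof rests on the same pigeonhole principle applied to the growth function $n\mapsto \log|S^n|$; your write-up is a correct, self-contained rendering of that argument with a coarser but still sufficient quantitative calibration.
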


	\begin{theorem}
	\label{T-upper bound}
Assume that $\Gamma $ is finitely generated of polynomial growth and infinite. Let $\mathsf{A}\in \mathscr{F}(\Gamma)$. Then
	\begin{displaymath}
\htop(X_\mathsf{A})\le \frac{1}{|\mathsf{A}|}\log (|\mathsf{A}|!).
	\end{displaymath}
	\end{theorem}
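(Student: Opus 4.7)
The plan is to realise $\htop(X_\mathsf{A})$ as an exponential growth rate of permanents of $(0,1)$-matrices and apply the Brègman--Minc bound (Theorem~\ref{T-Minc}). By Lemma~\ref{L-conjugacy} we may assume $\matheur{e}_\Gamma\in\mathsf{A}$, so $F\subseteq F\mathsf{A}$ for every $F\in\mathscr{F}(\Gamma)$. Taking $f=1_\mathsf{A}$ to be the indicator function of $\mathsf{A}$, Remark~\ref{R-permanent and entropy} combined with Proposition~\ref{P-per equal iper} gives
\begin{equation*}
\htop(X_\mathsf{A})=\per(f)=\iper(f)=\inf_{F\in\mathscr{F}(\Gamma)}\frac{\log\iper_{\mathsf{A},F}(f)}{|F|},
\end{equation*}
and $\iper_{\mathsf{A},F}(f)=\per(B_{F,F\mathsf{A},f})$, where $B_{F,F\mathsf{A},f}$ is the rectangular $(0,1)$-matrix introduced before Lemma~\ref{L-van der Waerden}.

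For any $F\in\mathscr{F}(\Gamma)$ I would square up $B_{F,F\mathsf{A},f}$ by appending $|F\mathsf{A}|-|F|$ all-ones rows, producing an $|F\mathsf{A}|\times|F\mathsf{A}|$ $(0,1)$-matrix $B'$. A direct count yields $\per(B')=(|F\mathsf{A}|-|F|)!\cdot\iper_{\mathsf{A},F}(f)$: a permutation $\sigma$ of $F\mathsf{A}$ contributes a nonzero term to $\per(B')$ iff it sends each $t\in F$ into $t\mathsf{A}$, which corresponds to an element of $X_{\mathsf{A},F}^\iota$, and the remaining positions on the appended rows can then be filled in $(|F\mathsf{A}|-|F|)!$ ways. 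The row sums of $B'$ are $|\mathsf{A}|$ for each of the $|F|$ original rows and $|F\mathsf{A}|$ for each appended row, so Theorem~\ref{T-Minc} yields
\begin{equation*}
(|F\mathsf{A}|-|F|)!\cdot\iper_{\mathsf{A},F}(f)\le(|\mathsf{A}|!)^{|F|/|\mathsf{A}|}(|F\mathsf{A}|!)^{(|F\mathsf{A}|-|F|)/|F\mathsf{A}|}.
\end{equation*}
Taking $\log/|F|$, using $\log(N!)\le N\log N$ on the middle factor, and dropping the non-positive term $-\log((|F\mathsf{A}|-|F|)!)/|F|$, one obtains
\begin{equation*}
\frac{\log\iper_{\mathsf{A},F}(f)}{|F|}\le\frac{\log(|\mathsf{A}|!)}{|\mathsf{A}|}+\frac{k_F}{|F|}\log|F\mathsf{A}|,\qquad k_F\coloneqq|F\mathsf{A}\setminus F|.
\end{equation*}

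It then suffices to produce a right Følner sequence $(F_n)$ along which $(k_{F_n}/|F_n|)\log|F_n\mathsf{A}|\to 0$, for then $n\to\infty$ forces $\htop(X_\mathsf{A})\le\frac{1}{|\mathsf{A}|}\log(|\mathsf{A}|!)$. This is where polynomial growth enters: I take $(F_n)$ to be the Følner sequence provided by Lemma~\ref{L-log strong} applied with $K=\mathsf{A}$, abbreviating $m_n=|F_n|$ and $k_n=k_{F_n}$, so that $(k_n/m_n)\log(1+k_n)\to 0$. Since $\Gamma$ is infinite, $m_n\to\infty$; the Følner condition forces $t_n\coloneqq k_n/m_n\to 0$; and once $k_n\le m_n$ one has $\log|F_n\mathsf{A}|=\log(m_n+k_n)\le\log m_n+\log 2$, reducing the task to $t_n\log m_n\to 0$. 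For $k_n\ge 1$ the inequality $1+k_n\ge k_n=t_nm_n$ gives $\log m_n\le\log(1+k_n)-\log t_n$, whence $t_n\log m_n\le t_n\log(1+k_n)-t_n\log t_n$, which tends to zero since both summands do; the case $k_n=0$ is immediate.

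The main obstacle is this last step: Lemma~\ref{L-log strong} controls $t_n\log(1+k_n)$, whereas the Brègman estimate delivers $t_n\log m_n$. The bridge $t_n\log m_n\le t_n\log(1+k_n)-t_n\log t_n$, coupled with $-t\log t\to 0$ as $t\to 0^+$, is precisely what makes the polynomial growth hypothesis usable; for a general amenable $\Gamma$ no analogously quantitatively invariant Følner sequence is known to exist.
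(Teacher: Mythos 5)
Your proof is correct and follows the same overall strategy as the paper's: complete the rectangular $(0,1)$-matrix $B_{F,F\mathsf{A},f}$ to a square $(0,1)$-matrix, apply Br\`egman's bound (Theorem~\ref{T-Minc}), and use the F{\o}lner sequence of Lemma~\ref{L-log strong} to make the boundary contribution vanish. The execution differs in two respects. First, you append all-ones rows, so that the completed permanent equals $(|F\mathsf{A}|-|F|)!\cdot\iper_{\mathsf{A},F}(f)$ and you end up bounding $|X_{\mathsf{A},F}^\iota|$; to conclude you therefore invoke $\htop(X_\mathsf{A})=\inf_F\frac{1}{|F|}\log\iper_{\mathsf{A},F}(f)$ from Proposition~\ref{P-per equal iper}, which ultimately rests on Lemma~\ref{L-inv measure are on perm} and the variational principle. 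The paper instead appends rows supported only on $F\mathsf{A}\smallsetminus\textup{Int}_{\mathsf{A}^{-1}}(F)$, which makes the completed permanent dominate $|X_{\mathsf{A},F}|$ directly, so only the combinatorial Proposition~\ref{P-entropy in local} is needed. Second, your appended rows have row sum $|F\mathsf{A}|$ rather than a boundary-sized quantity, so Br\`egman leaves you with the error term $\frac{k_F}{|F|}\log|F\mathsf{A}|$, which involves the logarithm of the \emph{whole} F{\o}lner set; your bridge $t_n\log m_n\le t_n\log(1+k_n)-t_n\log t_n$ together with $-t\log t\to 0^+$ is a correct and genuinely necessary extra step to reduce this to the quantity that Lemma~\ref{L-log strong} actually controls. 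The paper's completion keeps the extra row sums boundary-sized, so its error term is already of the form $\frac{1}{|F_n|}\log(K_n!)$ with $K_n$ comparable to $|F_n(\mathsf{A}\cup\mathsf{A}^{-1})\smallsetminus F_n|$ and \eqref{E-log strong} applies after Stirling without further manipulation. Both routes are valid; yours buys a cleaner, standard square completion at the cost of routing through the measure-theoretic input behind $\per=\iper$ and of the additional elementary estimate on the error term.
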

	\begin{proof}
Replacing $\mathsf{A}$ by $\mathsf{A}t^{-1}$ for some $t\in \mathsf{A}$, we may assume that $\matheur{e}_\Gamma \in \mathsf{A}$. Let $f$ be the characteristic function of $\mathsf{A}$.

Let $F\in \mathscr{F}(\Gamma)$. We denote $\{t\in \Gamma\colon t\mathsf{A}^{-1}\subseteq F\}$ by $\textup{Int}_{\mathsf{A}^{-1}}(F)$. We define a matrix $D_{F\mathsf{A}, f}\in M_{F\mathsf{A}}(\mathbb{R})$ via
	\begin{align*}
(D_{F\mathsf{A}, f})_{t_1, t_2}=
	\begin{cases}
(B_{F, F\mathsf{A}, f})_{t_1, t_2} & \quad \text{if } t_1\in F
	\\
1 & \quad \text{if } t_1\in F\mathsf{A}\smallsetminus F \text{ and } t_2\in F\mathsf{A}\smallsetminus \textup{Int}_{\mathsf{A}^{-1}}(F)
	\\
0 & \quad \text{if } t_1\in F\mathsf{A}\smallsetminus F \text{ and } t_2\in \textup{Int}_{\mathsf{A}^{-1}}(F)
	\\
	\end{cases}
	\end{align*}
for $t_1, t_2\in F\mathsf{A}$.
Then $B_{F, F\mathsf{A}, f}$ is a submatrix of $D_{F\mathsf{A}, f}$. Note that every injective map $\phi\colon F\rightarrow F\mathsf{A}$ extends to a (not necessarily unique) bijections $\psi\colon F\mathsf{A}\rightarrow F\mathsf{A}$. If $\phi(F)\supseteq \textup{Int}_{\mathsf{A}^{-1}}(F)$, then necessarily $(D_{F\mathsf{A}, f})_{t, \psi(t)}=1$ for all $t\in F\mathsf{A}\smallsetminus F$. It follows that
	\begin{displaymath}
\per (D_{F\mathsf{A}, f})\ge |X_{\mathsf{A}, F}|,
	\end{displaymath}
where $X_{\mathsf{A}, F}$ is defined in \eqref{eq:XAF}.

Note that $D_{F\mathsf{A}, f}$ is a (0, 1)-matrix. For each $t\in F$, the sum of the $t$-row of $D_{F\mathsf{A}, f}$ is $|\mathsf{A}|$. For each $t\in F\mathsf{A}\smallsetminus F$, the sum of the $t$-row of $D_{F\mathsf{A}, f}$ is $|F\mathsf{A}\smallsetminus \textup{Int}_{\mathsf{A}^{-1}}(F)|$. By Theorem~\ref{T-Minc} we have
	\begin{displaymath}
\per (D_{F\mathsf{A}, f})\le (|F\mathsf{A}\smallsetminus \textup{Int}_{\mathsf{A}^{-1}}(F)|!)^{|F\mathsf{A}\smallsetminus F|/|F\mathsf{A}\smallsetminus \textup{Int}_{\mathsf{A}^{-1}}(F)|}(|\mathsf{A}|!)^{|F|/|\mathsf{A}|}.
	\end{displaymath}
Note that
	\begin{align*}
|F\mathsf{A}\smallsetminus \textup{Int}_{\mathsf{A}^{-1}}(F)|&\le |F\mathsf{A}\smallsetminus F|+|F\smallsetminus \textup{Int}_{\mathsf{A}^{-1}}(F)|\le |F\mathsf{A}\smallsetminus F|+|\mathsf{A}|\cdot |F\mathsf{A}^{-1}\smallsetminus F|
	\\
&\le (|\mathsf{A}|+1)|F(\mathsf{\mathsf{A}}\cup \mathsf{A}^{-1})\smallsetminus F|.
	\end{align*}
Also note that $((n+1)!)^{1/(n+1)}\ge (n!)^{1/n}$ for all nonnegative integers $n$.
Thus
	\begin{align*}
|X_{\mathsf{A}, F}|&\le (|F\mathsf{A}\smallsetminus \textup{Int}_{\mathsf{A}^{-1}}(F)|!)^{|F\mathsf{A}\smallsetminus F|/|F\mathsf{A}\smallsetminus \textup{Int}_{\mathsf{A}^{-1}}(F)|}(|\mathsf{A}|!)^{|F|/|\mathsf{A}|}
	\\
&\le (((|\mathsf{A}|+1)|F(\mathsf{\mathsf{A}}\cup \mathsf{A}^{-1})\smallsetminus F|)!)^{|F\mathsf{A}\smallsetminus F|/((|\mathsf{A}|+1)|F(\mathsf{\mathsf{A}}\cup \mathsf{A}^{-1})\smallsetminus F|)}(|\mathsf{A}|!)^{|F|/|\mathsf{A}|}
	\\
&\le (((|\mathsf{A}|+1)|F(\mathsf{\mathsf{A}}\cup \mathsf{A}^{-1})\smallsetminus F|)!)^{1/(|\mathsf{A}|+1)} (|\mathsf{A}|!)^{|F|/|\mathsf{A}|}.
	\end{align*}

Since $\Gamma $ has polynomial growth, by Lemma~\ref{L-log strong} we can find a right Følner sequence $(F_n)_{n\in \mathbb{N}}$ satisfying \eqref{E-log strong}. By Proposition~\ref{P-entropy in local} we have
	\begin{align*}
\htop(X_\mathsf{A})&=\lim_{n\to \infty} \frac{1}{|F_n|}\log |X_{\mathsf{A}, F_n}|
	\\
&\le \limsup_{n\to \infty} \frac{1}{|F_n|}\log \Biggl((((|\mathsf{A}|+1)|F_n(\mathsf{A}\cup \mathsf{A}^{-1})\smallsetminus F_n|)!)^{1/(|\mathsf{A}|+1)}(|\mathsf{A}|!)^{|F_n|/|\mathsf{A}|}\Biggr)
	\\
&\le \limsup_{n\to \infty} \frac{1}{|F_n|\cdot (|\mathsf{A}|+1)}\log ((|F_n(\mathsf{A}\cup \mathsf{A}^{-1})\smallsetminus F_n|)!)+ \frac{1}{|\mathsf{A}|}\log(|\mathsf{A}|!)
	\\
&=\frac{1}{|\mathsf{A}|}\log(|\mathsf{A}|!),
	\end{align*}
where in the second inequality we use that $|F_n|\to \infty$ as $n\to \infty$ since $\Gamma $ is infinite, and in the second equality we use Stirling's approximation formula and \eqref{E-log strong}.
	\end{proof}

	\begin{question}
	\label{Q-polynomial growth}
Could we remove the polynomial growth condition in Theorem~\ref{T-upper bound}?
	\end{question}

	\begin{bibdiv}
	\begin{biblist}

	\begin{comment}
\bib{ABL}{article}{
author={Airey, Dylan},
author={Bowen, Lewis},
author={Lin, Yuqing Frank},
title={A topological dynamical system with two different positive sofic
entropies},
journal={Trans. Amer. Math. Soc. Ser. B},
volume={9},
date={2022},
pages={35--98},
}
	\end{comment}

\bib{Baltic10}{article}{
author={Balti\'{c}, Vladimir},
title={On the number of certain types of strongly restricted
permutations},
journal={Appl. Anal. Discrete Math.},
volume={4},
date={2010},
number={1},
pages={119--135},
}

	\begin{comment}
\bib{Baltic12}{article}{
author={Balti\'c, Vladimir},
title={Applications of the finite state automata for counting restricted
permutations and variations},
journal={Yugosl. J. Oper. Res.},
volume={22},
date={2012},
number={2},
pages={183--198},
}
	\end{comment}

	\begin{comment}
\bib{Baltic16}{article}{
author={Balti\'c, Vladimir},
author={Stevanovi\'c, Dragan},
title={Counting even and odd restricted permutations},
journal={Ars Math. Contemp.},
volume={10},
date={2016},
number={1},
pages={9--18},
}
	\end{comment}

\bib{BT}{article}{
author={Bollobás, Béla},
author={Thomason, Andrew},
title={Projections of bodies and hereditary properties of hypergraphs},
journal={Bull. London Math. Soc.},
volume={27},
date={1995},
number={5},
pages={417--424},
}

\bib{Boyd}{article}{
author={Boyd, David W.},
title={Mahler's measure and special values of $L$-functions},
journal={Experiment. Math.},
volume={7},
date={1998},
number={1},
pages={37--82},
}

\bib{BT98}{article}{
author={Boyle, Mike},
author={Tomiyama, Jun},
title={Bounded topological orbit equivalence and $C^*$-algebras},
journal={J. Math. Soc. Japan},
volume={50},
date={1998},
number={2},
pages={317--329},
}

\bibitem{Bregman}
L.~M.~Br\`{e}gman. \textit{Certain properties of nonnegative matrices and their permanents.} (Russian) {\it Dokl. Akad. Nauk SSSR} {\bf 211} (1973), 27--30; translation in {\it Soviet Math. Dokl.} {\bf 14} (1973), 945--949.

\bib{CCL}{article}{
author={Ceccherini-Silberstein, Tullio},
author={Coornaert, Michel},
author={Li, Hanfeng},
title={Expansive actions with specification of sofic groups, strong
topological Markov property, and surjunctivity},
journal={J. Funct. Anal.},
volume={286},
date={2024},
number={9},
pages={Paper No. 110376, 26 pp},
}

\bib{CM21}{article}{
author={Chandgotia, Nishant},
author={Meyerovitch, Tom},
title={Borel subsystems and ergodic universality for compact $\mathbb{Z}^d$-systems via specification and beyond},
journal={Proc. Lond. Math. Soc. (3)},
volume={123},
date={2021},
number={3},
pages={231--312},
}

\bib{Chollet}{article}{
author={Chollet, John},
title={Unsolved Problems: Is there a permanental analogue to Oppenheim's
inequality?},
journal={Amer. Math. Monthly},
volume={89},
date={1982},
number={1},
pages={57--58},
}

	\begin{comment}
\bib{CL}{article}{
author={Chung, Nhan-Phu},
author={Lee, Keonhee},
title={Topological stability and pseudo-orbit tracing property of group actions},
journal={Proc. Amer. Math. Soc.},
volume={146},
date={2018},
number={3},
pages={1047--1057},
}
	\end{comment}

\bib{Deninger}{article}{
author={Deninger, Christopher},
title={Fuglede-Kadison determinants and entropy for actions of discrete
amenable groups},
journal={J. Amer. Math. Soc.},
volume={19},
date={2006},
number={3},
pages={737--758},
}

\bib{Diaconis99}{article}{
author={Diaconis, Persi},
author={Graham, Ronald},
author={Holmes, Susan P.},
title={Statistical problems involving permutations with restricted
positions},
conference={
title={State of the art in probability and statistics},
address={Leiden},
date={1999},
},
book={
series={IMS Lecture Notes Monogr. Ser.},
volume={36},
publisher={Inst. Math. Statist., Beachwood, OH},
},
isbn={0-940600-50-1},
date={2001},
pages={195--222},
}

\bib{DFR}{article}{
author={Downarowicz, Tomasz},
author={Frej, Bartosz},
author={Romagnoli, Pierre-Paul},
title={Shearer's inequality and infimum rule for Shannon entropy and
topological entropy},
conference={
title={Dynamics and numbers},
},
book={
series={Contemp. Math.},
volume={669},
publisher={Amer. Math. Soc., Providence, RI},
},
date={2016},
pages={63--75},
}

\bib{Edwards15}{article}{
author={Edwards, Kenneth},
author={Allen, Michael A.},
title={Strongly restricted permutations and tiling with fences},
journal={Discrete Appl. Math.},
volume={187},
date={2015},
pages={82--90},
}

\bib{Egorycev}{article}{
author={Egorychev, Georgi\u{i} P.},
title={The solution of van der Waerden's problem for permanents},
journal={Adv. in Math.},
volume={42},
date={1981},
number={3},
pages={299--305},
}

\bib{Elimelech21}{article}{
author={Elimelech, Dor},
title={Permutations with restricted movement},
journal={Discrete Contin. Dyn. Syst.},
volume={41},
date={2021},
number={9},
pages={4319--4349},
}

\bib{Falikman}{article}{
author={Falikman, D. I.},
title={Proof of the van der Waerden conjecture on the permanent of a
doubly stochastic matrix},
language={Russian},
journal={Mat. Zametki},
volume={29},
date={1981},
number={6},
pages={931--938, 957},
}

	\begin{comment}
\bib{Fisher61}{article}{
author={Fisher, Michael E.},
title={Statistical mechanics of dimers on a plane lattice},
journal={Phys. Rev. (2)},
volume={124},
date={1961},
pages={1664--1672},
}
	\end{comment}

\bib{Friedland79}{article}{
author={Friedland, Shmuel},
title={A lower bound for the permanent of a doubly stochastic matrix},
journal={Ann. of Math. (2)},
volume={110},
date={1979},
number={1},
pages={167--176},
}

\bib{Friedland97}{article}{
author={Friedland, Shmuel},
title={On the entropy of $\mathbf{Z}^d$ subshifts of finite type},
journal={Linear Algebra Appl.},
volume={252},
date={1997},
pages={199--220},
}

	\begin{comment}
\bib{FP05}{article}{
author={Friedland, Shmuel},
author={Peled, Uri N.},
title={Theory of computation of multidimensional entropy with an
application to the monomer-dimer problem},
journal={Adv. in Appl. Math.},
volume={34},
date={2005},
number={3},
pages={486--522},
}
	\end{comment}

\bib{FK}{article}{
author={Fuglede, Bent},
author={Kadison, Richard V.},
title={Determinant theory in finite factors},
journal={Ann. of Math. (2)},
volume={55},
date={1952},
pages={520--530},
}

\bib{Gibson71}{article}{
author={Gibson, Peter M.},
title={Conversion of the permanent into the determinant},
journal={Proc. Amer. Math. Soc.},
volume={27},
date={1971},
pages={471--476},
}

	\begin{comment}
\bib{Hammersley66}{article}{
author={Hammersley, John M.},
title={Existence theorems and Monte Carlo methods for the monomer-dimer
problem},
conference={
title={Research Papers in Statistics (Festschrift J. Neyman)},
},
book={
publisher={John Wiley \& Sons, London-New York-Sydney},
},
date={1966},
pages={125--146},
}
	\end{comment}

	\begin{comment}
\bib{Hammersley68}{article}{
author={Hammersley, John M.},
title={An improved lower bound for the multidimensional dimer problem},
journal={Proc. Cambridge Philos. Soc.},
volume={64},
date={1968},
pages={455--463},
}
	\end{comment}

\bib{HM10}{article}{
author={Hochman, Michael},
author={Meyerovitch, Tom},
title={A characterization of the entropies of multidimensional shifts of
finite type},
journal={Ann. of Math. (2)},
volume={171},
date={2010},
number={3},
pages={2011--2038},
}

\bib{Juschenko13}{article}{
author={Juschenko, Kate},
author={Monod, Nicolas},
title={Cantor systems, piecewise translations and simple amenable groups},
journal={Ann. of Math. (2)},
volume={178},
date={2013},
number={2},
pages={775--787},
}

\bib{Juschenko15}{article}{
author={Juschenko, Kate},
author={de la Salle, Mikael},
title={Invariant means for the wobbling group},
journal={Bull. Belg. Math. Soc. Simon Stevin},
volume={22},
date={2015},
number={2},
pages={281--290},
}

\bib{Juschenko22}{book}{
author={Juschenko, Kate},
title={Amenability of discrete groups by examples},
series={Mathematical Surveys and Monographs},
volume={266},
publisher={American Mathematical Society, Providence, RI},
date={2022},
pages={xi+165},
}

\bib{KR}{article}{
author={Kammeyer, Janet Whalen},
author={Rudolph, Daniel J.},
title={Restricted orbit equivalence for ergodic ${\bf Z}^d$ actions. I},
journal={Ergodic Theory Dynam. Systems},
volume={17},
date={1997},
number={5},
pages={1083--1129},
}

\bib{Kaplansky44}{article}{
author={Kaplansky, Irving},
title={Symbolic solution of certain problems in permutations},
journal={Bull. Amer. Math. Soc.},
volume={50},
date={1944},
pages={906--914},
}

\bib{Kaplansky46}{article}{
author={Kaplansky, Irving},
author={Riordan, John},
title={The problem of the rooks and its applications},
journal={Duke Math. J.},
volume={13},
date={1946},
pages={259--268},
}

\bib{Kasteleyn61}{article}{
author={Kasteleyn, Pieter W.},
title={The statistics of dimers on a lattice I. The number of dimer arrangements on a quadratic lattice}, journal={Physica},
volume={27},
date={1961},
pages={1209--1225},
}

	\begin{comment}
\bib{KOS}{article}{
author={Kenyon, Richard},
author={Okounkov, Andrei},
author={Sheffield, Scott},
title={Dimers and amoebae},
journal={Ann. of Math. (2)},
volume={163},
date={2006},
number={3},
pages={1019--1056},
}
	\end{comment}

\bib{KL16}{book}{
author={Kerr, David},
author={Li, Hanfeng},
title={Ergodic theory, independence and dichotomies},
series={Springer Monographs in Mathematics},
%note={Independence and dichotomies},
publisher={Springer, Cham},
date={2016},
pages={xxxiv+431},
}

	\begin{comment}
\bib{Kitchens}{book}{
author={Kitchens, Bruce P.},
title={Symbolic dynamics},
series={Universitext},
note={One-sided, two-sided and countable state Markov shifts},
publisher={Springer-Verlag, Berlin},
date={1998},
pages={x+252},
}
	\end{comment}

\bib{Klove09}{article}{
author={Kl\o ve, Torleiv},
title={Generating functions for the number of permutations with limited
displacement},
journal={Electron. J. Combin.},
volume={16},
date={2009},
number={1},
pages={Research Paper 104, 11 pp},
%review={\MR{2539346}},
%doi={10.37236/193},
}

\bib{Lehmer70}{article}{
author={Lehmer, Derrick H.},
title={Permutations with strongly restricted displacements},
conference={
title={Combinatorial theory and its applications, I-III},
address={Proc. Colloq., Balatonf\"ured},
date={1969},
},
book={
series={Colloq. Math. Soc. J\'anos Bolyai},
volume={4},
publisher={North-Holland, Amsterdam-London},
},
date={1970},
pages={755--770},
}

\bib{LT}{article}{
author={Li, Hanfeng},
author={Thom, Andreas},
title={Entropy, determinants, and $L^2$-torsion},
journal={J. Amer. Math. Soc.},
volume={27},
date={2014},
number={1},
pages={239--292},
}

\bib{LM}{book}{
author={Lind, Douglas},
author={Marcus, Brian},
title={An introduction to symbolic dynamics and coding},
series={Cambridge Mathematical Library},
edition={2},
publisher={Cambridge University Press, Cambridge},
date={2021},
pages={xix+550},
}

\bib{LSW}{article}{
author={Lind, Douglas},
author={Schmidt, Klaus},
author={Ward, Tom},
title={Mahler measure and entropy for commuting automorphisms of compact
groups},
journal={Invent. Math.},
volume={101},
date={1990},
number={3},
pages={593--629},
}

	\begin{comment}
\bib{Mahler}{article}{
author={Mahler, Kurt},
title={An application of Jensen's formula to polynomials},
journal={Mathematika},
volume={7},
date={1960},
pages={98--100},
}
	\end{comment}

\bib{Marcus-Minc61}{article}{
author={Marcus, Marvin},
author={Minc, Henryk},
title={On the relation between the determinant and the permanent},
journal={Illinois J. Math.},
volume={5},
date={1961},
pages={376--381},
}

	\begin{comment}
\bib{Meyerovitch}{article}{
author={Meyerovitch, Tom},
title={Pseudo-orbit tracing and algebraic actions of countable amenable
groups},
journal={Ergodic Theory Dynam. Systems},
volume={39},
date={2019},
number={9},
pages={2570--2591},
}
	\end{comment}

\bib{Minc63}{article}{
author={Minc, Henryk},
title={Upper bounds for permanents of $(0,\,1)$-matrices},
journal={Bull. Amer. Math. Soc.},
volume={69},
date={1963},
pages={789--791},
}

	\begin{comment}
\bib{Minc78a}{article}{
author={Minc, Henryk},
title={An upper bound for the multidimensional dimer problem},
journal={Math. Proc. Cambridge Philos. Soc.},
volume={83},
date={1978},
number={3},
pages={461--462},
}
	\end{comment}

\bib{Minc78}{book}{
author={Minc, Henryk},
title={Permanents, with a foreword by Marvin Marcus},
series={Encyclopedia of Mathematics and its Applications},
volume={6},
%note={With a foreword by Marvin Marcus},
publisher={Addison-Wesley Publishing Co., Reading, Mass.},
date={1978},
pages={xviii+205},
}

	\begin{comment}
\bib{Minc79}{article}{
author={Minc, Henryk},
title={An asymptotic solution of the multidimensional dimer problem},
journal={Linear and Multilinear Algebra},
volume={8},
date={1979/80},
number={3},
pages={235--239},
}
	\end{comment}

\bib{MO}{book}{
author={Moulin Ollagnier, Jean},
title={Ergodic theory and statistical mechanics},
series={Lecture Notes in Mathematics},
volume={1115},
publisher={Springer-Verlag, Berlin},
date={1985},
pages={vi+147},
}

	\begin{comment}
\bib{OW}{article}{
author={Ornstein, Donald S.},
author={Weiss, Benjamin},
title={Entropy and isomorphism theorems for actions of amenable groups},
journal={J. Analyse Math.},
volume={48},
date={1987},
pages={1--141},
}
	\end{comment}

	\begin{comment}
\bib{Parry-Tuncel}{book}{
author={Parry, William},
author={Tuncel, Selim},
title={Classification problems in ergodic theory},
series={Statistics: Textbooks and Monographs},
volume={41},
note={London Mathematical Society Lecture Note Series, 67},
publisher={Cambridge University Press, Cambridge-New York},
date={1982},
pages={iii+101},
}
	\end{comment}

\bib{R}{article}{
author={Rudolph, Daniel J.},
title={Restricted orbit equivalence},
journal={Mem. Amer. Math. Soc.},
volume={54},
date={1985},
number={323},
pages={v+150},
}

\bib{DSAO}{book}{
author={Schmidt, Klaus},
title={Dynamical systems of algebraic origin},
series={Progress in Mathematics},
volume={128},
publisher={Birkh\"{a}user Verlag, Basel},
date={1995},
pages={xviii+310},
}

\bib{SS16}{article}{
author={Schmidt, Klaus},
author={Strasser, Gabriel},
title={Permutations of $\mathbb{Z}^d$ with restricted movement},
journal={Studia Math.},
volume={235},
date={2016},
number={2},
pages={137--170},
}

	\begin{comment}
\bib{Schrijver98}{article}{
author={Schrijver, Alexander},
title={Counting $1$-factors in regular bipartite graphs},
journal={J. Combin. Theory Ser. B},
volume={72},
date={1998},
number={1},
pages={122--135},
}
	\end{comment}

\bib{Valiant79}{article}{
author={Valiant, Leslie G.},
title={The complexity of computing the permanent},
journal={Theoret. Comput. Sci.},
volume={8},
date={1979},
number={2},
pages={189--201},
}

\bib{VW01}{book}{
author={van Lint, Jacobus H.},
author={Wilson, Richard M.},
title={A course in combinatorics},
edition={2},
publisher={Cambridge University Press, Cambridge},
date={2001},
pages={xiv+602},
}

	\begin{comment}
\bib{Walters}{article}{
author={Walters, Peter},
title={A variational principle for the pressure of continuous
transformations},
journal={Amer. J. Math.},
volume={97},
date={1975},
number={4},
pages={937--971},
}
	\end{comment}

\bib{Weiss}{article}{
author={Weiss, Benjamin},
title={Monotileable amenable groups},
conference={
title={Topology, ergodic theory, real algebraic geometry},
},
book={
series={Amer. Math. Soc. Transl. Ser. 2},
volume={202},
publisher={Amer. Math. Soc., Providence, RI},
},
date={2001},
pages={257--262},
}

\bib{Zhang}{article}{
author={Zhang, Fuzhen},
title={An update on a few permanent conjectures},
journal={Spec. Matrices},
volume={4},
date={2016},
pages={305--316},
}

	\end{biblist}
	\end{bibdiv}

	\end{document}